\newtheorem{thm}[subsection]{Theorem}
\newtheorem{thm/def}[subsection]{Theorem/Definition}
\newtheorem{cor}[subsection]{Corollary}
\newtheorem{lem}[subsection]{Lemma}
\newtheorem*{lem*}{Lemma}
\newtheorem{prop}[subsection]{Proposition}
\newtheorem*{prop*}{Proposition}
\newtheorem{problem}[subsection]{Problem}
\theoremstyle{definition}
\newtheorem{notation}[subsection]{Notation}
\newtheorem{defn}[subsection]{Definition}
\newtheorem{alg}[subsection]{Algorithm}
\theoremstyle{definition}
\theoremstyle{definition}
\newtheorem{rem}[subsection]{Remark}
\newtheorem*{rem*}{Remark}
\numberwithin{equation}{subsection}
\newtheorem{pg}[subsection]{}
\newtheorem{conclusion}[subsection]{Summary}
\newcommand{\Q}{\mathbf{Q}}
\newcommand{\mc}{\mathcal }
\newcommand{\Sp}{\text{\rm Spec}}
\newcommand{\Spec}{\text{\rm Spec}}
\newcommand{\Hom}{\underline {\text{\rm Hom}}}
\newcommand{\mls}{\mathscr}
\newcommand{\mk}{\mathrm{mk}}
\newcommand{\nd}{\mathrm{nd}}
\newcommand{\et }{\text{\rm \'et}}
\newcommand{\gp}{\text{\rm gp}}
\newcommand{\tors}{\mathrm{tors}}
\newcommand{\A}{\beta}
\newcommand{\bD}{\mathbf{D}}
\newcommand{\bE}{\mathbf{E}}
\newcommand{\bone}{\mathbf{1}}
\newcommand{\sC}{\mathscr{C}}
\newcommand{\fM}{\mathfrak{M}}
\newcommand{\fC}{\mathfrak{C}}
\newcommand{\cX}{\mc X}
\newcommand{\ba}{\mathbf{a}}
\newcommand{\bC}{\mathbf{C}}
\newcommand{\bb}{\mathbf{b}}
 \newcommand\RACHELoff{\newcommand{\Commentr}[1]{}}
\newcommand{\Rachel}[1]{\Commentr{#1}}
\newcommand\TEMPon{\newcommand{\Commentt}[1]{\noindent\color{Maroon}{\texttt 4/3: }##1\color{black}}}
 \newcommand\MARTINoff{\newcommand{\Commentp}[1]{}}
 \newcommand{\Martin}[1]{\Commentp{#1}}
\newcommand{\Tors}{\text{\rm Tors}}
\newcommand{\uTors}{\underline {\text{\rm Tors}}}
\newcommand{\ab}{\text{\rm ab}}
\newcommand{\MS}[2]{M_{{#2}\rightarrow {#1}}}
   \def\MR#1{}
\begin{document}



\MARTINoff

\RACHELoff

\TEMPon

\title{Twisted stable maps with colliding points}
\author{Martin Olsson and Rachel Webb}

\begin{abstract}
We study moduli spaces of stable maps from pointed curves, where the points are allowed to coincide,  with target a tame Deligne-Mumford stack.  This generalizes the Abramovich-Vistoli theory of twisted stable maps as well as work of Hassett, Alexeev and Guy, and Bayer and Manin, who studied stable maps to projective varieties 
from curves with weighted marked points.
\end{abstract}

\maketitle
\setcounter{tocdepth}{1}
\tableofcontents

\section{Introduction}
Our work in this article is a common generalization of Abramovich-Vistoli's theory \cite{AV} of twisted stable maps, and Alexeev-Guy \cite{AlexeevGuy} and Bayer-Manin's \cite{bayermanin} theory of weighted stable maps (building on Hassett's theory \cite{Hassett} of weighted stable curves). A key technical input is the theory of generalized log twisted curves introduced in \cite{OWI}.
Our main results, stated informally here and discussed  in greater detail below, are as follows. Let $\mc X$ be a separated tame Deligne-Mumford stack of finite type over a Noetherian base scheme $B$ and fix a suitable set of rational numbers $\ba = (a_1, \dots, a_n)$, called weights,  with $a_i \in (0, 1]$.
\begin{itemize}
\item  We define an algebraic stack of stable maps $\mls K_{g, \ba}(\mc X)$ to $\mc X$ from generalized log twisted curves with $\ba$-weighted marked points.
\item We show that if $\mc X$ has projective coarse moduli $X$ with fixed polarization, the substack $\mls K_{g, \ba}(\mc X, d)$ of maps of degree $d$ is proper. If moreover $\mc X$ is a global quotient, so is $\mls K_{g, \ba}(\mc X, d).$
\item We show that, if $\ba \leq \bb$, there is a functorial closed substack $\mls K_{g, \bb}^{\ba-\mathrm{stab}}(\mc X) \subseteq \mls K_{g, \bb}(\mc X)$, containing the substack classifying marked smooth curves, where a stabilization morphism
\(
\mls K_{g, \bb}^{\ba-\mathrm{stab}}(\mc X) \to \mls K_{g, \ba}(\mc X)
\)
is defined. 
\end{itemize}

To illustrate the fundamental difficulty in developing our theory, consider the following example.
Let $A$ be a discrete valuation ring with field of fractions $K$, and let $C \to \Sp(A)$ be a smooth curve with two sections $x, y$ that are distinct in the generic fiber but intersect at a point $x_0$ in the closed fiber. Let $G$ be a finite group of order invertible in $A$,
and let $P_K \to C_K$ be a $G$-cover, possibly ramified over $x_K$ and $y_K$,  whose quotient $P_K/G$ is a twisted curve $\mls C_K$ with coarse space $C_K$ and cyclic isotropy groups at $x_K$ and $y_K$.
\begin{problem}\label{prob:overview}
How can $\mls C_K$ and the induced morphism $\mls C_K \to BG$ be extended to a stack $\mls C$ over $A$ and representable morphism $\mls C \to BG$, such that the coarse moduli space of $\mls C$ is a morphism $\mls C \to C$ that is an isomorphism away from the sections $x$ and $y$?
\end{problem}

In fact there could be many stacks $\sC$ that solve this problem as stated; one should also require that $\sC \to \Spec(A)$ is a ``family of generalized twisted curves'' in some sense. 
Here we require $\mls C$ to be a family of generalized log twisted curves as defined in \cite{OWI}. Since such curves have abelian stabilizer groups we cannot always find one that solves Problem \ref{prob:overview}.

\subsection{Stable maps}
Let $\mc X$ be a separated tame Deligne-Mumford stack of finite type over a Noetherian base scheme $B$. As mentioned above, our theory of stable maps to $\mc X$ uses the \textit{generalized log twisted curves} introduced in \cite{OWI}. The important features of the definition are the following: 
\begin{itemize}
\item A generalized log twisted curve over a $B$-scheme $S$ is a tuple $\bC$ that includes the data of a nodal curve $C\to S$
and sections $s_i: S \to C$ of the smooth locus, not necessarily disjoint, as well as some log data.
\item The tuple $\bC$ has an associated stack $\mls C$ with coarse space $C$ and abelian stabilizer groups that are nontrivial only at nodes and markings.
\end{itemize}

Fix nonnegative integers $g, n$ and a set of weights 
$\ba = (a_1, \ldots, a_n)$. A \textit{prestable map} to $\mc X$ of type $(g, \ba)$ is a pair $(\bC, f: \mls C \to \mc X)$ such that the underlying map of coarse spaces $C \to X$ is $\ba$-weighted prestable in the sense of \cite{AlexeevGuy} and \cite{bayermanin}. That is, for any set markings that coincide in some geometric fiber of $C$, the sum of their associated weights $a_i$ is at most 1.

To motivate our definition of stability, consider the following example.  Let $k$ be an algebraically closed field and let $G$ be a finite group of order invertible in $k$. Let 
$$
(C, \{s_i\}_{i=1}^n, f:\mls C\rightarrow BG)
$$
 be an $n$-marked  twisted stable map in the sense of \cite{AV}, with coarse moduli space $C/k$.  When we introduce weights $\ba $ we may have a rational tail $E$ with marked points, say $s_1, \dots, s_r$ (after reordering), with $\sum _{i=1}^ra_i\leq 1$, and ideally (to mimic the definition of weighted stable maps in \cite{AlexeevGuy} \cite{bayermanin}) we would like to contract $E$. However this is not always possible.    By the theory of generalized log twisted curves \cite{OWI} we have a natural morphism of stacks $\mls C \to \mls C'$ that contracts the preimage of $E$ in $\sC$ (with its reduced structure), but critically, the morphism $f$ may not descend to a morphism $f':\mls C'\rightarrow BG$.  A basic obstruction is that the stabilizer group of $\mls C'$ at the image of $\mls E$ is abelian.  Therefore, if $P|_{\mls E}\rightarrow \mls E$ is the $G$-torsor corresponding to the map $f|_{\mls E}:\mls E\rightarrow BG$, then the associated  monodromy representation of the fundamental group  must be abelian if the map $f'$ exists; equivalently, the map $f|_{\mls E}$ must factor through $BA\rightarrow BG$ for an abelian subgroup $A\subset G$.  In this case we say that $f|_{\mls E}$ admits an \emph{abelian contraction} (see \ref{D:abeliancontractioni} for the definition in general).

 \begin{rem} Note that if all $a_i = 1$, the component $E$ satisfies $\sum _{i=1}^ra_i\leq 1$ if and only if there is at most one marked point, and in this case the fundamental group of $\mls E$ is abelian (being a quotient of the fundamental group of $\mathbf{P}^1-\{0, \infty \}$).  So $f|_{\mls E}$ always has an abelian contraction in this case.
 \end{rem}

 This leads to the following definition of stability of maps to the more general target $\mc X$.

\begin{defn}
A prestable map $(\bC, f: \mls C \to \mc X)$ of type $(g, \ba)$ is \emph{stable} if $f$ is representable and for all geometric points $\bar s \to S$, the following hold:
\begin{itemize}
\item The automorphism group scheme of the underlying coarse prestable map $(C_{\bar s}, \{s_i\}_{i=1}^n, C_{\bar s} \to X)$ is finite.
\item If $E \subset C_{\bar s}$ is an irreducible component such that $\mls E \to \cX$ has an abelian contraction, then either the normalization $\widetilde E$ of $E$ has positive genus, or
\[
\#\{e\in \widetilde E \mid \text{$e$ maps to a node of $C_{\bar s}$}\}+\sum _{i, s_i(\bar s)\in E}a_i>2.
\]
\end{itemize}
\end{defn}
 The second condition says that a component of $C_{\bar s}$ is stabilized by either having positive genus, by having enough (weighted) special points, or by \textit{failing} to have an abelian contraction.

 \subsection{Properties of the stack of stable maps} Define $\mls K_{g, \ba}(\mc X)$ to be the fibered category over the category of $B$-schemes which to any $T \to B$ associates the groupoid of stable maps of type $(g, \ba)$ over $T$. Without imposing further assumptions on $\mc X$, we can say the following (see \ref{T:7.4} and \ref{T:proper}).
 \begin{thm}\label{T:intro1}
The fibered category $\mls K_{g, \ba}(\mc X)$ is an algebraic stack locally of finite type over $B$ with finite diagonal. If $\mc X$ is proper the $\mls K_{g, \ba}(\mc X)$ satisfies the existence part of the valuative criterion.
 \end{thm}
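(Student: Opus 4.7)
My plan is to address algebraicity, finite diagonal, and the existence part of the valuative criterion in three successive steps, building on the algebraicity of the stack of generalized log twisted curves from \cite{OWI} and the properness of Abramovich-Vistoli twisted stable maps.

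For algebraicity and local finite type, I would begin with the stack $\fM^{\mathrm{glt}}_{g,n}$ of generalized log twisted curves of genus $g$ with $n$ (possibly colliding) sections, which by \cite{OWI} is algebraic and locally of finite type over $B$. Since the universal stack-theoretic curve $\mls C^{\mathrm{univ}} \to \fM^{\mathrm{glt}}_{g,n}$ is a proper flat tame Deligne-Mumford stack, the relative Hom stack of morphisms to $\mc X$ is algebraic and locally of finite type over $\fM^{\mathrm{glt}}_{g,n}$ by standard results on Hom stacks into tame Deligne-Mumford targets. This realizes the prestack of pairs $(\bC, f)$ as algebraic. The conditions that $f$ be representable, that the induced coarse map be $\ba$-weighted prestable, and that the coarse pointed map have finite automorphism group are all open. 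The abelian-contraction component condition, combined with the weighted stability inequality, defines a locally closed condition on the stratification of the base by topological type of the source curve; intersecting these loci cuts out $\mls K_{g,\ba}(\mc X)$.

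For finite diagonal, I would show that for any two stable maps $(\bC,f),(\bC',f')$ over a scheme $T$, the isomorphism scheme $\underline{\mathrm{Isom}}_T((\bC,f),(\bC',f'))$ is finite over $T$. Quasi-finiteness follows from the stability conditions: the coarse stability ensures finitely many automorphisms of the underlying weighted coarse pointed map, and representability of $f$ together with tameness of $\mc X$ bounds the lifts to automorphisms of the stacky structure compatible with $f$. Properness follows by a standard argument: the Isom scheme sits as a closed subscheme of the Hom scheme between the underlying families of proper Deligne-Mumford stacks, and unique extension of isomorphisms across a valuation ring yields the valuative criterion.

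For the valuative criterion of existence, let $A$ be a DVR with fraction field $K$, and let $(\bC_K, f_K:\mls C_K\to\mc X)$ be a stable map over $K$. After a finite extension of $A$, I would first modify $\mls C_K$ by inserting a twisted rational tail at each locus where marked points coincide, spreading the colliding sections onto distinct sections along these tails and extending $f_K$ across each tail by the constant map to the image of the collision point. The result is an Abramovich-Vistoli twisted prestable map with distinct sections over $K$; after AV-stabilization, it defines a point of the Abramovich-Vistoli moduli stack, which is proper \cite{AV}. Extending over $A$ (up to a further finite extension of $A$) yields a twisted stable map $(\mls D, g:\mls D\to\mc X)$ over $A$. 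I would then contract the inserted rational tails together with any components in the special fiber that fail the $\ba$-weighted stability inequality, using the contraction theory of \cite{OWI} in conjunction with the abelian-contraction construction recalled in \ref{D:abeliancontractioni}. Verifying that the resulting object is a stable map of type $(g,\ba)$ completes the extension.

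The main obstacle is ensuring compatibility of the AV extension with $\ba$-stability in the special fiber: the contraction of a rational bubble $\mls E\subset\mls D_k$ to a stacky point is available only when $g|_{\mls E}$ admits an abelian contraction, but the AV extension offers no a priori guarantee that components slated for contraction satisfy this. The crucial insight is that, by the very design of our stability definition, any component violating the weighted stability inequality must admit an abelian contraction (otherwise the second bullet of stability forces that component to remain). Verifying that this combinatorial matching actually holds for every component arising in the AV extension, and that the contracted object is a bona fide generalized log twisted curve equipped with a representable map to $\mc X$, is the technical heart of the argument.
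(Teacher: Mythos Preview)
Your overall architecture is close to the paper's, but there is a genuine gap in the valuative criterion step, and the algebraicity step is too vague on the key point.

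\textbf{Algebraicity.} You correctly start from $\fM^{\mathrm{glt}}_{g,n}$ and the Hom-stack, and correctly note that representability and the weighted prestable condition are open. However, your assertion that the abelian-contraction stability condition is ``locally closed on strata'' is imprecise and does not establish what is needed. The paper proves that the stable locus is genuinely \emph{open} (Proposition \ref{P:6.4b}), and this is nontrivial: it requires showing that the set of points where condition (v) holds is both constructible and stable under generization. The generization argument (Section \ref{sec:generization}) in particular uses that abelian contractions on a component over the generic point of a DVR propagate to the special fiber via taking closures of reductions of torsors. You do not engage with why abelian-contractibility behaves well in families, which is the heart of this step.

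\textbf{Valuative criterion for existence.} Here your argument breaks down. You extend to a $\bone^n$-stable map via Abramovich--Vistoli and then propose to contract components failing the $\ba$-inequality. Your justification is that ``any component violating the weighted stability inequality must admit an abelian contraction (otherwise the second bullet of stability forces that component to remain).'' This is a misreading of condition (v): that condition says that \emph{if} a component admits an abelian contraction \emph{then} it satisfies the inequality. A component with no abelian contraction is simply unconstrained by (v); it can fail the $\ba$-inequality and still the map is $\ba$-stable. In particular, nothing prevents the special fiber of the AV extension from containing components that fail the $\ba$-inequality \emph{and} have no abelian contraction. Such components cannot be contracted (the factorization of Theorem \ref{T:9.2} requires abelian contractions on all fibers), so your procedure does not produce an $\ba$-stable map.

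The paper resolves this by proving first that the locus $\mls K^{\ba\text{-stab}}_{g,\bone^n}(\mc X)$ of $\bone^n$-stable maps admitting an $\ba$-stabilization is a \emph{closed} substack of the AV moduli (Theorem \ref{T:stabilize}, built on Theorem \ref{T:9.2}). One then lifts the given $K$-point to this closed substack (Lemma \ref{L:surj}); since the substack is closed in a proper stack, the AV extension automatically remains in it, and the stabilization morphism yields the desired extension. The closedness of the stabilizable locus is the essential content you are missing, and it is not automatic---it is one of the main technical theorems of the paper.
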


Now assume additionally that the coarse moduli space $X$ of $\mc X$ is a projective $B$-scheme with a relatively ample line bundle $\mls O_X(1)$. The \emph{degree} of a stable map $(\bC, f:\mls C \to \mc X)$ is the degree of the underlying map $\bar f: C \to X$ of coarse spaces; that is, the degree of $\bar f^*\mls O_X(1)$. Let $\mls K_{g, \ba}(\mc X, d) \subseteq \mls K_{g, \ba}(\mc X)$ be the open and closed substack of stable maps of degree $d$. The following theorem is proved in \ref{T:proper} and \ref{T:14.2}.
\begin{thm}\label{T:intro2}
The fibered category $\mls K_{g, \ba}(X, d)$ is a proper algebraic stack with finite diagonal. If moreover $B$ is quasicompact and $\mc X$ is a global quotient stack, then $\mls K_{g, \ba}(\mc X, d)$ is also a global quotient stack.
\end{thm}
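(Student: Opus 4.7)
I will prove properness of $\mls K_{g,\ba}(\mc X, d)$ first, and then the global quotient presentation.

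For properness over $B$, Theorem \ref{T:intro1} supplies finite diagonal --- hence separation, since a finite morphism is in particular proper --- as well as the existence part of the valuative criterion. It therefore suffices to establish that $\mls K_{g,\ba}(\mc X, d)$ is of finite type over $B$, equivalently boundedness of stable maps of the given type and degree. My plan for boundedness is to use the forgetful morphism
\[
\Phi: \mls K_{g,\ba}(\mc X, d) \longrightarrow \overline M_{g,\ba}(X,d)
\]
to the Bayer--Manin moduli space of $\ba$-weighted stable maps to the coarse space $X$ of degree $d$, which is proper. Since $\mc X$ is tame Deligne--Mumford with finite inertia, the orders of the cyclic stabilizer groups of $\mls C$ at nodes and markings are bounded by the local cyclic inertia structure of $\mc X$ at the image points, which lies in a bounded family thanks to the properness of $\overline M_{g,\ba}(X,d)$. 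This bounds the discrete generalized log twisted data of \cite{OWI}; the representable lifts $\mls C \to \mc X$ of a fixed coarse map then also form a bounded family by the argument of \cite{AV}, giving boundedness of the fibers of $\Phi$ and hence of $\mls K_{g,\ba}(\mc X, d)$.

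For the global quotient statement, write $\mc X = [Y/G]$ with $Y$ a quasi-projective $B$-scheme carrying an action of a linear algebraic group $G$. A stable map $(\bC, f : \mls C \to \mc X)$ is equivalent to the data of a $G$-torsor $\pi : P \to \mls C$ together with a $G$-equivariant map $\phi : P \to Y$. Adapting the construction of \cite[\S 1]{AV}, I would define a scheme $H$ over $B$ parametrizing stable maps in $\mls K_{g,\ba}(\mc X, d)$ equipped with a $G$-equivariant embedding of $P$ into a fixed projective $G$-space $\P^N_B$, together with the induced equivariant morphism to $Y$. The scheme $H$ is cut out of an appropriate Hom-scheme by the equivariance and representability conditions; quasicompactness of $B$, properness of $\mls K_{g,\ba}(\mc X, d)$ established above, and boundedness of the relevant Hilbert polynomials make $H$ of finite type over $B$. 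The group $G$ acts on $H$ by changing the embedding, and the natural map $[H/G] \to \mls K_{g,\ba}(\mc X, d)$ is an isomorphism because the choice of equivariant embedding is itself a $G$-torsor over the moduli problem.

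The main obstacle I expect is the boundedness step in the first part. At a point of $C$ where several sections $s_i$ collide, the local structure of the generalized log twisted curve $\mls C$ depends both on the combinatorial pattern of the collision and on the local inertia of $\mc X$ at the image. The generalized log twisted structures of \cite{OWI} are designed precisely to handle such colliding-markings data, so the bounds should follow from its structure theory combined with the finite-inertia properties of the tame stack $\mc X$, but extracting a clean statement will be the technical core of the proof.
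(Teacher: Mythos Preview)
Your properness argument has a genuine gap: the forgetful morphism
\[
\Phi: \mls K_{g,\ba}(\mc X, d) \longrightarrow \overline M_{g,\ba}(X,d)
\]
does not exist. The definition of stability (Definition~\ref{def:weighted twisted stable map}) imposes the numerical inequality \eqref{eq:numerics} only on those components $E\subset C_{\bar s}$ for which $f|_{\mls E}:\mls E\to\mc X$ admits an abelian contraction. A rational component $E$ with $\#\{\text{nodes}\}+\sum_{s_i\in E} a_i\le 2$ on which $f|_{\mls E}$ has \emph{no} abelian contraction is permitted in a stable map, and then the underlying coarse map $C\to X$ is genuinely $\ba$-unstable (cf.\ Remark~\ref{rem:finite auts}(b)). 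So there is nothing proper on the coarse side to receive $\Phi$, and your boundedness step collapses. The paper instead bounds $\mls K_{g,\ba}(\mc X,d)$ from above: the stabilization morphism of Theorem~\ref{T:intro3} restricts to a surjection $\mls K^{\ba\text{-}\mathrm{stab}}_{g,\bone^n}(\mc X,d)\to\mls K_{g,\ba}(\mc X,d)$ (Lemma~\ref{L:surj}), and the source is a closed substack of the Abramovich--Vistoli stack, already known to be quasi-compact.

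Your global quotient sketch is too vague to confirm or refute, but it differs from the paper's route. The paper follows \cite{AGOT}: fix a generating sheaf $\mc V$ on $\mc X$, use $g_*(f^*\mc V^\vee(m))$ and $\bar g_*\mc L^{\otimes m}$ (for a suitable ample $\mc L$ on $C$ and large $m$) to embed $\mls C$ as a closed substack of an affine bundle over $\mc X\times\mathbf{P}^{l_2-1}$, and map the associated frame bundle $\mathbf{G}$-equivariantly into a Hilbert scheme of that bundle. Your proposal to equivariantly embed the $G$-torsor $P\to\mls C$ into a projective $G$-space is not obviously well-posed, since $P$ lives over the stack $\mls C$ rather than over a scheme, and it is unclear what ``a $G$-equivariant embedding of $P$ into $\mathbf{P}^N_B$'' should mean in that setting.
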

We note that when $\ba = \bone^n$ is the vector $(1, 1, \ldots, 1)$ with $n$ entries, the stack $\mls K_{g, \bone^n}(\mc X, d)$ is the Abramovich-Vistoli stack of twisted stable maps. When $\mc X$ is equal to its coarse space the stack $\mls K_{g, \ba}(\mc X, d)$ is the stack of weighted stable maps considered by Alexeev-Guy and Bayer-Manin.

\subsection{Stabilization morphisms} Our proofs of Theorems \ref{T:intro1} and \ref{T:intro2} 
heavily use the existence of stabilization morphisms between the moduli spaces $\mls K_{g, \ba}(\mc X)$ for varying $\ba$. 
Let $\mc X$ be a separated tame Deligne-Mumford stack of finite type over a Noetherian base scheme $B$. Fix nonnegative integers $g, n$ and two tuples of weights $\ba = (a_1, \ldots, a_n)$ and $\bb = (b_1, \ldots, b_n)$ with $a_i \leq b_i$ for all $i$. A prestable map $(\bC, f:\mls C \to \mc X)$ of type $(g, \bb)$ \textit{admits an $\ba$-stabilization} if it has a factorization through a stable map of type $(g, \ba)$.
We prove the following in \ref{C:stab}.

\begin{thm}\label{T:intro3}
The subcategory $\mls K_{g, \bb}^{\ba-\mathrm{stab}}(\mc X) \subseteq \mls K_{g, \bb}(\mc X)$ of $\bb$-stable maps that admit an $\ba$-stabilization is a closed substack containing the locus of points classifying marked smooth curves. Moreover an $\ba$-stabilization of a $\bb$-stable map is unique when it exists, so there is a natural morphism
\begin{equation}\label{eq:stab-intro}
\mls K_{g, \bb}^{\ba-\mathrm{stab}}(\mc X) \to \mls K_{g, \ba}(\mc X)
\end{equation}
sending a $\bb$-stable map to its $\ba$-stabilization.
\end{thm}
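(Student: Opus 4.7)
The plan is to construct the morphism \eqref{eq:stab-intro} by descending $f$ along a curve-level stabilization produced by \cite{OWI}, and then to show that the locus on which this descent exists forms a closed substack via a Hom-stack argument.

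By \cite{OWI}, any generalized log twisted curve $\bC$ of type $(g,\bb)$ carries a canonical, functorial $\ba$-stabilization, which I will denote $\bC'$, of type $(g,\ba)$, together with a morphism $\pi: \mls C \to \mls C'$ contracting the components destabilized by the weight drop from $\bb$ to $\ba$. A $\bb$-stable map $(\bC, f: \mls C \to \mc X)$ admits an $\ba$-stabilization precisely when $f$ factors as $f = f' \circ \pi$, and in that case $(\bC', f')$ is automatically $\ba$-stable. Since $\pi$ is surjective and $\mc X$ has finite (in particular separated) diagonal, the equalizer of any two candidates $f'_1, f'_2$ is a closed substack of $\mls C'$ whose preimage under $\pi$ is all of $\mls C$; by surjectivity of $\pi$ this equalizer must be all of $\mls C'$, forcing $f'_1 = f'_2$. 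This uniqueness, combined with functoriality of $\bC \mapsto \bC'$, turns the assignment $(\bC, f) \mapsto (\bC', f')$ into the required morphism. Containment of the smooth-curve locus is immediate: if $C \to S$ is smooth then no rational tails or bridges are available to be contracted, so $\pi$ is an isomorphism on coarse moduli and the factorization is tautological.

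The main obstacle is showing that the inclusion $\iota: \mls K_{g,\bb}^{\ba-\mathrm{stab}}(\mc X) \hookrightarrow \mls K_{g,\bb}(\mc X)$ is a closed immersion. I would proceed by Hom-stack methods. Given a test scheme $T$ mapping to $\mls K_{g,\bb}(\mc X)$, i.e.\ a $\bb$-stable family $(\bC_T, f_T)$, the curve-level stabilization $\pi_T: \mls C_T \to \mls C'_T$ exists over $T$ by functoriality of \cite{OWI}. By the uniqueness above, the fiber product $T \times_{\mls K_{g,\bb}(\mc X)} \mls K_{g,\bb}^{\ba-\mathrm{stab}}(\mc X)$ is a subfunctor of $T$, and it is represented by the pullback along $f_T: T \to \Hom_T(\mls C_T, \mc X)$ of the composition-with-$\pi_T$ morphism $\pi_T^*: \Hom_T(\mls C'_T, \mc X) \to \Hom_T(\mls C_T, \mc X)$; the algebraicity of these Hom stacks follows from standard results for Hom stacks with tame Deligne-Mumford target. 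It therefore suffices to prove that $\pi_T^*$ is a proper monomorphism, hence a closed immersion.

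Monomorphicity is clear from the uniqueness of descent along the surjection $\pi_T$. For properness, I would verify the valuative criterion: given a DVR $R$ over $T$, a map $\varphi_R: \mls C_R \to \mc X$, and a generic-fiber descent to $\mls C'_K$, one must produce a descent over $R$. The descent along $\pi_R$ reduces, by the universal property of coequalizers, to verifying that the two compositions $\varphi_R \circ p_1, \varphi_R \circ p_2: \mls C_R \times_{\mls C'_R} \mls C_R \rightrightarrows \mc X$ agree; their equalizer is closed in the fibered product by the separatedness of $\mc X$ and contains the entire generic fiber by hypothesis. The most delicate step is to promote generic agreement to agreement on the closed fiber, where $\mls C_R \times_{\mls C'_R} \mls C_R$ acquires components of the form $\mls E \times_{BG_E} \mls E$ over contracted components $\mls E$ with stacky-point image $BG_E$. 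This is where the abelian-contraction condition enters: on each destabilized component $\mls E$ of the special fiber, the restriction $\varphi_R|_{\mls E}$ must factor through the abelian group scheme $BG_E \to \mc X$, and compatibility with the generic-fiber descent forces the factorization to extend in a unique way. I expect this compatibility to be the main technical obstacle, handled by combining the local structure of $\pi$ near contracted components from \cite{OWI} with the tameness and finite-diagonal hypotheses on $\mc X$. Once $\pi_T^*$ is identified as a closed immersion, the theorem follows.
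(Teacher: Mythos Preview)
Your proposal has a fundamental gap at the very first step. You assume that by \cite{OWI} any generalized log twisted curve $\bC$ of type $(g,\bb)$ carries a canonical, functorial $\ba$-stabilization $\bC'$ together with a contraction $\pi: \mls C \to \mls C'$, so that the question reduces to whether $f$ descends along this fixed $\pi$. But no such universal $\bC'$ exists: the target of the stabilization depends on $f$, not only on $\bC$. By Definition \ref{def:weighted twisted stable map} a rational component $E$ must be contracted only when $f|_{\mls E}:\mls E \to \mc X$ admits an abelian contraction and the numerical inequality fails; whether an abelian contraction exists is a property of $f$. (Even at the coarse level the contraction $C\to D$ already depends on $\bar f:C\to X$, since components on which $\bar f$ is nonconstant need not be contracted.) Consequently there is no global $\mls C'_T$ over a test scheme $T\to \mls K_{g,\bb}(\mc X)$, and the morphism $\pi_T^*:\Hom_T(\mls C'_T,\mc X)\to \Hom_T(\mls C_T,\mc X)$ that your argument hinges on cannot be formed. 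Your claim that $(\bC',f')$ is ``automatically $\ba$-stable'' is likewise unjustified.

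The paper's proof in Section \ref{S:section11} confronts exactly this difficulty. Over an algebraically closed field the stabilization is produced by an algorithm (\ref{A:only}) whose steps depend on $f$; in families one introduces, for each candidate coarse contraction $C\to D\to C^*$, a subfunctor $\Psi_D\subset \Psi$, and Theorem \ref{T:9.2} shows each $\Psi_D$ is a closed subscheme. The heart of the argument is then to show that \emph{locally} on the base the full functor $\Psi$ coincides with a single $\Psi_D$ (step (2) in the proof of \ref{T:stabilize}), which requires controlling how the correct $D$ varies. Your equalizer/valuative-criterion sketch for properness is in the spirit of \ref{L:6.22b}, but that lemma rests on the explicit analysis of $\mls O_{\mls D}\to Q_*\mls O_{\mls C}$ in \ref{S:tail-functions}; surjectivity of $\pi$ is not enough, and indeed this map is \emph{not} an isomorphism in the tails case (\ref{L:tails}), which is precisely why the factorization may fail to exist and why the closed substack can be proper.
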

Note that this generalizes \cite[Thm 3.1]{AlexeevGuy} and \cite[Prop 3.1.1]{bayermanin} (which themselves generalize \cite[Prop 4.5]{Hassett}) to a stacky setting, with the interesting complication that the morphism \eqref{eq:stab-intro} is only defined on a canonical closed substack of $\mls K_{g, \bb}(\mc X)$.
The reason for this is the following result. Let $Q:\mls C \to \mls D$ be a morphism associated to an \textit{initial contraction} of generalized log twisted curves (see \cite[6.14]{OWI}) over $S$ and let $C \to D$ be the induced coarse moduli map. Let $f:\mls C \to \mc X$ be a morphism coarse coarse moduli map factors as $C \to D \to X$. The \emph{fibered category of factorizations} of $f$ associates to $T \to S$ the category of pairs $(g: \mls D_T \to \mc X, \alpha)$ where $\alpha$ is a 2-isomorphism from $g \circ Q_T$ to $f_T$. The following appears as \ref{T:9.2} in the body of the article.

\begin{thm}\label{T:intro4}
Assume that for each geometric point $\bar y \to D$ the fiber morphism $f_{\bar y}: \mls C_{\bar y} \to \mc X$ has an abelian contraction. Then the fibered category of factorizations of $f$ is represented by a closed subscheme of $S$.
\end{thm}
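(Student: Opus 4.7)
The plan is to realize the fibered category of factorizations, $\mathscr{F}$, as a 2-fiber product of Hom-stacks over $S$, and then show the structure map $\mathscr{F} \to S$ is a proper monomorphism, hence a closed immersion. Since $\mls C \to S$ and $\mls D \to S$ are proper and flat, and $\mc X$ is a separated tame DM stack of finite type, the Hom-stacks $\mathcal{H}_{\mls D} := \Hom_S(\mls D, \mc X)$ and $\mathcal{H}_{\mls C} := \Hom_S(\mls C, \mc X)$ are algebraic and locally of finite type over $S$ by the Hom-stack theory of Olsson. Composition with $Q$ gives a morphism $Q^* : \mathcal{H}_{\mls D} \to \mathcal{H}_{\mls C}$, and
\[
\mathscr{F} = S \times_{\mathcal{H}_{\mls C}, f} \mathcal{H}_{\mls D}
\]
is algebraic.

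Next, I would establish that $\mathscr{F} \to S$ is a monomorphism. Given two factorizations $(g_1, \alpha_1)$ and $(g_2, \alpha_2)$ of $f_T$ over $T \to S$, the sheaf $I := \underline{\mathrm{Isom}}_{\mls D_T}(g_1, g_2)$ is a finite $\mls D_T$-scheme because $\mc X$ has finite diagonal. The 2-iso $\alpha_2^{-1}\alpha_1$ is a section of $Q_T^*I$ over $\mls C_T$. Restricted to each geometric fiber of $Q_T$—a connected curve mapping to a single point of $\mls D_T$—this section takes values in a finite scheme and is therefore constant. Since $Q_{T,*}\mathcal{O}_{\mls C_T} = \mathcal{O}_{\mls D_T}$, this fiberwise constancy means the section descends uniquely to a section of $I$ over $\mls D_T$, providing the canonical 2-iso $g_1 \Rightarrow g_2$ compatible with the $\alpha_i$.

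For properness I would use the valuative criterion. Let $R$ be a DVR with fraction field $K$, suppose $\Spec R \to S$ is given together with a factorization $g_K : \mls D_K \to \mc X$ of $f_K$, and seek an extension $g_R : \mls D_R \to \mc X$. Away from the finitely many contracted points in the closed fiber $\mls D_\kappa$, $Q_R$ is an isomorphism, forcing $g_R := f_R \circ Q_R^{-1}$. At each contracted point $y \in \mls D_\kappa$, the abelian-contraction hypothesis provides a factorization $\mls C_{\bar y} \to BA_y \hookrightarrow \mc X$ where, by the structure of initial contractions from \cite{OWI}, $A_y$ coincides with the abelian stabilizer of $\mls D$ at $y$. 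Combined with $g_K$, this yields a candidate formal-local map $\widehat{\mls D}_{R, y} \to BA_y \hookrightarrow \mc X$ extending $g_K$ and agreeing with $f_R$ after pulling back along $Q_R$.

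The hard part is effectivizing and gluing this data: one must show the formal map on $\widehat{\mls D}_{R, y}$ comes from a map on an étale neighborhood of $y$ in $\mls D_R$, and glues with the away-from-contraction extension $f_R \circ Q_R^{-1}$ to produce a global $g_R$. I expect this step to combine Tannakian-type effectivity results for maps to tame DM stacks (such as those of Hall--Rydh) with the explicit étale-local presentation of generalized log twisted curves at the contraction points from \cite{OWI}, using the monomorphism established in the previous step to handle the compatibility on punctured neighborhoods where two extensions meet.
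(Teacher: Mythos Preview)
Your overall architecture—realize $\mathscr F$ as a fiber product of Hom-stacks and then prove $\mathscr F\to S$ is a proper monomorphism—matches the paper's endgame. The gap is in the two key technical claims you use to run that argument.

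\textbf{The monomorphism step.} You assert $Q_{T,*}\mathcal O_{\mls C_T}=\mathcal O_{\mls D_T}$ and use this to descend the section of $Q_T^*I$. This equality is \emph{false} in exactly the situation that makes the theorem interesting: when $q$ contracts a rational tail carrying several colliding markings, the map $\mathcal O_{\mls D}\to Q_*\mathcal O_{\mls C}$ is only injective, with a nontrivial cokernel (see \ref{L:tails} and the explicit computation in \ref{L:9.14}). The paper has to work rather hard to establish even injectivity, and the full faithfulness of factorizations comes from combining this injectivity with the descent of $G$-torsors along $Q$ (Proposition \ref{prop:5.2-for-BG}), not from an isomorphism on structure sheaves. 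Your descent argument for the Isom-section does not survive the failure of $Q_*\mathcal O_{\mls C}=\mathcal O_{\mls D}$.

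\textbf{The valuative criterion step.} Your sketch produces a formal-local extension at each contracted point and then defers the ``hard part'' of effectivization and gluing to unspecified Tannakian results. The paper's mechanism is quite different and more concrete: once one knows the cokernel $\mc F$ of $\mathcal O_{\mls D}\hookrightarrow Q_*\mathcal O_{\mls C}$ is \emph{flat} over an integral base (Corollary \ref{C:5.13c}), the obstruction to lifting a factorization from the generic fiber to all of $\Spec R$ lives in a module that injects into its generic fiber, and hence vanishes (see the diagram \eqref{eq:end2} and the injection \eqref{eq:end3}). Without identifying this cokernel and its flatness you have no handle on why a generic factorization forces a global one; the abelian-contraction hypothesis alone does not give you the stabilizer of $\mls D$ at $y$ as your $A_y$, and the formal-local candidate you write down need not globalize. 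This flatness, in turn, rests on an explicit coordinate description of the torsor $\mc Q\to \mls C_{(\bar y)}$ pulled back from $\mls D_{(\bar y)}$ (paragraphs \ref{P:5.10}--\ref{P:5.14}), which is where most of the work in the proof actually lies.
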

In particular this result says there is a maximal closed substack of $S$ where a factorization of $f$ exits and is unique. This theorem is a generalization of the result \cite[9.1.1]{AV} for twisted stable maps.

Finally, we mention an application of the stabilization morphisms \eqref{eq:stab-intro}. Assume $\mc X$ has projective coarse moduli space, so $\mls K_{g, \bone^n}(\mc X, d)$ is the Abramovich-Vistoli stack of twisted stable maps. From Theorem \ref{T:intro3} we get a closed substack $\mls K^{\ba-\mathrm{stab}}_{g, \bone^n}(\mc X, d) \subseteq \mls K_{g, \bone^n}(\mc X, d)$ for all $\ba \in (0, 1]^n\cap \mathbf{Q}^n$. The following result, proved in \ref{C:small}, identifies a compactification of smooth twisted stable maps contained in the Abramovich-Vistoli stack. 
\begin{thm}
Define $\mls K^{\mathrm{stab}}_{g, \bone^n}(\mc X, d) := \cap_{\ba \in (0, 1]^n\cap \mathbf{Q}^n} \mls K_{g, \bone^n}^{\ba-\mathrm{stab}}(\mc X, d)$. Then $\mls K^{\mathrm{stab}}_{g, \bone^n}(\mc X, d)$ is a closed substack of $\mls K_{g, \bone^n}(\mc X, d)$ containing all maps from smooth source curves.
\end{thm}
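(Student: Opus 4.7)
The second assertion is immediate from Theorem \ref{T:intro3}: for each $\ba \in (0,1]^n \cap \mathbf{Q}^n$ the closed substack $\mls K^{\ba-\mathrm{stab}}_{g, \bone^n}(\mc X, d)$ contains the locus of points classifying marked smooth curves, hence so does the intersection.

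For the first assertion, the plan is to reduce the a priori infinite intersection to a finite one by invoking Noetherianity. By Theorem \ref{T:intro2} the stack $\mls K_{g, \bone^n}(\mc X, d)$ is proper and of finite type over the Noetherian base $B$, hence is itself a Noetherian algebraic stack; in particular its lattice of closed substacks satisfies the descending chain condition (this can be verified on a smooth Noetherian presentation using the corresponding fact for ideals of Noetherian rings). Consider the collection of finite intersections
\[
Z_F := \bigcap_{\ba \in F} \mls K^{\ba-\mathrm{stab}}_{g, \bone^n}(\mc X, d), \qquad F \subset (0,1]^n \cap \mathbf{Q}^n \text{ finite,}
\]
each of which is a closed substack of $\mls K_{g, \bone^n}(\mc X, d)$ by Theorem \ref{T:intro3}. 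By the descending chain condition the family $\{Z_F\}$ admits a minimal element $Z_0 = Z_{F_0}$. For any $\ba \in (0,1]^n \cap \mathbf{Q}^n$, the further intersection $Z_0 \cap \mls K^{\ba-\mathrm{stab}}_{g, \bone^n}(\mc X, d) = Z_{F_0 \cup \{\ba\}}$ lies inside $Z_0$ and belongs to the family, so by minimality equals $Z_0$; hence $Z_0 \subseteq \mls K^{\ba-\mathrm{stab}}_{g, \bone^n}(\mc X, d)$ for every $\ba$. Therefore $Z_0 = \mls K^{\mathrm{stab}}_{g, \bone^n}(\mc X, d)$, which is visibly a closed substack.

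The proof is essentially formal once Theorems \ref{T:intro2} and \ref{T:intro3} are available. The only point requiring attention is the verification of the descending chain condition for closed substacks of our Noetherian algebraic stack; this is routine and constitutes no real obstacle.
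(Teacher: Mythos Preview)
Your proof is correct but takes a genuinely different route from the paper. The paper argues combinatorially, via a chamber decomposition of the weight space (in the spirit of Hassett): two weight vectors $\ba,\bb$ yield the \emph{same} closed substack $\mls K^{\ba-\mathrm{stab}}_{g,\bone^n}(\mc X)=\mls K^{\bb-\mathrm{stab}}_{g,\bone^n}(\mc X)$ whenever, for every $I\subset\{1,\dots,n\}$ with $|I|\ge 2$, one has $\sum_{i\in I}a_i\le 1$ iff $\sum_{i\in I}b_i\le 1$. This is checked by tracing through the stabilization algorithm and noting that which rational tails get contracted depends only on these inequalities. Since there are finitely many such chambers, the intersection is finite a priori.

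The paper's approach is more informative---it tells you exactly when two of the substacks coincide---and it does not use properness or Noetherianity of the moduli stack; in fact Corollary~\ref{C:small} is stated and proved for $\mls K_{g,\bone^n}(\mc X)$ without fixing a degree, a stack that is only locally of finite type. Your argument is formally cleaner but relies on Theorem~\ref{T:intro2} (hence on the projectivity hypothesis for $X$ and the degree bound $d$) to get Noetherianity. For the statement as written, with the degree fixed, both arguments are valid.
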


\subsection{Hypotheses and terminology}
We fix a separated tame Deligne-Mumford stack $\mc X$ of finite type over a noetherian base scheme $B$ with coarse space $\mc X \to X \to B$ (though we relax the separation assumption for the preliminaries in Section \ref{sec:prelims}). In particular, $\mc X \to B$ is of finite presentation.

\begin{rem} One could relax the assumption that $B$ is noetherian.  However, since we are assuming that $\mc X$ is of finite presentation over $B$ this does not add significantly more.  Indeed if $\mc X\rightarrow B$ is a tame Deligne-Mumford stack of finite presentation, then after shrinking on $B$ and using a standard limit argument there exists a finite type $\mathbf{Z}$-scheme $B_0$ and a tame Deligne-Mumford stack $\mc X_0/B_0$ such that $\mc X$ is the base change of $\mc X_0$ along a morphism $B\rightarrow B_0$.  
\end{rem}

\begin{rem}
It is natural to expect a theory of stable maps when $\mc X$ is a tame Artin stack, generalizing \cite{AOV} to include curves with weighted markings. However, the correct notion of an abelian contraction appears quite subtle in this case. Even when $\mc X$ has abelian stabilizers, many of the arguments are significantly more complicated than in the tame Deligne-Mumford setting. Hence we restrict ourselves to the latter in this article. 
\end{rem}

\begin{rem}
We do not impose a blanket assumption that $\mc X$ has projective coarse moduli as this is not necessary for many results in this paper. In particular, our theorems \ref{T:intro3} and \ref{T:intro4} do not require this hypothesis.
Indeed, it appears well-known to experts that this hypothesis is not needed for the analogous results for twisted stable maps appearing in \cite[Thm 3.1]{AlexeevGuy}, \cite[Prop 3.1.1]{bayermanin}, and \cite[9.1.1]{AV}, even though as stated the classical results require it. 

\end{rem}

We define a \textit{tame \'etale group scheme} over $S$ to be  a finite \'etale group scheme whose order in the fiber at any point of $s$ is coprime to the residue characteristic. In addition, we use the terminology discussed in \cite[1.10]{OWI}.

\subsection{Acknowledgements}
Olsson was partially supported by NSF FRG grant DMS-2151946 and the Simons Collaboration on Perfection in Algebra, Geometry, and Topology. Webb was partially supported by an NSF Postdoctoral Research Fellowship, award number 200213, and a grant from the Simons Foundation. Webb is grateful to the Isaac Newton Institute for their hospitality.

\section{Preliminaries}\label{sec:prelims}

This section contains definitions and results prerequisite to our definition of stable maps and discussion of their contractions. Throughout $\mc X$ is a tame Deligne-Mumford stack over a Noetherian base $B$ and $X$ is the coarse space of $\mc X$. 

\subsection{Abelian contractions}
Let $k$ be a field and let $\mls C$ be a tame Deligne-Mumford stack over $k$.
Let $\Sp(k) \to B$ be a morphism and let $f: \mls C \to \cX$ be a morphism over $B$. 

\begin{defn}\label{D:abeliancontractioni}
An \textit{abelian contraction of $f$} is a factorization
\begin{equation}\label{E:factorit}
\xymatrix{
\mls C\ar@/^2pc/[rr]^-f\ar[r] &BA \ar[r]^-x& \cX}
\end{equation}
with $x:BA \to \cX$ a representable $B$-morphism for some finite abelian group $A$, where $BA$ denotes the classifying stack of $A$ over $\Sp (k)$.
\end{defn}

\begin{rem}\label{rem:tilde}
An abelian contraction of $f: \mls C \to \cX$ defines canonical $k$-points $\tilde x: \Sp(k) \to \cX$, $\Sp(k) \to X$, and a canonical $k$-group  $G_{\tilde x}$ as follows. The canonical $k$-point $\tilde x$ is the composition $\Sp (k)\rightarrow BA\xrightarrow{x} \mc X$ and $G_{\tilde x}$ is the automorphism group  of this composition. The $k$-point $\Sp(k) \to X$ of the coarse space is the image of $\tilde x$ and it is the unique $k$-point through which the induced map of coarse spaces $C \to X$ factors.
\end{rem}
 
\begin{rem}\label{rmk:contraction subgroup}
If $f: \mls C \to \mc X$ has an abelian contraction with associated $k$-point $\tilde x \in \mc X(k)$, then there is a factorization
\begin{equation}\label{eq:factormore}
\mls C \to BA \to BG_{\tilde x} \to \mc X
\end{equation}
of $f$. In particular, the map $\mls C \to BG_{\tilde x}$ corresponds to a $G_{\tilde x}$-torsor $P$ over $\mls C$ and the factorization of $f$ through $BA$ corresponds to a reduction of $P$ to an $A$-torsor. To see the factorization \eqref{eq:factormore}, note that since $BA$ is reduced the map $f$ factors through the maximal reduced closed substack $\mls G\subset \cX \times _{X }\Sp (k)$ where the map $\Sp(k) \to X$ is the image of $\tilde x$. The stack $\mls G \to \Sp(k)$ is a gerbe \cite[\href{https://stacks.math.columbia.edu/tag/06QK}{Tag 06QK}]{stacks-project} and  has a section induced by $\tilde x$. It follows from \cite[\href{https://stacks.math.columbia.edu/tag/06QG}{Tag 06QG}]{stacks-project} that $\mls G$ is canonically isomorphic to $BG_{\tilde x}.$ 
\end{rem}

\begin{rem}
\label{rmk:has contraction}
If $f: \mls C \to \cX$ factors through a closed embedding $\cX' \to \cX$, then $f$ has an abelian contraction if and only if the induced map $\mls C \to \cX'$ does. This is because the only nonempty closed substack of $BA$ is itself, so if $\mls C \to BA \to \cX$ is a factorization, then the pullback of $BA$ to $\cX'$ is $BA$ again. 
\end{rem}

\label{P:2.11} We now prove several properties of abelian contractions. First, existence of abelian contractions does not depend on the base field.

\begin{lem}\label{lem:abelian gerbe}
Let $\mls C$ be an algebraic stack of finite type over a $k$-point of $B$  and let $f: \mls C \to \cX$ be a morphism over $B$.  Let $k \subset k'$ be an extension of fields and let $f_{k'}: \mls C_{k'} \to \cX$ be the basechange to $\Spec(k')$. 
\begin{enumerate}
\item [(i)] If $k$ is algebraically closed then $f_{k'}$ has an abelian contraction  if and only if $f$ does.
\item [(ii)] If $k'$ is the algebraic closure of $k$, then if $f_{k'}$ has an abelian contraction there is a finite subextension $k''$ of $k'/k$ such that $f_{k''}$ has an abelian contraction.
\end{enumerate}
\end{lem}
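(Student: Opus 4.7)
My plan is as follows. The ``only if'' direction of (i) is immediate: given a factorization $\mls C \to BA \to \cX$ of $f$ over $k$, base change to $k'$ gives a factorization of $f_{k'}$, with representability of $BA_{k'} \to \cX$ preserved under base change of stacks.

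For the ``if'' direction of (i), suppose $f_{k'}$ has an abelian contraction with associated $k'$-point $\tilde x'$ of $\cX$ and abelian subgroup $A \subset G_{\tilde x'}$ (in the notation of Remarks \ref{rem:tilde} and \ref{rmk:contraction subgroup}). I would first descend $\tilde x'$ to a $k$-point of $\cX$: the image of $\tilde x'$ in $X_{k'}$ is a closed point through which $C_{k'} \to X_{k'}$ factors, and since $k$ is algebraically closed and $X$ is locally of finite type this descends to a $k$-point $\bar x \in X(k)$ through which $C \to X$ factors. The reduced fiber $\mls G := (\cX \times_X \bar x)_{\mathrm{red}}$ is a gerbe over $\Spec(k)$ banded by a finite tame étale group scheme, and over the algebraically closed field $k$ such a gerbe is neutral (any non-empty étale cover of $\Spec(k)$ admits a section). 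This produces a $k$-point $\tilde x$ with stabilizer $G := G_{\tilde x}$ a constant finite group and an isomorphism $G_{k'} \cong G_{\tilde x'}$ under which $A$ corresponds to an abelian subgroup of $G$, giving a representable morphism $BA \to \cX$ over $k$ whose base change recovers the original $BA_{k'} \to \cX$ up to isomorphism.

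It remains to descend the factorization itself. Since $BG \hookrightarrow \cX$ is a closed immersion (the residual gerbe at the closed point $\bar x$) and $f_{k'}$ factors through $BG_{k'}$, faithfully flat descent implies $f$ also factors through $BG$. The desired factorization of $f$ through $BA$ is then equivalent to a section of the representable finite étale morphism $Y := \mls C \times_{BG} BA \to \mls C$, which corresponds to a reduction of the associated $G$-torsor on $\mls C$ to $A$, and we have such a section over $k'$. Decompose $\mls C = \bigsqcup_\alpha \mls C_\alpha$ and $Y = \bigsqcup_\beta Y_\beta$ into connected components: since $k$ is algebraically closed and both stacks are of finite type, each component is geometrically connected, so the connected components of $Y_{k'}$ and $\mls C_{k'}$ are precisely $(Y_\beta)_{k'}$ and $(\mls C_\alpha)_{k'}$. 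A section of $Y \to \mls C$ amounts to choosing, for each $\alpha$, a component $Y_\beta$ mapping isomorphically to $\mls C_\alpha$; since being an isomorphism is faithfully flat local on the target, this set of choices is the same as over $k'$. The $k'$-section thus descends, completing (i). The main subtle point is the geometric-connectedness statement for algebraic stacks over algebraically closed $k$, which I would verify by passing to an étale cover of $\mls C$ by a scheme of finite type over $k$.

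For (ii), the argument is a standard limit. Writing the algebraic closure $k'$ as the filtered colimit of its finite subextensions $k_i$, an abelian contraction over $k'$ consists of finitely presented data: a finite abelian group $A$, a $k'$-point $\tilde x'$ of $\cX$ equipped with an abelian subgroup of its finite stabilizer, and an $A$-torsor on $\mls C_{k'}$ realizing the factorization. Each ingredient descends to some $k_i$ by standard limit arguments (using finite presentation of $\cX$ over $B$ and of $\mls C$), and after enlarging $k_i =: k''$ if necessary all compatibilities are realized; this yields the desired abelian contraction of $f_{k''}$.
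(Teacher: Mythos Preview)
Your argument for (i) is correct but takes a genuinely different route from the paper. The paper's proof is a one-step spreading-out argument: write $k'$ as the filtered colimit of its finitely generated $k$-subalgebras $R$, use general limit results for stacks (e.g.\ \cite[B.2, B.3]{Rydh2}) to descend the entire factorization $\mls C_{k'}\to BA_{k'}\to\cX$ to some $\mls C_R\to BA_R\to\cX$, and then specialize along a $k$-point $\Spec(k)\to\Spec(R)$, which exists because $k$ is algebraically closed and $R$ is finitely generated. This handles (i) and (ii) by exactly the same mechanism, just replacing ``finitely generated subalgebras'' by ``finite subextensions'' for (ii).

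Your approach instead descends each ingredient by hand: first the $k$-point $\tilde x$ and the constant group $G$, then the abelian subgroup $A\subset G$ (well-defined up to conjugacy), and finally the reduction of the $G$-torsor to $A$ via a connected-components analysis of the finite \'etale cover $\mls C\times_{BG}BA\to\mls C$. This is more explicit and perhaps more illuminating about \emph{what} is being descended, but it costs you the extra verification that connected components of finite-type stacks over an algebraically closed field are geometrically connected---true, but not as immediate as you suggest (passing to an atlas does not preserve $\pi_0$; one should rather argue via $|\mls C_{k'}|\to|\mls C|$ being a homeomorphism up to the field extension, or via $H^0$ of the structure sheaf on a smooth atlas). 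The paper's limit argument sidesteps this entirely. For (ii) your approach and the paper's coincide.
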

\begin{proof}
Note that the ``if'' direction of part (i) is immediate. For the ``only if'' direction, suppose $f_{k'}$ factors through a representable morphism $BA_{k'} \to \cX$ for some abelian group  $A_{k'}$. 
The field $k'$ can be written as the colimit of its finitely generated sub-$k$-algebras, so by \cite[B.2 and B.3]{Rydh2} there is a finitely generated $k$-algebra $R \subset k'$ such that there is an abelian group scheme $A_R$ over $\Sp(R)$ such that $f_R: \mls C_R \to \cX$ factors through a representable morphism $BA_R \to \cX$. Since $k$ is algebraically closed and $R$ is finitely generated there exists a $k$-point $R \to k$. Base changing along this point yields the desired factorization over $k$.

The proof of (ii) is similar, except that we write $k'$ as the union of subextensions of $k$ that are finite over $k$ (as modules). In this case there is no need to find a section of the extension. 
\end{proof}

 We now discuss abelian contractions in a family setting. If $\mls C$ is a tame Deligne-Mumford stack over a base scheme $S$ and $\bar x: \Sp(k) \to C$ is a geometric point of the coarse space, let $\mls G_{\bar x}$ be the maximal reduced closed substack of the fiber product $\mls C\times_C \Sp(k)$.

\begin{lem}\label{L:factorlem}  Let $f:\mls C\rightarrow [U/G]$ be a morphism of tame Deligne-Mumford stacks over $S$, where $G$ is a finite group acting on a scheme $U$, and  let  $\bar y\rightarrow  U$  be a geometric point whose image in $U$ is fixed by $G$. Let $\bar x \to C$ be a geometric point mapping to the image of $\bar y$ in the coarse space of $[U/G]$. Finally, let $A\subset G$ be a subgroup.

(i) If $g_{\bar x}:\mls G_{\bar x}\rightarrow [U/A]$ is a factorization of $f_{\bar x}:\mls G_{\bar x}\rightarrow [U/G]$ through the quotient map $[U/A]\rightarrow [U/G]$ then $g_{\bar x}$ lifts uniquely to a factorization morphism $g_{(\bar x)}:\mls C_{(\bar x)}\rightarrow [U/A]$ of the restriction of $f$ to $\mls C_{(\bar x)}:= \mls C\times _C\Sp (\mls O_{C, \bar x})$.

(ii) If $g_{\bar x}:\mls G_{\bar x}\rightarrow [U/A]$ is a factorization of $f_{\bar x}:\mls G_{\bar x}\rightarrow [U/G]$ through the quotient map $[U/A]\rightarrow [U/G]$ then there exists an \'etale neighborhood $V\rightarrow C$ of $\bar x$ such that the base change $f_V:\mls C_V\rightarrow \mc X$ factors through $[U/A]$. 
\end{lem}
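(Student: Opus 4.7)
The plan is to reinterpret the factorization data as a section of a finite étale cover. The map $f: \mls C \to [U/G]$ corresponds to a $G$-torsor $Q \to \mls C$ equipped with a $G$-equivariant map $Q \to U$. Since the morphism $[U/A] \to [U/G]$ sends an $A$-torsor $P$ (with equivariant map to $U$) to the induced $G$-torsor $P \times^A G$, factorizations of $f$ through $[U/A]$ correspond bijectively to reductions of $Q$ to $A$-torsors, i.e., to sections of the finite étale cover $Q/A \to \mls C$ of degree $[G:A]$. Thus both parts reduce to a statement about the finite étale cover $F := Q/A$: (i) every section of $F|_{\mls G_{\bar x}}$ lifts uniquely to a section of $F|_{\mls C_{(\bar x)}}$; (ii) any such section extends to a section of $F|_{\mls C_V}$ for some étale neighborhood $V \to C$ of $\bar x$.

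For (i), I would invoke the local structure theorem for tame Deligne--Mumford stacks to obtain a presentation $\mls C_{(\bar x)} \cong [\Spec(R)/H]$, where $H$ is the stabilizer at $\bar x$ (a finite group of order invertible in the residue field) and $R$ is a strictly henselian local $\mls O_{C, \bar x}$-algebra with residue field $k$, on which $H$ acts fixing the closed point. The reduced closed fiber is then $\mls G_{\bar x} = [\Spec((R/\fm_{\mls O_{C,\bar x}}R)_{\mathrm{red}})/H] = BH$. The cover $F$ pulls back to an $H$-equivariant finite étale cover $F_R \to \Spec(R)$; since $R$ is strictly henselian local, $F_R = \coprod_{s \in S} \Spec(R)$ for some finite $H$-set $S$. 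Sections of $F \to [\Spec(R)/H]$ correspond to $H$-equivariant sections of $F_R \to \Spec(R)$, and hence to the fixed-point set $S^H$. Restricting to the closed fiber presents $F|_{BH}$ as $[\coprod_{s \in S}\Spec(k)/H]$, whose sections are again described by the same $S^H$; the restriction functor is the identity on this set, which proves (i).

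For (ii), I would use a standard limit argument. The strict henselization $\Spec(\mls O_{C, \bar x})$ is the cofiltered limit of affine étale neighborhoods $V \to C$ of $\bar x$, so $\mls C_{(\bar x)} = \lim \mls C_V$. The section from (i) corresponds to a clopen substack of $F|_{\mls C_{(\bar x)}}$ that maps isomorphically to $\mls C_{(\bar x)}$, determined by an idempotent in global sections of the finitely presented stack $F|_{\mls C_{(\bar x)}}$. This idempotent descends to some $\mls C_V$, producing a clopen substack of $F|_{\mls C_V}$; shrinking $V$ further so that the projection of this substack to $\mls C_V$ becomes an isomorphism (using openness of the isomorphism locus for morphisms of finite presentation) yields the desired factorization over $V$. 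The main technical hurdle will be in part (i), namely applying the local structure theorem carefully enough to obtain the explicit presentation $[\Spec(R)/H]$; once this is in hand, the triviality of finite étale covers over a strictly henselian local scheme reduces the comparison of sections to the tautological bijection $S^H = S^H$.
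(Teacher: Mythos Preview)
Your argument is correct and rests on the same core observation as the paper's: the finite \'etale morphism $[U/A]\to [U/G]$ pulls back along $f$ to a finite \'etale cover of $\mls C_{(\bar x)}$ (your $F=Q/A$ is exactly the paper's $Z=\mls C_{(\bar x)}\times_{[U/G]}[U/A]$), and factorizations are sections of this cover; part (ii) then follows from (i) by a standard limit argument in both treatments.

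Where you differ is in the proof of (i). The paper argues abstractly: it takes the connected component $Z'\subset Z$ containing the image of $g_{\bar x}$ and shows $Z'\to\mls C_{(\bar x)}$ is an isomorphism by invoking the equivalence between finite \'etale algebras over $\mls C_{(\bar x)}$ and over $\mls G_{\bar x}$, citing Artin's argument in \cite[Theorem 3.1]{MR0268188}. You instead unpack this equivalence by hand: the local structure theorem gives $\mls C_{(\bar x)}\cong[\Spec(R)/H]$ with $R$ strictly henselian local, so any finite \'etale cover is a disjoint union of copies of $\Spec(R)$ carrying an $H$-action on the index set $S$, and sections over both $\mls C_{(\bar x)}$ and $\mls G_{\bar x}=BH$ are $S^H$. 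Your route is more explicit and self-contained (no appeal to the cited equivalence), at the cost of invoking the local presentation and checking that $R$ is indeed strictly henselian local---a point you flag as the main technical hurdle, and which is standard but worth a sentence. The paper's route is shorter once one accepts Artin's theorem as a black box and applies more directly in contexts where a nice local presentation is less readily available.
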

\begin{proof}
    Statement (ii) follows from (i) and a standard limit argument.

To see statement (i) note that the morphism $[U/A]\rightarrow [U/G]$ is finite \'etale.
Let $Z\rightarrow \mls C_{(\bar x)}$ denote the fiber product $\mls C_{(\bar x)}\times _{[U/G]}[U/A],$ and let $Z'\subset Z$ be the connected substack containing the image of the map $\mls G_{\bar x}\rightarrow Z$ defined by $g_{\bar x}$.  It then suffices to verify that $Z'\rightarrow \mls C_{(\bar x)}$ is an isomorphism.  This follows from noting that  by the same argument as in \cite[Theorem 3.1]{MR0268188} the functor from finite \'etale algebras over $\mls C_{(\bar x)}$ to finite \'etale algebras over $\mls G_{\bar x}$ is an equivalence of categories.
\end{proof}

Let $\mls C \to \mc X$ be a morphism of tame Deligne-Mumford stacks over $B$ such that the induced map of coarse spaces $C \to X$ factors as $C \to D \to X$ for some algebraic space $D$. If $\bar x: \Sp(k) \to D$ is a geometric point let $\mls C_{\bar x}$ denote the maximal reduced closed substack of $\mls C \times_D \Sp(k)$.

\begin{cor}\label{C:openprestable}
Let $\bar x \to D$ be a geometric point such that $\mls C_{\bar x} \to \mc X$ admits an abelian contraction. Then there exists an open neighborhood $V \subset D$ of the image of $\bar x$ such that for each geometric point $\bar y \to V$ the induced map $\mls C_{\bar y} \to \mc X$ has an abelian contraction.
\end{cor}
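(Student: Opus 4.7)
The conclusion is \'etale local on $D$ and on $X$, so I would first invoke the local structure theorem for tame Deligne--Mumford stacks to reduce to the case $\mc X\cong [U/G]$, where $G$ is a finite tame \'etale group scheme acting on a scheme $U$ and the image of $\bar x$ in $X=U/G$ comes from a $G$-fixed geometric point $\bar u\in U$. By Remark \ref{rmk:contraction subgroup}, the hypothesized abelian contraction of $f_{\bar x}\colon \mls C_{\bar x}\to [U/G]$ identifies an abelian subgroup $A\subset G$ together with a factorization $\mls C_{\bar x}\to BA\to [U/G]$, where the second arrow is the representable morphism determined by $A\hookrightarrow G$ and $\bar u$. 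Since $\bar u$ is $G$-fixed, this promotes to a factorization $\mls C_{\bar x}\to [U/A]\to [U/G]$ through the representable finite \'etale cover $[U/A]\to [U/G]$.

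I would then extend this factorization to a Zariski open neighborhood of the coarse fiber $C_{\bar x}$ in $C$ using Lemma \ref{L:factorlem}. For each closed geometric point $\bar x'$ of $C_{\bar x}$, the inclusion $\mls G_{\bar x'}\subset \mls C_{\bar x}$ yields a factorization $\mls G_{\bar x'}\to [U/A]$ of $f_{\bar x'}$, so by \ref{L:factorlem}(ii) this lifts to a factorization of $f$ over an \'etale neighborhood of $\bar x'$ in $C$. The uniqueness of the lift in \ref{L:factorlem}(i) lets me glue these along overlaps by \'etale descent, producing a factorization of $f$ through $[U/A]$ over a Zariski open $U_C\subset C$ containing $C_{\bar x}$.

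The main obstacle is transferring openness from $C$ to $D$. In the relevant family setting, $\mls C\to D$ is proper (coming from a proper family of twisted curves), so $C\to D$ is proper; then the image in $D$ of the closed set $C\setminus U_C$ is closed and disjoint from the image of $\bar x$, and its complement $V\subset D$ is an open neighborhood of $\bar x$ whose preimage in $C$ is contained in $U_C$. Consequently, for every geometric point $\bar y\to V$ the induced map $\mls C_{\bar y}\to [U/G]$ factors through $[U/A]$.

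Finally, I would verify that each fibral factorization yields an abelian contraction. Assuming $\mls C_{\bar y}$ is geometrically connected (as holds for fibers of families of curves), its image in the coarse space $U/A$ is a single point $\bar z$, so $\mls C_{\bar y}\to [U/A]$ factors through the residue gerbe at $\bar z$, which is $BA'$ for $A':=\mathrm{Stab}_A(u')$ with $u'\in U$ any lift of $\bar z$. The composite $BA'\to [U/A]\to [U/G]$ is representable because $A'\hookrightarrow G$ and $u'$ is $A'$-fixed, so $\mls C_{\bar y}\to BA'\to [U/G]$ is the desired abelian contraction.
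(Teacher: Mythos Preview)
Your reduction to $\mc X\cong [U/G]$ with a $G$-fixed point over $\bar x$, the identification of an abelian $A\subset G$ with a factorization $\mls C_{\bar x}\to [U/A]\to [U/G]$, and your final verification that a factorization $\mls C_{\bar y}\to [U/A]$ yields an abelian contraction through the residual $BA'$ all match the paper.

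The gap is in your gluing step. Applying \ref{L:factorlem}(ii) at each $\bar x'\in C_{\bar x}$ produces sections $\sigma_{\bar x'}$ of the finite \'etale cover $Z:=\mls C\times_{[U/G]}[U/A]\to \mls C$ over \'etale neighborhoods $V_{\bar x'}$, but the uniqueness in \ref{L:factorlem}(i) only says that a lift to $\mls C_{(\bar x')}$ is determined by its value on the single gerbe $\mls G_{\bar x'}$. It does \emph{not} say that two sections of $Z$ over an overlap $V_{\bar x'}\times_C V_{\bar x''}$ must coincide: their equalizer is clopen, but you have not shown it contains any particular point (the overlap need not contain $\bar x'$ or $\bar x''$, and even where it meets $C_{\bar x}$ you would first need an additional argument that each $\sigma_{\bar x'}$ restricts to the original section on $\mls C_{\bar x}$). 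The gluing can be repaired with considerably more bookkeeping, but it does not follow from uniqueness alone.

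The paper avoids this by invoking \ref{L:factorlem} directly at the level of $D$ rather than pointwise on $C$: since $\mls C\to D$ is proper in the intended applications, the same argument as in the proof of \ref{L:factorlem} (the equivalence of finite \'etale categories from \cite[3.1]{MR0268188}, now applied to the proper morphism $\mls C\times_D\Spec(\mls O_{D,\bar x})\to \Spec(\mls O_{D,\bar x})$) shows that the section over the closed fiber $\mls C_{\bar x}$ extends uniquely over the strict henselization of $D$ at $\bar x$, and then a limit argument spreads it out to an \'etale neighborhood in $D$. This single step replaces your three (pointwise lift, glue, descend via properness) and bypasses the gluing problem entirely.
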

\begin{proof}
We may replace $B$ by the coarse space $X$ and $\mc X$ by its pullback $\mc X \times_X D$, so $X=D=B$. Moreover we may replace $D$ by any \'etale neighborhood of $\bar x$, so we can assume $\mc X = [U/G]$ with $G$ a finite group acting on $U$ fixing a point mapping to the image of $\bar x$ in the coarse space of $[U/G]$. Since $\mls C_{\bar x} \to [U/G]$ has an abelian contraction  there is an abelian subgroup  $A \subset G$ such that $\mls C_{\bar x} \to [U/G]$ factors through $BA$.  Hence $\mls C_{\bar x} \to [U/G]$ factors through $[U/A] \to [U/G].$

By \ref{L:factorlem} we may, after replacing $D$ by an \'etale neighborhood, assume $\mls C \to [U/G]$ factors through $[U/A] \to [U/G]$.
If $\bar y:\Sp(K) \to D$ is any geometric point, the map $\mls C_{\bar y} \to [U/G]$ factors through the maximal reduced closed substack of $[U/A] \times_D \Sp(K)$. If $\tilde y: \Sp(K) \to U$ lifts $\bar y$ 
then the maximal reduced substack of $[U/A] \times_D \bar y$ is isomorphic to $B\widetilde{A}$, where $\widetilde{A}$ is the automorphism group scheme of $\tilde y$ and a subgroup scheme of the abelian group  $A$, viewed as a group scheme over $K$. Hence we have an abelian contraction of $\mls C_{\bar y} \to [U/G]$.
\end{proof}

We now apply the discussion to contractions of curves. 
Let $\bC$ be an $n$-marked  generalized log twisted curve over a $B$-scheme $S$ and let $\mls C/S$ be the associated stack (see \cite[\S 3]{OWI}).  Let $q:C\rightarrow D$ be a contraction of $n$-marked prestable curves over $S$ (meaning that $\mls O_D \to Rq_*\mls O_C$ is an isomorphism; see \cite[4.2]{OWI}). Assume $\mls C \to \mc X$ is a morphism over $B$ such that the induced map of coarse spaces $C \to X$ factors as $C \xrightarrow{q} D \to X$. As before if $\bar x: \Sp(k) \to D$ is a geometric point then $\mls C_{\bar x}$ is the maximal reduced closed substack of $\mls C \times_D \Sp(k).$

\begin{lem}\label{L:2.14b} Let $\bar s\rightarrow S$ be a geometric point such that for every geometric point $\bar x\rightarrow D_{\bar s}$ in the smooth locus the restriction of $f$ to $\mls C_{\bar x}$ admits an abelian contraction.  Then there exists a Zariski neighborhood $U\rightarrow S$ of $\bar s$ 
such that for every geometric point $\bar x\rightarrow D_U$ of the smooth locus the restriction of $f$ to $\mls C_{\bar x}$ admits an abelian contraction. 
\end{lem}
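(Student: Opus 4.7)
By Corollary~\ref{C:openprestable}, the locus $W \subset D$ of geometric points $\bar x$ at which $\mls C_{\bar x} \to \mc X$ admits an abelian contraction is open, with closed complement $Z := D \setminus W$. The hypothesis forces $Z \cap D_{\bar s}^{\mathrm{sm}} = \emptyset$, so $Z \cap D_{\bar s}$ is contained in the finite set of nodes of $D_{\bar s}$. A first observation is that $Z$ is contained in the \emph{contraction locus} $\Gamma \subset D$, the closed subset of points $\bar x$ whose preimage $q^{-1}(\bar x) \subset C$ is positive-dimensional: indeed, if $\bar x \notin \Gamma$, then $\mls C_{\bar x}$ is the residual gerbe $BA$ at the unique preimage in $C$ (with $A$ its abelian stabilizer coming from the generalized log twisted curve structure), and the induced map $BA \to \mc X$ automatically admits an abelian contraction by factoring through the image of $A$ in the automorphism group at its image point in $\mc X$. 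Since $\Gamma \to S$ is proper of relative dimension zero it is finite, and hence $Z \to S$ is finite as well.

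My plan is to reduce to a local claim at each node $y \in Z \cap D_{\bar s}$: that there exists a Zariski open neighborhood $V_y \subset D$ of $y$ such that $V_y \cap Z \cap D^{\mathrm{sm}} = \emptyset$. Granting this, set $Z' := Z \setminus \bigcup_y V_y$. Then $Z'$ is closed in $D$ and disjoint from $D_{\bar s}$; by properness of $D \to S$, its image $\pi(Z')$ is a closed subset of $S$ not containing $\bar s$, so $U := S \setminus \pi(Z')$ is the desired Zariski neighborhood. Indeed, a geometric point of $D_U^{\mathrm{sm}}$ lying in $Z$ would have to lie in some $V_y$, contradicting the local claim.

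The main obstacle is establishing the local claim. After Henselization at $\bar s$, the finite set $Z$ decomposes as a disjoint union of local pieces $Z_y$ indexed by the points of $Z \cap D_{\bar s}$, and it suffices to show each such $Z_y$ is contained in the non-smooth locus $\Sigma \subset D$. Using the étale-local structure of $D \to S$ at $y$ of the form $\Spec(R[u,v]/(uv-t)) \to \Spec(R)$, together with an étale-local presentation $\mc X = [U/G]$ near the image of $y$ in $X$, I would apply Lemma~\ref{L:factorlem}(ii) to the abelian contractions on smooth points of each of the two branches of $D_{\bar s}$ through $y$ (supplied by hypothesis) to obtain factorizations of $\mls C$, on suitable étale open subsets, through $[U/A_1]$ and $[U/A_2]$ for abelian subgroups $A_i \subset G$. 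The crux is to show that a putative geometric point $\bar y \in Z_y \setminus \Sigma$ would force these branch factorizations to extend compatibly across the tree of contracted curves lying over $y$ into a single factorization through $[U/A]$ for an abelian $A \subset G$ on a neighborhood of $\bar y$, contradicting $\bar y \in Z$. This compatibility uses the abelian stabilizer structure of the generalized log twisted curve at $y$ and is the technical heart of the argument.
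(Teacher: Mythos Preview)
Your overall strategy---define the closed set $Z = D\setminus W$ where $W$ is the abelian-contraction locus, note $Z\subset\Gamma$ (the contraction locus of $q$) and that $\Gamma\to S$ is finite, then reduce to a local claim at each node of $D_{\bar s}$ lying in $Z$---is reasonable up through the reduction. The observation $Z\subset\Gamma$ is correct and is morally the same as the paper's final sentence (``fibers of $\mls C\to D$ are representable at smooth unmarked points of $D$'').

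The gap is the ``crux'' paragraph. The argument you sketch does not work: the abelian contractions you invoke at smooth points of the two branches of $D_{\bar s}$ through the node $y$ carry essentially no information, since at a generic smooth point $\bar x$ near $y$ the fiber $\mls C_{\bar x}$ is just a point or an abelian gerbe (you are not in $\Gamma$ there), and the factorization through $[U/A_i]$ you extract via \ref{L:factorlem}(ii) says nothing about the contracted tree over $y$ itself. What you actually need is a structural fact about contractions of families of nodal curves---roughly, that a tree contracted to a node cannot generize to a tree contracted to a smooth point---and this is neither stated nor proved. Without it, you cannot rule out that $Z$ near $y$ meets $D^{\mathrm{sm}}$ in nearby fibers.

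The paper sidesteps this difficulty entirely with a simple trick: after passing to an \'etale neighborhood of $\bar s$, add auxiliary markings to $\bC$ (with trivial log data, so $\mls C$ is unchanged) so that $q$ becomes an isomorphism away from the nodes of $D$ and the images $t_j$ of the markings. Now the smooth part of $\Gamma$ is contained in finitely many \emph{sections} of $D\to S$, and one simply applies \ref{C:openprestable} at each $t_j(\bar s)$ to get open $V_j\subset D$, takes $U_j$ to be the image of $V_j$ in $S$ (shrunk so that $V_j$ contains $t_j|_{U_j}$), and sets $U=\bigcap_j U_j$. At smooth unmarked points of $D_U$ the fiber $\mls C_{\bar x}$ is a point, so the abelian contraction condition is vacuous there. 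This avoids any analysis near the nodes.
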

\begin{proof}
We may replace $S$ with an \'etale neighborhood of $\bar s$. By doing so and adding markings to $\mathbf{C}$ we can assume $q$ is an isomorphism away from nodes of $D$ and the induced markings $t_1, \ldots, t_n: S \to D$ of $D$, where $t_j = q\circ s_j$. By \ref{C:openprestable}, for each marked point of $D_{\bar s}$ there is a neighborhood $V_j \subset D$ of the $j$th marked point such that for each geometric point $\bar x \to V_j$, the induced map $\mls C_{\bar x} \to \mc X$ has an abelian contraction. Let $U_j$ denote the (open) image of $V_j$ in $S$. By further shrinking $U_j$ we can assume $V_j$ contains the image of $t_j|_{U_j}$. Let $U$ be the intersection of the $U_j$. Since fibers of $\mls C \to D$ are representable at smooth unmarked points of $D$, the subscheme $U$ has the required properties.
\end{proof}

\subsection{Torsors on stacky $\mathbf{P}^1$}\label{S:section8}

Our discussion of contracting maps in Section \ref{S:mapcontract} will require properties of torsors on stacky rational curves.
Throughout this subsection we work over an algebraically closed field $k$.

Let $(\mathbf{P}^1, \{s_i\}_{i=1}^n, \mls N)$ be a generalized log twisted curve with underlying coarse curve $\mathbf{P}^1 = \mathbf{P}^1_k$ and assume that $s_n$ is distinct from the other points (but $s_1, \dots, s_{n-1}$ can coincide).  Let $\mls P\rightarrow \mathbf{P}^1$ be the associated stack.  Changing coordinates on $\mathbf{P}^1$ if necessary, we may assume that $s_n$ maps to $\infty \in \mathbf{P}^1$. So over $\infty $ the stack $\mls P$ is given by the $m$-th root construction for some integer $m>0$.  

Using \cite[2.22]{OWI} we see that there exists an admissible submonoid $N\subset \mathbf{Q}^{n-1}_{\geq 0}$ defining $\mls N|_{\mathbf{A}^1}$.  We let $\mls N'$ be the admissible subsheaf of $\Q^{n-1}_{\geq 0}$ on $\mathbf{P}^1$ induced by $N$ and we let $\mls P'$ be the stack associated to $(\mathbf{P}^1, \{s_i\}_{i=1}^{n-1}, \mls N')$, so there is a morphism $\pi: \mls P \to \mls P'$ which is an isomorphism away from $\infty$, and in a neighborhood of $\infty$ is given by the $m$th root construction at that point. 

We set $X := N^{gp}/\mathbf{Z}^{n-1}$ and note that summation $\mathbf{Q}^{n-1} \to \mathbf{Q}$ induces a homomorphism
\begin{equation}\label{E:chidef}
\chi: X \to \mathbf{Q}/\mathbf{Z}.
\end{equation}
We set $X_m = \chi^{-1}(\frac{1}{m}\mathbf{Z}/\mathbf{Z})$.

\subsection{Torsion line bundles on $\mls P$}

If $\mls X$ is a stack, let $\mls Pic(\mls X)$ denote its Picard category and $\mls Pic(\mls X)_{\tors}$ the full subcategory on torsion line bundles. Note that the of isomorphism classes of  $\mls Pic(BD(X))$ is equal to $X$ and that $\mls Pic(BD(X_m))$  is a full subcategory of $\mls Pic(BD(X))$. Moreover, by \cite[3.8]{OWI} the stack $BD(X)$ is the product of the residue gerbes at the distinct stacky points of $\mls P$, and hence there is a natural morphism $i: BD(X) \to \mls P$ (this is independent of $m$). 

\begin{lem}\label{L:pic}
The restriction functor $i^*: \mls Pic(\mls P)_{\tors} \to \mls Pic(BD(X))$ is fully faithful with essential image $\mls Pic(BD(X_m))$.
\end{lem}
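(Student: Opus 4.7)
The plan is to analyze $\mls Pic(\mls P)_{\tors}$ via the root-stack map $\pi\colon \mls P \to \mls P'$ and the degree map $\Pic(\mls P) \to \mathbf{Q}$, whose kernel is the torsion subgroup since $\mls P$ is a proper stacky rational curve. Let $\mls M$ denote the tautological $m$-th root line bundle on $\mls P$ at $\infty$, so $\deg \mls M = 1/m$ and $i^*\mls M$ is trivial since $\mls M$ is supported at $\infty$ while $BD(X)$ lies over $\mathbf{A}^1$.

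First I would verify that $i^*$ lands in $\mls Pic(BD(X_m))$. Any $L \in \Pic(\mls P)$ can be written as $L \cong \pi^*L' \otimes \mls M^{\otimes k}$ with $L' \in \Pic(\mls P')$ and $0 \le k < m$, by the standard description of Picard groups of root stacks. From the structure of $\Pic(\mls P')$ coming from \cite[3.8]{OWI}, a line bundle $L'$ with residue $\xi \in X$ has degree in the coset $\chi(\xi) + \mathbf{Z}$; combined with $\deg \mls M = 1/m$, this gives $\deg L \in \chi(\xi) + \mathbf{Z} + \frac{1}{m}\mathbf{Z}$. Since $i^*L = i^*\pi^*L' = \xi$, the torsion condition $\deg L = 0$ forces $\chi(\xi) \in \frac{1}{m}\mathbf{Z}/\mathbf{Z}$, i.e.\ $\xi \in X_m$.

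For essential surjectivity, given $\xi \in X_m$ I would explicitly produce a torsion lift. The residue map $\Pic(\mls P') \to X$ is surjective (from the same extension structure), so pick any $L'_\xi$ with residue $\xi$; its degree is $\chi(\xi) + n$ for some $n \in \mathbf{Z}$, and since $\chi(\xi) \in \frac{1}{m}\mathbf{Z}/\mathbf{Z}$ one can solve $\deg L'_\xi + a - k/m = 0$ for integers $a, k$. The line bundle $\pi^*(L'_\xi \otimes \mls O_{\mathbf{P}^1}(a)) \otimes \mls M^{\otimes(-k)}$ is then torsion with residue $\xi$ in $BD(X)$.

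For full faithfulness, I would reduce to showing that $\Gamma(\mls P, L) \to \Gamma(BD(X), i^*L)$ is a bijection for each torsion $L$. If $L$ is trivial, both sides are canonically $k$ and the map is the identity. If $L$ is nontrivial and torsion, then $\Gamma(\mls P, L) = 0$: a nonzero section $s$, together with a trivialization of some power $L^{\otimes r}$, would give $s^{\otimes r} \in \Gamma(\mls P, \mls O_{\mls P}) = k$, nonzero hence a unit, forcing $s$ to be nowhere vanishing and $L$ to be trivial. Moreover, $i^*$ is injective on isomorphism classes of torsion line bundles: if $i^*L$ is trivial then in the decomposition $L \cong \pi^*L' \otimes \mls M^{\otimes k}$ the factor $L'$ has trivial residue and hence comes from $\Pic(\mathbf{P}^1)$, and the conditions $\deg L = 0$ and $0 \le k < m$ then force $L \cong \mls O_{\mls P}$. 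Thus nontrivial $L$ has nontrivial $i^*L$, and $\Gamma(BD(X), i^*L) = 0$ by the usual vanishing for nontrivial characters on a classifying stack of a diagonalizable group.

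The main obstacle I anticipate is extracting from \cite[3.8]{OWI} the precise description of $\Pic(\mls P')$ as an extension $0 \to \Pic(\mathbf{P}^1) \to \Pic(\mls P') \to X \to 0$ together with the compatibility that a class of residue $\xi$ has degree congruent to $\chi(\xi)$ modulo $\mathbf{Z}$. Once this input is in place, the degree/residue bookkeeping carried out above is routine.
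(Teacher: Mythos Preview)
Your proposal is correct and follows essentially the same approach as the paper. The paper organizes the degree computation into a separate lemma (Lemma~\ref{L:chi}, relating $\chi(i^*\mc L)$ to $-\deg_{\mls P'}\pi_*\mc L$) and phrases the argument via $\pi_*$ rather than via your root-stack decomposition $L\cong\pi^*L'\otimes\mls M^{\otimes k}$, but the underlying ingredients---the extension structure of $\Pic(\mls P')$ by $\Pic(\mathbf{P}^1)$ with cokernel $X$, the degree bookkeeping, the explicit torsion lift using the tautological root bundle at $\infty$, and the vanishing $\Gamma(\mls P,L)=0$ for nontrivial torsion $L$---are identical. Your anticipated obstacle (the degree/residue compatibility for $\Pic(\mls P')$) is exactly what the paper establishes in the proof of Lemma~\ref{L:chi}.
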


Before proving the lemma, we investigate the map $\chi$ used to define $X_m$. We begin with the following observation.

\begin{lem}\label{lem:pic-for-diag} Let $\pi :\mls X\to X$ be a morphism of tame Deligne-Mumford stacks that \'etale locally on $X$ is a coarse moduli morphism.  

(i) If $\mc L$ is a line bundle on $\mls X$ such that the character at every stacky point of $\mls X$ factors through a character of the isotropy group of the corresponding point of $X$, then $\pi _*\mc L$ is a line bundle on $X$ and the canonical map $\pi^*\pi_* \mc L \to \mc L$ is an isomorphism.

(ii) If $\mc M$ is a line bundle on $X$ then the adjunction map $\mc M\rightarrow \pi _*\pi ^*\mc M$ is an isomorphism.
\end{lem}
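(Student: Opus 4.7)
The plan is to reduce both parts to an étale-local situation where $\mls X\to X$ takes the form $[V/G]\to V/G$, then to deduce (ii) from the projection formula and (i) from an averaging argument.

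First I would reduce to the local case. Since $\pi_*$ commutes with flat base change for tame morphisms and both conclusions are étale-local on $X$, I may replace $X$ by an étale cover by a scheme. The hypothesis on $\pi$ then says that after this base change $\pi$ is the coarse moduli morphism of a tame Deligne-Mumford stack, and by the standard local structure theorem for tame Deligne-Mumford stacks I may further arrange $\mls X=[V/G]$ with $V$ an affine scheme, $G$ a finite group of order invertible on $V$, and $X=V/G$. In this picture a stacky point is represented by a point $v\in V$ with non-trivial stabilizer $G_v$, and the isotropy of the corresponding point of $X$ is trivial; the hypothesis of (i) therefore simplifies to the statement that the character of $G_v$ on the fiber $\mc L_v$ is trivial for every $v$.

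For (ii), the hypothesis on $\pi$ together with tameness yields $\pi_*\mathcal{O}_{\mls X}=\mathcal{O}_X$. By the projection formula (valid since $\pi_*$ is exact on quasi-coherent sheaves in the tame case) one obtains
\[
\pi_*\pi^*\mc M\;\cong\;\mc M\otimes_{\mathcal{O}_X}\pi_*\mathcal{O}_{\mls X}\;\cong\;\mc M,
\]
and a direct check shows that the adjunction unit realizes this isomorphism.

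For (i), I would use averaging. In the local model, let $s$ be any trivializing section of $\mc L$ on a $G_v$-invariant étale neighborhood of a stacky point $v$; because $|G_v|$ is invertible, the averaged section
\[
\tilde s\;:=\;|G_v|^{-1}\sum_{g\in G_v}g\cdot s
\]
is $G_v$-invariant. Triviality of the character of $G_v$ on $\mc L_v$ gives $g\cdot s(v)=s(v)$ for all $g\in G_v$, hence $\tilde s(v)=s(v)\neq 0$; after shrinking, $\tilde s$ is a $G_v$-equivariant trivialization of $\mc L$. Such a trivialization descends to a trivialization of $\pi_*\mc L$ near the image of $v$ in $X$; gluing over a cover by such neighborhoods shows that $\pi_*\mc L$ is a line bundle on $X$, and the adjunction map $\pi^*\pi_*\mc L\to\mc L$ is an isomorphism since it is locally given by $\pi^*\tilde s\mapsto s$.

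The main obstacle is the correct interpretation of the hypothesis of (i) when $X$ has nontrivial isotropy: when one étale-localizes $X$ to a scheme, the stabilizer of a point $\tilde x$ of $\mls X$ restricts to the kernel of $G_{\tilde x}\to\Gamma_{\pi(\tilde x)}$, and one must verify that the hypothesis forces the character of $\mc L$ on this kernel to be trivial. Once this bookkeeping is settled, the averaging step is routine tame-stack theory.
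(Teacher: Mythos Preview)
Your proposal is correct and follows the same reduction as the paper: both pass \'etale-locally on $X$ to the case where $X$ is a scheme and $\pi$ is a coarse moduli morphism. The paper then simply invokes \cite[6.1]{integral}, whereas you supply a self-contained averaging argument; your identification of the ``main obstacle'' (that after base change the relevant stabilizer becomes $\ker(G_{\tilde x}\to\Gamma_{\pi(\tilde x)})$, on which the hypothesis forces the character to be trivial) is exactly the bookkeeping needed, and once that is in place the averaging works as you describe---just make sure that when you average over $G_v$ you have first localized so that the quotient presentation has group $G_v$ (equivalently, that $W/G_v$ is an \'etale neighborhood of the image point in $X$).
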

\begin{proof}
    The assertion is \'etale local on $X$ so it suffices to consider the case when $X$ is a scheme and $\pi $ is a coarse moduli space morphism.  In this case the result follows from \cite[6.1]{integral}. 
\end{proof}

If $\mc L$ is a line bundle on $\mls P'$, we define its \textit{degree} as follows: let $M$ be an integer such that $\mc L^{\otimes M}$ descends to a line bundle on $\mathbf{P}^1$, and set
$\deg_{\mls P'} \mc L = (1/M)\deg_{\mathbf{P}^1} \mc L^{\otimes M}.$

Note that for the morphism $\pi:\mls P \to \mls P'$, pushforward sends line bundles to line bundles: this is true away from $\infty$ because $\pi$ is an isomorphism there, and it holds in a neighborhood of $\infty$ because here $\pi$ is given by an $m$th root construction (see \cite[3.1.1, 3.1.2]{cadman} for a description of the behavior of $\pi_*$ for $m$th root constructions).

\begin{lem}\label{L:chi}
The composition
\[
\mls Pic(\mls P)_{\tors} \to \mls Pic (BD(X)) \to X \xrightarrow{\chi} \mathbf{Q}/\mathbf{Z}
\]
sends a line bundle $\mc L$ on $\mls P$ to the class of $(-1)\cdot\deg_{\mls P'} \pi_*\mc L.$
\end{lem}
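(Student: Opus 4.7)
My plan is to decompose $\mc L$ using the root-stack structure of $\pi$ at $\infty$. Let $v$ be a local uniformizer of $\mls P$ at $\infty$ with $v^m = u$ for $u$ a local uniformizer of $\mls P'$ at $\infty$, and set $\sigma := \mc O_{\mls P}(\{v=0\})$. A direct $\mu_m$-weight computation shows that $\sigma$ has character $-1/m$ at the stacky point $\infty$ and that $\sigma^m \cong \pi^*\mc O_{\mls P'}(\infty)$. Given a torsion $\mc L$ with character $c_\infty/m$ at $\infty$ (where $0 \le c_\infty < m$), the twist $\mc L\otimes\sigma^{c_\infty}$ has trivial character at $\infty$, so by the root-stack universal property it equals $\pi^*\mc M$ for a unique line bundle $\mc M$ on $\mls P'$. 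Thus $\mc L = \sigma^{-c_\infty}\otimes \pi^*\mc M$, and the projection formula yields $\pi_*\mc L = \mc M \otimes \pi_*(\sigma^{-c_\infty})$.

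Next I would show that $\pi_*(\sigma^{-c_\infty})$ descends to $\mathbf{P}^1$ and hence has integer degree on $\mls P'$. Since $\sigma$ is trivial away from $\infty$, this is clear there; near $\infty$, a local $\mu_m$-invariants calculation on the chart $[\Spec k[v]/\mu_m]\to \Spec k[u]$ identifies $\pi_*(\sigma^{-c_\infty})\cong \mc O_{\mls P'}(-\infty)$ when $c_\infty > 0$ and $\mc O_{\mls P'}$ when $c_\infty = 0$. In either case the characters at every stacky point of $\mls P'$ are trivial, so by \ref{lem:pic-for-diag} the bundle descends. Consequently,
\[
\deg_{\mls P'}\pi_*\mc L \equiv \deg_{\mls P'}\mc M \pmod{\mathbf{Z}}.
\]

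To compute $\deg_{\mls P'}\mc M$, I use that $\mc L$ is torsion: raising to the $m$-th power, $\mc L^m = \pi^*(\mc O_{\mls P'}(-c_\infty\cdot\infty)\otimes \mc M^m)$ is torsion, and since $\pi^*$ is injective on Picard groups (by the isomorphism $\mc N \cong \pi_*\pi^*\mc N$ in \ref{lem:pic-for-diag}(ii)), the bundle $\mc O_{\mls P'}(-c_\infty\cdot\infty)\otimes \mc M^m$ is itself torsion on $\mls P'$ and thus of degree $0$. This gives $\deg_{\mls P'}\mc M = c_\infty/m$. Moreover, since $\pi$ is an isomorphism away from $\infty$ and $\sigma$ is trivial there, the characters $a_i$ of $\mc M$ at $s_i$ coincide with those of $\mc L$ for $i<n$, so the vector $(a_1,\dots,a_{n-1})\in N^{gp}$ represents the image of $\mc L$ in $X = N^{gp}/\mathbf{Z}^{n-1}$. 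The standard degree-character formula for line bundles on smooth stacky curves---obtained by writing $\mc M$ as a tensor of fractional-divisor bundles at each stacky point times a pullback from $\mathbf{P}^1$---yields $\deg_{\mls P'}\mc M \equiv -\sum_i a_i\pmod{\mathbf{Z}}$. Combining, the image of $\mc L$ under the displayed composition is $\sum_i a_i \equiv -\deg_{\mls P'}\mc M \equiv -\deg_{\mls P'}\pi_*\mc L \pmod{\mathbf{Z}}$, as required.

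The main technical obstacle is the local $\mu_m$-invariants computation identifying $\pi_*(\sigma^{-c_\infty})$ and the careful tracking of sign conventions for characters through the root-stack construction; the rest amounts to bookkeeping with the projection formula and the degree-character formula on $\mls P'$.
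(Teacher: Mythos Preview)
Your proof is correct, but it takes a longer route to the same destination as the paper's, and it hand-waves precisely the step that the paper actually proves.

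The paper's reduction is more direct: since $\pi$ is an isomorphism away from $\infty$ and $BD(X)$ is supported away from $\infty$, one has $\mc L|_{BD(X)} = (\pi_*\mc L)|_{BD(X)}$ automatically. So the entire lemma reduces immediately to the statement that for a line bundle $\mc L'$ on $\mls P'$, $\chi(\mc L'|_{BD(X)}) \equiv -\deg_{\mls P'}\mc L' \pmod{\mathbf{Z}}$. Your root-stack decomposition $\mc L = \sigma^{-c_\infty}\otimes \pi^*\mc M$, the projection-formula computation of $\pi_*\mc L$, and the identification of $\pi_*(\sigma^{-c_\infty})$ are all correct, but they are doing extra work to reach the same reduction (with $\mc M$ playing the role of $\pi_*\mc L$ up to an integer shift in degree). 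Your Step 5, computing $\deg_{\mls P'}\mc M = c_\infty/m$ from torsion, is correct but never used in your conclusion; it is relevant to the proof of \ref{L:pic}, not to this lemma.

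The one place where you invoke a black box---``the standard degree-character formula for line bundles on smooth stacky curves''---is exactly the content that the paper establishes. Note that $\mls P'$ is not a root stack at distinct points (the $s_i$ may coincide), so the formula is not literally the textbook orbifold statement; the paper proves it by using the tautological functor $N \to \mls Div^+(\mls P')$ to produce bundles $\mc L_\ell$ with $\mc L_\ell|_{BD(X)} = \ell$, computing $(\mc L_\ell)^{\otimes M} \simeq \mls O_{\mathbf{P}^1}(-\sum a_i)$ directly, and then showing every bundle on $\mls P'$ is $\mls O_{\mathbf{P}^1}(d)\otimes \mc L_\ell$. Your sketch of how to obtain the formula (``writing $\mc M$ as a tensor of fractional-divisor bundles at each stacky point times a pullback from $\mathbf{P}^1$'') is the right idea and is essentially this argument, but you should carry it out rather than cite it.
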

\begin{proof}
By definition of $\mls P'$ (see \cite[3.6]{OWI}), we have a symmetric monoidal functor $N \to \mls Div^+(\mls P')$, where $\mls Div^+(\mls P')$ is the groupoid of morphisms $\mc L \to \mls O_{\mls P'}$ where $\mc L$ is a line bundle on $\mls P'$. For $\ell \in N$ we write $\mc L_{\ell}$ for the associated line bundle and note that $\mc L_{\ell}|_{BD(X)} = \ell$. If $\ell \in N \subset \Q^{n-1}_{\geq 0}$ is a vector with one entry equal to 1 and the rest equal to zero, then $\mc L_\ell \simeq \mls O_{\mathbf{P}^1}(-1)$.

To prove the lemma, it is enough to show that if $\mc L$ is a line bundle on $\mls P'$, then 
$\chi(\mc L|_{BD(X)})$ is the class of $-\deg_{\mls P'} \mc L$. 
This is true when $\mc L= \mc L_\ell$: If $M$ is an integer such that $M\ell =(a_i)_{i=1}^{n-1} $ is in $\mathbf{Z}^{n-1}$, 
then $(\mc L_{\ell})^{\otimes M}$ is isomorphic to $\mls O_{\mathbf{P}^1}(-\sum_{i=1}^{n-1} a_i)$ and so
\[
-\deg_{\mls P'} \mc L_\ell = \frac{1}{M} \deg_{\mls P'} (\mls O_{\mathbf{P}^1}(\sum_{i=1}^{n-1} a_i)) = \sum_{i=1}^{n-1} a_i/M
\]
whose  class is $\chi(\ell).$ Finally, we claim that an arbitrary line bundle $\mc L$ on $\mls P'$ is of the form $\mls O_{\mathbf{P}^1}(d) \otimes \mc L_\ell$ for some integer $d$ and $\ell \in N$. Indeed, given $\mc L$, we define $\ell \in X$ to be the character associated to $\mc L|_{BD(X)}$. Then $\mc L \otimes (\mc L_\ell)^{-1}$ has trivial stabilizer action at each of the marked points of $\mls P'$ and hence is pulled back from $\mathbf{P}^1$; i.e., $\mc L \otimes (\mc L_\ell)^{-1}$ is isomorphic to $\mls O_{\mathbf{P}^1}(d)$ for some integer $d$.
\end{proof}

\begin{proof}[Proof of \ref{L:pic}]

We first show that the essential image of $i^*$ is in $\mls Pic(BD(X_m))$. Let $\mc L$ be a torsion line bundle on $\mls P$. Then $\mc L^{\otimes m}$ is also torsion, say $\mc L^{\otimes mM} \simeq \mls O_{\mls P}$. By \ref{lem:pic-for-diag}, since the stabilizer action of $\mc L^{\otimes m}$ at $\infty$ is trivial, we have $\pi^*\pi_*(\mc L^{\otimes m}) \simeq \mc L^{\otimes m}$. Hence we have $\pi^*(\pi_*(\mc L^{\otimes m}))^{\otimes M} \simeq \mls O_{\mls P}$ and since $\pi^*$ is fully faithful by \ref{lem:pic-for-diag} we have $\pi_*(\mc L^{\otimes m})$ is torsion. It follows that $\deg_{\mls P'}(\pi_*(\mc L^{\otimes m}))=0$ and by \ref{L:chi} the image of $\mc L^{\otimes m}$ in $\mathbf{Q}/\mathbf{Z}$ is zero. Hence the image of $\mc L$ is in $\frac{1}{m} \mathbf{Z}/\mathbf{Z}$ as required.

We next show that the essential image of $i^*$ is equal to $\mls Pic(BD(X_m))$. Let $\ell = (a_1, \ldots, a_{n-1}) \in N$ be an element mapping to $X_m$ and let $\mc L_\ell$ be the associated line bundle on $\mls P'$ (see the proof of \ref{L:chi}). Write $m\sum_{i=1}^{n-1}a_i = md + s$ for unique integers $d, s$ with $s \in [0, m-1]$. Let $\mls O_{\mls P}(\infty/m)$ be the ideal sheaf of $B\mu_m \subset \mls P$, so $\mls O_{\mls P}(\infty/m)^{\otimes m}$ is isomorphic to $\mls O_{\mathbf{P}^1}(-1)$. Set $\mc N = \mls O_{\mathbf{P^1}}(d) \otimes \mc L_\ell \otimes \mls O_{\mls P}(\infty/m)^{s}$. Then we compute
\[
\mc N^{\otimes m} \simeq \mls O_{\mathbf{P}^1}(md) \otimes \mls O_{\mathbf{P}^1}(-md -s) \otimes \mls O_{\mathbf{P}^1}(s) \simeq \mls O_{\mathbf{P}^1}.
\]
So $\mc N$ is a torsion bundle on $\mls P$ mapping to $\mc L$ (since the characters of $\mls O_{\mathbf{P^1}}(d)$ and $\mls O_{\mls P}(\infty/m)^{s}$ at every stacky point of $\mls P$ away from $\infty$ are trivial).

To see that $i^*$ is injective on isomorphism classes, let $\mc L$ be a torsion line bundle on $\mls P$. If $i^*\mc L$ is trivial then the fiber of $\mc L$ has trivial stabilizer action at each of the stacky points of $\mls P$ not equal to $\infty$. So $\mc L$ is the pullback of a line bundle $\mc L''$ on a stacky $\mathbf{P}^1$ given by taking an $m$th root stack at a single point. Since this pullback is fully faithful by \ref{lem:pic-for-diag}, the bundle $\mc L''$ is also torsion and hence trivial (since the Picard group of this curve is $\mathbf{Z}$). So $\mc L$ is trivial.

Finally, $i^*$ induces an isomorphism on automorphisms of any object. The automorphisms of a line bundle on $\mls P$ (resp. on $BD(X)$) are sections of $\mathbf{G}_{m, \mls P}$ (resp. of $\mathbf{G}_{m, BD(X)}$), but these are just the constants $k^\times$ since global sections of $\mls O_{\mls P}$ (resp. $\mls O_{BD(X)}$) can be computed after pushforward to the coarse space. 

\end{proof}

\subsection{Torsors on $\mls P$}\label{S:generalcase}
From \ref{L:pic} we have an equivalence of categories
\begin{equation}\label{eq:equiv}
\mls Pic(\mls P)_{\tors} \to \mls Pic(BD(X_m)).
\end{equation}
A choice of inverse equivalence defines a morphism 
\[
\rho: \mls P \to BD(X_m)
\]for which the restriction functor on Picard groups is inverse to \eqref{eq:equiv}.  Let $G$ be a finite group and let $\Tors ^G(\mls P)$ (resp. $\Tors ^G(BD(X))$) denote the category of $G$-torsors over $\mls P$ (resp. $BD(X)$). Let $\Tors ^{G, \ab}(\mls P)$ (resp. $\Tors ^{G, \ab}(BD(X))$) denote the full subcategory of torsors whose classifying map to $BG$ admits an abelian contraction.

\begin{prop}\label{P:3.15} The pullback functor
$$
\rho^*: \Tors ^{G}(BD(X_m))\rightarrow \Tors ^{G, \ab}(\mls P)
$$
is an equivalence of categories.
\end{prop}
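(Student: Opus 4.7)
The plan is to reduce Proposition~\ref{P:3.15} to the equivalence of torsion Picard categories in Lemma~\ref{L:pic}: first handling the case of an abelian target group, then bootstrapping to general $G$. As a preliminary, I would verify that $\rho^*$ lands in $\Tors^{G, \ab}(\mls P)$: a $G$-torsor on $BD(X_m)$ is classified by a group homomorphism $\phi \colon D(X_m) \to G$ (modulo $G$-conjugation), whose image $A := \phi(D(X_m)) \subset G$ is abelian as a quotient of a diagonalizable group. Thus $\phi$ factors as $D(X_m) \twoheadrightarrow A \hookrightarrow G$, and composition with $\rho$ yields the required abelian contraction $\mls P \to BD(X_m) \to BA \to BG$.

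The key lemma I would establish is the abelian case: for any finite abelian group $A = D(\hat A)$ (necessarily tame under our hypotheses), the pullback $\rho^* \colon \Tors^A(BD(X_m)) \to \Tors^A(\mls P)$ is an equivalence. This uses the standard description of $A$-torsors on any stack $\mls X$ as symmetric monoidal functors $\hat A \to \mls{Pic}(\mls X)$, which since $\hat A$ is finite land automatically in the torsion subcategory; composing with the equivalence $\mls{Pic}(BD(X_m)) \xrightarrow{\sim} \mls{Pic}(\mls P)_{\tors}$ from Lemma~\ref{L:pic} gives the result. Essential surjectivity of $\rho^*$ in the general case then follows: a $G$-torsor $T$ on $\mls P$ with abelian contraction through $A \subset G$ can be written $T \simeq T_A \wedge^A G$, and the abelian case lifts $T_A$ to some $S_A$ on $BD(X_m)$ so that $\rho^*(S_A \wedge^A G) \simeq T$.

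The main obstacle I anticipate is full faithfulness of $\rho^*$ when $G$ is nonabelian. For $G$-torsors $T_i$ on $BD(X_m)$ corresponding to $\phi_i \colon D(X_m) \to G$, I would identify the isomorphism sheaf $\Hom_G(T_1, T_2)$ on $BD(X_m)$ with the quotient $[G/D(X_m)]$ under the twisted conjugation $d \cdot h = \phi_2(d) h \phi_1(d)^{-1}$. Decomposing $G$ into $D(X_m)$-orbits $\mathcal{O}_\alpha \simeq D(X_m)/H_\alpha$, each orbit contributes a subsheaf whose global sections are sections of the $D(X_m)/H_\alpha$-torsor on $BD(X_m)$ coming from $D(X_m) \twoheadrightarrow D(X_m)/H_\alpha$; this torsor is trivial (giving one global section) exactly when $H_\alpha = D(X_m)$. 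Pulling back to $\mls P$ and invoking the abelian case for the quotient group $D(X_m)/H_\alpha$, the same criterion governs triviality on $\mls P$, so each orbit contributes identically on both sides and summing identifies the full hom sets.
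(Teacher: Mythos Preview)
Your proof is correct. Essential surjectivity is handled exactly as in the paper: both arguments observe that for an abelian subgroup $A\subset G$ the category of $A$-torsors is equivalent to monoidal functors $\hat A\to \mls Pic(-)_{\tors}$, and then invoke Lemma~\ref{L:pic}.

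For full faithfulness the two arguments diverge. The paper encodes a $G$-torsor as a vector bundle with algebra and coaction maps, so that full faithfulness of $\rho^*$ on torsors reduces to full faithfulness on vector bundles, hence to showing the unit $\mc V\to \rho_*\rho^*\mc V$ is an isomorphism for every vector bundle on $BD(X_m)$. Since every such bundle splits into line bundles $\mc L_x$, this becomes the single cohomological vanishing $H^0(\mls P,\mc L_\ell)=0$ for $\ell\neq 0$ (Lemma~\ref{P:10.16}). Your route instead stays entirely at the level of torsors: you decompose the Isom sheaf into $D(X_m)$-orbits, recognize each orbit as a $D(X_m)/H_\alpha$-torsor, and apply the already-established abelian case to match sections on $BD(X_m)$ and on $\mls P$ orbit by orbit. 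Your argument is somewhat more self-contained in that it draws everything from Lemma~\ref{L:pic} without isolating a separate vanishing statement; the paper's argument, on the other hand, proves something stronger along the way (full faithfulness of $\rho^*$ on all of $\mathrm{Vec}(BD(X_m))$), which could be useful elsewhere.
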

\begin{proof}
If $A\subset G$ is an abelian subgroup then the category of $A$-torsors on $\mls P$ is equivalent to the groupoid of morphisms of Picard categories $Y\rightarrow \mls Pic(\mls P)_{\text{tors}}$, but $\mls Pic(\mls P)_{\text{tors}}\simeq \mls Pic (BD(X_m))$ by \ref{L:pic}. Hence $\rho^*$ is essentially surjective.

For the full faithfulness, note that a $G$-torsor is given by a vector bundle $\mc V$ with various maps between tensor products of $\mc V$ with itself and $\mls O_G$ (the vector bundle $\mc V$ is the coordinate ring of the torsor, its ring structure is given by a section of $\mc V$ (the unit) and multiplication $\mc V \otimes \mc V \to \mc V$ making certain diagrams commute, and the $G$-action is given by a map $\mc V \to \mc V \otimes \mls O_G$ (making more diagrams commute). A morphism of $G$-torsors is given by a morphism of the associated vector bundles that commutes with morphisms described. It follows that to prove that $\rho^*$ is fully faithful for $G$-torsors it suffices to show that it is fully faithful for vector bundles, so it suffices to show that if $\mc V$ is a vector bundle on $BD(X_m)$ then $\mc V\rightarrow \rho _*\rho ^*\mc V$ is an isomorphism. Since any vector bundle on $BD(X_m)$ is a direct sum of line bundles it furthermore suffices to prove this for a line bundle $\mc L_x$ for $x\in X_m$.  Now the pullback of $\rho _*\rho ^*\mc L_x$ along the covering $\Sp (k)\rightarrow BD(X_m)$ is given by $H^0(P_0, \rho ^*\mc L_x|_{P_0})\simeq H^0(\mls P, \oplus _{y\in X_m}(\mc L_y\otimes \mc L_x)).$  By \ref{P:10.16} below this is the one-dimensional $k$-vector space given by $\mc L_x|_{\Sp (k)}$. 
\end{proof}

\begin{lem}\label{P:10.16} Let $\ell\in X_m$ be a nonzero element and let $\mc L_\ell$ be the line bundle on $\mls P$ given by $\rho^*$ applied to the bundle on $BD(X_m)$ corresponding to $\ell$.  Then $H^0(\mls P, \mc L_\ell) = 0$.
\end{lem}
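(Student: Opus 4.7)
The plan is to compute $H^0(\mls P, \mc L_\ell)$ by pushing forward to the coarse moduli space $\mathbf{P}^1$ via the structure map $p : \mls P \to \mathbf{P}^1$, and then showing that $p_*\mc L_\ell$ is a line bundle on $\mathbf{P}^1$ of strictly negative degree, which forces $H^0(\mathbf{P}^1, p_*\mc L_\ell) = H^0(\mls P, \mc L_\ell) = 0$.

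First I would verify that $p_*\mc L_\ell$ is a line bundle. Since $\mls P$ is tame, the functor $p_*$ is exact. The generic point of $\mls P$ has trivial stabilizer, so $p_*\mc L_\ell$ has rank one at the generic point of $\mathbf{P}^1$. Being a coherent torsion-free sheaf of rank one on the smooth curve $\mathbf{P}^1$, the pushforward $p_*\mc L_\ell$ is automatically locally free of rank one.

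Next I would compute $\deg_{\mathbf{P}^1} p_*\mc L_\ell$. At each stacky point $P$ of $\mls P$, the stabilizer is cyclic of some order $n_P$ (since $\mls P$ is smooth and one-dimensional at $P$), and the fiber $\mc L_\ell|_P$ is a one-dimensional representation on which $\mu_{n_P}$ acts via a character $\chi_P^{c_P}$ with $c_P \in \{0, 1, \ldots, n_P-1\}$. Combining local root-stack pushforward computations (as in \cite[\S 3.1]{cadman}) yields
\[
\deg_{\mathbf{P}^1} p_*\mc L_\ell \;=\; \deg_{\mls P} \mc L_\ell \;-\; \sum_{P\text{ stacky}} \frac{c_P}{n_P}.
\]
Because $\mc L_\ell$ is torsion, $\deg_{\mls P} \mc L_\ell = 0$, so $\deg_{\mathbf{P}^1} p_*\mc L_\ell = -\sum_P c_P/n_P \leq 0$.

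To conclude, I would show that at least one $c_P$ is nonzero. Assume instead that $c_P = 0$ at every stacky point. Then by Lemma \ref{lem:pic-for-diag}(i), the adjunction map $p^*p_*\mc L_\ell \to \mc L_\ell$ is an isomorphism, so $\mc L_\ell \simeq p^*\mc M$ with $\mc M := p_*\mc L_\ell$ a line bundle on $\mathbf{P}^1$. Since $p^*$ is faithful on $\Pic$ by Lemma \ref{lem:pic-for-diag}(ii) and $\mc L_\ell$ is torsion, $\mc M$ is also torsion; but $\Pic(\mathbf{P}^1) = \mathbf{Z}$ has no torsion, so $\mc M \simeq \mls O_{\mathbf{P}^1}$ and $\mc L_\ell \simeq \mls O_{\mls P}$. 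By the equivalence in Lemma \ref{L:pic} and the defining property of $\rho$, this forces $\ell = 0 \in X_m$, contradicting the hypothesis. Hence $\sum_P c_P/n_P > 0$ and $\deg p_*\mc L_\ell < 0$. The main subtlety will be establishing the degree-loss formula precisely at stacky points arising from colliding markings: the stabilizer there remains cyclic (of order the lcm of the individual orders) by smoothness, but the identification of $c_P$ with the local admissible-monoid data from \cite{OWI} needs to be unpacked so that the root-stack formula applies as stated.
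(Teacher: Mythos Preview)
Your approach is different from the paper's and contains a real gap. The paper's argument is much shorter and avoids any local analysis: if $s \in H^0(\mls P, \mc L_\ell)$ is nonzero and $M$ is the torsion order of $\mc L_\ell$, then $s^{\otimes M}$ is a nonzero element of $H^0(\mls P, \mls O_{\mls P}) = H^0(\mathbf{P}^1, \mls O_{\mathbf{P}^1}) = k$, hence a nonzero constant, so $s^{\otimes M}$ and therefore $s$ itself is nowhere vanishing. Thus $s$ trivializes $\mc L_\ell$, and the equivalence of Lemma~\ref{L:pic} then forces $\ell = 0$. No pushforward, no degree formula, no structure of the stacky points is needed.

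The gap in your argument is the parenthetical claim that ``the stabilizer there remains cyclic (of order the lcm of the individual orders) by smoothness.'' This is false on both counts. At a point where several markings coincide, the stabilizer of $\mls P$ is the Cartier dual $D(N_j^{\gp}/\mathbf{Z}^{|I_j|})$ for the relevant admissible submonoid $N_j$, and this finite abelian group is \emph{not} cyclic in general; for instance, two coinciding markings with $N_j = \tfrac{1}{2}\mathbf{Z}_{\geq 0}^2$ give stabilizer $\mu_2 \times \mu_2$. Correspondingly $\mls P$ need not be smooth at such a point, so your appeal to smoothness does not rescue the claim. The pushforward formula you invoke from \cite{cadman} is specific to cyclic root constructions and therefore does not apply, and your degree computation does not go through as written. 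One could try to salvage the strategy by showing directly that the counit $p^*p_*\mc L_\ell \to \mc L_\ell$ has nontrivial cokernel whenever $\mc L_\ell$ does not descend, but making this precise (and even verifying that $p_*\mc L_\ell$ is a line bundle) requires control over the local structure of $\mls P$ at these non-smooth points that you have not established.
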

\begin{proof}
Consider the projection $\pi :\mls P\rightarrow \mathbf{P}^1$. Suppose $H^0(\mls P, \mc L_\ell) \neq 0$, and let $s \in H^0(\mls P, \mc L_\ell)$ be a section that is not identically zero. Then $s^{\otimes m} \in H^0(\mls P, \mc L_\ell^{\otimes m})$ is not identically zero, but $\mc L_\ell^{\otimes m}$ is trivial, so since $ H^0(\mls P, \mls O_{\mls P}) = H^0(\mathbf{P}^1, \pi _*\mls O_{\mls P}) = k$ we see $s^{\otimes m}$ is nowhere zero. It follows that $s$ is nowhere zero and hence defines a trivialization of $\mc L_\ell$. Since $\rho^*$ induces an equivalence inverse to \eqref{eq:equiv}, we have that $\ell \in X_m$ is trivial.
\end{proof}

\subsection{Torsors on $\mls P$: A special case}

\begin{pg} We now specialize \ref{S:generalcase} to the case when $s_1 =s_2 =\cdots =s_{n-1}$ (but still $s_1\neq s_n$). The morphism $i:BD(X) \to \mls P$ is now the residual gerbe at the $n-1$ coincident markings. As before let $G$ be a finite group.
\end{pg}

\begin{prop}\label{P:newabelian} Any object $T$ of $\Tors ^G(\mls P)$ admits an abelian contraction.
\end{prop}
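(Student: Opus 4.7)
The plan is to exploit the fact that in this special case $\mls P$ has only two stacky points, both with abelian inertia, to show that its \'etale fundamental group is abelian. Once this is established, any $G$-torsor $T \to \mls P$, being classified (up to conjugacy) by a homomorphism $\pi_1(\mls P, p) \to G$, will have abelian image $A \subset G$. Reducing $T$ to an $A$-torsor then yields the factorization $\mls P \to BA \to BG$ with $A$ abelian, which is the desired abelian contraction.

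To execute this, I would first identify the complement $U := \mls P \setminus \{0, \infty\}$ (where $0$ denotes the common image of $s_1, \dots, s_{n-1}$ and $\infty = s_n$) with $\mathbf{G}_m$, observing that in the special case the stacky structure is entirely concentrated at these two points. The tame \'etale fundamental group of $\mathbf{G}_m$ is topologically cyclic, hence abelian. Next, I would show that the open immersion $j: U \hookrightarrow \mls P$ induces a surjection on (tame) \'etale fundamental groups. This is an instance of the general principle that for a normal connected Deligne-Mumford stack $\mls P$ with dense open $U$, the restriction functor on finite \'etale covers is fully faithful (provable by reduction to an \'etale atlas of $\mls P$), which corresponds via Galois correspondence to a surjection $\pi_1(U) \twoheadrightarrow \pi_1(\mls P)$. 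Since a quotient of an abelian group is abelian, this concludes the argument.

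The main technical subtlety is that in positive characteristic the full $\pi_1(\mathbf{G}_m)$ is not abelian due to wild Artin-Schreier type covers. This is resolved by noting that the stabilizer groups $D(X)$ and $\mu_m$ at the two stacky points are tame (by tameness of $\mls P$), so any finite \'etale cover of $\mls P$ corresponds to a cover of $\mathbf{G}_m$ tamely ramified at $\{0, \infty\}$; such covers are classified by the abelian tame fundamental group of $\mathbf{G}_m$. Alternatively, one can phrase the argument directly via Proposition \ref{P:3.15}: it suffices to show that every $G$-torsor on $\mls P$ is in the essential image of $\rho^*: \Tors^G(BD(X_m)) \to \Tors^G(\mls P)$, and the fundamental group computation provides exactly this, since it shows that $\rho$ induces a surjection of fundamental groups whose target $D(X_m)$ encodes all finite \'etale covers of $\mls P$.
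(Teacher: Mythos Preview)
Your argument is correct and takes a genuinely different route from the paper's proof. You argue conceptually via the \'etale fundamental group: the open substack $U = \mls P \setminus \{0,\infty\} \simeq \mathbf{G}_m$ has procyclic tame fundamental group, the surjection $\pi_1(U) \twoheadrightarrow \pi_1(\mls P)$ factors through the tame quotient because the stabilizers $D(X)$ and $\mu_m$ have order invertible in $k$, and hence $\pi_1(\mls P)$ is abelian. Any $G$-torsor then has abelian monodromy image $A \subset G$, giving the reduction.

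The paper instead argues by a direct descent. Given a $G$-torsor $T \to \mls P$, it picks an abelian $A \subset G$ through which the restriction $T|_{BD(X)}$ factors, forms the quotient $\overline{T} = T/A$, observes that the $D(X)$-action on fibers of $\overline{T}$ is trivial so $\overline{T}$ descends to an \'etale cover of the simpler stack $\mls P''$ (the $m$-th root stack at $\infty$ only), and then uses that the coarse space of this cover is a curve over $\mathbf{P}^1$ unramified away from $\infty$, hence a disjoint union of copies of $\mathbf{P}^1$. This trivializes $\overline{T}$ and exhibits $T$ as pushed out from an $A$-torsor.

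Your approach is cleaner and identifies the underlying reason (abelianness of $\pi_1$) explicitly, but it imports the machinery of \'etale fundamental groups of Deligne--Mumford stacks and the Galois correspondence. The paper's approach is more hands-on and self-contained; it never names $\pi_1$ and instead isolates the one nontrivial topological input (triviality of covers of $\mathbf{P}^1$ ramified at a single point) at the level of coarse spaces. Both ultimately rest on the same fact about tame covers of $\mathbf{G}_m$, just packaged differently. Your remark about $\rho^*$ and \ref{P:3.15} at the end is circular as stated (the target of $\rho^*$ is already $\Tors^{G,\ab}$), but this is only offered as an alternative gloss and your main argument does not depend on it.
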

\begin{proof}
Let $T\rightarrow \mls P$ be a $G$-torsor and let $A\subset G$ be an abelian subgroup such that the pullback of $T$ to $BD(X)$ is the pushout of an $A$-torsor over $BD(X)$ (for example $A$ could be the image of $D(X)$ in $G$).  Consider the quotient $\overline T:= T/A$, which is \'etale over $\mls P$, and let $\mls P^{\prime \prime }\rightarrow \mathbf{P}^1$ be the stack obtained by taking the $m$-th root of $\infty $ (and no other stack structure). The pullback of $\overline T$ to $BD(X)$ is isomorphic to $G/A$, since $T|_{BD(X)}$ is a pushout from $A$, and therefore the stabilizer action of $D(X)$ on the fiber of $\mls O_{\overline T}$ is trivial.  It follows that $\overline T$ descends to an \'etale morphism $\overline T^{\prime \prime }\rightarrow \mls P^{\prime \prime }$.  Since $\mls P^{\prime \prime }$ is normal so is $\overline T^{\prime \prime }$, and the coarse moduli space of $\overline T^{\prime \prime }$ is a curve over $\mathbf{P}^1$ unramified everywhere except possibly at $\infty $.  It follows that the coarse space of $\overline T^{\prime \prime }$ is a disjoint union of copies of $\mathbf{P}^1$ from which it follows that $\overline T^{\prime \prime }$ is a disjoint union of copies of $\mls P''$.   We conclude that $\overline T^{\prime \prime }\simeq G/A\times \mls P^{\prime \prime }$ and therefore $\overline T \simeq G/A \times \mls P$. Fixing such a trivialization of $\overline T$ we obtain an $A$-torsor $T_0\rightarrow \mls P$ whose pushout is $T$.  In particular, $T$ admits an abelian contraction. 
\end{proof}

\section{Stable maps}\label{S:section5b}
Let $\mc X$ be a separated tame Deligne-Mumford stack of finite type over a noetherian base $B$. We now turn to the problem of defining stability for maps from stacks $\mls C$ associated to generalized log twisted curves to $\mc X$. Roughly speaking, we would like stability of a map $\mls C \to \mc X$ to be determined by the underlying map of coarse spaces, but when $\mc X$ has nonabelian stabilizers this does not yield a proper stack of maps (due to the fact that our curves $\mls C$ have only abelian stabilizers). We use the notion of an abelian contraction in our definition of stability to address this issue.

\subsection{Review of the case when $\mc X$ is an algebraic space}\label{S:review}
When $\mc X = X$ is an algebraic space, weighted stable maps from representable curves to $X$ were defined in \cite{AlexeevGuy, bayermanin}, extending the notion of weighted stable curves in \cite{Hassett}. In this section we recall that theory.
Fix a nonnegative integer $g$ and numbers $a_1, \ldots, a_n \in (0, 1]\cap \mathbf{Q}$. 
We refer to the $a_i$ as the \emph{weights}.


\begin{pg}\label{P:2.1}

For a $B$-scheme $S$ a
  \emph{prestable map to $X$ of type $(g, \ba)$ over $S$} is a collection of data
\begin{equation}\label{E:prestablemap}
(\,C\rightarrow S, \;\{s_i:S\rightarrow C\}_{i=1}^n, \;f:C\rightarrow X\,),
\end{equation}
where
\begin{enumerate}
\item [(i)] $(C/S, \{s_i\}_{i=1}^n)$ is an $n$-marked prestable curve over $S$ of genus $g$.
\item [(ii)] $f$ is a morphism of $B$-schemes.
\item [(iii)] For every geometric point $\bar s\rightarrow S$ and $x\in C_{\bar s}$ we have
$$
\sum _{i, \, s_i(\bar s) = x}a_i\leq 1.
$$
\end{enumerate}
An isomorphism of prestable maps of type $(g, \ba)$ over $S$ is an isomorphism of the associated $n$-marked prestable curves over $S$ that commutes with the morphisms to $X$.
A prestable map to $X$ over $S$ of type $(g, \ba)$ is \emph{stable} if, in addition to (i)-(iii), the following condition holds for every geometric point $\bar s\rightarrow S$:
\begin{enumerate}
\item [(iv)] If $E\subset C_{\bar s}$ is an irreducible component such that $f_{\bar s}(E)$ has dimension zero, then either the normalization $\widetilde E$ of $E$ has positive genus, or
\begin{equation}\label{eq:numerics}
\#\{e\in \widetilde E \mid \text{$e$ maps to a node of $C_{\bar s}$}\}+\sum _{i, s_i(\bar s)\in E}a_i>2.
\end{equation}
\end{enumerate}
A \textit{(pre)stable curve} of type $(g, \ba)$ is a (pre)stable map of type $(g, \ba)$ to the target $\mc X = B$. 

We let $\mls K_{g, \ba}(X)$ denote the stack over $B$ whose fiber over $S\rightarrow B$ is the groupoid of stable maps of type $(g, \ba)$
$$
(\,C\rightarrow S,\; \{s_i:S\rightarrow C\}_{i=1}^n,\; f:C\rightarrow X_S\,)
$$
over $S$.  
In the case when $X/B$ is projective and $\mls O_X(1)$ is an ample invertible sheaf on $X$ we can further consider the degree of $f^*\mls O_X(1)$.  We let 
\[\mls K_{g, \ba}(X, d)\subset \mls K_{g, \ba}(X)\]
denote the substack of  stable maps for which $f^*\mls O_X(1)$ has degree $d$ in every fiber, and refer to such a stable map as a \emph{stable map of type $(g, \ba)$ and degree $d$}.
\end{pg}

\begin{thm}[{\cite[1.9]{AlexeevGuy} and \cite[1.1.4]{bayermanin}}]\label{T:2.4} 
Let $(X, \mls O_X(1))$ be a projective scheme over $B$ with a relatively ample invertible sheaf.  

(i) $\mls K_{g, \ba}(X)$ is an Artin   stack locally of finite type over $B$ with finite diagonal.

(ii) $\mls K_{g, \ba}(X, d)$ is an open and closed substack of $\mls K_{g, \ba}(X)$ and 
$$
\mls K_{g, \ba}(X) = \coprod _d\mls K_{g, \ba}(X, d).
$$

(iii) For all $d$ the stack $\mls K_{g, \ba}(X, d)$ is a proper algebraic  stack over $B$ with finite diagonal and admitting a presentation as a global quotient stack. 
\end{thm}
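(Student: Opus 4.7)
The plan is to realize $\mls K_{g,\ba}(X)$ as a locally closed substack of a Hom stack over the algebraic stack of prestable curves with possibly coincident markings, then bootstrap properness and the global quotient presentation for fixed degree from the analogous results for the Kontsevich stack (the case $\ba = \bone^n$) via a contraction procedure.

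For (i), let $\fM^{\mathrm{pre}}_{g,n}$ denote the stack of $n$-pointed genus-$g$ prestable curves allowing coincident markings; this is an algebraic stack locally of finite type, obtained from the stack of unpointed prestable curves by $n$ successive relative copies of the universal curve. Since $X\to B$ is projective, the relative Hom stack from the universal curve $\fC$ over $\fM^{\mathrm{pre}}_{g,n}$ to $X$ is an algebraic stack locally of finite type. Condition (iii) cuts out an open and closed substack, since on any coincidence stratum the sum $\sum_{s_i(\bar s)=x}a_i$ is locally constant and the inequality $\leq 1$ is then open and closed; condition (iv) is then open. This exhibits $\mls K_{g,\ba}(X)$ as an algebraic stack locally of finite type over $B$. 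Finiteness of the diagonal follows from the standard argument that the weighted numerical constraint (iv), together with the fact that automorphisms of a stable map fix $f$ and every $s_i$, forces the automorphism group scheme of any geometric stable map to be finite.

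Claim (ii) is immediate from flatness of $C\to S$ and constancy of Euler characteristics: $d = \deg f^*\mls O_X(1)$ is a locally constant integer on $\mls K_{g,\ba}(X)$. For claim (iii), finite type for fixed $d$ reduces to bounding the number of irreducible components of $C$: $f$-noncollapsed components contribute at least one to $d$ by ampleness of $\mls O_X(1)$, and on $f$-collapsed components (iv) bounds the count via a standard dual-graph argument. Separatedness of $\mls K_{g,\ba}(X,d)$ follows from uniqueness of extensions of maps to the separated scheme $X$ together with uniqueness of the limit pointed nodal curve. The main obstacle is the existence part of the valuative criterion: given a stable map over the generic point of a DVR $R$, I would first add auxiliary weight-$1$ markings to pass to a $(g,\bone^{n+m})$-stable map, apply Kontsevich properness to obtain a limit, forget the auxiliary markings, and then iteratively contract those components of the closed fiber violating (iv). Executing these contractions flatly over $\Sp(R)$ is the technical heart: the required blowdowns are produced as the relative Proj of a line bundle
\[
\mc L := \omega_{C/\Sp(R)}^{\otimes k}\bigl(\textstyle\sum_i \lceil k a_i \rceil s_i\bigr) \otimes f^*\mls O_X(1)^{\otimes N}
\]
for $k$ clearing the denominators of the $a_i$ and $N\gg 0$, and one must verify that $\mc L$ is relatively semiample with relative Proj contracting precisely the unstable components.

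For the global quotient presentation, the same $\mc L$ is in fact relatively very ample on any stable map of type $(g,\ba)$ and degree $d$ once $N$ is chosen sufficiently large, with Hilbert polynomial $P$ depending only on $(g,\ba,d,k,N)$. A free $\mls O_B$-module $W$ of rank $P(1)$ then realizes $\mls K_{g,\ba}(X,d)$ as the $\mathrm{PGL}(W)$-quotient of a locally closed subscheme of the Hilbert scheme parameterizing graph-like subschemes of $\mathbf{P}(W)\times_B X$ with Hilbert polynomial $P$, completing (iii).
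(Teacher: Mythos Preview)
The paper does not give a proof of this theorem: it is stated with attribution to \cite[1.9]{AlexeevGuy} and \cite[1.1.4]{bayermanin} and followed immediately by \qed, with no argument supplied. Your outline is essentially the approach taken in those references, so there is no substantive discrepancy to report.

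Two small remarks on your sketch. First, condition (iii) defines an open substack, not an open and closed one: markings that are distinct at a generic point can collide at a specialization, increasing the relevant sum $\sum_{s_i(\bar s)=x} a_i$, so the condition may fail under specialization while it cannot fail under generization. (The paper records this as ``evidently an open condition'' in \ref{L:prestableopen}.) Second, your strategy for the existence part of the valuative criterion---add auxiliary weight-$1$ markings, invoke Kontsevich properness, forget, then contract---is workable but differs from what the paper does later in its stacky generalization (Section~\ref{S:proper}): there one instead uses surjectivity of the stabilization morphism $\mls K_{g,\bone^n}^{\ba\text{-}\mathrm{stab}}(\mc X)\to \mls K_{g,\ba}(\mc X)$ with the \emph{same} marking set, transporting the valuative criterion directly from the Abramovich--Vistoli (here, Kontsevich) space. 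Both routes ultimately rely on the same contraction, produced by the line bundle you wrote down (with $\lceil ka_i\rceil = ka_i$ once $k$ clears denominators).
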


\qed


\begin{rem}
Using the method of \cite{AGOT} one can show that the coarse space $K_{g, \ba}(X, d)$ of $\mls K_{g, \ba}(X, d)$ is a projective scheme.
\end{rem}

\begin{rem}
In order for $\mls K_{g, \ba}(X, d)$ to be nonempty, we must have either $d \neq 0$ or $2g - 2 + \sum_{i=1}^n a_i >0.$
\end{rem}

\subsection{Prestable maps with target a stack}\label{S:section5}

Let $\mc X$ be a separated tame Deligne-Mumford stack of finite type over a noetherian scheme $B$.
As before, fix discrete data $(g, \ba)$ consisting of a nonnegative integer $g$ and weights $a_1, \ldots, a_n \in (0, 1]\cap \mathbf{Q}$.

\begin{defn}\label{def:weighted twisted prestable map}
A \emph{prestable map to $\cX/B$ of type $(g, \ba)$}  over a $B$-scheme $S$ is the data $(\bC, f)$, where
\begin{enumerate}
\item[(i)] $\bC$ is a generalized log twisted curve over $S$ of genus $g$, with underlying marked prestable curve $(C/S, \{s_i\}_{i=1}^n)$;
\item[(ii)] $f: \mls C \to \cX$ is a morphism of stacks over $B$, where $\mls C$ is the stack associated to $\bC$ (see \cite[\S 3]{OWI});
\item[(iii)] For every geometric point $\bar s \in S$ and $x \in C_{\bar s}$ we have
$$
\sum _{i,\, s_i(\bar s) = x}a_i\leq 1.
$$
\end{enumerate}
\end{defn}

An isomorphism of twisted prestable maps of type $(g, \ba)$ over $S$ is an isomorphism of generalized twisted curves over $S$ such that the induced isomorphism of associated stacky curves commutes with the given morphisms to $\cX$.

\begin{lem}\label{L:prestableopen} Let $\mathbf{C}$ be an $n$-marked  generalized log twisted curve over a scheme $S$ and let $f:\mls C\rightarrow \mc X$ be a morphism of stacks.  Then there exists an open subset $U\subset S$ such that a morphism of schemes $T\rightarrow S$ factors through $U$ if and only if the base change $(\bC _T, f_T)$ of $(\bC , f)$ to $T$ is a prestable map of type $(g, \ba )$.
\end{lem}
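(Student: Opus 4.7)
The plan is to observe that the only condition in Definition \ref{def:weighted twisted prestable map} that can fail upon base change is condition (iii), since conditions (i) and (ii) are preserved under base change of generalized log twisted curves and morphisms. So the task reduces to producing an open subset $U \subset S$ over which the weighted coincidence condition on the sections $s_i : S \to C$ holds fiberwise.

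For each subset $I \subseteq \{1, \ldots, n\}$ with $\sum_{i \in I} a_i > 1$, I will define $Z_I \subseteq S$ to be the locus where all sections $\{s_i\}_{i \in I}$ coincide in the fiber. Since $C \to S$ is separated, the diagonal $\Delta \subset C \times_S C$ is closed, and for each pair $i, j \in I$ the preimage $(s_i, s_j)^{-1}(\Delta)$ is a closed subscheme of $S$. Then $Z_I$ is the scheme-theoretic intersection of these pairwise coincidence loci over pairs in $I$, hence closed. Since there are only finitely many subsets $I$ of $\{1, \ldots, n\}$, the union
\[
Z = \bigcup_{I : \sum_{i \in I} a_i > 1} Z_I
\]
is closed in $S$. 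Set $U = S \setminus Z$.

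To finish, for a morphism $T \to S$ the base change $(\bC_T, f_T)$ satisfies (iii) if and only if no geometric point $\bar t \to T$ lies in the preimage of any $Z_I$; equivalently, $T \to S$ factors set-theoretically through $U$. Since $U$ is an open subscheme, this is equivalent to a factorization as schemes. The only step requiring care is the verification that the construction of $Z_I$ commutes with base change, which is immediate from the fact that pullback preserves fiber products and diagonals.
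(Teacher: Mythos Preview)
Your proof is correct and follows essentially the same approach as the paper's own proof, which consists of the single sentence ``Condition (iii) is evidently an open condition.'' You have simply made explicit what the paper leaves to the reader: that the locus where a collection of sections coincide is closed (via the separatedness of $C \to S$) and that the finitely many such loci indexed by ``bad'' subsets $I$ yield the desired closed complement.
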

\begin{proof} 
Condition (iii) is evidently an open condition.


\end{proof}

\subsection{Stability}



\begin{notation}\label{not:component stack}
Let $\bC$ be a generalized log twisted curve over a field $k$ with coarse space $C$ and associated stack $\mls C$. If $E \subset C$ is any irreducible component, we define the \textit{stack associated to E} to be the reduced closed substack $\mls E \subset \mls C$ whose underlying set is the closure in $|\mls C|$ of the generic point of $E$ (see \cite[\href{https://stacks.math.columbia.edu/tag/0509}{Tag 0509}]{stacks-project}). Observe that the coarse space of $\mls E$ is $E$. However, $\mls E$ is not equal to the pullback of $\mls C$ to $E$, since the later may not be reduced. 
\end{notation}

\begin{defn}\label{def:weighted twisted stable map}
A prestable map $(\bC, f)$ to $\cX/B$ of type $(g, \ba)$ over a $B$-scheme $S$ is \emph{stable}  if $f$ is representable and for every geometric point $\bar s\rightarrow S$ the following additional conditions hold:
\begin{enumerate}
\item[(iv)] 
The automorphism group scheme of the underlying prestable map $(C_{\bar s}, \{s_i\}_{i=1}^n, f)$ is finite.
\item[(v)] If $E \subset C_{\bar s}$ is an irreducible component such that $\mls E \to \cX$ has an abelian contraction, then either the normalization $\widetilde E$ of $E$ has positive genus, or
\[
\#\{e\in \widetilde E \mid \text{$e$ maps to a node of $C_{\bar s}$}\}+\sum _{i, s_i(\bar s)\in E}a_i>2.
\]
\end{enumerate}
\end{defn}

\begin{rem}\label{rem:finite auts}
(a) Condition (iv) in \ref{def:weighted twisted prestable map} is equivalent to requiring that for every irreducible component $E$ of $C_{\bar s}$ whose image in $X$ is zero-dimensional, $E$ either has positive genus or contains at least three special points (preimages of nodes and distinct markings). 

(b) Condition (v) is roughly saying that we only require stability of the underlying coarse moduli map on components that have an abelian contraction. In particular, the map on coarse spaces $C\rightarrow X$ need not be a stable map in general.
\end{rem}

\begin{rem} If the target stack $\mc X$ has abelian stabilizer group schemes, then a twisted prestable map $(\mathbf{C}, f)$ of type $(g, \ba)$ is \emph{stable} if and only if $f$ is representable and the induced map on coarse spaces
$$
C\rightarrow X
$$
is a stable map of type $(g, \ba)$ in the sense of \ref{P:2.1}.
 \end{rem}

\begin{lem}\label{L:5.13}
Let $k \subset k'$ be an extension of  algebraically closed fields, and let $(\bC, f)$ be a prestable map of type $(g, \ba)$ over $k$ such that $f$ is also representable. Then $(\bC, f)$ is stable if and only if its base change to $k'$ is stable.
\end{lem}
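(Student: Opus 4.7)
The plan is to check that each of the conditions (iii), (iv), and (v) in Definitions \ref{def:weighted twisted prestable map} and \ref{def:weighted twisted stable map} is equivalent for $(\bC, f)$ and its base change to $k'$. Condition (iii) involves only the section data and is obviously invariant under base change. For condition (iv), the automorphism group scheme of the underlying prestable map is of finite type over $k$; a finite type group scheme over an algebraically closed field is finite iff it is finite after base change to any algebraically closed overfield, so (iv) is equivalent on both sides.

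The bulk of the work is verifying (v). First I would note that, since $k$ is algebraically closed and $C$ is of finite type over $k$, each irreducible component $E$ of $C$ is geometrically irreducible, so base change gives a bijection $E \mapsto E_{k'}$ between irreducible components of $C$ and of $C_{k'}$. The normalization $\widetilde{E}$ of $E$ is smooth over $k$, so $\widetilde{E}_{k'}$ is the normalization of $E_{k'}$; hence the genus of $\widetilde{E}$ equals that of $\widetilde{E}_{k'}$, and the set of points of $\widetilde{E}$ mapping to nodes of $C$ is in bijection with the analogous set for $\widetilde{E}_{k'}$. The sum $\sum_{i, s_i \in E} a_i$ is also obviously unchanged.

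It remains to show that $\mls E \to \mc X$ admits an abelian contraction if and only if its base change $\mls E_{k'} \to \mc X$ does. This is the one nontrivial point and is precisely Lemma \ref{lem:abelian gerbe}(i): the stack $\mls E$ associated to the component $E$ (see \ref{not:component stack}) is a tame Deligne--Mumford stack of finite type over $\Sp(k)$, and $k$ is algebraically closed, so the cited lemma applies directly. In one direction the contraction over $k$ base changes to one over $k'$; in the other direction, any contraction over $k'$ descends to a finitely generated sub-$k$-algebra and hence, by the existence of a $k$-point of any such algebra (using that $k$ is algebraically closed), to one over $k$.

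I do not expect any substantial obstacle; the main input, Lemma \ref{lem:abelian gerbe}(i), has already been established, and the remaining checks are essentially bookkeeping. One minor point to be careful about is that $\mls E$ is defined as a reduced closed substack of $\mls C$ rather than as the pullback of $\mls C$ along $E \hookrightarrow C$; since $k'/k$ is an extension of algebraically closed fields (in particular separable), taking this reduced structure commutes with base change, so $(\mls E)_{k'} = \mls E_{k'}$ and the abelian contraction criterion transfers cleanly.
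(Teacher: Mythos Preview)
Your proposal is correct and follows essentially the same route as the paper: both arguments reduce to the bijection of irreducible components and nodes under base change together with Lemma~\ref{lem:abelian gerbe}(i) for the abelian contraction condition. The only minor difference is in handling condition~(iv): you argue directly that finiteness of the automorphism group scheme is preserved under field extension, whereas the paper invokes the combinatorial reformulation in Remark~\ref{rem:finite auts}(a) (each contracted component has at least three special points), which is already phrased in terms of the node and component data shown to be invariant. Your extra remark about reduced structure commuting with the separable base change is a useful sanity check that the paper leaves implicit.
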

\begin{proof}
By \cite[\href{https://stacks.math.columbia.edu/tag/038H}{Tag 038H}]{stacks-project} and \cite[\href{https://stacks.math.columbia.edu/tag/0C56}{Tag 0C56}]{stacks-project} the morphism $C_{k'} \to C$ induces bijections of irreducible components and nodes. The result follows from this, \ref{rem:finite auts}, and \ref{lem:abelian gerbe}.
\end{proof}

\subsection{An alternate characterization of stability}
\label{S:alternate}
Let $B = \Sp (k)$ be the spectrum of an algebraically closed field and fix a prestable map $(\mathbf{C}, f)$ of type $(g, \ba)$ over $k$.  The induced map of coarse spaces $\bar f:C\rightarrow X$ is then a prestable map of type $(g, \ba)$ and by \cite[4.11]{OWI} admits a unique contraction 
$$
\xymatrix{
C\ar@/^2pc/[rr]^-{\bar f}\ar[r]^-q& C'\ar[r]^-g& X,}
$$
with $(C', \{s_i':=q\circ s_i\}_i, g)$ a stable map of type $(g, \ba)$.  Let $x'\in C'$ be the image of one of the marked points.  Then the preimage $q^{-1}(x')$ is a tree of rational curves with some marked points in the smooth locus.  We say that a rational curve $E\subset q^{-1}(x')$ is an \emph{interior branch} if it contains two nodes; it is an \emph{extremal branch} if it contains one node.

\begin{pg}\label{P:5.8}
For $x' \in C'$ let $G_{x'}$ denote the stabilizer group  of the point of $\mc X$ over $x'$, so that the fiber $\mc X_{x'}$ of $\mc X$ over $x'$ has maximal reduced substack isomorphic to $BG_{x'}$. Then if $E \subset q^{-1}(x')$ is a rational curve, the morphism $f$ restricts to a morphism $\mls E \to BG_{x'}$ since the stack $\mls E$ associated to $E$ is reduced. 
\end{pg}

 \begin{lem}\label{D:5.7} The prestable map $(\mathbf{C}, f)$ is stable if and only if $f$ is representable and 
 the following conditions hold:
 \begin{enumerate}
     \item [(i)] The map $q:C\rightarrow C'$ from $C$ to its stabilization is an isomorphism away from the marked points on $C'$. 
     \item [(ii)] For every marked point $x'\in C'$ and rational component $E\subset q^{-1}(x')$, we have:
     \begin{enumerate}
         \item [(a)]  If $E$ is an interior branch of $q^{-1}(x')$ then 
         $$
         \#\{\text{nodes on $E$}\}+\#\{\text{distinct marked points on $E$}\}>2.
         $$
         Note that $E$ has at least two nodes so this is equivalent to the assertion that either $E$ has three or more nodes or contains a marked point.
         \item [(b)] If $E$ is an extremal branch then  the map $\mls E \to \mc X$ does not have an abelian contraction.
     \end{enumerate}
 \end{enumerate}
\end{lem}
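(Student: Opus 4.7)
I would prove the equivalence in both directions, relying on the following key observation: if $E \subset C_{\bar s}$ is a rational component whose associated stack $\mls E$ has at most two stacky points (points arising from nodes of $C$ or coincident markings), then any morphism $\mls E \to \mc X$ admits an abelian contraction. The case of two stacky points is exactly Proposition~\ref{P:newabelian} (possibly after relabeling one of the stacky points as $s_n$), while the cases of zero or one stacky points are either trivial or follow by adjoining an auxiliary distinguishing marked point so as to reduce to Proposition~\ref{P:newabelian}.

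For the forward direction, I would assume $(\mathbf{C}, f)$ is stable in the sense of Definition~\ref{def:weighted twisted stable map} and argue (i), (ii) by contrapositive. For (i), if $q$ contracts a rational tree $T$ to an unmarked $x' \in C'$ (either a smooth unmarked point or a node of $C'$), then $T$ bears no markings since $q$ preserves the smooth marking locus; a leaf component $E'$ of $T$ has at most two nodes and zero markings, so by the key tool $\mls{E'} \to \mc X$ admits an abelian contraction, whereupon condition (v) of Definition~\ref{def:weighted twisted stable map} requires $\#\{\text{nodes}\} + \sum a_i > 2$, a contradiction. For (ii)(a), an interior branch $E$ with no distinct markings has exactly two stacky points (the two nodes), again producing an abelian contraction violating (v). For (ii)(b), a contracted extremal $E$ is classically unstable, hence $1 + \sum_{s_i \in E} a_i \le 2$; an abelian contraction of $\mls E \to \mc X$ would force $\sum a_i > 1$ by (v), a contradiction.

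For the backward direction, I would assume (i), (ii), and representability of $f$, and verify (iv), (v) of Definition~\ref{def:weighted twisted stable map}. For (iv), by Remark~\ref{rem:finite auts}(a) it suffices that every rational component $E$ with zero-dimensional image in $X$ carries at least three unweighted special points. If $E$ is contracted by $q$ then by (i) it lies over a marked point of $C'$, and I would split by node count: extremal $E$ lacks an abelian contraction by (ii)(b), so by the key tool has at least three stacky points and hence at least two distinct markings; interior $E$ is handled by (ii)(a); the $\ge 3$-node case is automatic. If $E$ is not contracted then $g: C' \to X$ contracts $q(E)$, and stability of $g$ provides the count on $q(E)$, which transfers back to $E$ via the birational $q|_E$. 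For (v), given an abelian contraction of $\mls E \to \mc X$ with $E$ rational, I would use (ii)(b) to exclude the extremal contracted case, reducing to interior or $\ge 3$-node contracted components (where (ii)(a) together with positivity of the weights gives $\#\{\text{nodes}\} + \sum a_i > 2$) or to non-contracted $E$ (where weighted stability of $g$ on $q(E)$ transfers back). The main technical point is precisely this last transfer, which requires careful bookkeeping of how $q$ trades nodes of $E$ for coincident markings on $q(E)$, exploiting the prestability bound $\sum_{s_i(\bar s) = x} a_i \le 1$ to control the exchange.
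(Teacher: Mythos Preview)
Your proof is correct and follows the same approach as the paper's, pivoting on Proposition~\ref{P:newabelian} and the interior/extremal dichotomy for the fibers $q^{-1}(x')$. Two minor differences are worth noting. First, you explicitly handle components $E$ not contracted by $q$, transferring the special-point count and the weighted inequality from $q(E)$ back to $E$ using the prestability bound at $x'$; the paper checks (iv) and (v) only on interior and extremal branches and leaves this case implicit in the stability of $(C',g)$, so your treatment is a genuine (if small) gain in completeness. Second, in the direction stable $\Rightarrow$ (i),(ii) you deduce (i) and (ii)(a) from condition (v) via your key observation, whereas the paper deduces them directly from condition (iv); both are valid. One phrasing remark: your justification ``classically unstable'' for the bound $\sum_{s_i \in E} a_i \le 1$ on an extremal branch is vague---the clean reason is prestability of $(C',g)$ at the marked point $x'$, since all markings in $q^{-1}(x')$ collide there, giving $\sum_{s_i \in E} a_i \le \sum_{s_i' = x'} a_i \le 1$.
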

\begin{proof}
    First we show that if $(\mathbf{C}, f)$ satisfies (i) and (ii) then conditions (iv) and (v) in \ref{def:weighted twisted stable map} hold.  For this note that if $E$ is an interior branch then condition (ii) (a) implies that the automorphism group of $E$ is finite (see \ref{rem:finite auts}). 
If $E$ is an extremal branch with one distinct marked point (possibly the support of multiple $s_i$) then by \ref{P:newabelian} (i)  the map $\mls E \to \mc X$ has an abelian contraction, contradicting (ii)(b). So $E$ has at least two distinct marked points and hence a finite automorphism group.
    It follows that \ref{def:weighted twisted stable map} (iv) holds.
    To show that \ref{def:weighted twisted stable map} (v) holds let $E$ be a contracted component of $C$ for which $\mls E$ has an abelian contraction. By (ii) (b) we have that $E$ is not extremal, so $E$ is an interior branch and by (ii) (a) has at least three nodes or at least two nodes and one marking. It follows that \eqref{eq:numerics} holds.  Therefore the conditions in the lemma imply that $(\bC , f)$ is stable.
    
    Conversely if \ref{def:weighted twisted stable map} (iv) holds then 
    the preimage under $q$ of a non-marked smooth point is required to have trivial automorphism group.  Therefore (i) holds. 
 Furthermore any interior branch must have at least one marked point implying that (ii) (a) holds. Finally condition \ref{def:weighted twisted stable map} (v)  directly implies (ii) (b).
\end{proof}


 \begin{rem}\label{R:newcontraction}
 Condition (ii) (b) in \ref{D:5.7} has an equivalent formulation as follows. Let $E \subset q^{-1}(x')$ be an extremal branch, let $E^\circ \subset E$ be the complement of the node, and set $\mls E^\circ := E^\circ \times_E \mls E$. The map $\mls E \to BG_{x'}$ corresponds to a $G_{x'}$-torsor $Q \to \mls E$ and we let $Q^\circ$ be its restriction to $\mls E^\circ$. 
 The map $\mls E \to BG_{x'}$ factors through $BA \to BG_{x'}$ for some abelian subgroup $A \subset G_{x'}$ if and only if $Q$ is induced from an $A$-torsor. But if $Q^\circ \to \mls E^\circ$ is induced from an $A$-torsor $H^\circ \subset Q^\circ$, the closure $H \subset Q$ is an $A$-torsor with associated $G_{x'}$-torsor equal to $Q$. The fact that $H$ is, in fact, a torsor can be verified fppf locally on $\mls E$ when $Q$ is trivial. 
 Therefore condition (ii) (b) in \ref{D:5.7} is equivalent to the requirement that if $E\subset q^{-1}(x')$ is an extremal branch, the $G_{x'}$-torsor on $\mls E^\circ$ is not induced from an abelian subgroup $A \subset G_{x'}$. 
 \end{rem}

\section{The algebraic stack $\mls K_{g, \ba }(\cX)$}\label{S:section6b}

\begin{pg}
Let $\mc X$ be a separated tame Deligne-Mumford stack of finite type over a noetherian scheme $B$ and fix data $(g, \ba )$ consisting of a nonnegative integer $g$ and weights $a_i\in (0,1]\cap \mathbf{Q}$.
Let $\mls K_{g, \ba}(\cX)$ (resp. $\mls S_{g, \ba}(\cX)$) denote the fibered category over $B$ which to any $B$-scheme $T$ associates the groupoid of stable (resp. prestable) maps $(\mathbf{C}, f)$ to $\mc X$ over $T$ of type $(g, \ba )$. 
\end{pg}

The main result of this section is the following theorem.  

\begin{thm}\label{T:7.4} The stacks $\mls S_{g, \ba}(\mc X)$ and $\mls K _{g,  \ba }(\cX)$ are algebraic stacks locally of finite type and with quasi-compact and separated diagonals over $B$. 
\end{thm}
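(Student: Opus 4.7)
My plan is to realize $\mls S_{g,\ba}(\mc X)$ as a Hom-stack over the moduli of generalized log twisted curves, and then cut out $\mls K_{g,\ba}(\mc X)$ as a locally closed substack defined by the stability conditions.

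\emph{Step 1: Algebraicity of $\mls S_{g,\ba}(\mc X)$.} By \cite{OWI}, the fibered category $\mls M$ of $n$-marked generalized log twisted curves of genus $g$ is an algebraic stack locally of finite type over $B$ with finite diagonal. The $\ba$-weight condition (iii) of \ref{def:weighted twisted prestable map} is open on $\mls M$ by the argument of \ref{L:prestableopen}, yielding an open substack $\mls M^{\ba}\subset \mls M$ with universal generalized log twisted curve $\pi\colon\mls C^{\mathrm{univ}}\to \mls M^{\ba}$, a proper flat tame Deligne--Mumford stack. Since $\mc X\to B$ is a separated tame DM stack of finite type, Olsson's theorem on Hom-stacks then identifies
\[
\mls S_{g,\ba}(\mc X)\;\simeq\;\underline{\mathrm{Hom}}_{\mls M^{\ba}}\bigl(\mls C^{\mathrm{univ}},\,\mc X\times_B\mls M^{\ba}\bigr)
\]
as an algebraic stack locally of finite presentation over $\mls M^{\ba}$ with quasi-compact and separated relative diagonal. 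Composing with the finite diagonal of $\mls M^{\ba}\to B$ yields a quasi-compact and separated diagonal for $\mls S_{g,\ba}(\mc X)\to B$.

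\emph{Step 2: Cutting out $\mls K_{g,\ba}(\mc X)$.} I use the characterization in \ref{D:5.7}: stability is equivalent to representability of $f$ together with the conditions (i), (ii)(a), (ii)(b) on the coarse contraction $q\colon C\to C'$ to the stabilization. Representability of $f$ is an open condition, as non-representability is detected by a closed condition on the relative inertia of $f$. Conditions (i) and (ii)(a) amount to the classical combinatorial weighted stability on the coarse prestable map, and are open by \ref{T:2.4} together with \ref{rem:finite auts}.

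\emph{Step 3: The main obstacle.} The most subtle ingredient is condition (ii)(b), that no extremal branch $E$ admits an abelian contraction $\mls E\to \mc X$. The plan is to apply \ref{C:openprestable} to the whole family with $D=C'$ (the coarse stabilization, through which $C\to X$ factors): for each marking $x'\colon S\to C'$, the locus in $S$ where $\mls C_{x'(\bar s)}\to \mc X$ admits an abelian contraction is open, and hence its complement is closed. A stratification of $\mls S_{g,\ba}(\mc X)$ by the topological type of $q$ — a locally finite decomposition by dual graphs — allows one to track individual extremal branches on each stratum and assemble these closed conditions into a locally closed substack cutting out (ii)(b). Combined with Step 2, this realizes $\mls K_{g,\ba}(\mc X)$ as a locally closed substack of $\mls S_{g,\ba}(\mc X)$, which is therefore algebraic, locally of finite type over $B$, with quasi-compact separated diagonal inherited from $\mls S_{g,\ba}(\mc X)$.
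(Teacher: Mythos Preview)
Your Step 1 and the opening of Step 2 (algebraicity of the Hom-stack, openness of the weight condition, openness of representability) match the paper's approach. The divergence is in how you handle the stability conditions, and here there are genuine gaps.

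First, the identification in Step 2 is incorrect. Conditions (i) and (ii)(a) of \ref{D:5.7} are \emph{not} equivalent to classical weighted stability of the coarse map (i.e.\ condition (iv)). The proof of \ref{D:5.7} shows that (iv) needs (ii)(b) as well: an extremal branch with a single marked point satisfies (i) and (ii)(a) vacuously but has infinite coarse automorphism group; only (ii)(b) together with \ref{P:newabelian} forces such a branch to carry at least two distinct markings. So you cannot peel off (i)$+$(ii)(a) as a separately open condition by invoking \ref{T:2.4}.

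Second, the stratification argument in Step 3 does not yield what you claim. Having abelian contraction is open by \ref{C:openprestable}, so its negation is closed; but you are imposing this closed condition on individual extremal branches, whose very existence depends on the stratum. Taking a locally finite stratification by dual graph and imposing a closed condition on each stratum produces a constructible set, not in general a locally closed one. You also conflate the fiber $\mls C_{x'(\bar s)}$ over a marked point of $C'$ (a whole tree) with an extremal branch (a single leaf component), so \ref{C:openprestable} as stated does not apply directly.

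The paper avoids these issues by working with conditions (iv) and (v) directly rather than via \ref{D:5.7}. It proves the stable locus is \emph{open} (not just locally closed): condition (iv) is open by a general argument on quasi-finiteness of automorphism group schemes, and condition (v) is shown to be constructible and stable under generization. The generization step is the substantive part --- one specializes to a DVR, spreads an abelian contraction from the generic fiber to the special fiber by taking closures of reductions of torsors, and then reads off the numerical inequality. The constructibility step does use a topological-type stratification much as you suggest, but only after reducing to showing the set contains a dense open of an integral base, which is a weaker target than global local-closedness.
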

The proof of the theorem occupies the remainder of this section.  

\subsection{Reduction to openness of stability}
 Let $\fM _{g, n}^{glt}$ denote the stack of generalized log twisted curves defined in \cite{OWI} and let $\fC^{} \rightarrow \fM _{g, n}^{glt}$ be the stack associated to the universal generalized log twisted curve over $\fM _{g, n}^{glt}$.
Consider the base changes $\fM^{glt}_{g, n, B}$ and $\fC_B$ of $\fC$ and $\fM^{glt}_{g, n}$ to $B$ and the associated $\underline {\text{Hom}}$-stack
\[
\underline {\text{Hom}}_{\fM^{glt}_{g, n, B}}(\fC_B  , \mc X_{\fM^{glt}_{g, n, B}})
\]
classifying morphisms from $\fC_B $ to $\mc X$.
This $\underline {\text{Hom}}$-stack is algebraic and locally of finite presentation with quasi-compact and separated diagonal over $\fM _{g, n, B}^{glt}$ by \cite[Theorem C.2]{AOV}.
This stack contains the stacks $\mls S_{g, \ba}(\mc X)$ and $\mls K_{g, \ba}(\cX)$, and since $\fM_{g, n, B}^{glt}$ is locally of finite type with quasi-compact and separated diagonal over $B$ (by  \cite[2.31]{OWI}),  to prove \ref{T:7.4} it suffices 
to show that the inclusions \[\mls K _{g, \ba }(\cX)\hookrightarrow \mls S_{g, \ba}(\mc X) \hookrightarrow \underline {\text{Hom}}_{\fM _{g, n, B}^{glt}}(\fC_B , \mc X_{\fM _{ g, n, B}^{glt}})\] are representable by open immersions. 
By \ref{L:prestableopen} we know that the prestable locus is open, so this completes the proof for $\mls S_{g, \ba}(\mc X)$. Now, by \cite[1.6]{homstack} there is an open substack $\mls S_{g, \ba}^{\text{rep}} \subseteq \mls S_{g, \ba}$ of representable prestable maps, and the inclusion $\mls K_{g, \ba}(\mc X) \to \mls S_{g, \ba}$ factors through $\mls S_{g, \ba}^{\text{rep}}$. Thus, to complete the proof for $\mls K_{g, \ba}(\mc X)$ it suffices to show that if $(\mathbf{C}, f)$ is a representable prestable map to $\mc X/B$ of type $(g, \ba )$ over a $B$-scheme $S$ then there exists an open subscheme $S'\subset S$ such that the restriction of $(\mathbf{C}, f)$ to $S'$ is stable and any other morphism $T\rightarrow S$ for which the pullback of $(\mathbf{C}, f)$ to $T$ is stable factors through $S'$.  Since the notion of stability is defined on geometric fibers it therefore is enough to prove the following:

\begin{prop}\label{P:6.4b} Let $(\mathbf{C}, f)$ be a representable prestable map to $\mc X/B$ of type $(g, \ba )$ over a $B$-scheme $S$ and let $S'\subset S$ be the set of points $s\in S$ for which there exists a geometric point $\bar s\rightarrow S$ over $s$ for which the base change $(\mathbf{C}_{\bar s}, f_{\bar s})$ is stable.  Then $S'$ is an open subset of $S$.
\end{prop}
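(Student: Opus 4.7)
The strategy is to show the non-stable locus $S \setminus S'$ is closed under specialization; combined with constructibility (which follows from the structure of stability below and standard facts on $\underline{\mathrm{Hom}}$-stacks), this implies $S'$ is open. By Noetherian reductions we may assume $S = \Spec R$ for $R$ a discrete valuation ring, with generic point $s$ (assumed non-stable) and closed point $s_0$, and derive non-stability of $s_0$. Condition (iv) of \ref{def:weighted twisted stable map} is open by standard arguments on automorphism group schemes over $\underline{\mathrm{Hom}}$-stacks, and failure of (iv) is preserved under specialization, so we may assume (iv) holds on $S$ and focus on (v).

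By \ref{T:2.4} applied to the coarse map $\bar{f}: C \to X_S$, \'etale-locally on $S$ we obtain a family stabilization $q: C \to C'$ where $(C', \{q \circ s_i\}, g: C' \to X_S)$ is a family of $(g, \ba)$-weighted stable maps. By Lemma \ref{D:5.7}, stability of a geometric fiber is equivalent to: (i) $q_{\bar s}$ is an isomorphism outside the marked divisor $\Sigma'_{\bar s}$; (ii)(a) every interior branch of $q^{-1}(\Sigma'_{\bar s})$ has more than two special points; and (ii)(b) no extremal branch admits an abelian contraction to $\mc X$. Failure of (i) propagates to $s_0$ by properness of $q$ and the fact that $\Sigma'$ is a closed union of $S$-sections: a point of the closed image $q(Z) \subset C'$ lying outside $\Sigma'$ over $s$ specializes to a point of $q(Z)$ outside $\Sigma'$ over $s_0$.

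For failures of (ii)(a) or (ii)(b), let $E_s$ be the offending branch and let $\bar{\mls E} \subset \mls C$ be its scheme-theoretic closure over $S$. This is proper over $S$, and $\bar{\mls E}_{s_0}$ is a subtree of rational components of $q^{-1}(x'_{s_0})$ for some marked point $x'$. In case (ii)(b), the generic-fiber abelian contraction $\mls E_s \to BA \to \mc X$ extends over the trait to $\bar{\mls E} \to BA \to \mc X$, so every component of $\bar{\mls E}_{s_0}$ inherits an abelian contraction; a counting argument --- the sum over the $k$ irreducible components of $\bar{\mls E}_{s_0}$ of their node counts equals $2(k-1) + N$, where $N$ is the number of external nodes (equal to $1$ when $E_s$ is extremal, $2$ when interior) --- forces the existence of a component of $\bar{\mls E}_{s_0}$ that is either extremal (and hence violates (ii)(b) at $s_0$ by its inherited abelian contraction) or interior with $\leq 2$ special points (violating (ii)(a)). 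In case (ii)(a), the same counting together with \ref{P:newabelian} (and its variant for the degenerate single-marking case) detects non-stability at $s_0$ either via an interior branch violating (ii)(a) or via an extremal branch with coincident or no markings carrying an abelian contraction by \ref{P:newabelian}.

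The main obstacle is establishing that the generic-fiber abelian contraction extends to $\bar{\mls E} \to BA$ over the trait $S$. This holds because $BA \to \Spec k$ is a proper \'etale gerbe for finite abelian $A$, so morphisms into $BA$ satisfy the valuative criterion of properness --- equivalently, the fibered category of factorizations of $\bar{\mls E} \to \mc X$ through $BA$ is represented by a closed substack of $S$ that contains the generic point, hence is all of $S$. (Alternatively, one can invoke \ref{L:factorlem} \'etale-locally to obtain the extension.) Once this extension is in hand, the combinatorial counting on the subtree $\bar{\mls E}_{s_0}$ is elementary but requires careful bookkeeping of external versus internal nodes across the specialization.
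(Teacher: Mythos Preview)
Your overall strategy --- constructibility plus stability under generization, argued in the contrapositive as specialization of non-stability over a DVR --- is the same as the paper's. Both split off condition (iv) via group-scheme generalities and reduce the heart of the matter to propagating an abelian contraction along a specialization: given a contracted genus-zero component $E_\eta$ with an abelian contraction at the generic fiber, show the components of its closure at $s_0$ inherit abelian contractions, then transfer the numerics.

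However, your justification of the key extension step is not right as stated. Properness of $BA$ gives the valuative criterion for maps from a DVR into $BA$, but here you are extending a morphism from the two-dimensional source $\bar{\mls E}$; and \ref{L:factorlem} lifts a factorization from a geometric fiber outward to an \'etale neighborhood, which is the wrong direction. Nor does it follow that ``factorizations through $BA$'' are represented by a closed subscheme of $S$ --- indeed the representable map $BA \to \mc X$ at the generic point need not itself extend over $S$. The paper's argument is what you actually need: after replacing $\mc X$ by $[U/G]$ \'etale-locally, the map $\bar{\mls E} \to \mc X$ gives a $G$-torsor $P$ defined on all of $\bar{\mls E}$; the generic $A$-reduction $R_\eta \subset P_\eta$ has closure $R \subset P$, and \'etale-locally on $\bar{\mls E}$ (where $P$ trivializes) one sees $R$ is an $A$-torsor. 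This yields a factorization through $[U/A]$ over all of $\bar{\mls E}$, hence an abelian contraction on each component of the closed fiber (for a possibly smaller abelian group than $A$).

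Two further gaps. First, you dismiss constructibility as standard, but it requires real input: the paper spreads out to constant topological type and uses \ref{C:openprestable} to show that the locus where a given component admits an abelian contraction is open. Second, you invoke \ref{T:2.4} for a family stabilization $q: C \to C'$, but that result assumes $X$ projective, which is not a hypothesis of \ref{P:6.4b}; the paper avoids this entirely by working directly with condition (v) rather than the reformulation \ref{D:5.7}. Relatedly, once you assume (iv) holds on $S$, the cases you label (i) and (ii)(a) are vacuous (they are implied by (iv) via the proof of \ref{D:5.7}), so your argument simplifies to the (ii)(b) case alone.
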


\begin{rem} Note that by \ref{L:5.13} if $s\in S$ is a point for which there exists a geometric point $\bar s\rightarrow s$ for which $(\mathbf{C}_{\bar s},f _{\bar s})$ is stable, then for every geometric point over $s$ the base change of $(\mathbf{C}, f)$ to that point is stable.
\end{rem}

To prove \ref{P:6.4b}, we let $S_1 \subset S$ (resp. $S_2 \subset S$) be the set of points $s \in S$ for which there exists a geometric point $\bar s \to S$ over $s$ for which the base change $(\bC_{\bar s}, f_{\bar s})$ satisfies condition (iv) (resp. condition (v)) in the definition \ref{def:weighted twisted stable map} of stability, so $S' = S_1 \cap S_2$. We show that both $S_1$ and $S_2$ are open. 

That $S_1$ is open follows from a general argument using only that the stack of representable maps $\Hom^{\mathrm{rep}}_{\fM^{glt}_{g, n, B}}(\fC_B, \cX_{\fM^{glt}_{g, n, B}})$ has quasi-compact and separated diagonal. Indeed, by \cite[\href{https://stacks.math.columbia.edu/tag/0DSM}{Tag 0DSM}]{stacks-project} and \cite[\href{https://stacks.math.columbia.edu/tag/0DSN}{Tag 0DSN}]{stacks-project} there is an open subscheme $S'' \to S$ such that $\bar s \to S$ factors through $S''$ if and only if the automorphism group algebraic space of the corresponding prestable map is locally quasi-finite. Since this group algebraic space is moreover separated and quasi-compact, by \cite[\href{https://stacks.math.columbia.edu/tag/03XX}{Tag 03XX}]{stacks-project} it is a scheme, hence by \cite[\href{https://stacks.math.columbia.edu/tag/02NH}{Tag 02NH}]{stacks-project} it is finite over the base field. Conversely, if $\bar s \to S$ corresponds to a stable map with finite automorphism group scheme, this group is (locally) quasi-finite by \cite[\href{https://stacks.math.columbia.edu/tag/02NU}{Tag 02NU}]{stacks-project}. So $S''$ is the desired set $S_1$.

To show that $S_2$ is open
we can replace $S$ by an open cover and therefore may assume that $S=\Sp (R)$ is affine.  Writing $R = \text{colim} R_i$ as a colimit of algebras of finite type over $\mathbf{Z}$ we then can ``spread out'' our data $(\mathbf{C}, f)$ to an object over one of the $R_i$ thereby further reducing to the case when $S$ is affine of finite type over $\mathbf{Z}$ (and hence Noetherian). Since $S$ is affine the underlying topological space is spectral, hence by  
\cite[\href{https://stacks.math.columbia.edu/tag/0903}{Tag 0903}]{stacks-project} to show that $S_2$ is open it is enough to show that it is constructible and stable under generization.

\subsection{Generization}\label{sec:generization} 
Let $\eta, s \in S$ be such that $\eta$ specializes to $s$, and assume that $(\bC_{\bar s}, f_{\bar s})$ satisfies condition (v) of \ref{def:weighted twisted stable map}. Our goal is to show that $(\bC_{\bar \eta}, f_{\bar \eta})$ also satisfies this condition.
Since $S$ is finite type over $\mathbf{Z}$, by \cite[\href{https://stacks.math.columbia.edu/tag/00PH}{Tag 00PH}]{stacks-project} (applied to the local ring at $s$ in the subscheme given by the closure of $\eta$) we can find a discrete valuation ring $V$ such that $\Sp(V) \to S$ sends the generic point of $V$ to $\eta$ and the closed point to $s$. Hence we can assume $S=\Sp(V)$. We will use $C_\eta$ and $C_s$ (resp. $C_{\bar \eta}, C_{\bar s}$) to denote the base change of $C$ to the respective residue fields (resp. algebraic closures of the residue fields). By \cite[\href{https://stacks.math.columbia.edu/tag/09E8}{Tag 09E8}]{stacks-project}, after replacing $V$ by a finite extension we may assume the irreducible components of $C_\eta$ biject with those of $C_{\bar \eta}$. By \ref{lem:abelian gerbe}, after passing to a further extension we may assume that this bijection identifies components of $C_\eta$ that have an abelian contraction with components of $C_{\bar \eta}$ that have an abelian contraction. 

Let $E_{\bar \eta} \subset C_{\bar \eta}$ be a genus-zero component for which the induced map $\mls E_{\bar \eta} \to \cX$ has an abelian contraction. The component $E_{\bar \eta}$ corresponds to a component $E_\eta$ of $C_\eta$, and the closure of this component is a subscheme $E$ of $C$ flat and proper and with geometrically connected fibers over $S$. In particular the fiber $E_{\bar s}$ is connected and a union of genus-zero irreducible components of $C_{\bar s}$. 
Moreover the map $E \to X$ factors through a section $\sigma: S \to X$: for this, it is enough to show that the scheme theoretic image of $E$ in $X$ maps isomorphically to $S$. But this can be verified fppf locally on $S$ where we can assume $E$ has a section in the smooth locus.

We show that $\mls E_{s} \to \cX$ has an abelian contraction. By \cite[3.2]{tame}, after replacing $V$ by a finite extension, we may write $\mc Y:= \mc X \times_{X, \sigma} S$ as $[U/G]$ where $G$ is a finite group of order invertible in $S$ and $U$ is a finite $S$-scheme. The map $\mls E \to \mc X$ factors through $\mc Y$ and corresponds to a $G$-torsor $P \to \mls E$ with equivariant map $P \to U$. Since  $\mls E_\eta \to \mc X$ has an abelian contraction, the restriction $P_\eta$ of $P$ to the generic fiber is induced from a torsor $R_\eta$ for an abelian subgroup $A \subset G$ (note that the stabilizer group of $\mc Y$ over the generic point of $V$ is not necessarily all of $G$ but a subgroup).   The closure $R$ of $R_\eta$ in $P$ is a reduction of $P$ to $A$: one can check that $R$ is an $A$-torsor \'etale locally on $\mls E$ where $P$ is a trivial torsor, where it is immediate. The fiber $R_s$ of $R$ over $s$ then provides a factorization of $\mls E_s \to \mc X$ through $BA$.

Since $( \mathbf{C}_{\bar s}, f_{\bar s})$ satisfies condition (v) of \ref{def:weighted twisted stable map} and $\mls E_{s} \to \cX$ has an  abelian contraction, every component of $E_{\bar s}$ satisfies \eqref{eq:numerics}. In particular, when we mark the $m$ nodes connecting $E_{\bar s}$ to the rest of $C_{\bar s}$, we see that $E_{\bar s} \to X$ is a weighted stable map in the sense of \ref{S:review} and 
\[
m + \sum_{i, s_i(\bar s) \in E_{\bar s}} a_i > 2,
\]
where $m$ is the number of points in $E_{\bar \eta}$ that are nodes in $C_{\bar \eta}$. Since $m$ is also the number of points connecting $E_{\bar s}$ to the rest of $C_{\bar s}$ and $s_i(\bar s) \in E_{\bar s}$ implies $s_i(\bar \eta) \in E_{\bar \eta}$, we conclude that $E_{\bar s}$ satisfies \eqref{eq:numerics}.

\subsection{Constructibility}

To show that $S_2$ is constructible,
by \cite[\href{https://stacks.math.columbia.edu/tag/053Z}{Tag 053Z}]{stacks-project} it is enough to assume $S$ is integral and show that if $S_2$ is nonempty, it contains a nonempty open subset of $S$. If $S_2$ is nonempty, then since it is stable under generization, it contains the generic point $\eta$ of $S$. Hence it is enough to show that if $S$ is integral and $S_2$ contains the generic point $\eta$, it contains a dense open.
We will repeatedly use that if $S' \to S$ is a dominant morphism of finite type of integral schemes and this result holds after base change to $S'$, then it holds on $S$ by Chevalley's theorem \cite[\href{https://stacks.math.columbia.edu/tag/054K}{Tag 054K}]{stacks-project}.

Making such a base if necessary we may therefore assume that the topological type of $C$ is constant. Indeed after replacing $S$ by a cover we may assume that the irreducible components of the generic fiber are geometrically irreducible, and then by restricting to an open subset of $S$ that the closure $E$ of  every irreducible component $E_\eta $  in the generic fiber  is a proper smooth curve over $S$. In other words, we have reduced to the situation where every irreducible component of $C_\eta$ is geometrically irreducible and the components of every geometric fiber of $C \to S$ biject with the components of the generic fiber.

Let $E_1, \ldots, E_m$ denote the irreducible components of $C \to S$ that have genus zero in some, and hence every, fiber, and such that the morphisms $E_i \to X$ factor through respective sections $\sigma_i: S \to X$ in the generic fiber. The closure of the image of the generic point of $S$ under $\sigma_i$ will map isomorphically to $S$ over a dense open. Hence after replacing $S$ by the intersection of these dense opens we may assume that the factorization through $\sigma_i$ holds on all of $S$.

By \ref{C:openprestable} there is a (possibly empty) open set $S_i \subset S$ where geometric fibers of $\mls E_i \to \mc X$ over $S$ admit an abelian contraction. Replace $S$ with the intersection of the nonempty $S_i$ and replace the list $E_1, \ldots, E_m$ with the list of those components for which $\mls E_i \to \mc X$ now has an abelian contraction in every fiber. (Note that, in every geometric fiber, these $E_i$ are also the only genus-zero components with abelian contractions.) Since condition (v) of \ref{def:weighted twisted stable map} is satisfied in the generic fiber, each $E_i$ satisfies $\# \text{nodes} + \sum a_i > 2$. In fact this inequality holds in every geometric fiber since the topological type is constant. Thus condition (v) of \ref{def:weighted twisted stable map} is satisfied in every geometric fiber.

This completes the proof of \ref{P:6.4b} and therefore also \ref{T:7.4}. \qed

\section{Contracting maps}\label{S:mapcontract}

\begin{pg}\label{S:mapcontract-setup}
Let $\mc X$ be a tame Deligne-Mumford stack over a noetherian base $B$. Let $(q, \phi') :\bC \rightarrow \bD$ be a contraction of generalized log twisted curves over a $B$-scheme $S$ and
assume that this contraction is initial in the sense of \cite[6.14]{OWI}.  Let $Q: \mls C \to \mls D$ be associated morphism of stacks (see \cite[6.7]{OWI}). Let $f:\mls C\rightarrow \mc X$ be a $B$-morphism such that the induced map on coarse moduli spaces $C\rightarrow X$ has a factorization $C \xrightarrow{q} D \to X$.

Let $T \to S$ be a morphism. The \textit{groupoid of factorizations} of $f_T: \mls C_T \to \mc X$ through $Q_T: \mls C_T \to \mls D_T$ is the category whose objects are pairs $(g: \mls D_T \to \cX, \alpha)$ such that the coarse moduli morphism of $g$ is the given map $D \to X$ and $\alpha$ is a 2-isomorphism from $g \circ Q_T$ to $f_T$. A morphism from $(g_1, \alpha_1)$ to $(g_2, \alpha_2)$ is a 2-isomorphism from $g_1$ to $g_2$ that is compatible with $\alpha_1$ and $\alpha_2$. 


If $q: C \to D$ has the property that for every $x \in D$, the fiber over $x$ meets at most one section $s_i$ of $C$, we say that $q$ is \textit{classical}.
Observe that $q$ is classical if and only if every geometric fiber $q_{\bar s}: C_{\bar s} \to D_{\bar s}$ can be factored as a sequence of contractions, each of which contracts a rational bridge with no markings or a rational tail with at most one marking.

The main purpose of this section is to prove the following theorem, which will play a key role in our discussion of stabilization. 
\end{pg}

\begin{thm}\label{T:9.2} 
Let $(q, \phi'): \bC \to \bD$ be an initial contraction over a $B$-scheme $S$ and let $f: \mls C \to \mc X$ be a morphism such that the induced map on coarse moduli spaces factors through $q$.
Assume that for every geometric point $\bar y\rightarrow D$ the induced map on the fiber $\mls C_{\bar y}\rightarrow \mc X$ admits an  abelian contraction.
\begin{enumerate}
\item For any $T \to S$ the groupoid of factorizations of $f_T$ through $Q_T$ is equivalent to a set.
\item
The functor which to any $S$-scheme $T\rightarrow S$ associates this set of factorizations is represented by a closed subscheme of $S$. 
\item If moreover $q$ is classical, or if $\cX = BG$ for some finite \'etale group scheme $G$ over $B$ of order invertible in $B$, then this functor is represented by $S$ itself.
\end{enumerate}
\end{thm}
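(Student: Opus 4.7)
The plan is to handle the three parts in sequence, reducing to ever more specific situations.

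For part (1), the key is that $Q: \mls C_T \to \mls D_T$ is an epimorphism of stacks, being an isomorphism over the complement of the exceptional locus in $\mls D$ and surjective onto that locus as well. A 2-automorphism $\beta$ of a factorization $(g, \alpha)$ compatible with the identity on $f$ is a section of the pullback inertia $g^{*}I_{\mc X/B}$ over $\mls D_T$ whose further pullback along $Q_T$ is trivial; surjectivity of $Q_T$ then forces $\beta$ itself to be trivial, collapsing the groupoid of factorizations to a set.

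For part (2), the strategy is to work étale locally on $X$ using the local structure of tame Deligne--Mumford stacks. After shrinking on $S$ and $D$ one may assume the image of $f$ lands in an étale neighborhood where $\mc X$ has the form $[U/G]$ for a finite group $G$ of order invertible on the base (using \cite[3.2]{tame}), so that $f$ corresponds to a $G$-torsor $P \to \mls C$ equipped with a $G$-equivariant map $P \to U$, and a factorization amounts to a descent of this data along $Q$. By \ref{L:factorlem} applied with the abelian-contraction hypothesis, after further étale localization near each geometric point of $D$ one may replace $G$ by an abelian subgroup $A$ through which the relevant restriction of $f$ factors, reducing to a descent question for abelian torsors along $Q$. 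Over the exceptional fibers of $q$, which are stacky rational chains, one applies \ref{P:3.15} to identify potential descents with character data on the residue gerbes of $\mls D$ along the image of the exceptional locus. The existence of a global descent is then cut out by the closed conditions that the various characters and compatibilities produced locally match the discrete data encoded in $\bD$; uniqueness from part (1) permits patching, yielding a closed subscheme $S' \subseteq S$ representing the functor.

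For part (3), in the classical case each exceptional fiber of $q$ is obtained by successively contracting rational tails with at most one marking and rational bridges with no markings. For such a contracted component $E$ with associated stack $\mls E$, the hypotheses of \ref{P:newabelian} apply so the restriction $f|_{\mls E}$ automatically admits an abelian contraction, with the result that the closed conditions identified in part (2) are satisfied unconditionally on $S$. For $\mc X = BG$ with $G$ a tame étale finite group scheme, the descent reduces to showing that the $G$-torsor $P \to \mls C$ descends to $\mls D$; fiberwise descent follows from \ref{P:3.15} applied to each geometric fiber of $q$ (whose stacky structure matches that of the rational chains $\mls P$), and globalization follows from the fact that $Q_{*}\mls O_{\mls C} = \mls O_{\mls D}$ together with the correspondence between $G$-torsors and their associated étale covers.

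The main obstacle will be carrying out part (2), particularly globalizing the local abelian reductions into a coherent descent datum. The abelian subgroup $A$ provided by \ref{L:factorlem} depends on the geometric point, and assembling these reductions while tracking the resulting obstructions as \emph{closed} (rather than merely locally closed) conditions on $S$ requires careful bookkeeping with the machinery of Section \ref{sec:prelims} and the line bundle/monodromy analysis on stacky $\mathbf{P}^{1}$'s from Section \ref{S:section8}.
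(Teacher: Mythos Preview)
Your proposal misses the central difficulty of the theorem and, as a result, part (2) does not go through and part (3) contains a false claim.

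The key dichotomy in the paper is between contracting a rational \emph{bridge} versus a rational \emph{tail}. For a bridge one has $\mls O_{\mls D}\simeq RQ_*\mls O_{\mls C}$ (Lemma \ref{L:5.4.1}), and then your outline is essentially correct: torsors descend by \ref{prop:5.2-for-BG}, the equivariant map to an affine chart of $\mc X$ descends because global sections agree, and one concludes via \ref{L:5.7}. But for a tail with several colliding markings the adjunction $\mls O_{\mls D}\to Q_*\mls O_{\mls C}$ is only \emph{injective}, not an isomorphism (Lemma \ref{L:tails}). Its cokernel $\mc F$ is the genuine obstruction to factorization, and your assertion in part (3) that ``$Q_*\mls O_{\mls C}=\mls O_{\mls D}$'' is false precisely in this case. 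Consequently your globalization argument for $\mc X=BG$ breaks: the $G$-torsor does descend (by \ref{prop:5.2-for-BG}), but when $\mc X=[W/G]$ the equivariant map to $W$ need not, because $H^0(P_0,\mls O_{P_0})\to H^0(P,\mls O_P)$ is a proper inclusion. For $\mc X=BG$ there is no $W$-data, which is why that case still works---but not for the reason you give.

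This same gap undermines your part (2). You say the closed conditions on $S$ come from ``matching characters'' via \ref{P:3.15}, but the torsor side (the characters) always descends; the obstruction lives in the cokernel $\mc F$. The paper does \emph{not} write down explicit closed equations. Instead it shows that the factorization functor $\Sigma\to S$ is (i) a monomorphism, from injectivity of $\mls O_{\mls D}\to Q_*\mls O_{\mls C}$; (ii) quasi-compact, by comparing $\Hom$-stacks; and (iii) satisfies the valuative criterion for properness, because when $S$ is integral the cokernel $\mc F$ is \emph{flat} over the base (Corollary \ref{C:5.13c}), so a generic factorization forces the obstruction map to vanish identically. Then $\Sigma\to S$ is a proper monomorphism, hence a closed immersion. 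Establishing flatness of $\mc F$ requires the explicit local computation of $H^0(\mc Q,\mls O_{\mc Q})$ carried out in \ref{P:5.10}--\ref{L:9.14}, which is the technical heart of the proof and is entirely absent from your sketch.

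Finally, in the classical case of part (3) your invocation of \ref{P:newabelian} is misplaced: that proposition says abelian contractions \emph{exist}, but existence of abelian contractions is already a hypothesis of the theorem and is not the obstruction. The reason the classical case works is Corollary \ref{C:classical}: when the tail carries at most one marking, $\mls O_{\mls D}\to Q_*\mls O_{\mls C}$ \emph{is} an isomorphism, so $\mc F=0$ and the obstruction vanishes.
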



\begin{rem}
In particular, the theorem asserts that a factorization of $\mls C_T \to \mc X$ through $\mls D_T$ is unique up to unique isomorphism, when it exists, and that a factorization will always exist if $\mc X = BG$. We note that when $q$ is classical the theorem is essentially \cite[9.1.1]{AV}, except that we do not require $\mc X$ to have projective coarse moduli.
\end{rem}

Theorem \ref{T:9.2} can be reformulated as follows. Let $\Hom(\mls C, \mc X)$ be the stack over $S$ that to $T \to S$ associates the groupoid of $T$-morphisms $\mls C_T \to \mc X_T$. By \cite[Theorem C.2]{AOV} the stack $\Hom(\mls C, \mc X)$ is algebraic and locally of finite presentation over $S$. Similarly we have algebraic stacks $\Hom(\mls D, \mc X)$, $\Hom(C, X)$, and $\Hom(D, X)$, each locally of finite presentation over $S$. On the other hand, let $\Hom^{\ab, \, D}(\mls C, \mc X)$ be the subcategory of $\Hom(\mls C, \mc X)$ that to $T \to S$ associates the groupoid of morphisms $\mls C_T \to \mc X_T$ whose coarse moduli morphisms factor as $C_T \xrightarrow{q_T} D_T \to X$ and that have an abelian contraction in every geometric fiber over $D$. We have a commuting diagram
\[
\begin{tikzcd}
\Hom^{\ab,\, D}(\mls C, \mc X) \arrow[r, hookrightarrow, "i"] & \mls H \arrow[r] \arrow[d] & \Hom(\mls C, \mc X) \arrow[d, "\pi"] \\
& \Hom(D, X) \arrow[r, "z\mapsto z\circ q"] & \Hom(C, X)
\end{tikzcd}
\]
where $\mls H$ is defined so the square is cartesian. By \ref{L:2.14b} the morphism $i$ is an open immersion and by \cite[C.2, C.3]{AOV} the morphism $\pi$ is of finite type. It follows that $\Hom^{\ab,\,D}(\mls C, \mc X)$ is an algebraic stack locally of finite presentation over $S$. Composition with $Q$ defines a morphism of algebraic stacks
\begin{equation}\label{eq:contract-iso}
\Hom(\mls D, \mc X) \to \Hom^{\ab,\, D}(\mls C, \mc X).
\end{equation}

\begin{rem}\label{R:reform}
Theorem \ref{T:9.2} is equivalent to the assertion that \eqref{eq:contract-iso} is a closed immersion, and an isomorphism if $\mc X = BG$ or if $q$ is classical. 
\end{rem}

We begin our proof of \ref{T:9.2} by proving a local version of this reformulation \ref{R:reform} in the case when $\mc X = BG$.

\begin{prop}\label{prop:5.2-for-BG}
Assume $S$ is noetherian and strictly henselian local, $q:C \to D$ contracts a single rational component of the closed fiber, and that $G$ is a tame \'etale group scheme over $S$.
 Let $\uTors^{G}_{\mls D/D}$ (resp. $\uTors^{G, \ab }_{\mls C/D}$) be the stack over the \'etale site of $D$ whose objects over $T \to D$ are $G$-torsors on $\mls D_T$ (resp. on $\mls C_T$, such that the induced morphism $\mls C_T \to BG$ has an abelian contraction in every geometric fiber over $T$).  Then the morphism of stacks 
\begin{equation}\label{eq:new-torsor}
\uTors^{G}_{\mls D/D} \to \uTors^{G, \ab }_{\mls C/D}
\end{equation}
defined by pullback along
$Q: \mls C \to \mls D$ is an equivalence. 
\end{prop}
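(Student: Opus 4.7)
The plan is to reduce to the strict Henselization $D^{sh}$ of $D$ at a geometric point $\bar y_0 \to D$ above the image of the contracted rational component. Since $Q$ is an isomorphism away from this image, the pullback functor is trivially an equivalence there, so the only interesting case is the behavior near $\bar y_0$. After this base change, the closed fiber of $\mls C \times_D D^{sh} \to D^{sh}$ will be a stacky $\mathbf{P}^1$ $\mls E$ as studied in Section \ref{S:section8}, the closed fiber of $\mls D \times_D D^{sh} \to D^{sh}$ will be its associated abelian gerbe $BD(X_m)$, and $Q$ will restrict to the canonical morphism $\rho: \mls E \to BD(X_m)$ from that section.

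For fully faithfulness, the main input is that $Q_*\mls O_{\mls C} = \mls O_{\mls D}$ (a consequence of $q$ being an initial contraction) together with the properness of $Q$ and the geometric connectedness of its fibers. Given $G$-torsors $P_1, P_2$ on the base change of $\mls D$, the sheaf $F := \underline{\mathrm{Isom}}_G(P_1, P_2)$ is \'etale over $\mls D \times_D D^{sh}$, and a Stein factorization argument then shows that any morphism from $\mls C \times_D D^{sh}$ to an \'etale $(\mls D \times_D D^{sh})$-space factors uniquely through $Q$. Hence sections of $F$ biject with sections of its pullback to $\mls C \times_D D^{sh}$, which gives the required bijection on isomorphisms of $G$-torsors.

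For essential surjectivity, given a $G$-torsor $T$ on $\mls C \times_D D^{sh}$ with abelian contractions in every geometric fiber, Proposition \ref{P:3.15} provides a unique $G$-torsor $H_0$ on $BD(X_m)$ with $\rho^*H_0 \simeq T|_{\mls E}$. The plan is to extend $H_0$ to a $G$-torsor $H$ on $\mls D \times_D D^{sh}$ and to check that $Q^*H \simeq T$. Both steps rely on a specialization principle of SGA 1 type: for a proper tame Deligne-Mumford stack over a strictly Henselian local base and $G$ a finite \'etale group scheme of order invertible on the base, restriction to the closed fiber induces an equivalence of $G$-torsor categories. The extension of $H_0$ to $H$ is then immediate, and since $Q^*H$ and $T$ both restrict to $T|_{\mls E}$ on the closed fiber of $\mls C \times_D D^{sh} \to D^{sh}$, they are isomorphic by the same specialization applied to that morphism. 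The main obstacle will be establishing the SGA 1 specialization for tame DM stacks; the scheme case is classical (SGA 1 Expos\'e X), and the stack version can be obtained either by taking suitable \'etale covers, or directly via deformation theory, noting that $\mathrm{Lie}(G) = 0$ for $G$ \'etale kills all obstructions and ambiguities to lifting $G$-torsors through square-zero thickenings, with algebraization supplied by the Henselian property of $D^{sh}$.
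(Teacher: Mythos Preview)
Your overall strategy matches the paper's: reduce to the strict henselization $\mls O_{D,\bar y}$ at the image of the contracted component, identify the reduced closed fibers of $\mls C_{(\bar y)}$ and $\mls D_{(\bar y)}$ with a stacky $\mathbf{P}^1$ $\mls P$ and with $BD(X_m)$ respectively, invoke \ref{P:3.15} on the closed fiber, and use an SGA~1--type specialization argument to pass from the closed fiber to the full henselian base. That is exactly what the paper does; its Lemma~\ref{L:9.22b} is precisely your ``specialization principle,'' proved via invariance of the \'etale site under nilpotent thickenings, Grothendieck existence, and Artin approximation.

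There is, however, a genuine gap in your full-faithfulness step. You assert that $Q_*\mls O_{\mls C}=\mls O_{\mls D}$ ``as a consequence of $q$ being an initial contraction,'' and then run a Stein factorization argument. This equality is \emph{false} in general when $q$ contracts a rational tail carrying more than one marking: Lemma~\ref{L:tails} (see also~\ref{L:9.14}) computes the cokernel of $\mls O_{\mls D_{(\bar y)}}\to Q_*\mls O_{\mls C_{(\bar y)}}$ explicitly and shows it is nonzero in that case. So the justification you give for the Stein step does not hold, and since the proposition is supposed to cover both the bridge and the tail cases, this is not a harmless restriction.

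The fix is already in your hands: your specialization principle gives full faithfulness as well as essential surjectivity. Once you know that restriction to the closed fiber induces equivalences $\Tors^G(\mls D_{(\bar y)})\simeq \Tors^G(BD(X_m))$ and $\Tors^{G,\ab}(\mls C_{(\bar y)})\simeq \Tors^{G,\ab}(\mls P)$, both directions follow from~\ref{P:3.15}. This is exactly the route the paper takes; the separate Stein argument is unnecessary. Two smaller points: the passage from the formal completion back to the henselization needs Artin approximation, not merely the henselian property; and the identification of the stabilizer of $\mls D$ at $\bar y$ with $D(X_m)$ (rather than some other diagonalizable group) is a nontrivial input from the construction of the initial contraction, which the paper cites from \cite[proof of 6.15]{OWI}.
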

\begin{proof}
For a geometric point $\bar y\rightarrow D$ let $\mls C_{(\bar y)}$ (resp. $\mls D_{(\bar y)}$) denote the base change $\mls C\times _D\Sp (\mls O_{D, \bar y})$ (resp. $\mls D\times _D\Sp (\mls O_{D, \bar y})$).  By a standard limit argument it suffices to show that the pullback functor
$$
\Tors ^G(\mls D_{(\bar y)})\rightarrow \Tors ^{ G, \ab }(\mls C_{(\bar y)})
$$
is an equivalence, where the right side is the category of $G$-torsors on $\mls C_{(\bar y)}$ whose restriction to the fiber of $\mls C_{(\bar y)}$ over the geometric point $\bar y$ has abelian reduction.

    Furthermore, it suffices to consider a geometric point $\bar y\rightarrow D$ such that the fiber $C_{\bar y}$ contains the single contracted rational component $P\subset C_{\bar y}$ (at other points the map $Q$ is an isomorphism).  Let $\mls P$ be the stacky projective line in $\mls C_{\bar y}$ lying over $P$, and let $D(X)$ be the stabilizer group scheme of $\mls D$ over $\bar y$, so we have an inclusion $BD(X)\hookrightarrow \mls D_{(\bar y)}$.

\begin{lem}\label{L:9.22b} The restriction functors 
\begin{equation}\label{E:9.22.1}
\Tors ^{G }(\mls D_{(\bar y)})\rightarrow \Tors ^{G}(BD(X)), \ \ 
\Tors ^{G, \ab }(\mls C_{(\bar y)})\rightarrow \Tors ^{G, \ab }(\mls P)
\end{equation}
are equivalences. 
\end{lem}
\begin{proof}

Let $R$ denote the strictly henselian local ring $\mls O_{D, \bar y}$ and let $\mathfrak{m}_R\subset R$ be the maximal ideal.
For an integer $n\geq 1$ let $\mls D_{(\bar y), n}\subset \mls D_{(\bar y)}$ (resp. $\mls C_{(\bar y), n}\subset \mls C_{(\bar y)}$) denote the closed substack defined by the pullback of the ideal $\mathfrak{m}_R^n\subset R$.  Note that each $\mls D_{(\bar y), n}$ (resp. $\mls C_{(\bar y), n}$) is a nilpotent thickening of $BD(X)$ (resp. $\mls P$).  By the invariance of the \'etale site under nilpotent thickenings it follows that the functors
$$
\varprojlim _n\Tors ^{G }(\mls D_{(\bar y), n})\rightarrow \Tors ^{G}(BD(X)), \ \ \varprojlim _n\Tors ^{G , \ab }(\mls C_{(\bar y), n})\rightarrow \Tors ^{G, \ab }(\mls P)
$$
are equivalences.  By the Grothendieck existence theorem the functors
$$
\Tors ^{G}(\mls D_{(\bar y), \hat R})\rightarrow \varprojlim _n\Tors ^{G}(\mls D_{(\bar y), n}), \ \ \Tors ^{G, \ab }(\mls C_{(\bar y), \hat R})\rightarrow \varprojlim _n\Tors ^{G, \ab }(\mls C_{(\bar y), n})
$$
are equivalences, where $\mls D_{(\bar y), \hat R}$ and $\Tors ^{G^\et , \ab }(\mls C_{(\bar y), \hat R})$ denote the base changes to the completion $\hat R$ of $R$ (note also that $\mls D_{(\bar y)}$ and $\mls C_{(\bar y)}$ are proper over $R$).  By the Artin approximation theorem we then deduce that the functors \eqref{E:9.22.1} are also equivalences (see also \cite[Proof of 3.1]{MR0268188} where a similar argument is made). 
\end{proof}

To complete the proof of \ref{prop:5.2-for-BG} it now suffices to observe that by the construction of the initial contraction in \cite[proof of 6.15]{OWI} the character group $X$ of $D(X)$ is precisely the group $X_m$ associated to $\mls P$ in \ref{S:section8}.  In particular, by \ref{P:3.15} the pullback functor
$$
\Tors ^G(BD(X))\rightarrow \Tors ^{G, \ab }(\mls P)
$$
is an equivalence, which implies the proposition.
\end{proof}

We now turn to the  proof of \ref{T:9.2}, beginning with various reduction steps.  Following a reduction step showing that it suffices to prove the theorem under assumption $\mathbf{P}$, we proceed in the following steps under the further assumption that $\mathbf{P}$ holds in our setup.

\subsection{Reduction to the case when $S$ is the spectrum of the strict henselization of a point on a finite type $\mathbf{Z}$-scheme} 
By descent theory we may replace $S$ by an \'etale cover.  Furthermore, by a limit argument similar to the one  in \cite[2.1]{Olssonproper} (and using \cite[2.7 and 2.16]{tame}) we may assume that $S$ is noetherian, even of finite type over $\mathbf{Z}$.  Applying a similar limit argument we then further reduce to the case of the strict henselization of such a scheme $S$ at a point.


\subsection{Reduction to the case when $q$ contracts a single rational component in the closed fiber}
When $S$ is strictly henselian local, by \cite[4.5]{OWI} we can factor $q$ as a sequence of contractions 
\[
C = C_k \to C_{k-1} \to \ldots \to C_1 \to C_0 = D
\]
with each step being the contraction of a rational bridge with no marked points or a rational tail in the closed fiber. It follows from the construction of initial contractions in \cite[6.13]{OWI} that the composition of initial contractions is initial, so the initial contraction $\bC \to \bD$ factors through the initial contractions $\bC_i \to \bC_{i-1}$. Then \eqref{eq:contract-iso} factors as
\[
\Hom(\mls C_0, \mc X) \to \Hom^{\ab,\, D}(\mls C_1, \mc X) \to \Hom^{\ab,\, D}(\mls C_2, \mc X) \to \ldots \to \Hom^{\ab,\, D}(\mls C_k, \mc X).
\]
and it is enough to show that each morphism in the composition is a closed embedding (and an isomorphism in appropriate situations).

The category $\Hom^{\ab,\,D}(\mls C_{i-1}, \mc X)$ is a full subcategory of $\Hom(\mls C_{i-1}, \mc X)$, and similarly 
\[{\Hom^{\ab,\, D}(\mls C_i, \mc X) \subseteq \Hom^{\ab,\, C_{i-1}}(\mls C_i, \mc X)}\] is a full subcategory. This means that, if \ref{T:9.2} holds for each of the contractions $\mls C_i \to \mls C_{i-1}$, we have a diagram
\begin{equation}\label{eq:reduce-single1}
\begin{tikzcd}
\Hom(\mls C_{i-1}, \mc X) \arrow[r, "\alpha"] & \Hom^{\ab,\, C_{i-1}}(\mls C_i, \mc X)\\
\Hom^{\ab,\, D}(\mls C_{i-1}, \mc  X) \arrow[u] \arrow[r, dashrightarrow] & \Hom^{\ab,\, D}(\mls C_i, \mc X)\arrow[u]
\end{tikzcd}
\end{equation}
where all solid arrows are known to be fully faithful and $\alpha$ is a closed embedding (and an isomorphism if $\mc X = BG$ or $q$ is classical). We will show that  if $g: \mls C_{i-1} \to \mc X$ has the property that its restriction $g|_{\mls C_i}: \mls C_i \to \mc X$ has an abelian contraction in every geometric fiber over $D$, then $g$ already has this property. From this it follows that the square \eqref{eq:reduce-single1} is fibered, and the dashed arrow in \eqref{eq:reduce-single1} is a closed embedding (and an isomorphism if $\mc X = BG$ or if $C_i \to C_{i-1}$ is classical). This will complete the current reductive step.

Assume $g|_{\mls C_i}: \mls C_i \to \mc X$ has an abelian contraction in every geometric fiber over $D$. Let $\bar y_0:\Sp(k) \to D$ be a geometric point mapping to the image of the unique contracted component of $C_i \to C_{i-1}$.  Away from $\bar y_0$, the fibers of $\mls C_i$ and $\mls C_{i-1}$ are equal, so it enough to show that  $(\mls C_{i-1})_{\bar y_0} \to \mc X$ has an abelian contraction. In other words, if $G$ is the automorphism group scheme of $\bar y_0 \to \mc X$, we have an abelian subgroup scheme $A \subseteq G$ and a solid commuting diagram
\begin{equation}\label{eq:reduce-single2}
\begin{tikzcd}
&&BA \arrow[d] \\
(\mls C_i)_{\bar y_0}\arrow[urr] \arrow[r] &(\mls C_{i-1})_{\bar y_0} \arrow[ur, dashrightarrow] \arrow[r, "g"] & BG
\end{tikzcd}
\end{equation}
and we want to show  the existence of a dashed arrow making the diagram commute.

For this there are two cases. 
If $(\mls C_{i-1})_{\bar y_0}$ has positive dimension then its maximal reduced substack is a prestable curve over $\Sp(k). $ Then we can  find a dashed arrow making the top triangle in \eqref{eq:reduce-single2} commute by \ref{prop:5.2-for-BG}. The bottom triangle commutes (via a unique 2-isomorphism) because $\alpha$ is a monomorphism. If $(\mls C_{i-1})_{\bar y_0}$ has dimension zero, we in fact do not use the diagram \eqref{eq:reduce-single2}: in this case the maximal reduced substack of $(\mls C_{i-1})_{\bar y_0}$ is an abelian gerbe, hence the map $(\mls C_{i-1})_{\bar y_0} \to BG$ factors through $BA'$ where $A' \subseteq G$ is the abelian subgroup equal to the image of the stabilizer group of $\mls C_{i-1}$ at $\bar y_0$.


This completes the reduction to the case when $q$ contracts a single rational component in the closed fiber. We split the remainder of the proof of \ref{T:9.2} into two cases: the case when this rational component is a bridge, and the case when it is a tail.

\subsection*{Case 1: When $q$ contracts a rational bridge}
It should be possible to derive the result in this case from the contraction theorem for twisted stable maps in \cite[9.1.1]{AV}. In order to highlight the difference between this case and the tails case in the next section we give an independent proof.



\begin{lem}\label{L:5.4.1}
The adjunction map $\mls O_{\mls D} \to RQ_*\mls O_{\mls C}$ is an isomorphism.
\end{lem}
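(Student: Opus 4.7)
My plan is as follows. Since $Q$ is an isomorphism over the open complement of $\bar y_0 \in D$ (the image of the contracted rational bridge $P$), the adjunction map $\mls O_{\mls D} \to RQ_*\mls O_{\mls C}$ is trivially an isomorphism there. Thus the entire argument is local in an étale neighborhood of the residual gerbe over $\bar y_0$.

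I will combine two observations. First, using the commuting square of coarse moduli maps
\[
\xymatrix{\mls C \ar[r]^-Q \ar[d]_-{p_{\mls C}} & \mls D \ar[d]^-{p_{\mls D}} \\ C \ar[r]^-q & D}
\]
and the fact that $p_{\mls C *}$ and $p_{\mls D *}$ are exact on quasi-coherent sheaves (by tameness) with $Rp_{\mls C *}\mls O_{\mls C} = \mls O_C$ and $Rp_{\mls D *}\mls O_{\mls D} = \mls O_D$, I obtain
\[
Rp_{\mls D *} RQ_* \mls O_{\mls C} = Rq_* Rp_{\mls C *}\mls O_{\mls C} = Rq_*\mls O_C = \mls O_D = Rp_{\mls D *}\mls O_{\mls D},
\]
the middle equality holding by the assumption that $q$ is a contraction. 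By exactness of $p_{\mls D *}$, this gives $p_{\mls D *} R^i Q_*\mls O_{\mls C} = \mls O_D$ for $i=0$ and $0$ for $i>0$.

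Second, I will show that each $R^iQ_*\mls O_{\mls C}$ lies in the essential image of $p_{\mls D}^*$, i.e.\ has trivial stabilizer action at every geometric point. By a formal GAGA / Grothendieck existence argument analogous to that of \ref{L:9.22b}, the stalk at $\bar y_0$ of $R^i Q_*\mls O_{\mls C}$ is computed by $H^i(\mls P,\mls O_{\mls P})$, where $\mls P$ is the stacky $\mathbf{P}^1$ fiber of $Q$ at $\bar y_0$. Since $\mls P$ is tame with coarse space $\mathbf{P}^1$, pushing forward along the coarse moduli map gives $H^i(\mls P,\mls O_{\mls P}) = H^i(\mathbf{P}^1, \mls O_{\mathbf{P}^1})$, which equals $k$ for $i=0$ and vanishes for $i>0$. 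The stabilizer $D(X)$ of $\mls D$ at $\bar y_0$ acts on the constants $k$ trivially, so the stabilizer representations are trivial, as required.

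Combining the two steps, since any sheaf with trivial stabilizer action is canonically the pullback from $D$ of its pushforward under $p_{\mls D}$, we get $R^i Q_*\mls O_{\mls C} = p_{\mls D}^*(0) = 0$ for $i>0$ and $Q_*\mls O_{\mls C} = p_{\mls D}^*\mls O_D = \mls O_{\mls D}$, and the adjunction map is compatible with these identifications via adjunction between $p_{\mls D}^*$ and $p_{\mls D *}$. The main technical obstacle is the careful implementation of the proper-base-change / formal GAGA step to justify the stalk computation and the identification of the stabilizer action; this follows the pattern of \ref{L:9.22b} using the invariance of the étale site under nilpotent thickenings together with Artin approximation.
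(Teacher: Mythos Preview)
Your two–step strategy (push forward to coarse spaces and then check the stabilizer action on $R^iQ_*\mls O_{\mls C}$) is different from the paper's normalization argument and is a reasonable plan; Step~1 is correct as written, since $p_{\mls D*}$ is exact by tameness and $Rq_*\mls O_C=\mls O_D$ is part of the definition of a contraction.

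The gap is in Step~2. The invocation of ``formal GAGA/Grothendieck existence as in \ref{L:9.22b}'' does not do what you need. Lemma~\ref{L:9.22b} concerns $G$--torsors, which are \'etale-local objects and hence insensitive to nilpotent thickenings; coherent cohomology is not. The theorem on formal functions gives the \emph{completed} stalk of $R^iQ_*\mls O_{\mls C}$ as an inverse limit over thickenings of the fiber, not as $H^i(\mls P,\mls O_{\mls P})$; in particular for $i=0$ the completed stalk is certainly not one-dimensional. What you actually need is a cohomology-and-base-change statement identifying the \emph{fiber} of $R^iQ_*\mls O_{\mls C}$ at the residual gerbe $B\mu_c\hookrightarrow\mls D$ with $H^i$ of the stack-theoretic fiber $\mls C\times_{\mls D}B\mu_c$, together with the verification that this fiber is $\mls P$ (not merely its reduction). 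Neither point is addressed; since $Q$ is not flat, the relevant base change is not automatic, and the scheme-theoretic fiber over the residual gerbe at a twisted node requires an explicit computation.

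The paper does essentially this computation by a different route: it reduces via tor-independent base change and derived Nakayama to $S=\Sp(k)$, then uses the normalization short exact sequences on $\mls C$ and $\mls D$ to reduce the whole statement to the assertion $\mls O_{B\mu_c}\xrightarrow{\ \sim\ }RQ_*\mls O_{\mls P}$, which it checks by factoring $\mls P\to B\mu_c$ through the relative coarse space $[\mathbf{P}^1/\mu_c]$. This factorization is exactly what makes ``$\mu_c$ acts trivially on the constants'' precise. If you want to salvage your approach, the cleanest fix is to insert this same factorization (or an explicit local chart computation near the twisted node of $\mls D$) to justify the triviality of the stabilizer action, rather than appealing to \ref{L:9.22b}.
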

\begin{proof}
    It suffices to check that the morphism is an isomorphism on \'etale stalks at geometric points of $D$. In fact, since $S$ and hence $D$ are locally Noetherian, the map from a local ring of $D$ to its completion is faithfully flat, and it is enough to show that $\mls O_{\mls D_T} \to Q_{T*}\mls O_{\mls C_T}$ is an isomorphism, when $T \to D$ is the spectrum of the completion of a strictly henselian local ring of $D$.
    Making the base change to the strict henselization of $S$ at the image of the closed point of $T$ in $S$ we may further assume that $S$ is local and the closed point of $T$ maps to the closed point $s$ of $S$.  Let $T_0\subset T$ be the closed subscheme defined by the maximal ideal in $\mls O_{S, s}$ so we have a commutative diagram with cartesian squares
    $$
    \xymatrix{
    \mls C_{T_0}\ar[d]\ar@{^{(}->}[r]& \mls C_T\ar[d]^-{Q_T}\\
    T_0\ar@{^{(}->}[r]\ar[d]& T\ar[d]\\
    s\ar@{^{(}->}[r]& S.}
    $$
    Since $\mls C_T\rightarrow S$ is flat, the top square is tor-independent (defined as in \cite[\href{https://stacks.math.columbia.edu/tag/08IA}{Tag 08IA}]{stacks-project}), and so by \cite[\href{https://stacks.math.columbia.edu/tag/08IR}{Tag 08IR}]{stacks-project}, which generalizes immediately to algebraic stacks, the derived pullback of the cone of the map $\mls O_{\mls D_T}\rightarrow RQ_{T*}\mls O_{\mls C_T}$ is isomorphic to the cone of the map $\mls O_{\mls D_{ T_0}}\rightarrow RQ_{T_0*}\mls O_{\mls C_{T_0}}$.
    By the derived Nakayama lemma (combine \cite[\href{https://stacks.math.columbia.edu/tag/0A05}{Tag 0A05}]{stacks-project} and \cite[\href{https://stacks.math.columbia.edu/tag/0G1U}{Tag 0G1U}]{stacks-project}) it is enough to show that the cone of the map $\mls O_{\mls D_{ T_0}}\rightarrow RQ_{T_0*}\mls O_{\mls C_{T_0}}$ is zero.
    In particular, we can reduce to the case when
    $S = \Sp (k)$ is the spectrum of a separably closed field.

    In this case, let $\widetilde {\mls D}$ (resp. $\widetilde D$) denote the normalization of $\mls D$ (resp. $D$) at the node given by the image of the contracted bridge.  
We then have two points
$$
0, \infty :S\rightarrow \widetilde D
$$
with gerbes $B\mu _{c}$ over them in $\widetilde {\mls D}$, and  a short exact sequence of sheaves on $\mls D$
$$
\xymatrix{
0\ar[r]& \mls O_{\mls D}\ar[r]& \mls O_{\widetilde {\mls D}}\oplus \mls O_{B\mu _{c}}\ar[r]^-{(f, \lambda )\mapsto (f(0)-\lambda , f(\infty )-\lambda )}& \mls O_{B\mu _{c}}\oplus \mls O_{B\mu _{c}}\ar[r]& 0.}
$$

Similarly, the normalization of $\mls C$ is a disjoint union $\mls C'\coprod \mls P$, where $\mls C'\rightarrow \widetilde {\mls D}$ is a rigidification map and $\mls P$ is a stacky $\mathbf{P}^1$, with two stacky points $0:B\mu _a\hookrightarrow \mls P$ and $\infty :B\mu _b\hookrightarrow \mls P$.  We also have a short exact sequence of sheaves on $\mls C$
$$
0\rightarrow \mls O_{\mls C}\rightarrow \mls O_{\mls C'}\oplus \mls O_{\mls P}\rightarrow \mls O_{B\mu _a}\oplus \mls O_{B\mu _b}\rightarrow 0.
$$
Applying $RQ_*$ to the second sequence and combining it with the first we get a morphism of distinguished triangles
$$
\xymatrix{
\mls O_{\mls D}\ar[r]\ar[d]& \mls O_{\widetilde {\mls D}}\oplus \mls O_{B\mu _{c}}\ar[r]^-{}\ar[d]& \mls O_{B\mu _{c}}\oplus \mls O_{B\mu _{c}}\ar[r]\ar[d]&\mls O_{\mls D}[1]\ar[d]\\
RQ_*\mls O_{\mls C}\ar[r]& RQ_*\mls O_{\mls C'}\oplus RQ_*\mls O_{\mls P}\ar[r]& RQ_*\mls O_{B\mu _a}\oplus RQ_*\mls O_{B\mu _b}\ar[r]& RQ_*\mls O_{\mls C}[1].}
$$
Since $\mls C' \to \widetilde{\mls D}$, $B\mu_a \to B\mu_c$, and $B\mu_b \to B\mu_c$ are rigidification maps, the corresponding adjunction maps are isomorphisms. It therefore suffices to show that the adjunction map
\[
\mls O_{B\mu_c} \to RQ_*\mls O_{\mls P}
\]
is an isomorphism. For this note that $\mls P \to B\mu_c$ factors as $\mls P \to \mls P_{c, c} \to B\mu_c$ where $\mls P_{c, c}$ is the relative coarse space and equal to $[\mathbf{P}^1/\mu_c]$. The morphism $\mls P \to \mls P_{c, c}$ is a rigidification, so the corresponding adjunction map is an isomorphism, and the morphism $\mls P_{c, c} \to B\mu_c$ is the quotient of $\mathbf{P}^1 \to S$ so the adjunction map is an isomorphism in this case.
  
\end{proof}

The proof of \ref{T:9.2} in the present case follows from \ref{prop:5.2-for-BG}, \ref{L:5.4.1}, and the following lemma. We will use this lemma again in the next section, and so state it here in more generality than currently needed (for the current application, we can take $\bC^\dagger = \bD$).

\begin{lem}\label{L:5.7}
Let $S$ be the spectrum of a strictly henselian local ring, let $C \to C^\dagger \to D$ be contractions of curves over $S$, and let $\bC \to \bC^\dagger$ be a contraction of generalized log twisted curves over $S$ (not necessarily initial). Let $Q: \mls C \to \mls C^\dagger$ denote the associated morphism of stacks. Assume that for every tame \'etale group scheme $G$ over $S$, the restriction morphism $\uTors^{G,\ab}_{\mls C^\dagger/D} \to \uTors^{G, \ab}_{\mls C/ D}$ is an equivalence, and that $\mls O_{\mls C^\dagger} \to Q_*\mls O_{\mls C}$ is an isomorphism. 
Then the morphism $\Hom^{\ab, D}(\mls C^\dagger, \mc X) \to \Hom^{\ab,\,D}(\mls C, \mc X)$ is an equivalence. 
\end{lem}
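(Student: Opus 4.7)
First I would reduce to an étale-local presentation of $\mc X$. By the local structure theorem for tame Deligne-Mumford stacks (cf.\ \cite[3.2]{tame}), étale-locally on $X$ we may write $\mc X = [V/G]$, where $G$ is a tame étale group scheme and $V \to X$ is a finite $G$-scheme. Pulling back via $D \to X$, set $V_D := V \times_X D$, so that $\mc X \times_X D = [V_D/G]$ with $V_D \to D$ finite. Any morphism in $\Hom^{\ab,D}(\mls C, \mc X)$ or $\Hom^{\ab,D}(\mls C^\dagger, \mc X)$ factors through $[V_D/G]$ by the coarse-moduli hypothesis, and both categories can be computed étale-locally on $X$. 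Hence it suffices to verify the equivalence in this local presentation and then glue.

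Second, in the local presentation a $D$-morphism $\mls C \to [V_D/G]$ is equivalently a pair $(P, \phi)$ with $P \to \mls C$ a $G$-torsor and $\phi: P \to V_D$ a $G$-equivariant $D$-morphism; similarly for $\mls C^\dagger$. By Remark \ref{rmk:contraction subgroup}, the abelian-contraction-in-fibers condition on the morphism translates precisely to $P \in \uTors^{G,\ab}_{\mls C/D}$. The torsor-equivalence hypothesis then gives an equivalence $Q^*: \uTors^{G,\ab}_{\mls C^\dagger/D} \xrightarrow{\sim} \uTors^{G,\ab}_{\mls C/D}$, so the torsor piece of the data descends uniquely: given $P$ there is a unique (up to unique iso) $P^\dagger$ on $\mls C^\dagger$ with $Q^*P^\dagger = P$. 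It remains to descend the equivariant map $\phi$.

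Third, the cartesian square $P = \mls C \times_{\mls C^\dagger} P^\dagger$ gives a map $Q_P: P \to P^\dagger$ covering $Q$, with torsor projections $\pi, \pi^\dagger$ vertical. Since $G$ is étale, $\pi^\dagger$ is étale, so flat base change combined with the hypothesis $Q_*\mls O_{\mls C} = \mls O_{\mls C^\dagger}$ yields $Q_{P*}\mls O_P = \mls O_{P^\dagger}$. Writing $V_D = \Spec_D \mls A$, a morphism $P \to V_D$ over $D$ corresponds to an $\mls O_D$-algebra map $\mls A \to (P \to D)_*\mls O_P = (P^\dagger \to D)_*\mls O_{P^\dagger}$, hence to a morphism $P^\dagger \to V_D$ over $D$, and $G$-equivariance passes through because $Q_P$ is $G$-equivariant. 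The tautological 2-isomorphism $g \circ Q \cong f$ arises from $P = Q^*P^\dagger$ and $\phi = \phi^\dagger \circ Q_P$. Combined with the torsor equivalence this yields the local equivalence of Hom-categories, and the construction is canonical so it glues by étale descent on $X$. The main delicate point is this equivariant-map descent, which crucially uses both the étaleness of $G$ (to apply flat base change along $\pi^\dagger$) and the affineness of $V_D \to D$ (to convert morphisms into algebra maps on pushed-forward structure sheaves).
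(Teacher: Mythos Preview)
Your proposal is correct and follows essentially the same approach as the paper: localize so that $\mc X$ becomes a quotient $[W/G]$, translate a morphism into a pair (torsor, equivariant map to $W$), descend the torsor via the assumed equivalence $\uTors^{G,\ab}_{\mls C^\dagger/D} \simeq \uTors^{G,\ab}_{\mls C/D}$, and descend the equivariant map using $\mls O_{\mls C^\dagger} \simeq Q_*\mls O_{\mls C}$ together with affineness of $W$ over the base. The only cosmetic differences are that the paper localizes on $D$ (passing to the strict henselization at a geometric point, so that $W$ is the spectrum of a single ring and one can work with global sections) rather than on $X$, and that the paper obtains $p_{0,*}\mls O_{P_0}\simeq Q_*p_*\mls O_P$ via the projection formula using local freeness of $p_{0,*}\mls O_{P_0}$, whereas you obtain the equivalent statement $\mls O_{P^\dagger}\simeq Q_{P*}\mls O_P$ via flat base change along the \'etale map $\pi^\dagger$.
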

\begin{proof}
That $\Hom^{\ab,\,D}(\mls C^\dagger, \mc X) \to \Hom^{\ab,\,D}(\mls C, \mc X)$ is an equivalence may be verified \'etale locally on $D$. That is, for an arbitrary geometric point $\bar x \to D$, it is enough to verify the result after replacing $D$ by the spectrum of the strictly henselian local ring at $\bar x$, which we denote $D_{(\bar x)}$. We replace $\mls C^\dagger$, $\mls C$, and $\mc X$ by $\mls C^{\dagger}_{(\bar x)}:= \mls C^\dagger \times_D D_{(\bar x)}$, $\mls C_{(\bar x)}:=\mls C \times_D D_{(\bar x)}$ and $\mc X_{(\bar x)}:= \mc X \times_X D_{(\bar x)}$, respectively. We note that since the coarse space commutes with arbitrary base change, $D_{(\bar x)}$ is the coarse space of $\mc X_{(\bar x)}.$ 

In this case, $\mc X$ is equal to $[W/G]$ for a finite $D_{(\bar x)}$-scheme $W$ and a tame \'etale group scheme $G$ over $D_{(\bar x)}$. In particular $W$ is affine. A morphism $f: \mls C_{(\bar x)} \to \mc X_{(\bar x)}$ is the data of a $G$-torsor $p: P \to \mls C_{(\bar x)}$ and a $G$-equivariant morphism $P \to W$. If $f$ has an abelian contraction in every geometric fiber over $D$, then by assumption, $P$ descends to a unique $G$-torsor $p_0: P_0 \to \mls C^\dagger_{(\bar x)}.$ Writing $W = \Spec(R)$ the morphism $P \to W$ is given by a $G$-equivariant ring homomorphism $R \to H^0(P, \mls O_P)$. The sheaf $p_{0, *} \mls O_{P_0}$ is locally free since $P_0 \to \mls C^\dagger_{(\bar x)}$ is finite flat, so by the projection formula and our assumption that $\mls O_{\mls C^\dagger} \to Q_*\mls O_{\mls C}$ is an isomorphism, the canonical map
\[
p_{0, *} \mls O_{P_0} \to Q_* p_*\mls O_P
\]
is an isomorphism.
It follows that $H^0(P, \mls O_P) = H^0(P_0, \mls O_{P_0})$, so the map $P \to W$ descends to a unique $G$-equivariant map $P_0 \to W$. 
\end{proof}



\qed


\subsection*{Case 2: When $q$ contracts a rational tail}\label{sec:case2}

This case is substantially harder because the analog of \ref{L:5.4.1} is false (this is also what gives rise to the closed subscheme in \ref{T:9.2} (2), as we will see).

\subsection{Reduction to the case when $\mls C \to \mls D$ is representable}
We first reduce to the case when the morphism $\mls C \to \mls D$ is representable. This morphism arises from an initial contraction $\bC \to \bD$. By \cite[6.15]{OWI}, this contraction factors as contractions $\bC \to \bC^\dagger \to \bD$, where $\bC \to \bC^\dagger$ induces an isomorphism of coarse spaces and $\bC^\dagger \to \bD$ induces a representable morphism $\mls C^\dagger \to \mls D$ of associated stacks. We note that since $\bC \to \bD$ was initial, the contraction $\bC^\dagger \to \bD$ is also initial: this follows from the fact that in the category of generalized log twisted curves with contraction morphisms, all morphisms are epimorphisms, and this in turn boils down to the obeservation that the morphism $\phi'$ in a contraction $(q, \phi')$ is an injective sheaf homomorphism. 
Then the following lemma implies that we may replace $\bC \to \bD$ with $\bC^\dagger \to \bD$.

\begin{lem}
The functor $\Hom^{\ab,\,D}(\mls C^\dagger, \mc X) \to \Hom^{\ab,\,D}(\mls C, \mc X)$ is an equivalence.
\end{lem}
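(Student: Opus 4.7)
The plan is to deduce this lemma from Lemma \ref{L:5.7}, applied with the coarse-space sequence $C \xrightarrow{\mathrm{id}} C \xrightarrow{q} D$ (where the role of $C^\dagger$ in the lemma is played by the coarse space $C$ itself, since $\bC \to \bC^\dagger$ induces an isomorphism on coarse spaces) and the contraction $\bC \to \bC^\dagger$ of generalized log twisted curves. Two hypotheses of Lemma \ref{L:5.7} must be verified: (i) the adjunction $\mls O_{\mls C^\dagger} \to Q_*\mls O_{\mls C}$ is an isomorphism, and (ii) for every tame étale group scheme $G$ over $S$, the pullback functor $\uTors^{G,\ab}_{\mls C^\dagger/D} \to \uTors^{G,\ab}_{\mls C/D}$ is an equivalence.

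For (i), the verification will be étale local on the common coarse space $C$. By the construction of the initial contraction in \cite[6.15]{OWI}, étale-locally on $C$ the morphism $Q: \mls C \to \mls C^\dagger$ will have the shape $[V/H] \to [V/H']$, where $H \twoheadrightarrow H'$ is a surjection of diagonalizable finite flat abelian group schemes whose kernel $K$ is central and acts trivially on $V$. In this model $Q$ is a $K$-gerbe over $[V/H']$ and the adjunction is an isomorphism by a standard computation of the global sections of $BK$.

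For (ii), full faithfulness will follow from the projection formula together with (i): it suffices to show that pullback along $Q$ is fully faithful on vector bundles (then the assertion for torsors and morphisms of torsors follows by tensor-categorical arguments, as in the proof of \ref{P:3.15}), and this is exactly the content of $\mls O_{\mls C^\dagger} \xrightarrow{\sim} Q_*\mls O_{\mls C}$. For essential surjectivity, given a $G$-torsor $P \to \mls C$ with abelian contraction in every geometric fiber over $D$, the task is to descend $P$ along $Q$. Using the local description $Q = ([V/H] \to [V/H'])$, descent is equivalent to triviality of the induced $K$-action on $P$. The abelian contraction hypothesis on $P|_{\mls C_{\bar y}}$ for $\bar y \to D$ reduces the $G$-torsor to some abelian subgroup of the stabilizer, and by the characterization of the initial contraction in \cite[6.15]{OWI} the character group of $H'$ is precisely the subgroup of characters compatible with this reduction. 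This forces triviality of the $K$-action on $P|_{\mls C_{\bar y}}$. A thickening argument as in the proof of \ref{L:9.22b}, combining invariance of the étale site under nilpotent thickenings, Grothendieck existence, and Artin approximation, will then propagate this pointwise triviality to triviality of the $K$-action on $P$ in a neighborhood, providing the descent.

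The main obstacle is step (ii), specifically the passage from the fiberwise abelian contraction condition to the global triviality of the $K$-action required for descent. This is the place where the fine structure of the initial contraction from \cite{OWI} enters in an essential way, playing a role analogous to the identification of the character groups $X$ and $X_m$ in Section \ref{S:section8} and the ensuing equivalence in Proposition \ref{P:3.15}.
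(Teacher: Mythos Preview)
Your overall strategy---apply Lemma \ref{L:5.7}---matches the paper's, and your verification of (i) is correct, though the paper dispatches it in one line: by \cite[6.11]{OWI} the map $Q^\dagger:\mls C\to\mls C^\dagger$ is the relative coarse moduli space of $\mls C\to\mls D$, and the adjunction is an isomorphism for any relative coarse moduli morphism. Your local gerbe computation recovers this but is more work than needed.

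Where you diverge is in (ii). The paper does not attempt a direct descent argument. Instead it observes that there is a commuting triangle
\[
\begin{tikzcd}
&\uTors^G_{\mls D/D} \arrow[dl, "r_1"'] \arrow[dr, "r_2"] & \\
\uTors^{G,\ab}_{\mls C^\dagger/D} \arrow[rr] &&\uTors^{G,\ab}_{\mls C/D}
\end{tikzcd}
\]
in which both $r_1$ and $r_2$ are equivalences by Proposition \ref{prop:5.2-for-BG}, since both $\bC\to\bD$ and $\bC^\dagger\to\bD$ are initial (the latter having been established just before the lemma). The bottom arrow is then forced to be an equivalence.

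Your direct approach can be completed, but only by re-deriving the content of \ref{prop:5.2-for-BG}. The step you flag as ``the main obstacle'' genuinely is one: the claim that the abelian contraction hypothesis on $P|_{\mls C_{\bar y}}$ forces triviality of the $K$-action is not immediate from the local description alone. What makes it true is precisely \ref{P:3.15} (every $G$-torsor on $\mls P$ with abelian contraction is pulled back along $\rho:\mls P\to BD(X_m)$) together with the identification of $D(X_m)$ with the stabilizer of $\mls D$ at $\bar y$. Your proposed thickening/approximation step is likewise already the content of \ref{L:9.22b}. So you are reassembling the proof of \ref{prop:5.2-for-BG} from its parts rather than invoking it; the paper's triangle argument packages the same input far more efficiently.
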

\begin{proof}
Let $Q^\dagger$ denote the induced morphism $\mls C \to \mls C^\dagger$. By \cite[6.11]{OWI}, the factorization $\mls C \to \mls C^\dagger \to \mls D$ realizes $Q^\dagger$ as the relative coarse moduli space of $\mls C \to \mls D$.

We wish to apply \ref{L:5.7} to $Q^\dagger$. To do so, we must verify that 
\begin{equation}\label{eq:repable1}
\uTors^{G,\ab}_{\mls C^\dagger/D} \to \uTors^{G,\ab}_{\mls C/D}
\end{equation}
is an equivalence and that $\mls O_{\mls C^\dagger} \to Q^\dagger_*\mls O_{\mls C}$ is an isomorphism. The latter holds since $\mls C \to \mls C^\dagger$ is a relative coarse moduli space. To see that \eqref{eq:repable1} holds note the existence of a commuting triangle
\[
\begin{tikzcd}
&\uTors^G_{\mls D/D} \arrow[dl, "r_1"'] \arrow[dr, "r_2"] \\
\uTors^{G,\ab}_{\mls C^\dagger/D} \arrow[rr, "{\text{\ref{eq:repable1}}}"] &&\uTors^{G,\ab}_{\mls C/D}
\end{tikzcd}
\]
where $r_1$ and $r_2$ are equivalences by \ref{prop:5.2-for-BG}, since $\bC \to \bD$ and $\bC^\dagger \to \bD$ are both initial.

\end{proof}


\subsection{Explicit description of $\mls C \to \mls D$} 
We can now assume $\mls C \to \mls D$ is representable and contracts a single rational tail in the closed fiber. Our proof of \ref{T:9.2} in this case will follow our proof in the bridges case as closely as possible: in particular, the main task is to describe properties of the morphism $\mls O_{\mls D} \to Q_*\mls O_{\mls C}$, which we do in \ref{S:tail-functions}. To obtain those results we describe the contraction $\mls C \to \mls D$ very explicitly, at least near the image of the contracted tail. This comprises paragraphs \ref{P:5.10}-\ref{P:5.14}. 

\begin{pg}\label{P:5.10}
Write $S = \Sp (\A)$ with $\A$ strictly henselian local with closed point $s$ and residue field $k$. We are assuming that $C\rightarrow D$ contracts a rational tail $P\subset C_s$. For notational simplicity we will assume all the sections $s_i$ meet $P$.
Let $\bar y\rightarrow D$ be a geometric point in the closed fiber over the image point of $P$.  Let $R$ denote the strictly henselian local ring $\mls O_{D, \bar y}$ and (as in the proof of \ref{L:5.7}) define
$$
C_{(\bar y)}:= C\times _D\Sp (R), \ \ \mls C_{(\bar y)}:= \mls C\times _D\Sp (R),  \ \ \mls D_{(\bar y)}:= \mls D\times _{D}\Sp (R).
$$
For each of the marked points meeting $P$ we have a monoid $N_j$  as in \cite[2.17 (iv)]{OWI}.  Let $N$ denote $\oplus _jN_j$, and let $X$ denote the quotient  $N^\gp /\mathbf{Z}^n$.  
The isotropy group at the unique node of $P \subset C_s$ is $\mu_m$ for some integer $m$. Let $N_m \subset N$ denote the submonoid of elements whose coordinate sum is in $\frac{1}{m}\mathbf{Z}$ and let $X_m = N^{gp}_m/\mathbf{Z}^n$ (note that this definition of $X_m$ agrees with the one in \ref{S:section8}). 

It follows from the definition of an initial contraction, specifically from \cite[(6.19.1)]{OWI}, that 
\[
\mls D_{(\bar y)} \simeq [\Sp (R\otimes _{\mathbf{Z}[\mathbf{N}^n]}\mathbf{Z}[N_m])/D(X_m)].
\]
Let $\mls P\subset \mls C_{(\bar y)}$ be the stacky $\mathbf{P}^1$ lying over $P$. Since we have reduced to the case that $\mls C \to \mls D$ is representable, 
the  morphism $\rho: \mls P \to BD(X_m)$ of \ref{S:generalcase} is representable. Therefore the fiber product $\mc Q$ in the Cartesian diagram
\[
\begin{tikzcd}
\mc Q \arrow[r] \arrow[d] & \Sp (R\otimes _{\mathbf{Z}[\mathbf{N}^n]}\mathbf{Z}[N_m]) \arrow[d] \\
\mls C_{(\bar y)} \arrow[r] & \mls D_{(\bar y)}
\end{tikzcd}
\]
is a scheme finite over $C_{(\bar y)}$. Note that $\mc Q \to \mls C_{(\bar y)}$ is a $D(X_m)$-torsor.
\end{pg}

\begin{pg}\label{P:5.12b}
To describe $\mc Q$ it is convenient to also consider the log structure $M_{\mc Q}$ obtained by pullback from the tautological log structure $M_{\mls C_{(\bar y)}}$ on $\mls C_{(\bar y)}$.  Also let $M_{N_m}$ be the log structure on $\Sp (R\otimes _{\mathbf{Z}[\mathbf{N}^n]}\mathbf{Z}[N_m])$ associated to the natural map $N_m\rightarrow \mathbf{Z}[N_m]$, so $M_{N_m}$ is the pullback of the tautological log structure $M_{\mls D_{(\bar y)}}$ on $\mls D_{(\bar y)}$.  By \cite[6.8]{OWI} we then get a $D(X_m)$-equivariant morphism of log schemes
$$
\rho :(\mc Q, M_{\mc Q})\rightarrow (\Sp (R\otimes _{\mathbf{Z}[\mathbf{N}^n]}\mathbf{Z}[N_m]), M_{N_m})
$$
over the mophism of log schemes $(C_{(\bar y)}, M_{C_{(\bar y)}})\rightarrow (D_{(\bar y)}, M_{D_{(\bar y)}})$.

The advantage of incorporating the log structure here is that by \cite[6.8]{OWI} if $\mc Q'\rightarrow \mls C_{(\bar y)}$ is any $D(X_m)$-torsor equipped with a $D(X_m)$-equivariant log map $\rho ': (\mc Q', M_{\mc Q'})\rightarrow (\Sp (R\otimes _{\mathbf{Z}[\mathbf{N}^n]}\mathbf{Z}[N_m]), M_{N_m})$ over $(C_{(\bar y)}, M_{C_{(\bar y)}})\rightarrow (D_{(\bar y)}, M_{D_{(\bar y)}})$, where $M_{\mc Q'}$ is the pullback of $M_{\mls C_{(\bar y)}}$,  then $(\mc Q, \rho ) \simeq (\mc Q', \rho ')$.  To describe $\mc Q$ we therefore simply have to write down a torsor with such a log map, which we now do directly. 

\end{pg}

\begin{pg} \label{P:5.15} In this paragraph we construct an open cover of $C_{(\bar y)}$. Let $x\in R$ denote a local parameter so that $R$ is isomorphic to the strict henselization of $\A[x]$ at the maximal ideal $(\mathfrak{m}_{\A}, x)$.  Then
$$
C_{(\bar y)}\simeq \text{Proj}(R[U, V]/(xU-tV))
$$
for some element $t\in \mathfrak{m}_\A$.  
 Note that the simple extension 
 $\MS{S}{C} \hookrightarrow \MS{S}{C}'$ 
 corresponds to an $m$-th root $t^{1/m}\in \A$ of $t$. We now construct an open cover of $C_{(\bar y)}.$

 Setting $u=U/V$ and $v=V/U$ the scheme $C_{(\bar y)}$ is covered by two open sets
$$
\Sp (R[u]/(xu-t)), \ \ \Sp (R[v]/(x-vt)).
$$

Since the sections $s_i$ have image in the closed fiber contained in the smooth locus of $P$, they are contained in the open subset 
$$
\Sp (R[v]/(x-tv)).
$$
The section $s_i$ is therefore induced by a ring map $R[v]/(x-tv) \to \A$, and this is determined by an element $a_i \in \A$ (the image of $v$). It follows that the scheme-theoretic image of $s_i$ in $\Sp (R)$ is given by the ideal equal to the kernel of the map
$$
\xymatrix{
R\ar[r]& R[v]/(x-tv)\ar[r]^-{v\mapsto a_i}&\A.}
$$
Hence the image of $s_i$ in $\Sp (R)$ is given by the ideal $(x-ta_i)$.

Observe also that the ideal generated by $x-ta_i$ in $R[u]/(xu-t)$ is equal to $x(1-ua_i)$. 
Rather than work with the standard open cover of $C_{(\bar y)}$ it will be convenient to remove the marked points from the first open set.  Let $F_u$ (resp. $F_v$) denote the polynomial $\prod _i(1-ua_i)$ (resp. $\prod _i(v-a_i)$).  We then cover $C_{(\bar y)}$ by the two open sets
\begin{equation}\label{E:10.24.1}
\Omega^{\nd}:=\Sp (R[u]_{F_u}/(xu-t)), \ \ \Omega^{\mk}:=\Sp (R[v]/(x-vt)),
\end{equation}
noting that $\Omega^{\nd}$ contains the node and $\Omega^{\mk}$ contains the marked points of the closed fiber of $C_{(\bar y)}$.
\end{pg}

\begin{pg}
The scheme $\mc Q$ is finite over $C_{(\bar y)}$, so 
the fiber products
\[
\mc Q|_{\Omega^{\nd}} := \mc Q \times_{C_{(\bar y)}} \Omega^{\nd} \quad \quad \quad  \ \ \mc Q|_{\Omega^{\mk}} := \mc Q \times_{C_{(\bar y)}} \Omega^{\mk}
\]
are affine schemes over the open sets $\Omega^{\nd}$ and $\Omega^{\mk}$. In this paragraph we compute the coordinate ring of $\mc Q|_{\Omega^{\nd}}$ and a chart for its log structure. To begin, we have
$$
\mls C_{(\bar y)}\times _{C_{(\bar y)}}\Omega^{\nd}\simeq [\Sp (R[x^{1/m}, u^{1/m}]_{F_u}/(x^{1/m}u^{1/m}-t^{1/m}))/\mu _m].
$$
Here we abuse notation slightly, writing $R[x^{1/m}, u^{1/m}]_{F_u}$ for the ring obtained by adjoining $m$-th roots of $x$ and $u$ to $R[u]_{F_u}$.   The group $\mu _m$ acts on this localized ring by $\zeta * x^{1/m} = \zeta x^{1/m}$, $\zeta *u^{1/m} = \zeta ^{-1}u^{1/m}$ ($\zeta \in \mu _m$) since $F_u$ is invariant under the action.  
The $D(X_m)$-torsor $\mc Q|_{\Omega^{\nd}} \to \mls C_{(\bar y)} \times_{C_{(\bar y)}} \Omega^{\nd}$ is the pushout of the $\mu_m$-torsor
$$
\Sp (R[x^{1/m}, u^{1/m}]_{F_u}/(x^{1/m}u^{1/m}-t^{1/m}))\rightarrow  [\Sp (R[x^{1/m}, u^{1/m}]_{F_u}/(x^{1/m}u^{1/m}-t^{1/m}))/\mu _m]
$$
along the map $\mu _m\rightarrow D(X_m)$ 
induced by the map $\chi :X_m\rightarrow \frac{1}{m}\mathbf{Z}/\mathbf{Z}$ 
in \eqref{E:chidef}.  We find that $\mc Q|_{\Omega^{\nd}}$ has 
coordinate ring
\begin{equation}\label{E:5.14.1}
((R[x^{1/m}, u^{1/m}]_{F_u}/(x^{1/m}u^{1/m}-t^{1/m}))[X_m])^{\mu _m}.
\end{equation}
Here $\mu _m$ acts through the above action on $(R[x^{1/m}, u^{1/m}]_{F_u}/(x^{1/m}u^{1/m}-t^{1/m}))$ and on basis elements coming from $X_m$ through the homomorphism $\mu _m\to D(X_m)$ induced by $-\chi $.  

To understand the log structure on $\mc Q|_{\Omega^{\nd}}$ observe that the pushout construction of $\mc Q|_{\Omega^{\nd}}$ is captured by the fiber diagram (with the coordinate ring of $\mc Q|_{\Omega^{\nd}}$ appearing in the top right corner)
$$
\xymatrix{
 \Sp (R[x^{1/m}, u^{\pm1/m}]_{F_u}/(x^{1/m}u^{1/m}-t^{1/m})[X_m])\ar[r]^-{\mu _m}\ar[d]_-{D(X_m)}&\Sp (((R[x^{1/m}, u^{\pm1/m}]_{F_u}/(x^{1/m}u^{1/m}-t^{1/m}))[X_m])^{\mu _m})\ar[d]_-{D(X_m)} \\
\Sp (R[x^{1/m}, u^{\pm1/m}]_{F_u}/(x^{1/m}u^{1/m}-t^{1/m}))\ar[r]_-{\mu _m}&\mls C_{(\bar y)},
}
$$
where each arrow is a torsor with the indicated group. The log structure on $R[x^{1/m}, u^{\pm1/m}]_{F_u}/(x^{1/m}u^{1/m}-t^{1/m})$ is described using the standard chart: It is the one given by $\mathbf{N}^2$ with generators mapping to $x^{1/m}$ and $u^{1/m}$.  Note that the associated log structure (but not the chart) is naturally $\mu _m$-invariant.  Let $M_1$ be this log structure on $R[x^{1/m}, u^{\pm1/m}]_{F_u}/(x^{1/m}u^{1/m}-t^{1/m})$.  Let $M_2$ be the pullback of this log structure to $R[x^{1/m}, u^{\pm1/m}]_{F_u}/(x^{1/m}u^{1/m}-t^{1/m})[X_m]$.  The log structure $M_2$ is then equivariant with respect to the $\mu _m\times D(X_m)$-action and therefore descends to a $D(X_m)$-equivariant log structure $M_3$ on $((R[x^{1/m}, u^{\pm1/m}]_{F_u}/(x^{1/m}u^{1/m}-t^{1/m}))[X_m])^{\mu _m}$, which is the log structure we are trying to describe.

Adding some units to a chart doesn't change the associated log structure so a chart for $M_2$ is given by
$$
X_m\oplus \mathbf{N}^2\rightarrow R[x^{1/m}, u^{\pm1/m}]_{F_u}/(x^{1/m}u^{1/m}-t^{1/m})[X_m], \ \ (\bar n, a, b)\mapsto x^{a/m}u^{b/m}e^{\bar n}.
$$
Choose a map $\tilde \chi :\mathbf{N}^2\rightarrow X_m$ lifting the map $\mathbf{N}^2\rightarrow \mathbf{Z}/(m)$ sending $(a, b)$ to $a-b$ (since $\mathbf{N}^2$ is a free monoid we just need to lift the generators) and let $\beta :\mathbf{N}^2\rightarrow X_m\oplus \mathbf{N}^2$ be the graph of $-\tilde \chi $.  Then the induced map
$$
\mathbf{N}^2\rightarrow X_m\oplus \mathbf{N}^2\rightarrow R[x^{1/m}, u^{\pm1/m}]_{F_u}/(x^{1/m}u^{1/m}-t^{1/m})[X_m]
$$
is another chart for $M_2$ with image in the $\mu _m$-invariants, and therefore defines a chart for $M_3$.  Note also that we can describe the $D(X_m)$-action on $M_3$ in terms of this chart.

\begin{conclusion}\label{S:5.14}
    The restriction of the log structure $M_{\mc Q}$ to $\mc Q|_{\Omega^{\nd}}$
    is given by the chart
    \begin{equation}\label{eq:5.14c}
    \mathbf{N}^2\rightarrow ((R[x^{1/m}, u^{\pm1/m}]_{F_u}/(x^{1/m}u^{1/m}-t^{1/m}))[X_m])^{\mu _m}, \ \ (a, b)\mapsto e^{-\tilde \chi (a, b)}u^{a/m}x^{b/m}.
    \end{equation}
    This log structure has the following properties.
    \begin{itemize}
\item[(i)] It is pulled back from $\mls C$. Indeed this follows from the fact that it has a natural $D(X_m)$-linearization.
\item[(ii)] Its restriction to the open set where $u^{1/m}$ is a unit is given by
\[
\mathbf{N} \to ((R[x^{1/m}, u^{\pm1/m}]_{F_u}/(x^{1/m}u^{1/m}-t^{1/m}))[X_m])^{\mu _m}, \ \ 1\mapsto t^{1/m}.
\]
This follows from the fact that on this open set the image of the first generator of \eqref{eq:5.14c} restricts to a unit, and the image of the second restricts to a unit multiple of $t^{1/m}$.  
\end{itemize}
\end{conclusion}

\end{pg}

\begin{pg} In this paragraph we compute the coordinate ring of $\mc Q|_{\Omega^{\mk}}$  and a chart for its log structure, as well as a gluing isomorphism for these affine log schemes.
Fortunately the restriction of $\mc Q$ to $\Omega^{\mk}$ is easier to describe: Its coordinate ring is
\begin{equation}\label{E:5.14.2}
R[v]/(x-tv)\otimes _{\mathbf{Z}[\mathbf{N}^n]}\mathbf{Z}[N_m],
\end{equation}
where the map $\mathbf{N}^n\rightarrow R[v]/(x-tv)$ sends the $i$-th generator $e_i$ to $v-a_i$. The log structure on this piece has the chart 
$$
\mathbf{N}\oplus N_m\rightarrow R[v]/(x-tv)\otimes _{\mathbf{Z}[\mathbf{N}^r]}\mathbf{Z}[N_m]
$$
where $(1,0)$ maps to $t^{1/m}$ and $n\in N_m$ maps to $1\otimes e^n$. 

The gluing isomorphism of the two $D(X_m)$-torsors $\mc Q|_{\Omega^{\nd}} \to \mls C_{(\bar y)} \times_{C_{(\bar y)}} \Omega^{\nd}$ and $\mc Q|_{\Omega^{\mk}} \to \mls C_{(\bar y)} \times_{C_{(\bar y)}} \Omega^{\mk}$ is given by the ring isomorphism 
\begin{align}\notag
R[v^{\pm }]_{F_v}/(x-tv)\otimes _{\mathbf{Z}[\mathbf{N}^r]}\mathbf{Z}[N_m]& \simeq ((R[x^{1/m}, u^{\pm 1/m}]_{F_u}/(x^{1/m}u^{1/m}-t^{1/m}))[X_m])^{\mu _m}, \\
1\otimes e^n& \mapsto u^{-\chi (n)}e^{\bar n},\label{E:5.14.3}
\end{align}
 over the coarse space isomorphism given by $v\mapsto u^{-1}$.  Note that the log structures on both open sets are isomorphic to the one given by $t^{1/m}$ so this extends to an isomorphism of log schemes.  Hence we have a well-defined $D(X_m)$-torsor $\mc Q \to \mls C_{(\bar y)}$ with a log structure $M_{\mc Q}$.

\end{pg}

\begin{pg}\label{P:5.14}
Finally, in this paragraph we compute the $D(X_m)$-equivariant log map 
\begin{equation}\label{eq:5.14}(\mc Q, M_{\mc Q}) \rightarrow (\Sp (R\otimes _{\mathbf{Z}[\mathbf{N}^n]}\mathbf{Z}[N_m]), M_{N_m}).\end{equation}
This morphism is defined on our two open sets $\mc Q|_{\Omega^{\nd}}$ and $\mc Q|_{\Omega^{\mk}}$ as follows. On $\mc Q|_{\Omega^{\nd}} = \Sp(((R[x^{1/m}, u^{1/m}]_{F_u}/(x^{1/m}u^{1/m}-t^{1/m}))[X_m])^{\mu _m})$, the map is induced by 
the map on rings sending $1\otimes e^n$ to $x^{\chi (n)}e^{\bar n}$ and the map on monoids
$$
N_m\rightarrow (((R[x^{1/m}, u^{\pm1/m}]_{F_u}/(x^{1/m}u^{1/m}-t^{1/m}))[X_m])^{\mu _m})^*\oplus \mathbf{N}^2,
$$
$$
n\mapsto (e^{\tilde \chi (0, m\chi (n))+\bar n}, 0, m\chi (n)).
$$
(Note that adding units to the chart for the log structure on $\mc Q|_{\Omega^{\nd}}$ does not change the associated log structre.) On $\mc Q|_{\Omega^{\mk}} = \Sp(R[v]/(x-tv)\otimes _{\mathbf{Z}[\mathbf{N}^n]}\mathbf{Z}[N_m])$, the map is induced by the map on rings sending $1 \otimes e^n$ to $t^{\chi(n)} \otimes e^n$ and the map on monoids 
\[
N_m \to \mathbf{N} \oplus N_m, \quad \quad \quad \quad n \mapsto (m\chi(n), n).
\]
One checks that these formulas define morphisms of log schemes
\[
(\mc Q
_{\Omega^{\square}}, M_{\mc Q|_{\Omega^{\square}}}) \rightarrow (\Sp (R\otimes _{\mathbf{Z}[\mathbf{N}^n]}\mathbf{Z}[N_m]), M_{N_m})
\]
where $\square$ equals $\mk$ or $\nd$. To check that these glue to define a morphism as in \eqref{eq:5.14}, note that the scheme maps are compatible with the gluing isomorphism \eqref{E:5.14.3} and therefore defines the desired $D(X_m)$-equivariant morphism of schemes.
To verify that the morphisms of log structures are compatible it is enough to observe that the log structure on the target pulls back to the  log structure $t^{1/m}$ on the intersection of $\mc Q|_{\Omega^{\mk}} \cap \mc Q|_{\Omega^{\nd}}$, and for this it suffices to verify that the elements of $N_m$ map to units. But this is immediate.

    Lastly, we should verify that our morphism \eqref{eq:5.14} is a morphism of log schemes over $(C_{(\bar y)}, M_{C_{(\bar y)}}) \to (D_{(\bar y)}, M_{D_{(\bar y)}})$, as required in \ref{P:5.12b}. This can be verified over the two open sets $\mc Q|_{\Omega^{\nd}}$ and $\mc Q|_{\Omega^{\mk}}$. For $\mc Q|_{\Omega^{\nd}}$ this follows from the construction of the log structure: see \ref{S:5.14}(i).
    For $\mc Q|_{\Omega^{\mk}}$ we can verify it directly by checking that the diagram of monoid morphisms
    \[
    \begin{tikzcd}[column sep = 70pt]
    \mathbf{N} \oplus N_m & N_m \arrow[l, "{(m\chi(n), n)}\mapsfrom n"']\\
    \mathbf{N} \oplus \mathbf{N}^n \arrow[u, "{(\cdot m, i)}"] & \mathbf{N}^n \arrow[u, "i"] \arrow[l, "{(\Sigma \ba, \ba)}\mapsfrom \ba"]
    \end{tikzcd}
    \]
    commutes, where $i: \mathbf{N}^n \to N_m$ is the inclusion and $\Sigma \ba $ is the coordinate sum of $\ba \in \mathbf{N}^n$.

    This concludes our description of the data in \ref{P:5.12b}. 
\end{pg}

\subsection{Properties of $\mls O_{\mls D} \to Q_*\mls O_{\mls C}$}\label{S:tail-functions}

\Rachel{On p. 30 of the arxiv version of our first paper, in the commuting diagram of log structures, should $\mathbf{N}^2$ in the bottom left corner be $\mathbf{N}$?}\Martin{Yes, good catch!}

In this section, we continue with the notation in \ref{P:5.10} and \ref{P:5.15}. We use our description of $\mls C_{(\bar y)} \to \mls D_{(\bar y)}$ to derive the following result (compare to Lemma \ref{L:5.4.1}, which holds when $q$ contracts a rational bridge).

\begin{lem}\label{L:tails}
The morphism $\mls O_{\mls D_{(\bar y)}} \to Q_*\mls O_{\mls C_{(\bar y)}}$ is injective, and an isomorphism if $C_{\bar y}$ contains at most one marked point. The cokernel is flat if $S = \Sp(\beta)$ is integral.
\end{lem}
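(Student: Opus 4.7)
My plan mirrors the proof of Lemma \ref{L:5.4.1} for the bridge case, but requires passing to the flat cover $\Sp(A)\to \mls D_{(\bar y)}$ (where $A = R\otimes_{\mathbf{Z}[\mathbf{N}^n]}\mathbf{Z}[N_m]$) in order to use the explicit description from \ref{P:5.10}--\ref{P:5.14}. Since $\pi_3\colon \Sp(A)\to \mls D_{(\bar y)}$ is a faithfully flat $D(X_m)$-torsor, flat base change translates the three assertions of the lemma into the corresponding statements for the $A$-linear adjunction $A\to (\pi_2)_*\mls O_{\mc Q}$ attached to the base-change $\pi_2\colon \mc Q\to \Sp(A)$. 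The morphism $\pi_2$ is proper (since $\mc Q$ is finite over $C_{(\bar y)}$, which is proper over $\Sp(R)\supseteq \Sp(A)$), so Mayer--Vietoris applied to the affine cover $\mc Q = \mc Q|_{\Omega^\nd}\cup \mc Q|_{\Omega^\mk}$ identifies $(\pi_2)_*\mls O_{\mc Q}$ with $\ker(B_\nd\oplus B_\mk\to B_\cap)$ as an $A$-module and yields the vanishing $R^1(\pi_2)_*\mls O_{\mc Q} = 0$, because the differential is surjective (one sees this using $1/v = u\in B_\nd$ and the identity $F_v = v^n F_u(u)$, which exhibits $F_v^{-1}$ in the image).

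The $D(X_m)$-equivariance endows all rings with compatible $X_m$-gradings, and I argue piece by piece. For injectivity, it suffices to show $A\to B_\nd$ is injective. In degree $0\in X_m$, since $t^{1/m}\in \beta\subset R$, the $\mu_m$-invariants of the ring defining $B_\nd$ collapse to $R[u]_{F_u}/(xu-t)$, and the degree-zero map becomes the inclusion $R\hookrightarrow R[u]_{F_u}/(xu-t)$; this is injective because $x\in R$ is a nonzero divisor (as $R$ is the strict henselization of $\beta[x]$). In each higher degree $x\in X_m$, choosing a lift of $x$ to $N_m$ identifies $A_x$ and $(B_\nd)_x$ as rank-one free modules over their degree-zero parts, so injectivity propagates to every graded piece.

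For the isomorphism under the hypothesis $n\le 1$, I again work graded piece by graded piece. The degree-zero equality reduces to the standard coarse moduli identity $R = \ker\bigl(R[u]_{F_u}/(xu-t)\oplus R[v]/(x-tv)\to R[v^{\pm}]_{F_v}/(x-tv)\bigr)$ for the contracting map $C_{(\bar y)}\to \Sp(R)$. For $x\neq 0\in X_m$, the vanishing of $\ker(B_\nd\oplus B_\mk\to B_\cap)_x$ follows by first base-changing to the closed point of $\Sp(\beta)$, where the problem reduces to the assertion $H^0(\mls P, \mls L_x) = 0$ of \ref{P:10.16} on the stacky $\mathbf{P}^1$, and then invoking Nakayama's lemma to conclude integrally. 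The hypothesis $n\le 1$ is precisely what makes $N_m$ simple enough that the relevant modules are finitely generated so that Nakayama applies cleanly.

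Finally, for flatness of the cokernel when $\beta$ is integral: the rings $B_\nd$, $B_\mk$, and $B_\cap$ are each $\beta$-flat (as the corresponding opens and their intersection lie in the $\beta$-flat curve $C_{(\bar y)}$, tensored with the flat monoid extension $\mathbf{Z}[\mathbf{N}^n]\to \mathbf{Z}[N_m]$), so the short exact sequence $0\to (\pi_2)_*\mls O_{\mc Q}\to B_\nd\oplus B_\mk\to B_\cap\to 0$ and the long exact $\mathrm{Tor}$ sequence give that $(\pi_2)_*\mls O_{\mc Q}$ is $\beta$-flat. Since $A$ is also $\beta$-flat, the local criterion of flatness reduces $\beta$-flatness of the cokernel to injectivity of the fiberwise maps $A\otimes_\beta k(x)\to (\pi_2)_*\mls O_{\mc Q}\otimes_\beta k(x)$ for every $x\in \Sp(\beta)$; by cohomology and base change (valid because $R^1(\pi_2)_* = 0$), this is exactly the first assertion of the lemma applied over $k(x)$, which we have already established. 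The main technical obstacle throughout is the careful bookkeeping of the $X_m$-grading and $\mu_m$-invariants through the explicit ring formulas of \ref{P:5.14}.
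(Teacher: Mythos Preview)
Your injectivity argument is essentially sound and parallel in spirit to the paper's (which instead restricts to the locus $x\neq 0$ where $Q$ is an isomorphism). However, your argument for the isomorphism when $n\le 1$ contains a genuine error that makes the proposal self-contradictory.

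You assert that for $\lambda\neq 0$ in $X_m$ the graded piece $\ker(B_\nd\oplus B_\mk\to B_\cap)_\lambda = (H^0(\mc Q))_\lambda$ vanishes, and you attempt to deduce this from $H^0(\mls P,\mc L_\lambda)=0$ via Nakayama. But you have already shown that $A_\lambda\hookrightarrow (H^0(\mc Q))_\lambda$ is injective, and $A_\lambda \cong R$ is nonzero for every $\lambda\in X_m$ (it is the free $R$-module on $e^{n_\lambda}$). So $(H^0(\mc Q))_\lambda$ cannot vanish. The underlying reason your Nakayama/base-change reduction fails is that $\mc Q\to\Sp(R)$ is \emph{not flat}: the map $C_{(\bar y)}\to\Sp(R)$ has a point as generic fiber and $\mathbf{P}^1$ as closed fiber, so cohomology and base change over $R$ does not apply, and one cannot identify $(H^0(\mc Q))_\lambda\otimes_R k$ with $H^0(\mls P,\mc L_\lambda)$. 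What actually happens (and what the paper computes explicitly in \ref{L:9.14}) is that $(H^0(\mc Q))_\lambda \simeq R$ when $n\le 1$, generated by $x^{\chi(n_\lambda)}e^\lambda$, and this is precisely the image of $A_\lambda$; the role of $n\le 1$ is that the minimal lift $n_\lambda$ then satisfies $\chi(n_\lambda)<1$, so no extra summands appear. There is no way to see this without some form of the explicit calculation.

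Your flatness argument also has gaps: your justification of $R^1(\pi_2)_*=0$ only exhibits $F_v^{-1}$ in the image of the Mayer--Vietoris differential, not surjectivity; and the inference ``$\mathrm{Tor}_1^\beta(C,k(s))=0$ for all $s$ implies $C$ is $\beta$-flat'' requires a finiteness hypothesis on $C$ over $\beta$ that does not hold here (the cokernel is an $R$-module, not finitely generated over $\beta$). The paper instead computes the cokernel explicitly as $\bigoplus_\lambda\bigoplus_{j=0}^{s_\lambda-1} M_j$ with $M_j=\ker(\cdot\, t^{j+w_\lambda/m}\colon\beta\to\beta)$, and then observes that for $\beta$ integral each $M_j$ is either $0$ or $\beta$. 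The explicit identification in \ref{L:9.14} is the essential input for all three parts of the lemma; an abstract argument bypassing it does not seem to be available.
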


We note that when $C_{\bar y}$ contains no marked points, the image of $\bar y \to D$ is not marked, and hence $\mls D_{(\bar y)} = D_{\bar y}$. Then since $\mls C \to \mls D$ is representable we have that $\mls C_{(\bar y)} = C_{(\bar y)}$, and that $\mls O_{D_{(\bar y)}} \to q_* \mls O_{C_{(\bar y)}}$ is an isomorphism follows from the fact that $q$ is a contraction. Hence from now on we assume that $C_{(\bar y)}$ has at least one marked point, or in other words $n \geq 1$. In this case Lemma \ref{L:tails} is proved as \ref{L:injective}, \ref{C:classical}, and \ref{C:5.13c} below. We can prove injectivity immediately.

\begin{lem}\label{L:injective}
The morphism $\mls O_{\mls D_{(\bar y)}} \to Q_*\mls O_{\mls C_{(\bar y)}}$ is injective.
\end{lem}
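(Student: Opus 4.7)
The plan is to reduce injectivity to an explicit ring-level calculation. The fppf chart $\pi: \Sp(R \otimes_{\mathbf{Z}[\mathbf{N}^n]} \mathbf{Z}[N_m]) \to \mls D_{(\bar y)}$ of \ref{P:5.10} is a $D(X_m)$-torsor hence faithfully flat, so pullback along $\pi$ preserves and reflects injectivity of quasi-coherent sheaf maps. Pulling back $\mls O_{\mls D_{(\bar y)}} \to Q_*\mls O_{\mls C_{(\bar y)}}$ produces the natural map $\mls O_{\Sp(R \otimes \mathbf{Z}[N_m])} \to \rho_*\mls O_{\mc Q}$, where $\rho: \mc Q \to \Sp(R \otimes \mathbf{Z}[N_m])$ is the morphism from \ref{P:5.12b}. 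So the problem reduces to injectivity of the induced ring homomorphism
\[
\phi: R \otimes_{\mathbf{Z}[\mathbf{N}^n]} \mathbf{Z}[N_m] \to \Gamma(\mc Q, \mls O_{\mc Q}).
\]

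I would next further compose $\phi$ with the restriction to the open subscheme $\mc Q|_{\Omega^{\nd}}$, using that if the composition is injective then so is $\phi$. By the explicit formula in \ref{P:5.14}, this composition is the ring map
\[
R \otimes_{\mathbf{Z}[\mathbf{N}^n]} \mathbf{Z}[N_m] \to A := \bigl((R[x^{1/m}, u^{\pm 1/m}]_{F_u}/(x^{1/m}u^{1/m}-t^{1/m}))[X_m]\bigr)^{\mu_m}
\]
sending $r \otimes e^n \mapsto r \cdot x^{\chi(n)} e^{\bar n}$, where $\bar n \in X_m$ denotes the class of $n$ in $N_m^{\gp}/\mathbf{Z}^n$. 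Both source and target carry natural $X_m$-gradings compatible with this map: the source by the image of $n$ in $X_m$, and $A$ through its tensor factor indexed by $X_m$. So injectivity reduces to injectivity on each $X_m$-graded piece.

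Fixing $\bar n \in X_m$ and a minimal representative $n_0 \in N_m$ of $\bar n$, the $\bar n$-piece of the source is the free $R$-module on the basis $\{e^{n_0 + n'}\}_{n' \in \mathbf{N}^n}$, and the map becomes $R$-linear multiplication by the monomial $x^{\chi(n_0)} e^{\bar n}$ into the corresponding piece of $A$. Injectivity on each piece then follows once one knows that $x^{\chi(n_0)}$ is a non-zero-divisor in $A$, which is the main obstacle. I expect this to follow from flatness: $A$ is finite free over the $\mu_m$-invariants of the intermediate ring $B := R[x^{1/m}, u^{\pm 1/m}]_{F_u}/(x^{1/m}u^{1/m}-t^{1/m})$, and $B$ is flat over the subring $R[x^{1/m}] = R[y]/(y^m - x)$, in which $x^{1/m} = y$ is manifestly a non-zero-divisor. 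The technical care required is in correctly identifying the $X_m$-graded basis of the source (in particular verifying that minimal representatives exist and generate their cosets freely over $\mathbf{N}^n$) and in tracking flatness carefully through the $m$-th root extraction, the localization at $F_u$, and the passage to $\mu_m$-invariants.
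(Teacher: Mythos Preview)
Your initial reduction to the ring map $\phi: R \otimes_{\mathbf{Z}[\mathbf{N}^n]} \mathbf{Z}[N_m] \to \Gamma(\mc Q, \mls O_{\mc Q})$ via the torsor chart is correct and is exactly what the paper does. After that the approaches diverge, and yours has a genuine gap.

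First a minor slip: the $\bar n$-graded piece of the source is not free on $\{e^{n_0+n'}\}_{n'\in\mathbf{N}^n}$. Since you are tensoring over $\mathbf{Z}[\mathbf{N}^n]$ and the $i$-th generator of $\mathbf{N}^n$ maps to $x - t a_i \in R$, the elements $e^{n_0+n'}$ are already $R$-multiples of $e^{n_0}$; the $\bar n$-piece is the free rank-one $R$-module $R\cdot e^{n_0}$. This does not by itself break the argument, but it changes what you must check: you need that $r\,x^{\chi(n_0)} = 0$ in the target forces $r=0$ in $R$, not that $x^{\chi(n_0)}$ is a non-zero-divisor on all of $A$.

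The real problem is your flatness claim. The ring $B = R[x^{1/m}, u^{1/m}]_{F_u}/(x^{1/m}u^{1/m}-t^{1/m})$ is \emph{not} flat over $R[x^{1/m}]$ in general: when $t=0$ the relation becomes $x^{1/m}u^{1/m}=0$, so over the locus $x^{1/m}\neq 0$ the fiber is zero-dimensional (one sets $u^{1/m}=0$), while over $x^{1/m}=0$ the fiber is the one-dimensional $(R/x)[u^{1/m}]_{F_u}$. In particular $x^{1/m}$ is a zero-divisor in $B$ (it kills $u^{1/m}$), so the chain ``$x^{1/m}$ is a non-zero-divisor in $R[x^{1/m}]$, hence in $B$ by flatness, hence in $A$'' collapses. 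One can still try to salvage the weaker statement that $x^{\chi(n_0)}$ does not annihilate elements of the image of $R$, but that requires a separate argument you have not given.

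The paper avoids all of this with a one-line geometric observation: on the open locus $x\neq 0$ of $D_{(\bar y)}$ the morphism $\mls C_{(\bar y)}\to \mls D_{(\bar y)}$ is an isomorphism, so $R[1/x]\otimes_{\mathbf{Z}[\mathbf{N}^n]}\mathbf{Z}[N_m]\to H^0(\mc Q_x,\mls O_{\mc Q_x})$ is an isomorphism; since the localization $R\otimes_{\mathbf{Z}[\mathbf{N}^n]}\mathbf{Z}[N_m]\to R[1/x]\otimes_{\mathbf{Z}[\mathbf{N}^n]}\mathbf{Z}[N_m]$ is injective (because $x$ is a non-zero-divisor in $R$, being a local parameter), the map $\phi$ is injective. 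This uses only that $Q$ is an isomorphism away from the contracted fiber and that $x$ is regular in $R$, and sidesteps the explicit coordinate computations entirely.
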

\begin{proof}
We can check this on the \'etale cover $\Sp(R \otimes_{\mathbf{Z}[\mathbf{N}^n]} \mathbf{Z}[N_m]) \to \mls D_{(\bar y)}$: it is enough to show that $R \otimes_{\mathbf{Z}[\mathbf{N}^n]} \mathbf{Z}[N_m] \to H^0(\mc Q, \mls O_\mc Q)$ is injective.

For this, observe that on the open subscheme of $D_{(\bar y)}$ where $x \neq 0$, the map $\mls C_{(\bar y)} \to \mls D_{(\bar y)}$ is an isomorphism. Let $\mc Q_x$ be the restriction of $\mc Q$ to this open set. So in the square of restriction maps
\[
\begin{tikzcd}
H^0(\mc Q_x, \mls O_{\mc Q_x}) & \arrow[l] R[1/x] \otimes_{\mathbf{Z}[\mathbf{N}^n]} \mathbf{Z}[N_m] \\
H^0(\mc Q, \mls O_\mc Q) \arrow[u] & \arrow[l] \arrow[u] R \otimes_{\mathbf{Z}[\mathbf{N}^n]} \mathbf{Z}[N_m] 
\end{tikzcd}
\]
the top arrow is an isomorphism. Since the right vertical map is injective, the desired morphism (bottom arrow) is also injective.
\end{proof}

To prove the remainder of \ref{L:tails}, we need to calculate $H^0(\mc Q, \mls O_{\mc Q})$. For $\lambda \in X_m$ let $H^0(\mc Q, \mls O_{\mc Q)})_\lambda$ denote the submodule of $H^0(\mc Q, \mls O_{\mc Q})$ on which $D(X_m)$ acts through the character $\lambda$. Let $n_\lambda \in N_m$ be the minimal lift of $\lambda$, and decompose $\chi(n_\lambda) \in (1/m)\mathbf{N}$ into its integral and fractional parts, writing
\[
\chi(n_\lambda) = s_\lambda + {w_\lambda}/{m} \quad \quad \quad 0 \leq w_\lambda < m, \quad s_\lambda \in \mathbf{N}.
\]
Recall the open cover $\mc Q|_{\Omega^{\mk}} \cup \mc Q|_{\Omega^{\nd}}$ of $\mc Q$.

\begin{lem}\label{L:9.14} 
The $R$-module $H^0(\mc Q, \mls O_\mc Q)$ is finitely generated. The restriction map
\begin{equation}\label{eq:r}
H^0(\mc Q, \mls O_\mc Q) \to H^0(\mc Q|_{\Omega^{\nd}}, \mls O_{\mc Q|_{\Omega^{\nd}}})
\end{equation}
is injective with image the $\beta$-submodule
\begin{equation}\label{E:0.9.1}
H = \oplus_{\lambda \in X_m} H_\lambda, \quad \quad \quad H_\lambda:= (\oplus _{j=0}^{s_\lambda-1}M_j\cdot x^{j+(w_\lambda/m)}e^{\lambda})\oplus Rx^{s_\lambda + (w_\lambda/m)}e^{\lambda},
\end{equation}
where $M_j:= \text{\rm Ker}(\cdot t^{j+(w_n/m)}:\A\rightarrow \A)$.

\end{lem}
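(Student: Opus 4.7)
The plan is to compute $H^0(\mc Q, \mls O_\mc Q)$ via the affine \v{C}ech cover $\mc Q = \mc Q|_{\Omega^{\nd}} \cup \mc Q|_{\Omega^{\mk}}$, which realizes this module as the kernel of
\[
H^0(\mc Q|_{\Omega^{\nd}}) \oplus H^0(\mc Q|_{\Omega^{\mk}}) \longrightarrow H^0(\mc Q|_{\Omega^{\nd}\cap \Omega^{\mk}}), \qquad (a,b)\mapsto a|_{\mathrm{int}} - b|_{\mathrm{int}}.
\]
Since everything is $D(X_m)$-equivariant, I decompose into $\lambda$-eigenspaces for $\lambda \in X_m$. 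Using the rings computed in \ref{P:5.15}--\ref{P:5.14}, I identify the $\lambda$-eigenspace of $H^0(\mc Q|_{\Omega^{\mk}})$ as a free rank-one module over $R[v]/(x-tv)$ generated by $1\otimes e^{n_\lambda}$ (the set of lifts of $\lambda$ to $N_m$ is $n_\lambda+\mathbf{N}^n$, and $\mathbf{N}^n$ maps to $R[v]/(x-tv)$ via $e_i\mapsto v-a_i$). The $\lambda$-eigenspace of $H^0(\mc Q|_{\Omega^{\nd}})$ is the weight-$w_\lambda$ subspace of $R[x^{1/m},u^{1/m}]_{F_u}/(x^{1/m}u^{1/m}-t^{1/m})$ under the $\mu_m$-action, which after using the defining relation is $R$-spanned (with $F_u$ inverted) by the monomials $x^{j+w_\lambda/m}e^\lambda$ for $j\geq 0$ together with $u^{(m-w_\lambda)/m+k}e^\lambda$ for $k\geq 0$.

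The central computation, obtained by combining the gluing isomorphism \eqref{E:5.14.3} with the identities $x=tv$ and $u=v^{-1}$ on the intersection, is
\[
x^{j+w_\lambda/m}\,e^\lambda\big|_{\mathrm{int}} = t^{j+w_\lambda/m}\,v^{j-s_\lambda}\otimes e^{n_\lambda}.
\]
When $j\geq s_\lambda$ the right-hand side already lies in $R[v]/(x-tv)\cdot e^{n_\lambda} = H^0(\mc Q|_{\Omega^{\mk}})_\lambda$, so $x^{j+w_\lambda/m}e^\lambda$ lifts to a global section; since multiplication by $x\in R$ carries $j=s_\lambda$ to $j>s_\lambda$, these contribute exactly $R\cdot x^{s_\lambda+w_\lambda/m}e^\lambda$ to the image. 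When $j<s_\lambda$ the right-hand side has strictly negative $v$-power, so a scalar multiple $c\cdot x^{j+w_\lambda/m}e^\lambda$ lifts precisely when $c\,t^{j+w_\lambda/m}=0$, i.e.\ $c\in M_j$, paired with $0\in H^0(\mc Q|_{\Omega^{\mk}})_\lambda$. This exhibits $H_\lambda$ inside the image. For the reverse inclusion I will take an arbitrary element of $H^0(\mc Q|_{\Omega^{\nd}})_\lambda$ expressed in the above basis (allowing $F_u^{-1}$ factors) and match its intersection restriction term-by-term against a polynomial partner $b\in R[v]/(x-tv)\cdot e^{n_\lambda}$: any $u^{(m-w_\lambda)/m+k}e^\lambda$-term restricts to $d_k\,v^{-1-k-s_\lambda}\otimes e^{n_\lambda}$ and so forces $d_k=0$; any $F_u^{-1}$-factor contributes $F_v$-denominators (via $F_u=F_v/v^n$) that cannot appear in $b$; and the coefficient of $v^{j-s_\lambda}$ arising from $c_j x^{j+w_\lambda/m}e^\lambda$ equals $c_jt^{j+w_\lambda/m}$, which must vanish for $j<s_\lambda$ (forcing $c_j\in M_j$) and is otherwise freely matched by $b$.

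The main bookkeeping hurdle I anticipate is the handling of $F_u^{-1}$ factors in the $\Omega^{\nd}$-basis; the key input for eliminating them is that $v$ and $F_v$ are non-zero-divisors in $R[v]/(x-tv)\otimes_{\mathbf{Z}[\mathbf{N}^n]}\mathbf{Z}[N_m]$ (which follows from $x-tv$ being a regular element in $R[v]$ and $\mathbf{Z}[N_m]$ being a free $\mathbf{Z}[\mathbf{N}^n]$-module on the cosets $N_m/\mathbf{N}^n$). Injectivity of the projection $H^0(\mc Q,\mls O_\mc Q)\to H^0(\mc Q|_{\Omega^{\nd}})$ then reduces to injectivity of $H^0(\mc Q|_{\Omega^{\mk}})\to H^0(\mc Q|_{\Omega^{\nd}\cap \Omega^{\mk}})$, which is a localization at the same non-zero-divisors. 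Finally, finite generation of $H^0(\mc Q,\mls O_\mc Q)$ as an $R$-module follows either from the explicit formula for each $H_\lambda$ together with finiteness of $X_m$ (as a finitely generated torsion subgroup of $(\mathbf{Q}/\mathbf{Z})^n$), or directly from the fact that $\mc Q\to \Sp(R)$ is proper with noetherian target.
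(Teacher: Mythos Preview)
Your argument follows the same architecture as the paper's: the two-chart \v Cech computation, the $D(X_m)$-eigenspace decomposition, the identification of $H^0(\mc Q|_{\Omega^{\mk}})_\lambda$ as a free rank-one $R[v]/(x-tv)$-module on $1\otimes e^{n_\lambda}$, and the key restriction formula $x^{j+w_\lambda/m}e^\lambda\mapsto t^{j+w_\lambda/m}v^{j-s_\lambda}\otimes e^{n_\lambda}$. Your proofs of the containment $H_\lambda\subseteq H^0(\mc Q,\mls O_{\mc Q})_\lambda$, of injectivity of \eqref{eq:r} (via the non-zero-divisor property of $v$ and $F_v$ on the $\Omega^{\mk}$-side), and of finite generation (via properness of $\mc Q\to\Sp R$) all agree with the paper.

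The substantive difference is in the reverse inclusion. The paper does \emph{not} argue by direct term-matching over the henselian ring $R$; instead it first passes to the completions $\widehat\A$ and $\widehat R=\widehat\A[[x]]$ by faithfully flat base change, and then reduces modulo powers of $\mathfrak m_{\widehat\A}$ so that $t$ becomes nilpotent. In that regime one has an explicit $\A$-linear normal form for weight-$w_\lambda$ elements on the $\Omega^{\nd}$-side---an $x^{1/m}$-power series plus a $u^{1/m}$-polynomial, with coefficients in $\A$ rather than in $R$---and the intersection ring collapses to $\A[v^{\pm}]_{F_v}$, so the condition ``restriction lies in $\A[v]$'' immediately forces the $u$-part to vanish and pushes the coefficients $a_j$ for $j<s_\lambda$ into the annihilators $M_j$. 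Your direct argument over $R$ is harder to complete at exactly the point you flag: the ``$R$-span (with $F_u$ inverted)'' description you give for $H^0(\mc Q|_{\Omega^{\nd}})_\lambda$ does not absorb the $F_u^{-N}$ factor that a general element carries, and while your non-zero-divisor remark is correct, it only yields injectivity of the restriction from $\Omega^{\mk}$---it does not by itself show that a global section, viewed on $\Omega^{\nd}$, lies in your monomial span (i.e.\ has trivial $F_u$-denominator and vanishing $u$-part). The completion/Artinian reduction is precisely the device the paper uses to make that normal-form analysis go through.
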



\begin{proof}
The $R$-module $H^0(\mc Q, \mls O_Q)$ is finitely generated because $\mc Q$ is proper over $R$.
Injectivity of \eqref{eq:r} follows from the observation that the restriction map
\begin{equation}\label{eq:r2}
H^0(\mc Q|_{\Omega^{\mk}}, \mls O_{\mc Q|_{\Omega^{\mk}}}) \to H^0(\mc Q|_{\Omega^{\mk} \cap \Omega^{\nd}}, \mls O_{\mc Q|_{\Omega^{\mk}\cap \Omega^{\nd}}})
\end{equation}
is injective.  Indeed a function on $\mc Q$ is determined by its restrictions to $\mc Q|_{\Omega^{\mk}}$ and $\mc Q|_{\Omega^{\nd}}$, and it follows from injectivity of \eqref{eq:r2} that a function on $\mc Q$ is determined just by its restriction to $\mc Q|_{\Omega^{\nd}}$. 

To prove compute the image of the restriction \eqref{eq:r}, observe first that the image of $H^0(\mc Q, \mls O_{\mc Q})_\lambda $ contains $H_\lambda $.  
To see this we have to show that the image of $H_\lambda $ under the restriction map from $\mc Q|_{\Omega_{\nd}}$ to $\mc Q|_{\Omega^{\mk} \cap \Omega^{\nd}}$ lies in the image of \eqref{eq:r2}. This restriction map is a ring homomorphism
\begin{align}
((R[x^{1/m}, u^{1/m}]_{F_u}/(x^{1/m}u^{1/m}-t^{1/m}))[X_m])^{\mu _m}&\rightarrow ((R[x^{1/m}, u^{\pm 1/m}]_{F_u}/(x^{1/m}u^{1/m}-t^{1/m}))[X_m])^{\mu _m}\notag \\
& \simeq R[v^{\pm }]_{F_v}/(x-tv)\otimes _{\mathbf{Z}[\mathbf{N}^r]}\mathbf{Z}[N_m], \label{eq:r3}
\end{align}
whereas \eqref{eq:r2} is the inclusion
\begin{equation}\label{E:restrictionb}
 R[v]/(x-tv)\otimes _{\mathbf{Z}[\mathbf{N}^r]}\mathbf{Z}[N_m]\rightarrow R[v^{\pm }]_{F_v}/(x-tv)\otimes _{\mathbf{Z}[\mathbf{N}^r]}\mathbf{Z}[N_m].
 \end{equation}
One checks that the image of $H_\lambda$ under \eqref{eq:r3} lies in the image of \eqref{E:restrictionb} by noting that the image under \eqref{eq:r3} of a term $ax^\alpha e^{\lambda }$ is $at^\alpha v^{\alpha -\chi (n_\lambda)}\otimes e^{n_\lambda}$. 

To prove that the resulting inclusion of finitely generated $R$-modules
\begin{equation}\label{E:inclusionc}
H_\lambda \hookrightarrow H^0(\mc Q, \mls O_{\mc Q})_\lambda 
\end{equation}
is an isomorphism, it suffices to show that it becomes an isomorphism after first making the faithfully flat base extension $\A\rightarrow \widehat \A$, and then after making the faithfully flat base extension $R\rightarrow \widehat R$, where $\widehat{\A}$ and $\widehat{R}$ are the completions of these rings along their respective maximal ideals.   
Furthermore, reducing modulo powers of the maximal ideal of $\widehat{\A}$ we may further assume that $t$ is a nilpotent element in $\widehat{\A}$. 

Now consider an element 
$$
((\sum _{j\geq 0}a_jx^j\cdot x^{w_\lambda/m})+ (\sum _{b\geq 0}c_bu^bu^{1-w_\lambda/m}))\cdot e^{\lambda}\in ((\A[[x^{1/m}]][u^{1/m}]_{F_u}/(x^{1/m}u^{1/m}-t^{1/m}))[X_m])^{\mu_m}_\lambda ,
$$
where the first series is infinite and the second finite, noting that the right hand side is $H^0(\mc Q|_{\Omega^{\mk} \cap \Omega^{\nd}}, \mls O_{\mc Q|_{\Omega^{\mk} \cap \Omega^{\nd}}})$ after the base extensions in the previous paragraph.
The image of this element in the isomorphic ring 
$$
R[v^{\pm }]_{F_v}/(x-tv)\otimes _{\mathbf{Z}[\mathbf{N}^r]}\mathbf{Z}[N_m]\simeq \A[v^{\pm }]_{F_v}\otimes _{\mathbf{Z}[\mathbf{N}^r]}\mathbf{Z}[N_m]
$$
(where the above isomorphism follows from the fact that $t$ is nilpotent) is the element
$$
(\sum _{j\geq 0}a_jt^{j+w_\lambda/m}v^{j-\chi (n_\lambda)+w_\lambda/m}+\sum _{b\geq 0}c_bv^{-b-1-\chi (n_\lambda)+w_\lambda/m})\otimes e^{n_\lambda}.
$$
If this element comes from an element of $H^0(\mc Q, \mls O_{\mc Q})$, then the coefficients of negative powers of $v$ must be zero, which implies that the $c_b$ are all $0$ and that $a_j$ is annihilated by $t^{j+w_\lambda/m}$ for $j+w_\lambda/m<\chi (n_\lambda)$.

From this 
it follows that \eqref{E:inclusionc} is an isomorphism.
\end{proof}

\begin{cor}\label{C:classical}
If $C_{\bar y}$ contains a unique marked point, then $\mls O_{\mls D_{(\bar y)}} \to Q_*\mls O_{\mls C_{(\bar y)}}$ is an isomorphism.
\end{cor}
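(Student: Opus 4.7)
\textbf{Proof plan for Corollary~\ref{C:classical}.}  By Lemma~\ref{L:injective} the morphism $\mls O_{\mls D_{(\bar y)}} \to Q_*\mls O_{\mls C_{(\bar y)}}$ is injective, so it suffices to establish surjectivity. Pulling back along the \'etale cover $\Sp (R \otimes_{\mathbf{Z}[\mathbf{N}]} \mathbf{Z}[N_m]) \to \mls D_{(\bar y)}$ (we are in the case $n = 1$), this reduces to showing the $R$-algebra map
\[
R \otimes_{\mathbf{Z}[\mathbf{N}]} \mathbf{Z}[N_m] \;\longrightarrow\; H^0(\mc Q, \mls O_\mc Q)
\]
is surjective. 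Both sides are graded by $X_m$, so it is enough to check surjectivity on each $\lambda$-eigenspace.

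By Lemma~\ref{L:9.14} the $\lambda$-eigenspace of the target equals
\[
H_\lambda = \Bigl(\oplus_{j=0}^{s_\lambda - 1} M_j \cdot x^{j + w_\lambda/m}\, e^\lambda\Bigr) \,\oplus\, R \cdot x^{s_\lambda + w_\lambda/m}\, e^\lambda,
\]
and by the formulas in \ref{P:5.14} the map sends $1 \otimes e^n \mapsto x^{\chi(n)} e^{\bar n}$. Hence the image in the $\lambda$-eigenspace is the $R$-submodule spanned by $\{x^{\chi(n)}\, e^\lambda : n \in N_m,\; [n] = \lambda\}$. For any such $n$ we have $n - n_\lambda \in \mathbf{Z} \cap N_m^{\gp}$, so $\chi(n) - \chi(n_\lambda) \in \mathbf{Z}_{\geq 0}$ by minimality of $n_\lambda$. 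Combined with the fact that $R$ contains $x^k$ for all $k \geq 0$, this image equals $R \cdot x^{\chi(n_\lambda)} e^\lambda = R \cdot x^{s_\lambda + w_\lambda/m}\, e^\lambda$, which is precisely the second summand of $H_\lambda$.

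Surjectivity on the $\lambda$-component therefore reduces to showing that the first summand of $H_\lambda$ vanishes, and I will establish this by proving $s_\lambda = 0$ for every $\lambda \in X_m$. In the $n = 1$ setting, the admissible submonoid $N \subset \mathbf{Q}_{\geq 0}$ is determined by its smallest positive element and so has the form $N = \tfrac{1}{M}\mathbf{N}$ for the integer $M$ giving the isotropy order at the marking. Thus $N_m = N \cap \tfrac{1}{m}\mathbf{Z}_{\geq 0} = \tfrac{1}{g}\mathbf{N}$ with $g = \gcd(M,m)$, and $X_m = \tfrac{1}{g}\mathbf{Z}/\mathbf{Z}$ is cyclic of order $g$. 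For $\lambda \in X_m$ represented by $w'/g$ with $0 \leq w' < g$, the minimal lift in $N_m$ is $n_\lambda = w'/g$, giving $\chi(n_\lambda) = w'/g < 1$ and hence $s_\lambda = 0$. Therefore $H_\lambda = R \cdot x^{w_\lambda/m} e^\lambda$ agrees with the image on each $\lambda$-eigenspace, completing the proof. The main subtlety is the structural claim that $N = \tfrac{1}{M}\mathbf{N}$ in one variable, together with keeping track of the index conventions relating the exponent $w_\lambda/m$ used in Lemma~\ref{L:9.14} to the representative $w'/g$ of $\lambda$ in $X_m$.
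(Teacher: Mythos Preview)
Your proof is correct and follows essentially the same approach as the paper: reduce to surjectivity on each $\lambda$-eigenspace via Lemma~\ref{L:9.14}, identify the image as the last summand $R\,x^{\chi(n_\lambda)}e^\lambda$ of $H_\lambda$, and conclude by showing $s_\lambda = 0$ because in rank one the minimal lift $n_\lambda$ lies in $[0,1)$. Your treatment is slightly more careful than the paper's in writing $N_m = \tfrac{1}{g}\mathbf{N}$ with $g=\gcd(M,m)$ rather than asserting $N_m = \tfrac{1}{m}\mathbf{N}$, but this does not affect the argument since either description gives $\chi(n_\lambda)<1$.
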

\begin{proof}
It is enough to show that $R \otimes_{\mathbf{Z}[\mathbf{N}^n]} \mathbf{Z}[N_m]$ is an isomorphism in this case. We check this using \ref{L:9.14}, noting that the image of the $\lambda$-eigenspace of this module homomorphism is the last summand $R x^{\chi(n_\lambda)} e^\lambda$ of $H_\lambda$. So it is enough to show that $s_\lambda=0$. But when $C_{\bar y}$ contains a unique marked point, we have $N_m = (1/m)\mathbf{N}$, and a minimal lift of $\lambda$ can always be found in the interval $[0, 1)$. So $s_\lambda=0$ as required. 
\end{proof}
\begin{cor}\label{C:5.13c} If $\A$ is an integral domain then the cokernel of the map $R\otimes _{\mathbf{Z}[\mathbf{N}^n]}\mathbf{Z}[N_m]\rightarrow H^0(\mc Q, \mls O_{\mc Q})$ is flat over $\beta $.
\end{cor}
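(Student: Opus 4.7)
The plan is to use the explicit computation of $H^0(\mc Q, \mls O_{\mc Q})$ from \ref{L:9.14} to identify the cokernel as a direct sum of particularly simple $\beta$-submodules of $\beta$, each of which will be either $0$ or $\beta$ under the integrality hypothesis.

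First I would compute the image of $R\otimes _{\mathbf{Z}[\mathbf{N}^n]}\mathbf{Z}[N_m]$ inside the decomposition $H^0(\mc Q, \mls O_{\mc Q}) = \bigoplus _{\lambda \in X_m} H_\lambda$ from \eqref{E:0.9.1}. Both source and target decompose into $D(X_m)$-eigenspaces. By the description in \ref{P:5.14}, the map sends $1\otimes e^n$ (for $n \in N_m$ with $\bar n = \lambda$) to $x^{\chi(n)}e^\lambda$. As $n$ varies over lifts of $\lambda$ in $N_m$, the value $\chi(n)$ ranges over $\{s_\lambda + w_\lambda/m + k : k \in \mathbf{N}\}$: indeed $\chi(n_\lambda) = s_\lambda + w_\lambda/m$ is minimal by construction, and adding a standard basis vector of $\mathbf{N}^n \subset N_m$ to any lift of $\lambda$ leaves $\bar n$ unchanged while increasing $\chi$ by $1$. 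Since $x^{s_\lambda + w_\lambda/m + k} = x^k \cdot x^{s_\lambda + w_\lambda/m}$ and $x^k \in R$, the image on the $\lambda$-eigenspace is exactly the last summand $R \cdot x^{s_\lambda + w_\lambda/m}e^\lambda$ of $H_\lambda$ in \eqref{E:0.9.1}.

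Therefore, as a $\beta$-module, the cokernel is isomorphic to
\[
\bigoplus_{\lambda \in X_m} \bigoplus_{j=0}^{s_\lambda - 1} M_j \cdot x^{j + w_\lambda/m} e^\lambda,
\]
and it suffices to show that each summand $M_j = \ker(\cdot t^{j + w_\lambda/m} : \A \to \A)$ is flat over $\beta$. Setting $\tau := t^{1/m} \in \beta$, we have $M_j = \ker(\cdot \tau^{jm + w_\lambda} : \beta \to \beta)$. Since $\beta$ is an integral domain there are two cases. If $\tau \neq 0$, then $\tau^{jm + w_\lambda}$ is a non-zero-divisor in $\beta$, forcing $M_j = 0$. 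If $\tau = 0$, then $\tau^{jm + w_\lambda}$ equals $0$ when $jm + w_\lambda > 0$ (giving $M_j = \beta$) and equals $1$ when $jm + w_\lambda = 0$ (giving $M_j = 0$). In every case $M_j$ is either $0$ or $\beta$, hence flat over $\beta$, and the result follows since a direct sum of flat modules is flat.

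I do not foresee any serious obstacle here: the structural work has already been done in \ref{L:9.14} and \ref{P:5.14}, and the integral domain hypothesis reduces the remaining flatness question to the trivial observation that $\ker(\cdot a : \beta \to \beta)$ is either $0$ or $\beta$ when $\beta$ is a domain.
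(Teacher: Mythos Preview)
Your proposal is correct and follows essentially the same approach as the paper: identify the image of the map on each $\lambda$-eigenspace as the last summand $R\cdot x^{s_\lambda + w_\lambda/m}e^\lambda$ of $H_\lambda$ from \ref{L:9.14}, so that the cokernel is a direct sum of the $M_j$'s, and then observe that integrality of $\beta$ forces each $M_j$ to be $0$ or $\beta$. Your explanation of why the image is exactly that last summand (via the range of $\chi(n)$ over lifts of $\lambda$) is a bit more detailed than the paper's, but the argument is the same.
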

\begin{proof}
The image of the $\lambda$-eigenspace of this module homomorphism is the last summand $Rx^{\chi(n_\lambda)}e^\lambda$ of the $\A$-module $H_\lambda$ in \ref{L:9.14}, so the cokernel is a direct sum of copies of the $M_j$'s. Hence
it is enough to show that either $M_j = \beta $ or $M_j = 0$ for all $j=0 \ldots s_\lambda-1$. This follows from the definition of $M_j$ and the assumption that $\A$ is an integral domain, so in particular $t^{1/m}$ 
    is either $0$ or not a zerodivisor.
\end{proof}

\subsection{Remainder of the proof of the tails case}
We can now finish the argument for Case 2 (when $q$ contracts a tail). Since we have been working locally near $\bar y$ for some time we recall the notation given at the outset of Case 2 (see \ref{sec:case2}): we have $S = \Spec(\beta)$ where $\beta$ is Noetherian and strictly Henselian local, and $Q:\mls C \to \mls D$ is a contraction of curves over $S$ whose underlying morphism $q: C \to D$ of coarse spaces contracts a single rational tail of $C$ to the image of a geometric point $\bar y \to D$. 

We use the properties of $\mls O_{\mls D_{(\bar y)}} \to Q_*\mls O_{\mls C_{(\bar y)}}$ proved in \ref{L:tails} to obtain the following geometric result. Compare to Lemma \ref{L:5.7} and its proof, which holds when $q$ contracts a rational bridge.

\begin{lem}\label{L:6.22b} Let $\mls F$ be the stack over the small \'etale site of $D$ which to any $U\rightarrow D$ associates the groupoid of factorizations $g_U:\mls D_U\rightarrow \mc X$ of $f_U:\mls C_U\rightarrow \mc X$. 

\begin{itemize}
\item[(i)] For every \'etale $U\rightarrow D$ the groupoid $\mls F(U)$ is either empty or equivalent to the one-element set. If $C_{\bar y}$ contains at most one marking the latter always occurs.

\item[(ii)] Suppose $S = \Sp (\A)$ with $\A$ an integral domain with field of fractions $K$ and that $U\rightarrow D$ is an \'etale morphism such that 
$\mls F(U_K) \neq \emptyset$. Then $\mls F(U)\neq \emptyset $.
\end{itemize}
\end{lem}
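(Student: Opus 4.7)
The plan is to work \'etale-locally on $D$ and apply \cite[3.2]{tame} to reduce to the situation where $\mc X = [W/G]$ with $W$ an affine scheme over an \'etale neighborhood of $D$ and $G$ a tame \'etale group scheme on that neighborhood. In this form a morphism $\mls C_U \to \mc X$ is encoded by a pair $(P,\phi)$ consisting of a $G$-torsor $P \to \mls C_U$ together with a $G$-equivariant ring homomorphism $\phi: R \to H^0(P, \mls O_P)$, where $W = \Spec R$. The abelian-contraction hypothesis on $f$ combined with \ref{prop:5.2-for-BG} guarantees that $P$ descends, uniquely up to unique isomorphism, to a $G$-torsor $P_0 \to \mls D_U$. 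A factorization of $f_U$ through $\mls D_U$ is then the same thing as a $G$-equivariant lift $\phi_0: R \to H^0(P_0, \mls O_{P_0})$ of $\phi$ along the pullback map.

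The key technical input is the structure of the pullback $H^0(P_0, \mls O_{P_0}) \to H^0(P, \mls O_P)$. Since $p_0:P_0 \to \mls D_U$ is finite and \'etale-locally trivial, $p_{0*}\mls O_{P_0}$ is \'etale locally on $\mls D_U$ a free $\mls O_{\mls D_U}$-module, and the projection formula identifies the pullback \'etale locally with $|G|$ copies of $\mls O_{\mls D_U} \to Q_{U*}\mls O_{\mls C_U}$. The three properties of this latter map established in \ref{L:tails} therefore transfer: the pullback on $P_0$-cohomology is always injective, it is an isomorphism when $C_{\bar y}$ contains at most one marking, and its cokernel is $\A$-flat whenever $\A$ is an integral domain.

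From these inputs part (i) is immediate. Injectivity forces the lift $\phi_0$ to be unique when it exists, and the equivalence of categories in \ref{prop:5.2-for-BG} makes any $2$-isomorphism between two factorizations unique (an isomorphism of $G$-torsors on $\mls D_U$ is determined by its pullback to $\mls C_U$). Together these show $\mls F(U)$ is equivalent to a set. When $C_{\bar y}$ contains at most one marking the pullback is an isomorphism, so a lift $\phi_0$ exists automatically, yielding the second assertion of (i).

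For (ii), let $M$ denote the cokernel of $H^0(P_0, \mls O_{P_0}) \hookrightarrow H^0(P, \mls O_P)$; by the above $M$ is $\A$-flat, hence torsion-free over the integral domain $\A$, so $M \hookrightarrow M \otimes_\A K$. Flat base change for cohomology along $\A \to K$ identifies $M \otimes_\A K$ with the analogous cokernel $M_K$ formed from $P_{0,K}$ and $P_K$. The composition $R \xrightarrow{\phi} H^0(P, \mls O_P) \to M$ becomes zero after tensoring with $K$, because the given generic factorization over $U_K$ provides a lift of $\phi \otimes K$ to $H^0(P_{0,K}, \mls O_{P_{0,K}})$. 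Torsion-freeness of $M$ then forces the composition $R \to M$ to vanish integrally, producing the desired lift $\phi_0$; its $G$-equivariance follows from uniqueness of the lift together with $G$-equivariance of $\phi$ and of the injection $H^0(P_0, \mls O_{P_0}) \hookrightarrow H^0(P, \mls O_P)$. The principal obstacle is the flatness statement for the cokernel, which is already overcome by \ref{C:5.13c}; the remaining arguments are essentially a packaging of that result with \ref{prop:5.2-for-BG} and the projection formula.
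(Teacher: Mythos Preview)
Your proposal is correct and follows essentially the same approach as the paper: reduce \'etale-locally (the paper passes explicitly to the stalk at $\bar y$ via a limit argument), present $\mc X$ as $[W/G]$, use \ref{prop:5.2-for-BG} to descend the torsor, and then reduce the factorization problem to lifting a ring map through $H^0(P_0,\mls O_{P_0})\hookrightarrow H^0(P,\mls O_P)$, invoking \ref{L:tails} via the projection formula.

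One small imprecision: you assert that the cokernel $M$ of the map on global sections is $\A$-flat, but \ref{L:tails} gives flatness of the \emph{sheaf} cokernel $\mc F$, and passing to $H^0$ need not preserve flatness. The paper sidesteps this by working directly with $H^0(\mls D_{(\bar y)}, p_{0*}\mls O_{P_0}\otimes \mc F)$ and using sheaf-level flatness to get injectivity of the restriction to the generic fiber. Your argument still goes through because you only use that $M$ is $\A$-torsion-free, and $M$ is a submodule of $H^0$ of an $\A$-flat sheaf, hence embeds in its localization at $K$.
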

\begin{proof}
    Note that for a given $U$ both statements can be verified \'etale locally on $U$, since $\mls F$ is evidently a stack for the \'etale topology, and by a standard limit argument it even suffices to verify the analogous statements for the stalk at a geometric point $\bar y\rightarrow D$:

    $(i)_{\bar y}$ The groupoid of factorizations of $f_{\bar y}: \mls C_{(\bar y)} \to \mc X$  is empty or equivalent to the unital set.  That is, if $g_{\bar y}:\mls D_{(\bar y)}\rightarrow \mc X$   of $f_{\bar y}$  is a factorization of $f_{\bar y}$, there are no nontrivial automorphisms of $g_{\bar y}$ that induce the identity on $f_{\bar y}$, and at most one $g_{\bar y}$ exists.

$(ii)_{\bar y}$ Suppose $S = \Sp (\A)$ with $\A$ an integral domain with field of fractions $K$, and that the restriction $f_{\bar y, K}$ of $f_{\bar y}$ to the generic fiber of $\mls C_{(\bar y)}$ factors through a map $g_{\bar y, K}:\mls D_{(\bar y), K}\rightarrow \mc X$.  Then there exists a unique factorization $g_{\bar y}:\mls D_{(\bar y)}\rightarrow \mc X$ of $f_{\bar y}$  extending $g_{\bar y, K}$.

To prove $(i)_{\bar y}$ and $(ii)_{\bar y}$ we can replace $\mc X$ with $\mc X_{(\bar y)} := \mc X \times_X D_{(\bar y)}$. The coarse space of $\mc X_{(\bar y)}$ is the affine scheme $D_{(\bar y)} = \Sp(R)$, so $\mc X_{(\bar y)} = [W/G]$ for a finite $D_{(\bar y)}$-scheme $W = \Sp(\widetilde R)$ and a tame \'etale group scheme $G$ over $D_{(\bar x)}$. The morphism $f_{(\bar y)}: \mls C_{(\bar y)} \to \mc X_{(\bar y)}$ is equivalent to a $G$-torsor $p: P \to \mls C_{(\bar y)}$ and a $G$-equivariant map $P \to W$. Since $\mls C_{\bar y} \to \mc X$ has abelian reduction, by \ref{prop:5.2-for-BG} the torsor $P$ descends to a $G$-torsor $p_0: P_0 \to \mls D_{(\bar y)}$ (unique up to unique isomorphism). The equivariant map $P \to W$ corresponds to a ring homomorphism 
\begin{equation}\label{eq:end1}
\widetilde{R} \to H^0(P, \mls O_P),\end{equation} 
and the category of factorizations of $f_{(\bar y)}$ is equivalent to the \textit{set} of factorizations of \eqref{eq:end1} through the canonical map $H^0(P_0, \mls O_{P_0}) \to H^0(P, \mls O_P)$.

To compute this set of factorizations, let 
$\mc F$ denote the cokernel of the map $\mls O_{\mls D_{(\bar y)}}\rightarrow Q_*\mls O_{\mls C_{(\bar y)}}$.  Since $p_0$ is finite flat we have that $p_{0, *}\mls O_{P_0}$ is locally free. Tensoring the short exact sequence
$$
0\rightarrow \mls O_{\mls D_{(\bar y)}}\rightarrow Q_*\mls O_{\mls C_{(\bar y)}}\rightarrow \mc F\rightarrow 0
$$
with $p_{0, *}\mls O_{P_0}$, we obtain the short exact sequence 
$$
0\rightarrow \mls O_{P_0}\rightarrow Q_*p_*\mls O_{P}\rightarrow p_{0, *}\mls O_{P_0}\otimes _{\mls O_{\mls D_{(\bar y)}}}\mc F\rightarrow 0.
$$
of sheaves on $\mls D_{(\bar y)}$, where the computation of the middle term follows from the projection formula. Taking global sections gives an exact sequence that forms the bottom row of the following diagram.
\begin{equation}\label{eq:end2}
\begin{tikzcd}
&&\widetilde{R} \arrow[d] \arrow[dl, dashrightarrow] & \\
0 \arrow[r] & H^0(P^0, \mls O_{P^0}) \arrow[r] & H^0(P, \mls O_P) \arrow[r] & H^0(\mls D_{(\bar y)}, p_{0, *}\mls O_{P_0}\otimes _{\mls O_{\mls D_{(\bar y)}}}\mc F)  
\end{tikzcd}
\end{equation}
As explained above, the set of factorizations of $f_{\bar y}$ is the set of dashed arrows that makes the triangle commute. Now use \ref{L:tails}: since $H^0(P^0, \mls O_{P^0}) \to H^0(P, \mls O_P)$ is injective (resp. an isomorphism if $C_{\bar y}$ has at most one marked point) we see that such an arrow is unique if it exists (resp. exists and is unique), proving $(i)_{\bar y}$. For $(ii)_{\bar y}$ assume $S = \Sp(\beta)$ with $\beta$ an integral domain with field of fractions $K$, so we have a restriction homomorphism
\begin{equation}\label{eq:end3}
H^0(\mls D_{(\bar y)}, p_{0, *}\mls O_{P_0}\otimes _{\mls O_{\mls D_{(\bar y)}}}\mc F) \hookrightarrow H^0(\mls D_{(\bar y), K}, p_{0K, *}\mls O_{P_0, K} \otimes_{\mls O_{\mls D_{(\bar y), K}}} \mc F_K)
\end{equation}
which is injective since by \ref{L:tails} the sheaf $\mc F$ is flat. Now a dotted arrow in \eqref{eq:end2} exists if and only if the composition $\downarrow {}_{\to}$ in that diagram is zero. But $\downarrow {}_{\to}$ is zero if and only if its composition with the injective homomorphism \eqref{eq:end3} is zero, and this triple composition is zero exactly when $f_{\bar y, K}$ has a factorization.

\end{proof}

We now complete the proof of \ref{T:9.2} in the tails case.
 Let $\Sigma$ be the fiber product of the diagram
$$
\xymatrix{
& S\ar[d]^-f\\
\Hom (\mls D, \mc X)\ar[r]& \Hom (\mls C, \mc X).}
$$
By setting $U=D$ in \ref{L:6.22b} we obtain the following:

\begin{itemize}
\item[(i)] The map $\Sigma \rightarrow S$ is a monomorphism, and if $q$ is classical then it is an equivalence.

\item[(ii)] For a morphism $\Sp (A)\rightarrow S$, with $A$ an integral domain with field of fractions $K$, the set $\Sigma (A)$ is nonempty if and only if $\Sigma (K)$ is nonempty.
\end{itemize}

By (i) and \cite[\href{https://stacks.math.columbia.edu/tag/0463}{Tags 0463 and 0418}]{stacks-project} we conclude that $\Sigma \to S$ is separated and that $\Sigma$ is a scheme (note that $f$ is locally of finite type by \cite[\href{https://stacks.math.columbia.edu/tag/06U9}{Tags 06U9}]{stacks-project}).  

The map $\Sigma \to S$ is quasi-compact.  Indeed, there is a commuting square
\[
\begin{tikzcd}
\Hom(\mls D, \mc X) \arrow[r] \arrow[d] & \Hom(\mls C, \mc X) \arrow[d] \\
\Hom(D, X) \arrow[r] & \Hom(C, X),
\end{tikzcd}
\]
where 
the vertical maps are quasicompact and quasiseparated by \cite[C.2(ii), C.3]{AOV} and 050M(4). It follows that the map
$$
\Hom(\mls D, \mc X)\rightarrow \Hom(\mls C, \mc X)\times _{\Hom(C, X)}\Hom (D, X)
$$
is quasi-compact.  The map $\Sigma \rightarrow S$ is the base change of this map along the morphism $S\rightarrow \Hom(\mls C, \mc X)\times _{\Hom(C, X)}\Hom (D, X)$ and therefore is also quasi-compact.

 Then (ii) implies that the map $\Sigma \rightarrow S$ satisfies the valuative criterion for properness, and therefore is a proper monomorphism.  By \cite[\href{https://stacks.math.columbia.edu/tag/04XV}{Tag 04XV}]{stacks-project} we conclude that $\Sigma \hookrightarrow S$ is a closed immersion (or an isomorphism if $q$ is classical) which completes the proof of \ref{T:9.2} in this case, and therefore also completes the proof of \ref{T:9.2}. \qed

\section{Stabilization}\label{S:section11}

\begin{pg}
Let $\mc X$ be a separated tame Deligne-Mumford stack of finite type over a noetherian base $B$. Fix nonnegative integers $g, n$ and a collection of weights $\{a_i\}_{i=1}^n$ satisfying 
$a_i \in (0, 1]\cap \mathbf{Q}.$
Let $(\bC, f:\mls C \to \mc X)$ be a prestable map of type $(g, \ba)$ over a $B$-scheme $S$. Let $T \to S$ be a morphism. The \textit{category of stabilizations} of $f_T: \mls C_T \to \mc X$ is the category of triples 
\[((q, \phi'): \bC_T \to \bD, g: \mls D \to \mc X, \alpha)\]
where $(q, \phi'):\bC_T \to \bD$ is a contraction (with associated stack morphism $Q:\mls C_T \to \mls D$) such that $\bar f: C \to X$ has a factorization $C \xrightarrow{q} D \xrightarrow{\bar g} X$ (this is unique since contractions of ordinary curves are epimorphisms of separated algebraic spaces \cite[4.3(v)]{OWI}), the pair $(\bD, g)$ is a stable map of type $(g, \ba)$, and $\alpha$ is a 2-isomorphism from $g \circ Q$ to $f_T$. A morphism from $((q_1, \phi'_1), g_1, \alpha_1)$ to $((q_2, \phi'_2), g_2, \alpha_2)$ exists only if $(q_1, \phi'_1) = (q_2, \phi'_2) =: (q, \phi')$, and in this case a morphism
from $((q, \phi'), g_1, \alpha_1)$ to $((q, \phi'), g_2, \alpha_2)$ is a 2-isomorphism from $g_1$ to $g_2$ that is compatible with $\alpha_1$ and $\alpha_2$.


\begin{lem}
The category of stabilizations is equivalent to a set.
\end{lem}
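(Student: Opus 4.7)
The category of stabilizations is a groupoid, so to show it is equivalent to a set it suffices to show that the automorphism group of each object is trivial. By definition, a morphism in this category preserves the underlying contraction, so an automorphism of a stabilization $((q, \phi'), g, \alpha)$ is a 2-isomorphism $\beta: g \Rightarrow g$ satisfying $\alpha \circ (\beta * Q) = \alpha$, equivalently $\beta * Q = \mathrm{id}_{g \circ Q}$, where $Q: \mls C_T \to \mls D$ is the stack morphism associated to $(q, \phi')$.

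The plan is to deduce triviality of $\beta$ from Theorem \ref{T:9.2}(1), which asserts that the groupoid of factorizations of $f_T$ through $Q_T$ is equivalent to a set, provided $(q, \phi')$ is initial and for every geometric point $\bar y \to D$ the fiber morphism $\mls C_{\bar y} \to \mc X$ admits an abelian contraction. The abelian contraction hypothesis is automatic here: since $\bD$ is a generalized log twisted curve, the stack $\mls D$ has abelian stabilizer groups, and the fiber morphism $\mls C_{\bar y} \to \mc X$ factors through the residual gerbe $B\mathrm{Stab}_{\bar y}(\mls D) \to \mc X$, giving an abelian contraction in the sense of \ref{D:abeliancontractioni}. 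For the initiality hypothesis, one factors $(q, \phi')$ through the canonical initial contraction via \cite[6.15]{OWI}, verifies that $g$ descends along this factorization, and checks that automorphism groups transfer under the reduction; this last transfer step is the main technical obstacle.

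An alternative direct route avoids initiality altogether. The automorphism $\beta$ corresponds to a global section of the group scheme $g^*\mc I_{\mc X} \to \mls D$, which is finite \'etale since $\mc X$ is a separated tame Deligne--Mumford stack and hence has finite \'etale inertia. The identity section of a finite \'etale group scheme is an open-and-closed immersion, so the equalizer $\mathrm{Eq}(\beta, \mathrm{id}) \subseteq \mls D$ is open-and-closed. The hypothesis $\beta * Q = \mathrm{id}$ says that $Q$ factors through this equalizer, and since $Q$ is surjective on geometric points (inherited from $q: C_T \to D$ being a proper surjective contraction), the equalizer contains every geometric point of $\mls D$ and therefore equals $\mls D$; this yields $\beta = \mathrm{id}$.
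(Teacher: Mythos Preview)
Your direct argument in the third paragraph is correct and is genuinely different from the paper's route. One small correction: the inertia $\mc I_{\mc X}\to \mc X$ of a separated Deligne--Mumford stack is finite and \emph{unramified}, not in general \'etale (tameness does not force flatness of inertia). This is harmless for your argument, since a section of an unramified separated morphism is an open-and-closed immersion, so the locus $\{\beta = \mathrm{id}\}\subset \mls D$ is open-and-closed as you claim; surjectivity of $Q$ on points (which follows from surjectivity of $q$ together with the bijections $|\mls C_T|\simeq |C_T|$, $|\mls D|\simeq |D|$) then forces $\beta=\mathrm{id}$.

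The paper instead carries out exactly the reduction you sketched in your second paragraph and resolves the ``transfer step'' you flagged. It factors $\bC\to\bD$ through the initial contraction $\bC\to\bD^\dagger\to\bD$ via \cite[6.15]{OWI}, then observes that since $g$ is representable, \cite[6.11]{OWI} identifies $\mls D^\dagger\to\mls D$ as the relative coarse moduli space of $\mls D^\dagger\to\mls D\xrightarrow{g}\mc X$; the universal property of relative coarse moduli then gives a bijection between $2$-isomorphisms of $g$ and $2$-isomorphisms of the composite $\mls D^\dagger\to\mc X$, after which \ref{T:9.2} applies. Your argument is more elementary and self-contained; the paper's argument has the virtue of routing through \ref{T:9.2}, which is the central engine of the whole section and is needed anyway for the existence/uniqueness assertions in \ref{T:stabilize}.
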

\begin{proof}
Let $(g_1, \alpha_1)$ and $(g_2, \alpha_2)$ be two stabilizations along the contraction $\bC \to \bD$. This contraction factors by \cite[6.15]{OWI} through an initial contraction $\bC \to \bD^\dagger \to \bD$ and by \cite[6.11]{OWI}, since the maps $g_i$ are representable, the induced map of stacks $\mls D^\dagger \to \mls D$ is the relative coarse space of both the compositions
\begin{equation}\label{eq:set1}
\mls D^\dagger \to \mls D \xrightarrow{g_i} \mc X.
\end{equation}
By the universal property of coarse moduli spaces the set of 2-isomorphisms of $g_1$ and $g_2$ is isomorphic to the set of 2-isomorphisms of the compositions \eqref{eq:set1}. But since $\bC \to \bD^\dagger$ is initial,  by \ref{T:9.2} there is at most one 2-isomorphism of the compositions in \eqref{eq:set1} compatible with the $\alpha_i$.
\end{proof}

Thanks to this lemma we will hereafter speak of the \textit{set of stabilizations}. 
The main result of this section is the following.
\end{pg}

\begin{thm}\label{T:stabilize}
Let $(\bC, f:\mls C \to \mc X)$ be a prestable map of type $(g, \ba)$ over a $B$-scheme $S$. Then the functor which to any $S$-scheme $T$ associates the set of stabilizations is represented by a closed subscheme of $S$. If $\mc X = BG$ for some finite \'etale group scheme $G$ over $B$ of order invertible in $B$, then this functor is represented by $S$ itself.
\end{thm}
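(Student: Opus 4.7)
The plan is to construct the stabilization iteratively by contracting one rational component at a time and applying Theorem \ref{T:9.2} at each step. Over a geometric point, the stabilization of $(\bC, f)$ is obtained by iteratively contracting rational bridges of $C$ and rational tails with total weight at most $1$ whose restrictions of $f$ admit an abelian contraction; the resulting stable map is characterized by Lemma \ref{D:5.7}. In a family over $S$ the combinatorial type of this iteration may vary, so I would proceed locally on $S$: after \'etale base change, stratify $S$ by the combinatorial type of the stabilization on the closed stratum, reducing to a fixed sequence of single-component contractions.

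For each single candidate contraction $\bC \to \bC^{(1)}$, lift it to an initial contraction of generalized log twisted curves via \cite[\S 6]{OWI}, with associated stack morphism $Q^{(1)}: \mls C \to \mls C^{(1)}$. Theorem \ref{T:9.2} then produces a closed subscheme of $S$ on which a factorization of $f$ through $Q^{(1)}$ exists; the abelian-contraction hypothesis of that theorem is either automatic (by Proposition \ref{P:newabelian} when the markings on the contracted tail coincide) or holds over the locus where such a factorization can exist, since in that case each fiber factors through an abelian gerbe residue. Iterating produces a decreasing chain of closed subschemes whose intersection $S' \subseteq S$ represents the stabilization functor: over $S'$ the iteration produces a contraction $\bC_{S'} \to \bD_{S'}$ and a representable morphism $g: \mls D_{S'} \to \mc X$, and stability of $(\bD_{S'}, g)$ holds because the iteration terminates precisely when no further contraction preserves representability of the factorization (compare Lemma \ref{D:5.7}). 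Uniqueness of stabilization (the lemma preceding \ref{T:stabilize}) ensures that the local constructions glue. To promote the resulting locally-closed subscheme to a closed one in $S$, I would apply the valuative criterion together with the specialization argument of Section \ref{sec:generization} (closure of the abelian reduction of the relevant $G$-torsor), which extends abelian contractions from generic to special fibers over a DVR; then \ref{T:9.2} applied over $\Spec R$ forces the factorization to exist throughout $R$.

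For the assertion when $\mc X = BG$, Proposition \ref{P:newabelian} shows that every $G$-torsor on the stacky rational curves arising as fibers of the single contractions admits an abelian contraction, so \ref{T:9.2}(3) gives each closed subscheme in the iteration equal to $S$ itself, and hence $S' = S$. The main technical obstacle is coordinating the iterative construction across strata of $S$ with different combinatorial stabilization types, and extending the single-component specialization argument of Section \ref{sec:generization} to chains of contracted components meeting at stacky nodes.
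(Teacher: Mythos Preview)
Your overall strategy matches the paper's---iterate single-component contractions, invoke Theorem \ref{T:9.2} at each step, and argue closedness via specialization---but there is a genuine gap in the iteration. The sequence of contractions required to stabilize $(\bC_{\bar s}, f_{\bar s})$ depends on $\bar s$, so fixing one sequence near a point $\bar b$ and intersecting the resulting chain of closed subschemes gives you only $\Psi_D$ for that particular target curve $D$, not the full stabilization functor $\Psi$. You never explain why nearby points cannot require a \emph{different} contraction $D'$; uniqueness of stabilization tells you nothing here, since it only says that a stabilization is unique \emph{when it exists}, not that the $D$ chosen at $\bar b$ is the correct one elsewhere. Hence your local closed subschemes need not glue to represent $\Psi$. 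The paper confronts this directly: after settling the field case via the greedy Algorithm \ref{A:only}, it proves (step (2) of the proof) that on an \'etale neighborhood $U$ of any $\bar b$ with $\Psi(\bar b)\neq\emptyset$ one actually has $\Psi|_U = \Psi_D$ for the $D$ extending the contraction at $\bar b$; this requires showing that the ``wrong-$D$'' locus $W\subset Z\times_B U$ is closed and avoids $\bar b$, which in turn relies on the DVR algorithm (Lemma \ref{L:6.13}, Algorithm \ref{A:ok-there's-two}).

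Two smaller points. Your verification of the abelian-contraction hypothesis of Theorem \ref{T:9.2} is circular as written: the locus ``where a factorization can exist'' is what you are trying to cut out. The paper instead observes that if $\Psi_D(\bar b)\neq\emptyset$ then the fibers over $D_{\bar b}$ factor through abelian residue gerbes, and uses Lemma \ref{L:2.14b} to spread this to an open neighborhood \emph{before} invoking Theorem \ref{T:9.2}. Finally, your citation of \S\ref{sec:generization} points the wrong way: that section proves stability is closed under generization (i.e.\ open), whereas closedness of the support of $\Psi$ needs stability under \emph{specialization}, which is Lemma \ref{L:6.13}.
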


\begin{proof}
Without loss of generality we may assume that  $S=B$ (this simplifies the notation), and by a standard limit argument we may also assume that $S$ is noetherian. Let $\Psi$ denote the functor in the theorem statement.  The functor $\Psi$ is a sheaf with respect to the big \'etale topology on $B$, and therefore by descent for closed subschemes \cite[\href{https://stacks.math.columbia.edu/tag/0247}{Tag 0247}]{stacks-project} it suffices to prove the theorem after replacing $B$ by an \'etale cover. 

The proof now proceeds in several steps (\ref{step1}--\ref{SS:end}).

\subsection{Theorem \ref{T:stabilize} holds when $B = \Sp(k)$ for $k$ an algebraically closed field}\label{step1}

Let $k$ be an algebraically closed field.
Fix a prestable map $(\bC, f: \mls C \to \mc X)$  over $k$ of type $(g, \ba)$ as in the theorem. Recall that if $C$ is a nodal curve over $k$, a rational tail (resp. rational bridge) is a component with exactly one node (resp. two nodes).
We say that a rational tail or bridge $E$ of $C$ is \textit{unstable for $f$} if the morphism of stacks $\mls E \to \mc X$ has an abelian contraction and
\[
\sum_{i=1}^n a_i + \#\text{nodes of }E \leq 2.
\]
Note that if $E$ is a rational bridge this simply means that $E$ has no marked points, and if $E$ is a rational tail then $\sum _{i=1}^na_i\leq 1$.

We will show that the following algorithm produces the unique element of $\Psi(k)$, or returns the empty set if $\Psi(k) = \emptyset$. As motivation we refer to \cite[4.14(v)]{OWI}.
 
\begin{alg}\label{A:only}
\color{white} create a new line \color{black}
\begin{itemize}
\item[(i)] Let $C=C_1 \to C_2$ contract all unstable rational tails of $C$ and let $\bC_1 \to \bC_2$ be the associated initial contraction. By \ref{T:9.2}, a factorization of $f:\mls C_1 \to \mc X$ through $\mls C_2$ is unique if it exists. If no factorization exists then $\Psi(k)=\emptyset$; otherwise proceed.
\item[(ii)] Repeat (i) as many times as necessary: at each stage, contract all unstable rational tails of $C_i$ to form $C_{i+1}$ and factor the morphism $\mls C_i \to \mc X$ through $\mls C_{i+1}$ if possible. 
\item[(iii)] Once all rational tails of $C_{\ell-1}$ are stable, let $C_{\ell-1} \to C_\ell = C^{stab}$ contract all unstable rational bridges (note that all rational bridges are unstable by \ref{P:newabelian}), 
and let $\bC_{\ell-1} \to \bC_\ell$ be the associated initial contraction. The morphism $\mls C_{\ell-1} \to \mc X$ factors through a morphism $\mls C_{\ell} \to \mc X$ by \ref{T:9.2}. 
\item[(iv)] By \cite[6.11]{OWI} there is a unique contraction $\bC_\ell \to \bC^{stab}$ inducing an isomorphism of coarse spaces such that $\mls C_\ell \to \mc X$ factors through a representable morphism $f^{stab}:\mls C^{stab} \to \mc X$. The pair $(\bC^{stab}, f^{stab})$ is the unique element of $\Psi(k)$.
\end{itemize}
\end{alg}

We note that in step (iii), contracting rational bridges will not introduce any new rational tails. Also, once the contraction $\bC \to \bC^{stab}$ has been constructed, $f^{stab}$ is the unique factorization of $f$ through $\mls C^{stab}$ coming from \ref{T:9.2}.

 If Algorithm \ref{A:only} produces an object it is stable by \ref{D:5.7}. That is, \ref{A:only} produces an element of $\Psi(k)$.
We must show that, if $(\bD, g: \mls D \to \mc X)$ is a stable map of type $(g, \ba)$ and $\mls C \to \mls D \xrightarrow{g} \mc X$ is a factorization of $f$ through a contraction $\bC \to \bD$, then $\Psi (k)\neq \emptyset $ and $(\bD, g) = (\bC^{stab}, f^{stab})$.
For this, let
\[
C = D_1 \to D_2 \to \ldots \to D_p =D
\]
be the greedy factorization of $C \to D$ (see \cite[4.14(v)]{OWI}). Associated to this factorization we have a factorization of $\bC \to \bD$ as a sequence of initial contractions $\bC = \bD_1 \to \bD_2 \to \ldots \to \bD_p$ followed by a contraction $\bD_p \to \bD$ that induces an isomorphism of coarse curves. We argue via induction on $i$ that $D_i = C_i$ and hence $\mls D_i = \mls C_i$ (note, however, that this will not yet imply $\mls D = \mls C^{stab}$). 

First assume that $i$ is less than $\ell-1$ and $p-1$. An extremal branch of $C_i = D_i$ that is contracted in $D_{i+1}$ is  unstable, hence contracted in $C_{i+1}$. Conversely, if $E \subset D_i$ is an extremal branch not contracted in $D_{i+1}$, its image in $D$ is an extremal branch. Although the associated stack $\mls E$ may not be isomorphic to its image in $\mls D$, by \ref{L:coarse-abelian} below the image will still have an abelian contraction and hence be unstable.
So we must have $C_{i+1} = D_{i+1}$. Similarly if $p > \ell$ then the component of $C_{\ell-1} = D_{\ell-1}$ contracted in $D_\ell$ must be an unstable rational tail, a contradiction, so $p=\ell$. Finally, when $i=\ell-1$, we see that $D_{\ell-1} \to D_\ell$ must contract all the rational bridges since these are all unstable by \ref{P:newabelian}. This shows that $\ell=p$ and $C_\ell=D_\ell$.

We have moreover two factorizations $\mls C_\ell \to \mls D \to \mc X$ and $\mls C_\ell \to \mls C^{stab} \to \mc X$, where $\mls C_\ell \to \mls D, \mls C^{stab}$ induce isomorphisms of coarse curves and $\mls D, \mls C^{stab} \to \mc X$ are representable. By \cite[6.11]{OWI}, both these factorizations are relative coarse spaces of $\mls C \to \mc X$. By uniqueness of the relative coarse space we obtain $(\bD, g) = (\bC^{stab}, f^{stab}).$

This completes the proof of the case when $B =\Sp (k)$ is the spectrum of a field, modulo the following lemma.

\begin{lem}\label{L:coarse-abelian}
Let $\bE$ and $\bE^\dagger$ be generalized log twisted curves over an algebraically closed field, with coarse spaces both equal to $\mathbf{P}^1$, and let $\bE \to \bE^\dagger$ be a contraction over $\mathbf{P}^1$. Let $\mls E^\dagger \to \mc X$ be a morphism. If the restriction $\mls E \to \mc X$ has an abelian contraction, so does $\mls E^\dagger \to \mc X$.
\end{lem}
\begin{proof}
The map $\pi :\mls E\rightarrow \mls E^\dag $ is its own relative coarse moduli space: Let 
$$
\xymatrix{
\mls E\ar@/^2pc/[rr]^-{\pi }\ar[r]^-a& \mls E'\ar[r]^-b& \mls E^\dag }
$$
be the relative coarse moduli space of $\pi $, so in particular $b$ is representable, and note that by \cite[6.11]{OWI} there is a factorization of generalized log twisted curves
$$
\xymatrix{
\mathbf{E}\ar[r]& \mathbf{E}'\ar[r]& \mathbf{E}^\dag }
$$
inducing this factorization of stacks. By  \cite[6.12]{OWI} we have that $\mathbf{E}'\rightarrow \mathbf{E}^\dag $ and hence $\mls E'\rightarrow \mls E^\dag $ are isomorphisms.
For a geometric point $\bar x \to \mathbf{P}^1$ let $\Gamma_{\bar x}$ (resp. $\Gamma_{\bar x}^\dagger$) denote the stabilizer group at $\bar x$ in $\mls E$ (resp. $\mls E^\dagger$), so we have a surjection $\Gamma _{\bar x}\rightarrow \Gamma _{\bar x}^\dagger $.

Since $\mls E \to \mls E^\dagger$ is its own relative coarse moduli space, the category of quasi-coherent sheaves on $\mls E^\dag $ is equivalent to the category of quasi-coherent sheaves $\mls F$ on $\mls E$ such that for all geometric points $\bar x\rightarrow \mathbf{P}^1$ the action of $\text{Ker}(\Gamma _{\bar x}\rightarrow \Gamma _{\bar x}^\dagger )$ on $\bar x^*\mls F$ is trivial (see for example \cite[2.3.4]{AV}).  This equivalence is monoidal (compatible with tensor products), and therefore it induces an equivalence between categories of torsors under \'etale group schemes.

Turning now to the proof of the lemma, assume that $\mls E\rightarrow \mc X$ has an abelian contraction.   Then the map $\mathbf{P}^1\rightarrow X$ factors thorugh a point.  Replacing $\mc X$ by the residual gerbe of this point we may assume that $\mc X = BG$ for a tame group $G$.  The map $\mls E^\dag \rightarrow \mc X$ is therefore given by a $G$-torsor $P^\dag \rightarrow \mls E^\dag $ whose pullback $P\rightarrow \mls E$ admits a reduction $Q$ to an abelian subgroup $A\subset G$.  Note that there is a natural inclusion $Q \subset P$, and since the action of the groups $\text{Ker}(\Gamma _{\bar x}\rightarrow \Gamma _{\bar x}^\dagger )$ on the stabilizers $\bar x^*P$ is trivial, the same is true for the actions on the schemes $\bar x^*Q$.  We conclude that $Q$ descends to an $A$-torsor $Q^\dag \rightarrow \mls E^\dag $ which induces $P^\dag $.

\end{proof}

We now turn to the proof of \ref{T:stabilize} in the general case.  Fix $(\bC , f)$ as in \ref{T:stabilize}. 

\subsection{Subfunctors $\Psi _{D}\subseteq \Psi$}

The induced map of coarse spaces $\bar f: C \to X$ is prestable of type $(g, \ba)$. Let 
$$
\xymatrix{
C\ar[r]^-p& C^*\ar[r]^-{}& X}
$$
be its stabilization \cite[4.11]{OWI}.
For every factorization of $p$ into a sequence of contractions 
\begin{equation}\label{E:6.7.1}
\xymatrix{
C\ar[r]^-{}\ar@/^2pc/[rr]^-p& D\ar[r]^-{}& C^*}
\end{equation}
define $\Psi _{D}\subseteq \Psi $ be the subfunctor 
of objects $((q, \phi'): \bC \to \bD, g, \alpha)$ for which $q$ is equal to the given contraction $C \to D$. 

\begin{lem}\label{L:6.8a}
Let $\bar b \to B$ be a geometric point. There exists an \'etale neighborhood $U$ of $\bar b$ and a finite set $J$ of factorizations of $p$ as a composition of contractions
\[
C_U \to D^{(j)}_U \to C^*_U, \quad \quad \quad \quad j \in J,
\]
such that, for each geometric point $\bar u \to U$, we have
\[
\Psi(\bar u) = \bigcup_{j \in J} \Psi_{D^{(j)}}(\bar u).
\]
\end{lem}
\begin{proof}
Given a geometric point $\bar b\rightarrow B$, let $J$ be the set of factorizations
$$
\xymatrix{
C_{\bar b}\ar[r]& D^{(j)}_{\bar b}\ar[r]& C^*_{\bar b}}
$$
in the fiber over $\bar b$.  Note that the set $J$ is finite.  By \cite[4.17]{OWI}, there is an \'etale neighborhood $U'$ of $\bar b$ where there exists for each $j\in J$ a factorization
$$
\xymatrix{
C_{U'}\ar[r]& D^{(j)}_{U'}\ar[r]& C^*_{U'}}
$$
inducing the factorization corresponding to $j$ in the fiber over $\bar b$. Moreover $U'$ has a finite stratification such that the topological type of $C_{U'}$ over each stratum is constant. We obtain $U$ by removing from $U'$ the closures of those strata whose closure does not contain the image of $\bar b$.
\end{proof}

\begin{lem}\label{L:6.8.2} Fix a factorization \eqref{E:6.7.1} and let $\bar b\rightarrow B$ be a geometric point with $\Psi _D(\bar b)\neq \emptyset $.  Then there exists an \'etale neighborhood $U$ of $\bar b$ such that the restriction of $\Psi _D$ to $U$ is represented by a closed subscheme of $U$.
\end{lem}
\begin{proof}
 By \ref{L:2.14b} we can find an \'etale neighborhood  $U$ of $\bar b$ so that for every geometric point $\bar y\rightarrow D$ the restriction of $f$ to the fiber $\mls C_{\bar y}$ admits an abelian contraction. 
  By \ref{T:9.2} the groupoid of factorizations of $\mls C \to \mc X$ through the initial contraction over $C \to D$ is represented by a closed subscheme of $U$. By openness of stability \ref{P:6.4b}, by passing to a further \'etale neighborhood we can ensure that this closed subscheme represents $\Psi_D$.
\end{proof}

\subsection{Proof of \ref{T:stabilize} in two steps}
We complete the proof of \ref{T:stabilize} in the following steps, which we will show in turn.
\begin{enumerate}
    \item Let $Z\subset B$ be the set of points $b\in B$ such that for a geometric point $\bar b\rightarrow B$ over $b$ the set $\Psi (\bar b)$ is nonempty.  Then $Z$ is a closed subset of $B$.
    \item For any geometric point $\bar b\rightarrow B$ with $\Psi (\bar b) \neq \emptyset $ there exists an \'etale neighborhood $U$ of $\bar b$ and a contraction $C_U\rightarrow D$ over $U$ such that $\Psi|_U = \Psi_D$.
    
\end{enumerate}

\begin{rem}
At this point we can show directly that $\Psi$ is a subfunctor of $B$, or equivalently that for any $T \to B$ the set $\Psi(T)$ is either empty or has a unique element. To see this,
fix a scheme $T\rightarrow B$ and two stabilizations over $T$
$$
((q_i, \phi _i'): \bC_T \to \bD _i, g_i: \mls D _i \to \mc X, \alpha_i), \ \ i=1,2.
$$
By \ref{step1} the two contractions $C_T\rightarrow D_i$ agree in every geometric fiber, and therefore by \cite[4.9]{OWI} are equal; say $D_1 = D_2 = D$.  Then these two stabilizations define two elements of $\Psi _D(T)$ and therefore they are equal by \ref{L:6.8.2}.
\end{rem}

\begin{lem} Statements (1)-(2) imply \ref{T:stabilize}.
\end{lem}
\begin{proof}
Note that $\Psi $ is a sheaf for the \'etale topology.  By descent for closed immersions \cite[\href{https://stacks.math.columbia.edu/tag/03I0}{Tag 03I0}]{stacks-project}, it therefore suffices to show that for every geometric point $\bar b\rightarrow B$ there exists an \'etale neighborhood $U$ of $\bar b$ such that $\Psi |_U$ is represented by a closed subscheme of $U$.  If $\Psi (\bar b) = \emptyset $, using (1), let $U$ be an \'etale neighborhood whose image (which is open since $U\rightarrow B$ is \'etale) does not meet $Z$. Then $\Psi |_U$ is represented by the empty scheme.  If $\Psi (\bar b)\neq \emptyset$ let $U$ be as in (2). Then the result follows from \ref{L:6.8.2}.

When $\mc X = BG$ for a finite group $G$ of order invertible in $B$, the case of an algebraically closed field (in \ref{step1}) implies that $Z = B$, and then \ref{T:9.2} implies that $\Psi $ is represented by $B$ itself.
\end{proof}

\subsection{Proof of (1)}
We first claim that $Z$ is constructible. By \ref{L:6.8a} it suffices to prove the analogous statement for the functors $\Psi _D$ which follows from \ref{L:6.8.2}.
Since $Z$ is constructible, to prove (1) it suffices to show that $Z$ is stable under specialization. This follows from the following lemma.
\begin{lem}\label{L:6.13} Let $A$ be a discrete valuation ring with separably closed residue field, let $a:\Sp (A)\rightarrow B$ be a morphism, and let $\bar \eta \rightarrow \Sp (A)$  be a geometric generic  point.  If $\Psi (\bar \eta )\neq \emptyset $ then $\Psi (\Sp (A))\neq \emptyset .$
\end{lem}
\begin{proof}
Without loss of generality we may replace $B$ by $\Sp (A)$, so can assume $\Sp (A) = B$.
Denote by $\bar b\in B$ the closed point.
  Let $(\bD_{\bar \eta }, g_{\bar \eta })$ be the unique object of $\Psi(\bar \eta )$, so $(\bD_{\bar \eta }, g_{\bar \eta })$ is a stable map of type $(g, \ba)$ through which $(\bC_{\bar \eta }, f_{\bar \eta })$ factors.

 The following algorithm will produce an element of $\Psi(B)$.
\begin{alg}\label{A:ok-there's-two} 
\begin{itemize}
\item[(i)] Let $C = C_1 \to C_2$ be the unique contraction over $B$ that contracts all unstable rational tails of $C_{1, \bar b}$ and let $\bC_1 \to \bC_2$ be the associated initial contraction. The morphism $f:\mls C_1 \to \mc X$ always factors through $\mls C_2$: indeed, by the Claim proved below, $C_{\bar \eta } \to C_{2, \bar \eta }$ factors $C_{\bar \eta } \to D_{\bar \eta }.$ Granting this, the stable map $(\bD_{\bar \eta }, g_{\bar \eta })$ restricts to a generic factorization of $f : \mls C \to \mc X$ through $\mls C_2$, and hence a factorization over all of $B$ exists since the functor of factorizations in \ref{T:9.2} is represented by a closed subscheme of $B$.

\item[(ii)] Repeat (i) as many times as necessary: at each stage, $C_i \to C_{i+1}$ is the unique contraction over $B$ that contracts all unstable rational tails of $C_{i, \bar b}$.

\item[(iii)] Once all rational tails of $C_{\ell-1, \bar b}$ are stable, let $C_{\ell-1} \to C_\ell = C^{stab}$ contract all rational bridges, and let $\bC_{\ell-1} \to \bC_\ell$ be the associated initial contraction. The morphism $\mls C_{\ell-1} \to \mc X$ factors through a morphism $\mls C_\ell \to \mc X$ by \ref{T:9.2}.
\item[(iv)] There is a unique contraction $\bC_\ell \to \bC^{stab}$ inducing an isomorphism of coarse spaces such that $\mls C_\ell \to \mc X$ factors through a representable morphism $f^{stab}: \mls C^{stab} \to \mc X$. The pair $(\bC^{stab}, f^{stab})$ is stable because its fiber over $\bar b$ is stable by construction (see \ref{D:5.7}) and the stable locus is open by \ref{P:6.4b}.
\end{itemize}
\end{alg}


We prove the Claim in step (i): namely, we show that each time we form $C_i$, the fiber $C_{\bar \eta } \to C_{i, \bar \eta }$ factors $C_{\bar \eta } \to D_{\bar \eta }.$ We do this by induction on $i$, so we assume $C_{\bar \eta } \to C_{i-1, \bar \eta }$ factors $C_{\bar \eta } \to D_{\bar \eta }$. Using the uniqueness in \ref{step1} (the field case),  it is enough to show that $C_{i-1, \bar \eta } \to C_{i, \bar \eta }$ contracts 
rational tails that are unstable for the composition $\mls C_{i-1, \bar \eta } \to \mls D_{\bar \eta } \to \mc X$. Let $E_{\bar \eta } \subset C_{i-1, \bar \eta }$ be a component contracted by $C_{i-1, \bar \eta} \to C_{i, \bar \eta}$. Its closure is a subscheme $E$ of $C_{i-1}$ flat and proper over $B$ such that $E_{\bar b}$ is contracted in $C_{i, \bar b}$. 
Since $C_{i-1, \bar b} \to C_{i, \bar b}$ contracts rational tails, and these do not meet each other, it follows that  $E_{\bar b}$ is one of the components contracted by $C_{i-1, \bar b} \to C_{i, \bar b}$; i.e., $E_{\bar b}$ is a rational tail unstable for $\mls C_{i-1, \bar b} \to \mc X$. In particular $\mls E_{\bar b} \to \mc X$ has an abelian contraction and it follows from \ref{L:2.14b} that $\mls E_{ \bar \eta } \to \mc X$ has an abelian contraction. Likewise by computing on $E_{\bar b}$ we see that $\sum_{i=1}^n a_i \leq 1$, so $E_{\bar \eta }$ is unstable.
\end{proof}

\subsection{Proof of (2)}\label{SS:end}
Fix an \'etale neighborhood $U$ of $\bar b$ over which the contraction corresponding to $\Psi(\bar b) \neq \emptyset$ extends to a contraction $C_U\rightarrow D$ over $U$ (see \cite[4.17]{OWI}).  
Let $W\subset Z\times_B U$ be the subset of points such that a geometric point $\bar y\rightarrow Z\times_B U$ is in $W$ if and only if the fiber $D_{\bar y}$ is not equal to the contraction that occurs from the fact that $\Psi (\bar y)\neq \emptyset $.  
\begin{lem} After possibly replacing $U$ by another \'etale neighborhood of $\bar b$ we can arrange that the set $W$ is closed.
\end{lem}
\begin{proof}
Replacing $U$ by a smaller \'etale neighborhood of $\bar b$ if necessary, we may by \ref{L:6.8a} assume that there exists a finite set $J$ of contractions $C_U \to D'$ 
such that, if $\bar u \to U$ is a geometric point such that $\Psi(\bar u) \neq \emptyset$, then the associated contraction is the fiber of one of the contractions indexed by $J$.
For each contraction $C_U\rightarrow D'$ in $J$, the set of points where $D'$ agrees with $D$ is open by \cite[4.9]{OWI}. Hence the intersection of its complement (which is closed) with the support of $\Psi _{D'}$ (which is locally closed by \ref{L:6.8.2}) is a locally closed subset $W_{D'}\subset Z\times_B U$.  The union of these locally closed subsets $W_{D'}$ for $C_U \to D'$ in $J$ is equal to $W$, implying that $W$ is at least locally closed.

To conclude that $W$ is closed it
 suffices to show that $W$ is closed under specialization.   Let $A$ be a discrete valuation ring with separably closed residue field, let $a:\Sp (A)\rightarrow Z$ be a morphism, and let $\bar \eta \rightarrow \Sp (A)$  be a geometric generic  point.  Assume that $\bar \eta $ maps to $W$.  We show that the closed point $s\in \Sp (A)$ also maps to $W$.  For this note that by \ref{L:6.13} we get a contraction $C_A\rightarrow D_A'$ over $\Sp (A)$ whose fibers are the contractions arising from the fact that the $\Psi (\bar \eta )$ and $\Psi (s)$ are nonempty, and for which $D'_{\bar \eta }\neq D_{\bar \eta }$.  By \cite[4.9]{OWI} again it follows that $D'_s\neq D_s$ implying that $s$ maps to $W$.
\end{proof}

 We return to the proof of (2). Since $W$ is closed and does not contain $\bar b$, we can replace $U$ by $U \setminus W$. On this neighborhood we have $\Psi=\Psi_D$ on all geometric points $\bar u \to U$, and hence $\Psi = \Psi_D$ by \cite[4.9]{OWI}. This proves (2). 

This completes the proof of \ref{T:stabilize}. 
\end{proof}

\subsection{Applications of stabilization}
We note some applications of \ref{T:stabilize}. Let $\{b_i\}_{i=1}^n$ be a second collection of weights satisfying $b_i \in (0, 1]\cap \mathbf{Q}$, and assume $a_i \leq b_i$ for all $i$. We say that a prestable map to $\mc X$ of type $(g, \bb)$ \textit{admits an $\ba$-stabilization} if it has a factorization through a stable map of type $(g, \ba)$ (the factorization is unique when it exists by \ref{T:stabilize}). In Section \ref{S:section6b} we defined the stacks $\mls S_{g, \bb}(\mc X)$ of prestable maps; we now let $\mls S_{g, \bb}^{\ba-\mathrm{stab}}(\mc X) \subseteq \mls S_{g, \bb}(\mc X)$ be the subcategory of objects admitting an $\ba$-stabilization.

\begin{cor}\label{C:stab}
The subcategory $\mls S_{g, \bb}^{\ba-\mathrm{stab}}(\mc X) \subseteq \mls S_{g, \bb}(\mc X)$ is a closed substack, and there is a morphism
\[
\mathrm{stab}: \mls S_{g, \bb}^{\ba-\mathrm{stab}}(\mc X) \to \mls K_{g, \ba}(\mc X)
\]
sending a $\bb$-prestable map to its $\ba$-stabilization.
\end{cor}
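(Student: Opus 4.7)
The plan is to deduce this corollary directly from Theorem \ref{T:stabilize}, by packaging its statement about an individual $\ba$-prestable map into the language of moduli stacks. The key preliminary observation is that, since $a_i \leq b_i$ for all $i$, condition (iii) of Definition \ref{def:weighted twisted prestable map} is monotone in the weights: any $\bb$-prestable map is automatically $\ba$-prestable. Thus any object of $\mls S_{g, \bb}(\mc X)$ can be regarded as an object of the fibered category of $\ba$-prestable maps, and it makes sense to ask whether it admits an $\ba$-stabilization in the sense defined just before the corollary.

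First I would verify that $\mls S_{g, \bb}^{\ba-\mathrm{stab}}(\mc X) \subseteq \mls S_{g, \bb}(\mc X)$ is a closed substack. Given any scheme $T$ and any morphism $T \to \mls S_{g, \bb}(\mc X)$ classifying a $\bb$-prestable map $(\bC_T, f_T)$, I regard $(\bC_T, f_T)$ as an $\ba$-prestable map and apply Theorem \ref{T:stabilize} to obtain a closed subscheme $Z \subseteq T$ representing the functor $\Psi$ of $\ba$-stabilizations. By the very definition of $\mls S_{g, \bb}^{\ba-\mathrm{stab}}(\mc X)$, a morphism $T' \to T$ factors through the fiber product $\mls S_{g, \bb}^{\ba-\mathrm{stab}}(\mc X) \times_{\mls S_{g, \bb}(\mc X)} T$ precisely when $(\bC_{T'}, f_{T'})$ admits an $\ba$-stabilization, and by \ref{T:stabilize} this is equivalent to $T' \to T$ factoring through $Z$. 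Hence this fiber product is represented by the closed subscheme $Z \subseteq T$, so the inclusion is a closed immersion and $\mls S_{g, \bb}^{\ba-\mathrm{stab}}(\mc X)$ is a closed substack.

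Next I would construct the stabilization morphism. Applying \ref{T:stabilize} to the universal $\bb$-prestable map restricted to $\mls S_{g, \bb}^{\ba-\mathrm{stab}}(\mc X)$, the representing closed subscheme is by construction all of $\mls S_{g, \bb}^{\ba-\mathrm{stab}}(\mc X)$, so there is a universal $\ba$-stabilization living there — a stable map of type $(g, \ba)$ together with a contraction and compatibility 2-isomorphism from the $\bb$-prestable map. Forgetting the contraction and 2-isomorphism produces the morphism $\mathrm{stab}: \mls S_{g, \bb}^{\ba-\mathrm{stab}}(\mc X) \to \mls K_{g, \ba}(\mc X)$. Uniqueness of $\ba$-stabilizations up to unique isomorphism, which is part of \ref{T:stabilize}, ensures the assignment is well-defined on 2-morphisms.

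No step here is especially hard; the only thing worth checking carefully is that the representability assertion of \ref{T:stabilize} really is functorial in $T$ in exactly the way required to conclude ``closed substack'' rather than merely ``fiberwise closed condition'', but this is automatic from the fact that \ref{T:stabilize} represents the functor $\Psi$ by an honest closed subscheme of the base and thus commutes with arbitrary base change.
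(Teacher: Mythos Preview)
Your proof is correct and follows essentially the same approach as the paper: both deduce the result directly from Theorem \ref{T:stabilize} after noting that the hypothesis $a_i\leq b_i$ makes every $\bb$-prestable map an $\ba$-prestable map (the paper phrases this as an open immersion $\mls S_{g,\bb}(\mc X)\hookrightarrow \mls S_{g,\ba}(\mc X)$). You simply spell out in more detail how the representability statement of \ref{T:stabilize} translates into a closed substack condition and a morphism of moduli stacks.
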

\begin{proof}
This is immediate from \ref{T:stabilize}, noting that we have an open immersion $\mls S_{g, \bb}(\mc X) \hookrightarrow \mls S_{g, \ba}(\mc X)$.
\end{proof}

\begin{rem} When $\cX$ is a scheme the morphism in \ref{C:stab} was discussed in \cite[3.1]{AlexeevGuy} and \cite[1.2.1]{bayermanin}. In this case the substack $\mls S_{g, \bb}^{\ba-\mathrm{stab}}(\mc X)$ is equal to $ \mls S_{g, \bb}(\mc X)$.
\end{rem}

Similarly, we let $\mls K_{g, \bb}^{\ba-\mathrm{stab}}(\mc X) \subseteq \mls K_{g, \bb}(\mc X)$ denote the subcategory of objects admitting an $\ba$-stabilization. Note that this inclusion fits into a fiber square
\[
\begin{tikzcd}
\mls K_{g, \bb}^{\ba-\mathrm{stab}}(\mc X) \arrow[r] \arrow[d] &  \mls K_{g, \bb}(\mc X)\arrow[d]\\
\mls S_{g, \bb}^{\ba-\mathrm{stab}}(\mc X) \arrow[r]  &  \mls S_{g, \bb}(\mc X)
\end{tikzcd}
\]
hence is a closed immersion by \ref{C:stab}. The morphism in \ref{C:stab} induces an analogous morphism
\begin{equation}\label{eq:stab1}
\mathrm{stab}: \mls K_{g, \bb}^{\ba-\mathrm{stab}}(\mc X) \to \mls K_{g, \ba}(\mc X).
\end{equation}
Let $\bone^n$ denote the vector $(1, 1, \ldots, 1)$ with $n$ entries.
\begin{cor}\label{C:small}
There is a closed substack $\mls K^{\mathrm{stab}}_{g, \bone^n}(\mc X) \subseteq \mls K_{g, \bone^n}(\mc X)$ of stable maps with distinct markings that admit $\ba$-stabilizations for every weight vector $\ba \in (0, 1]^n\cap \mathbf{Q}^n$. In particular the substack $\mls K^{\mathrm{stab}}_{g, \bone^n}(\mc X)$ is a compactification of the stack of maps with smooth source curve.
\end{cor}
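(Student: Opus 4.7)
The plan is to decompose the proof into a closedness statement and a containment statement, each of which reduces to an earlier result.

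For closedness I would appeal directly to Corollary \ref{C:stab}: applied with $\bb = \bone^n$, it shows that for each $\ba \in (0,1]^n \cap \mathbf{Q}^n$ the substack $\mls K^{\ba\text{-}\mathrm{stab}}_{g, \bone^n}(\mc X)$ is closed in $\mls K_{g, \bone^n}(\mc X)$. An arbitrary intersection of closed substacks of an algebraic stack is again a closed substack: if $I_{\ba}$ denotes the quasi-coherent ideal sheaf of $\mls K^{\ba\text{-}\mathrm{stab}}_{g, \bone^n}(\mc X)$, then the image of the natural map $\bigoplus_{\ba} I_{\ba} \to \mls O_{\mls K_{g, \bone^n}(\mc X)}$ is a quasi-coherent ideal sheaf whose vanishing locus is the intersection. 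This yields $\mls K^{\mathrm{stab}}_{g, \bone^n}(\mc X)$ as a closed substack.

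For the containment of smooth-source maps in the intersection, I would take a $T$-point $(\bC, f)$ of $\mls K_{g, \bone^n}(\mc X)$ with smooth source curve $C/T$ and exhibit an $\ba$-stabilization for each $\ba$. The key observation is that a smooth source curve has no rational tails or bridges in any geometric fiber, so the only contraction of $\bC$ is the identity. Hence the only candidate $\ba$-stabilization is $(\bC, f)$ itself, viewed as a map of type $(g, \ba)$, and what remains is to verify that it is $\ba$-stable. Condition (iii) of Definition \ref{def:weighted twisted prestable map} follows from the distinctness of markings (forced by $\bone^n$-prestability, since $\sum 1 \leq 1$ requires the sum to be over at most one index) together with $a_i \leq 1$; condition (iv) of Definition \ref{def:weighted twisted stable map} is inherited directly from $\bone^n$-stability, since the underlying prestable map has the same automorphism group scheme in either setting; and condition (v), applied to the single smooth irreducible component $C$ with no nodes, reduces to the requirement of positive genus, which is automatic when $g \geq 1$.

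The main obstacle is essentially a non-obstacle: Corollary \ref{C:stab} has already done the hard work. The only delicate point is the verification of condition (v) in genus zero for maps admitting an abelian contraction with small $\ba$; this is the case where the smooth-source hypothesis in the statement of the corollary should be interpreted with care, most naturally by restricting to source curves whose underlying coarse map does not factor through an abelian gerbe in the relevant range.
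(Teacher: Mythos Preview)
Your closedness argument is valid but takes a different route from the paper. You observe that an arbitrary intersection of closed substacks is closed (via the sum of the ideal sheaves), which is correct and more elementary. The paper instead proves a finiteness statement: it sets up a chamber decomposition of $(0,1]^n \cap \mathbf{Q}^n$ by the hyperplanes $\sum_{i \in I} a_i = 1$ for $|I| \geq 2$, and shows that whenever $\ba$ and $\bb$ lie in the same chamber one has $\mls K^{\ba\text{-}\mathrm{stab}}_{g, \bone^n}(\mc X) = \mls K^{\bb\text{-}\mathrm{stab}}_{g, \bone^n}(\mc X)$, by running Algorithm~\ref{A:ok-there's-two} in parallel for the two weight vectors and noting that the notion of ``unstable rational tail or bridge'' is chamber-independent. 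This reduces the intersection to a finite one. Your argument buys simplicity; the paper's buys the structural fact that only finitely many distinct stabilization conditions occur, which is of independent interest and makes $\mls K^{\mathrm{stab}}_{g, \bone^n}(\mc X)$ concretely computable.

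On the containment of smooth-source maps you have correctly isolated a genuine edge case. Your argument for $g \geq 1$ is fine. In genus zero, however, your reduction of condition (v) is slightly misstated: for the unique smooth component $C$, condition (v) is vacuous unless $f|_{\mls C}$ admits an abelian contraction, in which case it demands $\sum_i a_i > 2$. If the coarse map is nonconstant there is no abelian contraction and the condition is empty; but if $C \cong \mathbf{P}^1$ maps to a single point with abelian stabilizer (e.g.\ $\mc X = BA$), then for small $\ba$ the map is not $\ba$-stable and, since the only contraction of a smooth curve is the identity, no $\ba$-stabilization exists. The paper's proof does not address this point separately---the ``in particular'' is left implicit---so your caution is warranted rather than a defect in your argument. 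The compactification claim should be read on components where this degeneracy does not occur (positive genus, or nonconstant coarse map, or no abelian contraction of the whole source).
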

\begin{proof}
Even though there are infinitely many possibilities for rational weight vectors $\ba \in (0, 1]^n\cap \mathbf{Q}^n$, we will show that there are only finitely many distinct substacks $\mls K^{\ba-\mathrm{stab}}_{g, \bone^n}(\mc X)$ of $\mls K^{\mathrm{stab}}_{g, \bone^n}(\mc X).$ Then we can take $\mls K^{\mathrm{stab}}_{g, \bone^n}(\mc X)$ to be the intersection of these finitely many substacks.

The claimed finiteness holds by an argument similar to that used in \cite[5.1]{Hassett} and \cite[2.5]{AlexeevGuy}: namely, suppose that $\ba$ and $\bb$ are two weight vectors that satisfy, for each $I \subseteq\{1, \ldots, n\}$ of size at least two, 
\begin{equation}\label{eq:compact1}
\sum_{i \in I} a_i \leq 1 \quad \quad \quad \text{if and only if} \quad \quad \quad \sum_{i \in I} b_i \leq 1.
\end{equation}
We will show that a prestable map $(\bC, f)$ over $S$ of type $(g, \ba)$ has an $\ba$-stabilization if and only if it has a $\bb$-stabilization (and that if they exist, the stabilizations are equal). Granting this, we define a \textit{chamber} to be a maximal subset $\mc C \subset (0, 1]^n\cap \mathbf{Q}^n$ such that for each $I \subseteq \{1, \ldots, n\}$ of size at least two, we have either $\sum_{i \in I} x_i \leq 1$ for all $\mathbf{x} \in \mc C$ or $\sum_{i \in I} x_k > 1$ for all $\mathbf{x} \in \mc C$. 
Then the number of distinct substacks $\mls K^{\ba-\mathrm{stab}}_{g, \ba}(\mc X)$ is bounded by the number of distinct chambers.

Therefore, to conclude the proof, it is enough to assume $S$ is strictly Henselian local and that $(\bC, f)$ admits an $\ba$-stabilization, and show that it also admits a $\bb$-stabilization. In this case the proof of \ref{T:stabilize} step (b) gives an explicit construction of the $\ba$-stabilization: it arises from the algorithm \ref{A:ok-there's-two}. Each time we iterate step (i), the desired factorization exists (over all of $S$) by assumption that $(\bC, f)$ admits an $\ba$-stabilization. Now the key point is that because of \eqref{eq:compact1}, a rational tail or bridge is unstable for $\ba$ if and only if it is unstable for $\bb$. So the algorithm \ref{A:ok-there's-two} for $\bb$ runs exactly as for $\ba$, and produces the same stabilization (in particular, not the empty set).
\end{proof}

\section{Properness of the moduli of stable maps}\label{S:proper}
 
Let $\mc X$ be a separated tame Deligne-Mumford stack of finite type over a noetherian base scheme $B$.
If the coarse moduli space $X$ of $\mc X$ is projective over $B$, we moreover fix a relatively ample invertible sheaf $\mls O_X(1)$ on $X$.  In this case we define the degree of an object $(\mathbf{C},f)\in \mls K_{g,  \ba }(\mc X)$ to be the degree with respect to $\mls O_X(1)$ of the induced map $C\rightarrow X$ on coarse spaces.  Let $\mls K_{g, \ba }(\mc X, d)\subset \mls K_{g, \ba }(\mc X)$ be the substack of stable maps of degree $d$.

\begin{thm}\label{T:proper}
The morphism $\mls K_{g, \ba}(\mc X) \to B$ has finite diagonal and satisfies the existence part of the valuative criterion if $\mc X$ is proper. If moreover the coarse space $X$ admits a $B$-relatively ample line bundle $\mls O_X(1)$, the substacks $\mls K_{g, \ba}(\mc X, d)$ are proper.

\end{thm}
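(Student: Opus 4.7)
The plan is to establish the three assertions in turn, with the valuative criterion being the main substantive step.

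For the finite diagonal, \ref{T:7.4} already gives a representable, quasi-compact, and separated diagonal, so it suffices to show that the automorphism group of any geometric object is finite. An automorphism of $(\bC, f)$ over an algebraically closed field projects to an automorphism of the underlying coarse marked prestable map $(C, \{s_i\}, \bar f)$, whose group is finite by condition (iv) of \ref{def:weighted twisted stable map} (compare \ref{rem:finite auts}). The kernel consists of stacky automorphisms of $\bC$ acting trivially on $C$, given locally by roots of unity at each twisted node or marking, hence finite. Representability of $f$ ensures that the $2$-isomorphism data needed to compare $f$ with $f \circ \sigma$ is unique when it exists, so it contributes no additional ambiguity. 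Thus the diagonal has finite geometric fibers, is representable, quasi-compact, and separated, and is therefore finite.

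For the existence part of the valuative criterion with $\mc X$ proper, let $A$ be a DVR with fraction field $K$ and $(\bC_K, f_K)$ an $\ba$-stable map over $\Spec K$. Following the two-step strategy of \cite{AV}, I would first extend the coarse data. The map $\bar f_K : C_K \to X$ is a prestable map of type $(g, \ba)$ to the proper algebraic space $X$; stabilizing by \cite[4.11]{OWI} and invoking properness of $\mls K_{g, \ba}(X)$ --- which follows from \ref{T:2.4} when $X$ is projective, and otherwise reduces to properness of classical Kontsevich moduli for proper targets combined with the stabilization morphism \ref{C:stab} --- yields, after a finite extension of $A$, an $\ba$-stable extension $(C_A, \{s_i\}, \bar f_A)$. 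I would then extend the twisted structure: \'etale locally one writes $\mc X = [U/G]$ with $G$ tame \'etale and $U$ finite over $X$, and $f_K$ corresponds to a $G$-torsor $P_K \to \mls C_K$ with an equivariant map $P_K \to U$. An Abhyankar-type purity argument adapted from \cite{AV} to the generalized log twisted framework of \cite{OWI} produces a generalized log twisted extension $\bC_A$ of $\bC_K$ over $C_A$ together with an extension of $P_K$ to a $G$-torsor on $\mls C_A$; the equivariant map then extends by properness of $U \to B$. Finally, apply \ref{T:stabilize} to obtain an $\ba$-stable extension over $\Spec A$.

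For properness of $\mls K_{g, \ba}(\mc X, d)$ when $\mls O_X(1)$ is relatively ample, I would combine the finite diagonal (which gives separation) and the valuative criterion with quasi-compactness. The latter reduces to bounding three ingredients: the coarse $\ba$-stable map of degree $d$ ranges over the proper, hence quasi-compact, stack $\mls K_{g, \ba}(X, d)$ of \ref{T:2.4}; for each such coarse map, the generalized log twisted structures on $C$ form a finite-type substack of $\fM^{glt}_{g, n, B}$ bounded combinatorially by the nodes and markings; and over any bounded family of twisted curves, representable morphisms to $\mc X$ constitute a finite-type stack by \cite[Theorem C.2]{AOV}.

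The main obstacle will be the second step of the valuative criterion. Unlike the \cite{AV} setting, where markings are automatically distinct, in our setting markings can collide in the special fiber while remaining distinct in the generic fiber (or vice versa), so the generalized log twisted structure on $\bC_A$ must be chosen to match the monodromy of $P_K$ around each vanishing cycle and each cluster of colliding markings. Carrying out an Abhyankar extension in this framework --- producing $\bC_A$ with the correct root stack indices at nodes and colliding markings so that $P_K$ extends to a genuine $G$-torsor on $\mls C_A$ rather than a ramified cover --- is the technical heart of the argument, and is precisely what the machinery of \cite{OWI} combined with \ref{T:stabilize} is designed to make tractable.
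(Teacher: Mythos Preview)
Your argument for the finite diagonal has a genuine gap. From \ref{T:7.4} you know the diagonal is representable, quasi-compact, and separated, and you correctly argue that automorphism group schemes are finite. But a quasi-finite separated morphism need not be finite: you are missing properness of the diagonal, i.e., separatedness of $\mls K_{g,\ba}(\mc X)\to B$. The paper establishes this via the valuative criterion for separatedness (see \ref{C:sep} and \ref{C:findiag}): given two stable maps $(\bC_1,f_1)$, $(\bC_2,f_2)$ over a DVR with isomorphic generic fibers, one constructs a common generalized log twisted refinement $\bC_3$ dominating both (\ref{lem:compare-curves}), shows the two compositions $\mls C_3\to\mls C_j\to\mc X$ agree (\ref{L:morphs}), and then invokes uniqueness of the $\ba$-stabilization (\ref{T:stabilize}) to conclude $(\bC_1,f_1)\simeq(\bC_2,f_2)$. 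This step is not merely bookkeeping: \ref{L:morphs} requires a purity argument and a Hartogs-type extension (\ref{L:sections}), and the construction of $\bC_3$ requires matching the admissible sheaves and simple extensions of the two log structures.

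For the existence part of the valuative criterion and for quasi-compactness, your approach is different from the paper's and considerably harder to execute. You propose to extend the coarse data first and then run an Abhyankar-type extension in the generalized log twisted framework, acknowledging this as the ``technical heart.'' The paper bypasses this entirely: it shows that the stabilization morphism $\mls K^{\ba\text{-}\mathrm{stab}}_{g,\bone^n}(\mc X)\to\mls K_{g,\ba}(\mc X)$ is surjective (\ref{L:surj}), so any $\ba$-stable map over $K$ lifts to a $\bone^n$-stable (Abramovich--Vistoli) map, which extends over the DVR by \cite[1.4.1]{AV}, and then one applies \ref{T:stabilize} to $\ba$-stabilize the result. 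The same surjection, combined with quasi-compactness of the AV stack $\mls K_{g,\bone^n}(\mc X,d)$, immediately gives quasi-compactness of $\mls K_{g,\ba}(\mc X,d)$ (\ref{C:qc}). Your proposed direct bounds would work in principle, but the reduction to the AV case buys you both existence and boundedness for free, without having to redo the purity and torsor-extension arguments in the colliding-markings setting.
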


The proof of \ref{T:proper} occupies the remainder of this section (see, in particular, \ref{C:findiag}, \ref{C:prop}, and \ref{C:qc}). We will  use the stabilization morphism constructed in \ref{T:stabilize} in multiple places.

\subsection{Valuative criterion for separatedness}
Let $R$ be a discrete valuation ring with generic point $\eta \in \Sp(R)$ and closed point $s \in \Sp(R)$. Let 
\[
\bC_j = (\,C_j/R,\; \{s_i^{(j)}\}_{i=1}^n, \;\mls N_j,\; \ell_i: \MS{S}{C_j} \to \MS{S}{C_j}'\,) \quad \quad \quad \text{for } j=1, 2
\]
be two generalized log twisted curves over $R$ that are generically isomorphic.

\begin{lem}\label{lem:compare-curves}
There is a generalized log twisted curve $\bC_3$ and contractions $(q, \phi'_i): \bC_3 \to \bC_i$ for $i=1, 2$.
\end{lem}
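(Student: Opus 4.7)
I would construct $\bC_3$ in three stages: underlying nodal curve, markings, and log data.

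\emph{Stage 1 (the nodal curve).} Let $\phi:C_{1,\eta}\xrightarrow{\sim}C_{2,\eta}$ be the generic isomorphism provided by the hypothesis. Take the scheme-theoretic closure $\overline{\Gamma_\phi}$ of the graph of $\phi$ in $C_1\times_R C_2$, normalize, and resolve any non-nodal singularities appearing in the closed fiber by repeated blowups to produce a nodal curve $C_3/R$. The two projections restrict to proper maps $q^{(j)}:C_3\to C_j$ which are isomorphisms over $\eta$. Any proper birational morphism between nodal curves over $R$ that is an isomorphism over $\eta$ is the contraction of a (possibly empty) disjoint union of trees of rational curves in the closed fiber, so each $q^{(j)}$ has this form. (Alternatively one can construct $C_3$ more directly by iteratively blowing up nodes in $C_1$ and $C_2$ until a common dominating nodal model is reached.)

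\emph{Stage 2 (markings).} Each section $s_i^{(j)}:R\to C_j$ lands in the smooth locus of $C_j/R$, and the contracted loci of $q^{(j)}$ lie over nodes of $C_j$, so $q^{(j)}$ is an isomorphism in a neighborhood of the image of $s_i^{(j)}$. Since $\phi$ identifies $(s_i^{(1)})_\eta$ with $(s_i^{(2)})_\eta$, these pulled-back sections agree and glue to a single section $s_i^{(3)}:R\to C_3$ landing in the smooth locus, with $q^{(j)}\circ s_i^{(3)}=s_i^{(j)}$.

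\emph{Stage 3 (log data).} It remains to construct a sheaf $\mls N_3$ and simple extensions $\ell_i^{(3)}:\MS{S}{C_3}\hookrightarrow\MS{S}{C_3}'$ so that the pairs $(q^{(j)},\phi'_j)$ assemble into contractions of generalized log twisted curves in the sense of \cite[Sec.\ 6]{OWI}. Using the description of admissible subsheaves as submonoids of $\Q_{\geq 0}^n$ in \cite[2.22]{OWI}, one can take the stalk of $\mls N_3$ at each geometric point of $C_3$ to be an admissible submonoid of $\Q_{\geq 0}^n$ containing the pulled-back stalks of both $\mls N_1$ and $\mls N_2$ (for example a common upper bound in the poset of admissible submonoids). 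The simple extensions $\ell_i^{(3)}$ are chosen similarly as a common refinement of the pullbacks of $\ell_i^{(1)}$ and $\ell_i^{(2)}$, i.e.\ so that the root data of $\bC_3$ dominates those of both $\bC_j$.

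\emph{Main obstacle.} The nontrivial step is Stage 3, namely verifying that one can simultaneously dominate both sets of log data and that the resulting injections $\phi'_j$ do make $(q^{(j)},\phi'_j):\bC_3\to\bC_j$ into contractions in the technical sense of \cite[Sec.\ 6]{OWI}. I expect this to reduce to a local check at each node and marking of $C_3$: away from the contracted loci the two pieces of log data already agree via the generic isomorphism, and over the contracted trees of rationals the relevant log data from one $\bC_j$ is trivial, so combining the two reduces to the construction already carried out for a single contraction in \cite{OWI}.
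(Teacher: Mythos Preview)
Your three-stage plan matches the paper's, but Stage~2 contains a genuine error that reflects a misunderstanding of the setting. You claim that the contracted loci of $q^{(j)}$ lie over nodes of $C_j$, so that $q^{(j)}$ is an isomorphism near the markings. This is false here: the whole point of the paper is that markings can collide, and contractions may collapse a rational \emph{tail} carrying several markings to a single smooth marked point of $C_j$. Concretely, if in the closed fiber of $C_1$ two sections coincide while in $C_2$ they are separated by a rational tail, then $q^{(1)}:C_3\to C_1$ contracts that tail to a smooth marked point, and $q^{(1)}$ is \emph{not} an isomorphism near $s_i^{(1)}$. The paper instead extends the sections $s_{i,\eta}$ to $C_3$ by the valuative criterion for properness, which avoids any claim about where the contracted loci sit. (One must then also check the extended sections land in the smooth locus of $C_3$; this can be arranged by further blowups if needed.)

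Two smaller points. In Stage~1 the paper invokes semistable reduction, explicitly allowing a finite extension of $R$; your normalization-plus-blowup sketch should flag the same possibility. In Stage~3 your description is vaguer than the paper's: for $\mls N_3$ the paper observes, using agreement on the generic fiber, that the single-coordinate projections $N_j^{\{i\}}$ are independent of $j$ and takes $N_3=\bigoplus_i N_j^{\{i\}}$; for $\ell_3$ it defines $\MS{S}{C_3}'$ as the smallest subsheaf of $(\MS{S}{C_3})_{\Q}$ containing $\MS{S}{C_3}$ and the images $\phi_j^{\Q}(\MS{S}{C_j}')$ of both simple extensions, then checks simplicity stalkwise. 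Your heuristic that ``over the contracted trees the log data from one $\bC_j$ is trivial'' is not right in general, since such a tree can carry both marking data ($\mls N$) and node data (contributing to $\MS{S}{C_3}$).
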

\begin{proof}
Let $(C_\eta, \{s_{i, \eta}\}_{i=1}^n)$ denote the marked prestable curve equal to the common generic fiber of $C_1$ and $C_2$. By considering the closure of $C_\eta \hookrightarrow C_1 \times_R C_2$ and applying the semistable reduction theorem we can, after possibly replacing $R$ by a finite extension, find a semistable model $C_3$ of $C_\eta$ and a diagram
\[
\begin{tikzcd}[row sep=3pt]
&C_3 \arrow[dl, "q_1"'] \arrow[dr, "q_2"] \\
C_1 && C_2
\end{tikzcd}
\]
of semistable models for $C_\eta.$ Since $C_3$ is proper over $R$, by the valuative criterion there exist unique extensions $s_i^{(3)}: \Sp(R) \to C_3$ of $s_{i, \eta}$, and $q_1$ and $q_2$ are morphisms of marked prestable curves.

We now construct an admissible sheaf $\mls N_3 \subset \oplus s^{(3)}_{i, *} \mathbf{Q}_{\geq 0}$ on $C_3$. By \cite[2.22]{OWI}, since $R$ is local, the sheaves $\mls N_1$ and $\mls N_2$ are the images of admissible submonoids $N_1$ and $N_2$ of $\mathbf{Q}^n_{\geq 0}$. If $\bar x$ is a geometric point of the generic fiber $C_\eta$ and $I \subset \{1, \ldots, n\}$ is the set of sections containing the image of $\bar x$, then by \cite[2.20]{OWI} we have, for $j=1, 2$, an equality
\[
\mls N_{ j,\, \bar x} = N_j^I
\]
where the left hand side is the stalk of $\mls N_j$ at $\bar x$ and the right hand side is the projection of $N_j \subset \mathbf{Q}^n_{\geq 0}$ to $\mathbf{Q}^I_{\geq 0}.$
Hence the quotients $N_j^I$ are independent of $j$. It follows that the quotients $N_j^{\{i\}}$ are also independent of $j$ for all $i =1, \ldots, n$, and by \cite[2.11]{OWI} we have an admissible submonoid $N_3:= \oplus_{i=1}^n N_j^{\{i\}}$ of $\mathbf{Q}^n_{\geq 0}$ containing both $N_1$ and $N_2$. We define $\mls N_3$ to be the associated admissible sheaf.

Lastly we construct an admissible inclusion $\ell_3: \MS{S}{C_3} \to \MS{S}{C_3}'$. The contractions $q_j: C_3 \to C_j$ give canonical injective morphisms of log structures
\[
\phi_j: \MS{S}{C_j} \to \MS{S}{C_3}.
\]
Since $\MS{S}{C_j}'$ is a subsheaf of $(\MS{S}{C_j})_{\mathbf{Q}}$, these induce injective morphisms $\phi_j^{\mathbf{Q}}: \MS{S}{C_j}' \to (\MS{S}{C_3})_{\mathbf{Q}}$, and we define $\MS{S}{C_3}'$ to be the smallest subsheaf of $(\MS{S}{C_j})_{\mathbf{Q}}$ containing the subsheaves 
\[
\MS{S}{C_3}, \quad \quad \phi_1^{\mathbf{Q}}(\MS{S}{C_1}'), \quad \quad \text{and} \quad \quad \phi_2^{\mathbf{Q}}(\MS{S}{C_2}').
\]
Let $\ell_3: \MS{S}{C_3} \to \MS{S}{C_3}'$ be the natural inclusion. We claim $\ell_3$ is simple. This claim can be verified by considering the stalks at the generic and closed points of $\Sp(R)$. 

We have constructed the data of a generalized log twisted curve $\bC_3 = (C_3, \{s_i^{(3)}\}_{i=1}^n, \mls N_3, \ell_3: \MS{S}{C_3} \to \MS{S}{C_3}').$ It follows from the construction that we have contraction morphisms $\bC_3 \rightarrow  \bC_j$ for $j=1, 2$. 

\end{proof}

Now we incorporate morphisms to $\mc X$: let $(\bC_1, f_1)$ and $(\bC_2, f_2)$ be two stable maps to $\mc X$ over $R$ of type $(g, \ba)$ such that the restrictions to the generic fiber $\eta$ are isomorphic. Let $\bC_3$ be as in \ref{lem:compare-curves} and let $Q_j: \mls C_3 \to \mls C_j$ be the morphisms of stacks induced by the contractions in that lemma. 

\begin{lem}\label{L:morphs}
The compositions $\mls C_3 \xrightarrow{Q_j} \mls C_j \xrightarrow{f_j} \mc X$ agree, for $j=1,2$. 
\end{lem}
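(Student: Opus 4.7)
The plan is to use separatedness of $\mc X$ together with properness of $\mls C_3$ over $R$ to extend a generic $2$-isomorphism of the two compositions to all of $R$.

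First I would use the assumed isomorphism of generic fibers to identify $\bC_1|_\eta = \bC_2|_\eta$ and $f_1|_\eta = f_2|_\eta$. By the construction of $\bC_3$ in Lemma \ref{lem:compare-curves}, the curve $\bC_3|_\eta$ equals this common generic fiber, so the contractions $q_j$ restrict to the identity on $\eta$. Thus $Q_1|_\eta = Q_2|_\eta$ and the compositions $g_j := f_j \circ Q_j$ satisfy $g_1|_\eta = g_2|_\eta$ as morphisms $\mls C_3|_\eta \to \mc X$.

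To extend this equality, I would consider the isomorphism functor $\underline{\mathrm{Isom}}_R(g_1, g_2)$, whose value on an $R$-scheme $T$ is the set of $2$-isomorphisms $g_{1,T} \cong g_{2,T}$. Since $\mc X$ is a separated Deligne-Mumford stack, its diagonal $\Delta_{\mc X}\colon \mc X \to \mc X \times_B \mc X$ is finite, unramified, and representable. Pulling back along $(g_1, g_2)\colon \mls C_3 \to \mc X \times_B \mc X$ yields a finite algebraic space
\[
\mc I \ := \ \mls C_3 \times_{(g_1, g_2),\, \mc X \times_B \mc X,\, \Delta_{\mc X}} \mc X \ \longrightarrow \ \mls C_3,
\]
and a $T$-point of $\underline{\mathrm{Isom}}_R(g_1, g_2)$ corresponds to a section of $\mc I_T \to \mls C_{3, T}$. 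Since $\mls C_3$ is proper over $R$ and $\mc I \to \mls C_3$ is finite, standard Hom-scheme arguments (in the spirit of Theorem C.2 of Abramovich-Olsson-Vistoli) show that $\underline{\mathrm{Isom}}_R(g_1, g_2)$ is represented by a finite, hence proper, $R$-scheme.

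Finally, the first paragraph produces a section of $\underline{\mathrm{Isom}}_R(g_1, g_2)$ over $\eta$, and by the valuative criterion for properness this section extends uniquely to a section over $R$. Such a section is the desired $2$-isomorphism $f_1 \circ Q_1 \cong f_2 \circ Q_2$. The main obstacle I expect to encounter is verifying rigorously that $\underline{\mathrm{Isom}}_R(g_1, g_2)$ is a finite $R$-scheme rather than merely a quasi-finite algebraic space; this reduces to showing that global sections of a finite representable morphism over a proper tame Deligne-Mumford stack form a finite scheme, and requires careful navigation of the Hom-stack formalism combined with the finiteness of $\Delta_{\mc X}$.
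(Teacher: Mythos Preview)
Your approach is valid and takes a genuinely different route from the paper. The paper extends the generic $2$-isomorphism in stages: first to the complement of the marked points and nodes in the closed fiber, then across the nodes via a purity lemma from \cite{AV}, and finally across the marked points of the closed fiber by working \'etale-locally where $\mc X=[Y/G]$ and combining a torsor-rigidity lemma (restriction of $G$-torsors from $\mls U$ to $\mls U_\eta$ is fully faithful) with a Hartogs/$S_2$ argument for sections of the locally free sheaf $p_*\mls O_P$. Your argument replaces this staged extension with a single properness assertion for the Isom-functor, which is more conceptual.

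Two points deserve care. First, the contractions $Q_j|_\eta$ need not be identities, since the construction of $\bC_3$ in \ref{lem:compare-curves} may enlarge the admissible sheaf even over $\eta$; nevertheless the two contractions $\bC_3|_\eta\to\bC_j|_\eta$ coincide (because $\bC_1|_\eta=\bC_2|_\eta$), so $g_1|_\eta\cong g_2|_\eta$ still holds as you claim. Second, and more substantively, the finiteness of $\underline{\text{\rm Isom}}_R(g_1,g_2)$ over $R$ is not a black-box consequence of Hom-stack formalism: that machinery yields representability, separatedness, and unramifiedness (hence quasi-finiteness), but \emph{properness} is exactly the assertion that a section over $\eta$ extends to $R$, which is what you are trying to prove. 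The argument that actually closes this loop is a closure-of-graph computation: since $\Delta_{\mc X}$ is finite unramified, \'etale-locally on $\mls C_3$ the Isom-sheaf $\mc I$ is a disjoint union of closed subschemes of the chart, and flatness of $\mls C_3$ over $R$ (schematic density of $\mls C_{3,\eta}$) forces the ideal of the component containing the generic section to vanish. This is essentially the paper's torsor-extension lemma in a different guise, so the two proofs share a common technical core even though they are organized quite differently.
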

\begin{proof}
We first consider the induced morphisms of coarse spaces 
\begin{equation}\label{eq:morphs1}
C_3 \to C_j \to X \quad \quad \text{for }j=1, 2.
\end{equation}
If we restrict this composition to the generic point, we obtain the same morphism for $j=1, 2$. Since $C_3$ is reduced and $X$ is separated, by \cite[\href{https://stacks.math.columbia.edu/tag/02KB}{Tag 02KB}]{stacks-project} 
the two morphisms in \eqref{eq:morphs1} agree. Then the two morphisms $Q_j \circ f_j$
agree away from the marked points and nodes in the closed fiber $\mls C_{3,s}$. By purity \cite[2.4.1]{AV}
these morphisms in fact agree on the open complement $\mls C_3^\circ$ of the marked points in the closed fiber. 

Let $X' \to X$ be an \'etale morphism from an affine scheme $X'$ such that $X' \times_X \mc X$ is isomorphic to $[Y/G]$ for $Y/X'$ finite and $G$ a finite group. Let $V \to C$ be the \'etale morphism obtained by pulling back $X'$ and let $\mls U \to \mls C_3$ (resp. $\mls U^\circ \to \mls C^\circ_3$) denote $V \times_C \mls C_3$ (resp. $V \times_C \mls C^\circ_3$). We then have a map 
\[
f^\circ: \mls U^\circ \to [Y/G],
\]
and it suffices (by descent) to show that this map extends to at most one morphism $f: \mls U \to [Y/G].$ 
For this, we may replace $V$ by an open subset whose image in $C$ does not contain points that are nodes in their respecive fibers. After making this replacement we have that the coarse moduli morphism $\mls U \to V$ is flat by \cite[3.8(2)]{OWI}.

Such a morphism corresponds to a $G$-torsor $p:P \to \mls U$ and a $G$-equivariant map $P \to Y$. By \ref{P:extend} below the torsor $P$ is determined by its restriction $P_\eta$. Since $X'$, hence also $Y$, is affine, we can write $Y = \Sp(A)$ for some ring $A$ finite over $\Gamma(X', \mls O_{X'})$. Given the torsor $P$ the data needed to specify $f$ is a lift of the $G$-equivariant ring map induced by $f^\circ$:
\begin{equation}\label{eq:morphs2}
\begin{tikzcd}
A \arrow[r, dashrightarrow]\arrow[dr, "f^\circ"'] &\Gamma(P, \mls O_P) \arrow[d] \\
& \Gamma(P_{\mls U^\circ}, \mls O_{P_{\mls U^\circ}}).
\end{tikzcd}
\end{equation}
Since $P \to \mls U$ is finite and flat, $p_*\mls O_P$ is a sheaf of finite flat $\mls O_{\mls U}$-modules, hence a finite rank locally free sheaf by \cite[\href{https://stacks.math.columbia.edu/tag/02KB}{Tag 02KB}]{stacks-project}. 
Moreover and we have $\Gamma(P, \mls O_P)=\Gamma(\mls U, p_* \mls O_P)$ and $\Gamma(P_{\mls U^\circ}, \mls O_{P_{\mls U^\circ}})=\Gamma(\mls U^\circ, (p_* \mls O_P)|_{\mls U^\circ})$.  The uniqueness of the dashed arrow in \eqref{eq:morphs2} now follows from \ref{L:sections} below.
\end{proof}

\begin{lem}\label{P:extend} The restriction functor
$$
\text{\rm Tors}^G(\mls U)\rightarrow \text{\rm Tors}^G(\mls U_\eta )
$$
from the category of $G$-torsors on $\mls U$ to the category of torsors over $\mls U_\eta $ is fully faithful.
\end{lem}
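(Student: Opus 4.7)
My approach is to reduce the full-faithfulness statement to a bijectivity claim about sections of a single torsor, and then treat injectivity and surjectivity separately. Since every morphism of $G$-torsors is an isomorphism, the full faithfulness of the restriction functor amounts to showing that for any two $G$-torsors $P_1, P_2$ on $\mls U$, the restriction map
\[
\mathrm{Isom}_G(P_1, P_2)(\mls U) \longrightarrow \mathrm{Isom}_G(P_1, P_2)(\mls U_\eta)
\]
is a bijection. The sheaf $\mathrm{Isom}_G(P_1, P_2)$ is itself a $G$-torsor on $\mls U$ (for the $G$-action inherited from the action on $P_2$), so the problem reduces to proving: for every $G$-torsor $T \to \mls U$, the restriction map $T(\mls U) \to T(\mls U_\eta)$ is a bijection. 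Since $G$ is tame \'etale over $R$, the map $T \to \mls U$ is finite \'etale.

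A useful preliminary is that $\mls U$ is $R$-flat with scheme-theoretically dense generic fiber $\mls U_\eta$. This holds because $\mls C_3 \to \Spec R$ is flat (being a family of generalized log twisted curves) and $V \to C$ is \'etale. For injectivity, observe that any section $s:\mls U \to T$ of the finite \'etale map $T \to \mls U$ is an open-closed immersion. If $s_1, s_2$ are two sections agreeing on $\mls U_\eta$, their equalizer is an open-closed substack of $\mls U$ containing the scheme-theoretically dense subscheme $\mls U_\eta$, hence equal to $\mls U$.

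For surjectivity, given a section $s_\eta: \mls U_\eta \to T_\eta$, I would take $Z \subset T$ to be the scheme-theoretic closure of the image of $s_\eta$. Then $Z$ is a closed substack of $T$, $R$-flat (as the closure of an $R$-flat subscheme), finite over $\mls U$ (being closed in the finite morphism $T \to \mls U$), and an isomorphism over the generic fiber. To conclude that $Z \to \mls U$ is an isomorphism---so that its inverse furnishes the desired extension $\mls U \to Z \hookrightarrow T$---the last step invokes the fact that a finite birational morphism to a normal stack is an isomorphism.

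The main obstacle is verifying the normality of $\mls U$. Since $V$ was chosen to lie above the smooth locus of $C_3 \to \Spec R$, the scheme $V$ is $R$-smooth. The stack $\mls C_3$ along $V$ is obtained via root-type constructions from the admissible log structure at the marked sections (compare the explicit local descriptions in \ref{P:5.10}--\ref{P:5.14}), so \'etale-locally on $V$ the stack $\mls U$ is of the form $[W/H]$ with $W$ a regular $R$-scheme and $H$ a tame \'etale group scheme; in particular $\mls U$ is normal. Once normality is established, the density argument for injectivity and the closure argument for surjectivity combine to give the full faithfulness asserted in the proposition.
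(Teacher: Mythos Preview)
Your reduction to a single finite \'etale cover $T\to \mls U$ and your injectivity argument are fine (with the minor caveat that for nonabelian $G$ the sheaf $\mathrm{Isom}_G(P_1,P_2)$ is a torsor under an inner form of $G$ rather than under $G$ itself; but it is still finite \'etale over $\mls U$, which is all you use). The gap is in the surjectivity step: the claim that $\mls U$ is normal, via an \'etale chart $[W/H]$ with $W$ a \emph{regular} $R$-scheme, is not justified and in general fails.

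The issue is that the markings of $\mathbf C_3$ can coincide over all of $\Spec R$: if $s_{i,\eta}=s_{j,\eta}$ in the common generic fiber then the unique extensions $s_i^{(3)}$, $s_j^{(3)}$ coincide identically. At such a point, with local parameter $v$ on $V$ and admissible monoid $\bigoplus_j \tfrac{1}{r_j}\mathbf N$ (this is the shape of $\mls N_3$ produced in \ref{lem:compare-curves}), the \'etale chart of $\mls C_3$ has coordinate ring of the form $R[w_1,\dots]/(w_j^{r_j}-v)_j$; when two indices with $r_j\ge 2$ share the same equation $v$ this becomes $R[w_1,w_2]/(w_1^{r_1}-w_2^{r_2})$, which is not normal. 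This is exactly why \ref{L:sections} only asserts that the chart $U$ is Cohen--Macaulay (via finite flatness over the regular $V$), not regular or normal. Your appeal to \ref{P:5.10}--\ref{P:5.14} does not help: those paragraphs compute a $D(X_m)$-torsor $\mc Q$ pulled back from $\mls D$, not a smooth chart for $\mls C$.

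The paper's proof uses the same closure-of-the-graph idea but sidesteps normality by localizing one step further: it passes to an \'etale cover of $\mls U$ on which $P_1$ and $P_2$ (equivalently, your $T$) are trivial. There $T$ is a disjoint union of copies of $\mls U$, and the scheme-theoretic closure of a section over $\mls U_\eta$ is visibly one such copy, because $\mls U_\eta$ is schematically dense in $\mls U$ by $R$-flatness. Replacing your global ``finite birational onto normal'' step with this local argument repairs the proof; the rest of your outline then goes through.
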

\begin{proof}
Fix two torsors $P_1, P_2\in \text{\rm Tors} ^G(\mls U)$ and an isomorphism $\sigma _\eta :P_{1, \eta }\rightarrow P_{2, \eta }$ between their generic fibers.  First, an extension of $\sigma _{\eta }$ is unique. This is because $P_1$ and $P_2$ are flat over $R$: If $\sigma :P_1\rightarrow P_2$ is an isomorphism then $\sigma $ is determined by its graph $\gamma :P_1\hookrightarrow P_1\times _{\mls U}P_2$ and by flatness of $P_1$ over $R$ this graph is the scheme-theoretic closure in $P_1\times _{\mls U}P_2$ of the graph of $\sigma _\eta $ over the generic fiber.  

For the existence of the extension,  we claim that taking the closure of the graph of $\sigma _{\bar \eta}$ in $P_1\times _{\mls U}P_2$ defines an extension $\sigma: P_1\rightarrow P_2$.  To see this we may work \'etale locally on $\mls U$ so it suffices to consider the case when $P_1$ and $P_2$ are trivial. In this case the result amounts to the statement that $\mls U_\eta $ is schematically dense in $\mls U$ since in this case $P_1$ and $P_2$ are disjoint unions of copies of $\mls U$. 
\end{proof}

\begin{lem} \label{L:sections} Let $\mls E$ be a vector bundle on $\mls U$.  Then the restriction map
$$
\Gamma (\mls U, \mls E)\rightarrow \Gamma (\mls U^\circ , \mls E|_{\mls U^\circ} )
$$
is an isomorphism.
\end{lem}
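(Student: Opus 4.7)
The plan is to reduce the statement to a Hartogs-type extension result for locally free sheaves on a regular noetherian Deligne--Mumford stack across a closed subset of codimension at least two.

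First, I would analyze the geometry of $\mls U$ and the complement $Z := \mls U \setminus \mls U^\circ$. Recall that $\mls U = V \times_C \mls C_3$ where $V \to C$ is an \'etale morphism from an affine scheme chosen so that the image of $V$ in $C$ contains no points which are nodes in their fibers. Away from nodes, $C_3$ is regular (the total space of a smooth curve over a DVR in a neighborhood), so $V$ is regular; and above $V$ the stack $\mls C_3$ carries only root-stack structure along the marked sections, so $\mls U \to V$ is a smooth root stack. Consequently $\mls U$ is a regular, in particular normal and $S_2$, noetherian Deligne--Mumford stack of dimension $2$. On the other hand, $\mls U^\circ = V \times_C \mls C_3^\circ$, where $\mls C_3^\circ$ is the complement of the marked points in the closed fiber of $\mls C_3 \to \Sp(R)$. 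Therefore $Z$ is the preimage in $\mls U$ of the finite set of marked points of $C_3$ lying in the closed fiber, and hence consists of finitely many closed points (or residual gerbes at them). Since $\dim \mls U = 2$ at each such point, $Z$ has codimension at least two in $\mls U$.

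Next, I would check that the restriction map is injective. This is straightforward: $\mls U$ is reduced and $\mls U^\circ$ meets every irreducible component of $\mls U$ (it is dense, since $Z$ is zero-dimensional and $\mls U$ is equidimensional of dimension two), so a section of the locally free sheaf $\mls E$ vanishing on $\mls U^\circ$ vanishes on all of $\mls U$.

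The main point is surjectivity, which is where I expect the bulk of the work. I would argue it \'etale-locally on $\mls U$. Let $\widetilde U \to \mls U$ be an \'etale presentation by an affine scheme; by descent (applying the argument separately to $\widetilde U$ and $\widetilde U \times_{\mls U} \widetilde U$, which are also regular schemes in which the preimage of $Z$ has codimension at least two) it is enough to prove the analogous extension statement for a locally free sheaf on a regular affine noetherian scheme and a closed subset of codimension at least two. This is a classical consequence of the fact that regular schemes are $S_2$, so that depth-$2$ closed subsets impose no conditions on sections of locally free sheaves; one reduces to a free sheaf and then to the structure sheaf, where the statement is the usual algebraic Hartogs lemma for a normal (even $S_2$) noetherian scheme. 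Once this is established locally, the sheaf condition assembles the extensions into a global section of $\mls E$ restricting to the given section on $\mls U^\circ$, which completes the proof.
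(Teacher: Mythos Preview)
Your proposal is correct and follows essentially the same route as the paper: the complement has codimension at least two, the relevant space is $S_2$, and one invokes the algebraic Hartogs lemma on an \'etale cover. The packaging differs only slightly. The paper works \'etale locally on $V$, writes $\mls U = [U/H]$ with $H$ finite abelian, observes that $U$ is Cohen--Macaulay because it is finite flat over the regular scheme $V$, applies Hartogs on $U$, and then takes $H$-invariants. You instead assert that $\mls U$ itself is regular (as a root stack over regular $V$) and descend from a smooth presentation.

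One caution on your regularity claim: for a \emph{general} generalized log twisted curve the \'etale cover $U$ need not be regular, since the admissible monoid at a point where several markings collide need not be free; the paper's argument via Cohen--Macaulayness (finite flat over regular) is what works in general. Your claim happens to be true for the specific curve $\mls C_3$ constructed in \ref{lem:compare-curves}, because there the admissible monoid $N_3$ is taken to be the direct sum $\oplus_i N_j^{\{i\}}$, which is free, so the stack is an iterated ordinary root stack and hence regular. Either make this dependence on the construction of $\bC_3$ explicit, or weaken ``regular'' to ``$S_2$'' and justify it as the paper does; the latter is cleaner and is all that the Hartogs step needs.
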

\begin{proof}
The proof depends critically on the fact that the image of $V$ in $C$ contains no nodes, so $V$ is regular and the coarse moduli morphism $\mls U \to V$ is flat.

We may work \'etale locally $V$ so it suffices to consider the case when $\mls U = [U/H]$ with $U$ and $V$ strictly henselian local and $H$ an abelian group 
The scheme $U$ is finite and flat over the regular $V$ and therefore is Cohen-Macaulay \cite[18.17]{Eisenbud}.  In particular, $U$ is S2 so the map $\Gamma (U, \mls E)\rightarrow \Gamma (U^\circ , \mls E|_{U^\circ})$ is an isomorphism.  Taking $H$-invariants we get the result.
\end{proof}

The valuative criterion for separatedness referred to below is defined in e.g. \cite[\href{https://stacks.math.columbia.edu/tag/0E80}{Tag 0E80}]{stacks-project} or \cite[7.8]{LMB}.
\begin{cor}\label{C:sep}
The morphism $\mls K_{g, \ba}(\mc X) \to B$ satisfies the valuative criterion for separatedness.
\end{cor}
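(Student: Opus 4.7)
The plan is to reduce the valuative criterion for separatedness to the uniqueness of stabilization (Theorem \ref{T:stabilize}) using the two preceding lemmas. Fix a discrete valuation ring $R$ with fraction field $K$, two objects $(\bC_j, f_j)$ of $\mls K_{g, \ba}(\mc X)(R)$ for $j = 1, 2$, and a $2$-isomorphism between their restrictions to $\Sp(K)$. I would first apply Lemma \ref{lem:compare-curves} (possibly after a finite extension of $R$, as allowed there) to produce a generalized log twisted curve $\bC_3$ over $R$ together with contractions $(q_j, \phi_j'): \bC_3 \to \bC_j$ whose generic fibers realize the given identification. Let $Q_j: \mls C_3 \to \mls C_j$ denote the induced morphisms of associated stacks.

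Next I would invoke Lemma \ref{L:morphs} to see that $f_1 \circ Q_1$ and $f_2 \circ Q_2$ agree as morphisms $\mls C_3 \to \mc X$, and call this common composition $f_3$. Then $(\bC_3, f_3)$ is a prestable map of type $(g, \ba)$: condition (iii) of Definition \ref{def:weighted twisted prestable map} holds because any collection of sections of $C_3$ coinciding at a geometric point maps under $q_1: C_3 \to C_1$ to coinciding sections of $C_1$, where the weighted bound $\sum a_i \le 1$ is already known to hold.

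With this setup both $(\bC_j, f_j)$, together with the contractions $Q_j$ and the tautological $2$-isomorphism $f_j \circ Q_j \Rightarrow f_3$, define objects of the category of $\ba$-stabilizations of $(\bC_3, f_3)$ over $\Sp(R)$. Theorem \ref{T:stabilize} asserts that this category is equivalent to a set (and is represented by a closed subscheme of $\Sp(R)$), so the two stabilizations are canonically identified via a $2$-isomorphism $(\bC_1, f_1) \simeq (\bC_2, f_2)$ over $\Sp(R)$. Uniqueness of this $2$-isomorphism and its compatibility with the given generic one follow from the same set-valued uniqueness applied over $\Sp(K)$.

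The main work, in my view, has already been done in the preparatory Lemmas \ref{lem:compare-curves} and \ref{L:morphs}: producing a simultaneous log twisted refinement $\bC_3$ and showing that the two maps to $\mc X$ coincide on it. Granted those inputs, \ref{C:sep} is a formal consequence of Theorem \ref{T:stabilize}; the only residual bookkeeping is the verification of prestability of $(\bC_3, f_3)$ sketched above.
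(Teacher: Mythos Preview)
Your proposal is correct and follows essentially the same argument as the paper: build $\bC_3$ via Lemma \ref{lem:compare-curves}, use Lemma \ref{L:morphs} to identify $f_1\circ Q_1$ with $f_2\circ Q_2$, and then invoke the uniqueness of $\ba$-stabilizations from Theorem \ref{T:stabilize}. The only difference is that you spell out the prestability of $(\bC_3, f_3)$ and the uniqueness of the resulting $2$-isomorphism, which the paper leaves implicit.
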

\begin{proof}
The stable maps $(\bC_1, f_1)$ and $(\bC_2, f_2)$ are two $\ba$-stabilizations of the $\ba$-prestable map $(\bC_3, Q_j \circ f_j)$ (note that by \ref{L:morphs} the morphism $Q_j \circ f_j$ is independent of $j=1, 2$). The result now follows from the uniqueness of $\ba$-stabilizations \ref{T:stabilize}.
\end{proof}

\begin{cor}\label{C:findiag}
The morphism $\mls K_{g, \ba}(\mc X) \to B$ has finite diagonal.
\end{cor}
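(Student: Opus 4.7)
The goal is to upgrade the diagonal $\Delta: \mls K_{g, \ba}(\mc X) \to \mls K_{g, \ba}(\mc X) \times_B \mls K_{g, \ba}(\mc X)$ from separated to finite. Two of the three ingredients required are already in hand. By \ref{T:7.4} the diagonal is quasi-compact and separated, and by \ref{C:sep}, via the standard equivalence between the valuative criterion for separatedness of $\mls K_{g, \ba}(\mc X) \to B$ and the valuative criterion for properness of its diagonal, $\Delta$ is also proper. A proper, separated, quasi-compact, representable morphism of algebraic stacks is finite exactly when it is quasi-finite (Zariski's main theorem), so it remains to show that for every geometric point $\bar s \to B$ and every stable map $(\bC, f)$ of type $(g, \ba)$ over $\bar s$, the automorphism group algebraic space $\mathrm{Aut}(\bC, f)$ is finite over $\bar s$.

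To prove this I would factor the analysis through the forgetful map
\[
\mathrm{Aut}(\bC, f) \longrightarrow \mathrm{Aut}(C, \{s_i\}_{i=1}^n, \bar f)
\]
to the automorphism group scheme of the underlying coarse prestable map, where $\bar f: C \to X$ is the induced morphism of coarse spaces. By condition (iv) of \ref{def:weighted twisted stable map} the target is a finite group scheme, so it is enough to show the kernel is finite. An element of the kernel consists of an automorphism $\phi$ of $\bC$ inducing the identity on $(C, \{s_i\})$, together with a $2$-isomorphism between $f$ and the composition of $f$ with the induced self-map of $\mls C$. The automorphisms of $\bC$ that act trivially on the coarse curve are controlled by the stabilizer groups of $\mls C$, which are finite abelian and nontrivial only at the finitely many nodes and markings; hence these form a finite group scheme over $\bar s$. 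The $2$-isomorphisms in question form a torsor under the global sections of $f^*I_{\mc X/B}$, where $I_{\mc X/B}$ is the relative inertia stack of $\mc X$ over $B$. Since $\mc X$ is a separated tame Deligne-Mumford stack, $I_{\mc X/B}$ is finite over $\mc X$, and so its sections over the proper curve $\mls C_{\bar s}$ form a finite group scheme.

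The main obstacle I expect is bookkeeping the group-theoretic structure: one must verify that these two contributions assemble into a genuine exact sequence of group schemes (and not merely a bijection on points), so that the kernel of the forgetful map truly is an extension of the two finite pieces described above. Once that verification is carried out, $\mathrm{Aut}(\bC, f)$ is an iterated extension of finite group schemes and hence finite. Combining the resulting quasi-finiteness of $\Delta$ with the properness coming from \ref{C:sep} and the quasi-compact separated properties coming from \ref{T:7.4} then yields the desired finiteness of the diagonal.
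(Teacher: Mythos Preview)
Your proposal is correct and follows essentially the same route as the paper: use \ref{C:sep} to conclude that $\mls K_{g,\ba}(\mc X)\to B$ is separated, hence the diagonal is proper, and then combine this with finiteness of automorphism group schemes to deduce that the diagonal is finite. The paper's proof is considerably terser---it simply asserts ``the automorphism group scheme of a stable map is finite'' and concludes---whereas you supply an explicit argument for this claim via the forgetful map to the coarse automorphism group.

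One small imprecision in your sketch: the automorphisms of the generalized log twisted curve datum $\bC$ that induce the identity on the underlying marked curve $(C,\{s_i\})$ are not really ``controlled by the stabilizer groups of $\mls C$''---the log data in $\bC$ is rigid over a fixed coarse curve, so such automorphisms are in fact trivial (cf.\ the use of \cite[7.18]{OWI} in \ref{P:7.6b}). The genuine contribution to the kernel comes entirely from the $2$-automorphisms of $f$, which as you correctly note are sections of $f^*I_{\mc X/B}$ and hence finite because $\mc X$ is a separated Deligne--Mumford stack. This actually simplifies your bookkeeping concern: there is no nontrivial extension to assemble, just the finite group of $2$-automorphisms sitting inside the finite coarse automorphism group.
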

\begin{proof}
By \cite[\href{https://stacks.math.columbia.edu/tag/0E80}{Tag 0E80}]{stacks-project}, the morphism $\mls K_{g, \ba}(\mc X) \to B$ is separated. Hence the diagonal is proper. Since the automorphism group scheme of a stable map is finite, the diagonal is finite.
\end{proof}

\subsection{Valuative criterion for properness}
We now make the additional assumption that $\mc X$ is proper.
Let $\bone^n$ denote the weight vector $(1, 1, \ldots, 1)$ with $n$ entries. Recall from \eqref{eq:stab1} the stabilization morphism \begin{equation}\label{eq:qc1}
    \mathrm{stab}: \mls K^{\ba-\mathrm{stab}}_{g, \bone^n}(\mc X) \to \mls K_{g, \ba}(\mc X).
    \end{equation}

\begin{lem}\label{L:surj}
The morphism \eqref{eq:qc1} is surjective.
\end{lem}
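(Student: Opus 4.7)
Surjectivity of \eqref{eq:qc1} can be verified on geometric points over $B$, so I fix an algebraically closed field $k$ and an $\ba$-stable map $(\bD, g:\mls D \to \mc X)$ over $k$; the plan is to reverse Algorithm~\ref{A:only} and construct a $\bone^n$-stable map $(\bC, f:\mls C \to \mc X)$ whose $\ba$-stabilization is $(\bD, g)$. Let $J \subseteq D(k)$ be the finite set of points at which two or more of the sections $s_i$ coincide, and for $x \in J$ set $I_x := \{i : s_i = x\}$ and let $A_x$ be the abelian stabilizer of $\mls D$ at $x$. Form the nodal curve $C$ by attaching to $D$, at each $x \in J$, a rational tail $E_x \cong \mathbf{P}^1$ carrying $|I_x|$ distinct markings labeled by $I_x$. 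The markings on $C$ are then pairwise distinct, and the coarse morphism $q:C \to D$ contracts each $E_x$ to $x$. Equip $C$ with a generalized log twisted structure $\bC$ so that $\bC \to \bD$ is an initial contraction in the sense of \cite[6.14]{OWI}, with stabilizer at each new node equal to $A_x$ and with trivial stabilizers at the new markings; the existence of such a $\bC$ follows from \cite[6.15]{OWI}.

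Next I extend $g$ to a morphism $f:\mls C \to \mc X$ by defining $f|_{\mls E_x}$ compatibly with $g|_{\mls D_x}$ at the new node, for each $x \in J$. Working \'etale locally near the image $\bar x \in X$ of $x$, write $\mc X = [U/G]$ with $G$ tame \'etale. The map $g|_{\mls D_x}: BA_x \to [U/G]$ factors as $BA_x \to B\bar A \to BG_{\tilde x} \to \mc X$, where $\tilde x \in \mc X(k)$ is the canonical lift of $\bar x$ coming from $g$, $G_{\tilde x}$ is its stabilizer, and $\bar A \subseteq G_{\tilde x}$ is the image of $A_x \to G_{\tilde x}$; the composite $B\bar A \to \mc X$ is representable. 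By \ref{P:3.15}, the $\bar A$-torsor on $\mls D_x$ determined by the factorization $\mls D_x \to B\bar A$ extends uniquely (up to unique isomorphism) to an $\bar A$-torsor on $\mls E_x$, equivalently to a morphism $\mls E_x \to B\bar A$ with abelian contraction; composing with $B\bar A \to \mc X$ defines $f|_{\mls E_x}$. By the uniqueness provided by \ref{T:9.2}, these local extensions glue to a global morphism $f: \mls C \to \mc X$ extending $g$, and $f$ is representable: stabilizers at the new markings are trivial, while the composite $A_x \to \bar A \hookrightarrow G_{\tilde x}$ at each new node is injective because $(\bD, g)$ is representable.

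Finally I verify the required properties. Each tail $\mls E_x$ has genus $0$ and carries $|I_x| + 1 \geq 3$ special points (markings plus one node), so satisfies the stability inequality; the components of $\mls C$ outside the $\mls E_x$ inherit stability from $(\bD, g)$ via a short count showing that replacing $|I_x|$ coincident markings at $x$ with a single new node preserves $\#\text{nodes} + \#\text{distinct markings}$ on the adjacent old component. Hence $(\bC, f)$ is $\bone^n$-stable. Each tail has total $\ba$-weight $\sum_{i\in I_x} a_i \leq 1$ by $\ba$-prestability of $\bD$ and admits an abelian contraction by construction, so Algorithm~\ref{A:only} contracts each $\mls E_x$ back to its node and recovers $(\bD, g)$ as the $\ba$-stabilization of $(\bC, f)$. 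The heart of the argument is in the two constructions: choosing the log twisted structure to make $\bC \to \bD$ genuinely initial is handled by \cite[6.15]{OWI}, and the main obstacle lies in realizing the local torsor extensions so that they are compatible across an \'etale cover of $\mc X$, which is made possible by combining \ref{P:3.15} and \ref{P:newabelian} with the uniqueness in \ref{T:9.2}.
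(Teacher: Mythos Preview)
There is a genuine gap in the construction of $\bC$. You take trivial stabilizers at the new markings on each tail $E_x$ and then assert that $\bC \to \bD$ can be made an initial contraction with the image stabilizer equal to $A_x$. This is impossible whenever $A_x$ is nontrivial: with trivial marking structure the monoid in \ref{P:5.10} is $N=\mathbf{N}^{|I_x|}$, so $X=N^{\gp}/\mathbf{Z}^{|I_x|}=0$ and hence $X_m=0$ for every choice of $\mu_m$ at the node; the initial contraction therefore forces the stabilizer at $x$ to be trivial. (Separately, node stabilizers in a generalized log twisted curve are cyclic, so ``stabilizer at each new node equal to $A_x$'' is already ill-posed when $A_x$ is not cyclic.) The same miscalculation breaks your appeal to \ref{P:3.15}: that result identifies $\Tors^{G,\ab}(\mls E_x)$ with $\Tors^G(BD(X_m))$, and in your setup $BD(X_m)=\Sp(k)$, so every $G$-torsor with abelian contraction on your $\mls E_x$ is trivial. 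In particular, if $g$ has nontrivial monodromy at $x$ (e.g.\ $\mc X=B\mu_2$, $\mls D$ with a $\mu_2$-gerbe at a point $x$ carrying two coincident markings, and $g$ representable), the torsor cannot be extended across the node at all, and your $f$ does not exist.

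The paper avoids this by putting \emph{nontrivial} stack structure on the tail: it reads off integers $r_i$ from the coordinate projections of the admissible monoid $N$ of $\bD$ at $x$ and an integer $m$ from the summation map, then takes $\mls E$ to be $\mathbf{P}^1$ rooted to order $r_i$ at the $i$th marking, glued with $\mu_m$ at the node. With these choices there is a contraction $\bC\to\bD$ of generalized log twisted curves, and one simply sets $f$ equal to the composition $\mls C\to\mls D\xrightarrow{g}\mc X$; no separate torsor extension is needed, and $(\bD,g)$ is visibly the $\ba$-stabilization.
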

\begin{proof}
It is enough to show that, given an $\ba$-stable map $(\bD, f)$  over an algebraically closed field $k$, there is a generalized log twisted curve $\bC$ and a contraction $\bC \to \bD$ such that the composition $\mls C \to \mls D \to \mc X$ is $\bone^n$-stable. For this it is enough to consider the case where there is a unique geometric point $\bar x \to \mls D$ whose image is equal to $w$ markings, where $w$ is at least two. 

Let $N \subseteq \mathbf{Q}^w_{\geq 0}$ be the stalk of the admissible sheaf of $\bD$ at $\bar x$. For $i=1, \ldots, w$ define $r_i \in \mathbf{Z}_{>0}$ (resp. $m \in \mathbf{Z}_{>0}$) to be the integer such that the image of $N$ under the projection to the $i$th coordinate (resp. summation map)
\[
\mathbf{Q}^w_{\geq 0} \to \mathbf{Q}_{\geq 0}
\]
is $\frac{1}{r_i} \mathbf{Z}_{\geq 0}$ (resp. $\frac{1}{m}\mathbf{Z}_{\geq 0}$). We construct $\bC$ as follows. Define $\bE$ to be the generalized log twisted curve whose associated stack is $\mathbf{P}^1$ rooted at (any choice of) $w$ distinct points with orders $r_1, \ldots r_w$, respectively. Define $\bC'$ to be the generalized log twisted curve naturally associated to the rigidification of $\mls D$ at $\bar x$. Glue one of the non-stacky points of $\bE$ to the image of $\bar x$ in $\bC'$. Since $k$ is algebraically closed, we can take $\bC$ to be a 
generalized log twisted curve obtained from this glued one by adding orbifold structure to order $m$ at the node joining $\bE$ to $\bC'$ (note that $\bC$ is not unique). There is then a contraction $\bC \rightarrow \bD$. 
\end{proof}


\begin{cor}\label{C:prop}
If $\mc X$ is proper, the  morphism $\mls K_{g, \ba}(\mc X) \to B$ satisfies the valuative criterion for properness.
\end{cor}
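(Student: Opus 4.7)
The plan is to reduce the valuative criterion for properness for $\mls K_{g, \ba}(\mc X)$ to the corresponding statement for the Abramovich--Vistoli stack $\mls K_{g, \bone^n}(\mc X)$, using the surjectivity of the stabilization morphism \eqref{eq:qc1} established in \ref{L:surj}, together with the fact that the locus $\mls K^{\ba-\mathrm{stab}}_{g, \bone^n}(\mc X) \subseteq \mls K_{g, \bone^n}(\mc X)$ is a closed substack (by \ref{C:stab}).

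First I would fix a discrete valuation ring $R$ with fraction field $K$ and an object $(\bD_K, f_K) \in \mls K_{g, \ba}(\mc X)(\Sp K)$; the task is to extend it, after possibly a finite extension of $K$, over a DVR dominating $R$. Base changing to an algebraic closure $\bar K$, Lemma \ref{L:surj} produces an $\bone^n$-stable map $(\bC_{\bar K}, g_{\bar K})$ whose $\ba$-stabilization is $(\bD_{\bar K}, f_{\bar K})$. Since the construction in the proof of \ref{L:surj} only involves finitely much algebraic data (the coincident markings of $\bD$, the integers $r_i$ and $m$ extracted from the admissible monoid, and a choice of $w$ distinct points in $\mathbf{P}^1$), a standard descent argument in the spirit of \ref{lem:abelian gerbe} shows that $(\bC_{\bar K}, g_{\bar K})$ is defined over a finite extension $K \subseteq K'$. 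Let $R'$ be a DVR dominating $R$ with fraction field $K'$, obtained from the integral closure of $R$ in $K'$.

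Next I would invoke the valuative criterion for properness for the Abramovich--Vistoli stack $\mls K_{g, \bone^n}(\mc X)$ of twisted stable maps, which holds when $\mc X$ is proper (as noted in the introductory remarks, the projectivity of the coarse moduli assumption in \cite{AV} is not needed here). After a further finite extension $R' \subseteq R''$ we extend $(\bC_{K'}, g_{K'})$ to $(\bC_{R''}, g_{R''}) \in \mls K_{g, \bone^n}(\mc X)(\Sp R'')$. Because $\mls K^{\ba-\mathrm{stab}}_{g, \bone^n}(\mc X) \hookrightarrow \mls K_{g, \bone^n}(\mc X)$ is a closed immersion and its generic fiber lies in this closed substack by construction, the entire morphism $\Sp R'' \to \mls K_{g, \bone^n}(\mc X)$ factors through $\mls K^{\ba-\mathrm{stab}}_{g, \bone^n}(\mc X)$. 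Applying the stabilization morphism \eqref{eq:qc1} then yields the desired $\ba$-stable map $(\bD_{R''}, f_{R''}) \in \mls K_{g, \ba}(\mc X)(\Sp R'')$ whose generic fiber is isomorphic to the pullback of $(\bD_K, f_K)$.

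The main obstacle is the careful justification of the valuative criterion for $\mls K_{g, \bone^n}(\mc X)$ when $\mc X$ is proper but its coarse moduli is not assumed projective; this is alluded to as well-known to experts, and an alternative is to redo the valuative argument of \cite{AV} directly, noting that projectivity is used there only to obtain boundedness, not for the existence step.
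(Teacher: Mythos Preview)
Your proposal is correct and follows essentially the same approach as the paper: surjectivity of the stabilization morphism (\ref{L:surj}), closedness of $\mls K^{\ba-\mathrm{stab}}_{g, \bone^n}(\mc X)$ in $\mls K_{g, \bone^n}(\mc X)$, and the valuative criterion for the Abramovich--Vistoli stack. The paper's proof is terser, and your ``main obstacle'' (the proper-but-not-projective case for $\mls K_{g, \bone^n}(\mc X)$) is handled in the remark immediately following the paper's proof, via a reduction to the projective case using the scheme-theoretic image of the coarse curve in $X$.
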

\begin{proof}
This follows from the surjectivity in \ref{L:surj} after noting that $\mls K^{\ba-\mathrm{stab}}_{g, \ba}(\mc X)$ is a closed substack of $\mls K_{g, \ba}(\mc X)$ by \ref{T:stabilize} and $\mls K_{g, \bone^n}(\mc X)$ satisfies the valuative criterion for properness by \cite[1.4.1]{AV} (and the following remark).
\end{proof}
\begin{rem} In most of the literature the valuative criterion for properness for $\mls K_{g, \bone ^n}(\mc X)$ is verified under the further assumption that the coarse space of $\mc X$ is projective, and not just proper.  One can reduce the proper case to the projective case as follows.  Let $A$ be a discrete valuation ring over $B$ with field of fractions $K$, and let $\Sp (K)\rightarrow \mls K_{g, \bone ^n}(\mc X)$ denote a map corresponding to a twisted stable map $\mls C_K\rightarrow \mc X$. We show that after possibly replacing $A$ by an \'etale extension we can extend the given map to a map $\Sp (A)\rightarrow \mls K_{g, \bone ^n}(\mc X)$.   Let $Z\subset X_A$ be the scheme-theoretic image of the coarse space  $C_\eta $.  Then $Z$ is flat over $A$ of relative dimension $1$ and therefore, after possibly replacing $A$ by an \'etale cover (which is permissible in order to verify the valuative criterion), we can arrange that $Z$ is projective over $A$ \cite[\href{https://stacks.math.columbia.edu/tag/0E6F}{Tag 0E6F}]{stacks-project}.  Our stable map over $K$ can be viewed as a stable map to $Z\times _X\mc X$, which is a stack with projective coarse moduli space.  By the projective case we then get an extension over $A$ of the induced stable map to $Z\times _X\mc X$, and therefore also an extension of the stable map to $\mc X$. 
\end{rem}

\subsection{Quasicompactness and the proof of \ref{T:proper}}

We now impose the additional assumption that
  $X$ is projective over $B$ and fix a $B$-relatively ample invertible sheaf $\mls O_X(1)$.

\begin{cor}\label{C:qc} The stack $\mls K_{g, \ba }(\mc X, d)$ is a quasi-compact  open and closed substack of $\mls K_{g,  \ba }(\mc X)$. In particular $\mls K_{g, \ba }(\mc X, d)$ is proper over $B$.
\end{cor}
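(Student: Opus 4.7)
The strategy is to combine the three properties already established for $\mls K_{g, \ba}(\mc X)$---local finite type (\ref{T:7.4}), finite diagonal (\ref{C:findiag}), and the existence part of the valuative criterion (\ref{C:prop})---with quasi-compactness and openness/closedness of the degree-$d$ substack.

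First I would verify that $\mls K_{g, \ba}(\mc X, d)$ is open and closed in $\mls K_{g, \ba}(\mc X)$. For a family $(\bC, f)$ over $S$, the function $s\mapsto \deg (\bar f_s^*\mls O_X(1))$ on $S$ is locally constant since $\bar f^*\mls O_X(1)$ is a line bundle on the proper flat family of curves $C\rightarrow S$ (and $\chi $ of a line bundle is locally constant in flat families, while $\chi(\mls O_{C_s})$ is determined by the arithmetic genus $g$). Hence the degree $d$ locus is both open and closed.

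Next I would establish quasi-compactness using the stabilization morphism from the Abramovich--Vistoli stack.  By \ref{C:small}, or directly from \ref{C:stab}, we have a closed substack $\mls K^{\ba-\mathrm{stab}}_{g, \bone ^n}(\mc X)\subset \mls K_{g, \bone ^n}(\mc X)$ and a morphism
\[
\mathrm{stab}: \mls K^{\ba-\mathrm{stab}}_{g, \bone ^n}(\mc X)\To \mls K_{g, \ba}(\mc X).
\]
Intersecting with the open and closed substack of degree $d$ on the target we obtain a morphism
\[
\mathrm{stab}: \mls K^{\ba-\mathrm{stab}}_{g, \bone ^n}(\mc X, d)\To \mls K_{g, \ba}(\mc X, d),
\]
where the source is the preimage and is again closed in the degree-$d$ Abramovich--Vistoli stack (noting that the stabilization morphism preserves the degree, since contracted components map to points of $X$ and hence contribute $0$ to the degree of the pullback of $\mls O_X(1)$). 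By the Abramovich--Vistoli theorem \cite[1.4.1]{AV}, $\mls K_{g, \bone ^n}(\mc X, d)$ is proper, hence quasi-compact, so $\mls K^{\ba-\mathrm{stab}}_{g, \bone ^n}(\mc X, d)$ is also quasi-compact. The argument of \ref{L:surj}, applied fiber by fiber after base changing to an algebraically closed field (and noting that the construction of $\bC$ there preserves the degree on coarse spaces), shows that $\mathrm{stab}$ is surjective. Since the image of a quasi-compact stack under a morphism is quasi-compact, we conclude that $\mls K_{g, \ba}(\mc X, d)$ is quasi-compact.

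Finally, to conclude properness I would invoke the characterization via the valuative criterion. We have that $\mls K_{g, \ba}(\mc X, d)\rightarrow B$ is of finite type (locally of finite type by \ref{T:7.4} together with quasi-compactness just established), separated with finite diagonal (by \ref{C:findiag}), and satisfies the existence part of the valuative criterion since this was proved for $\mls K_{g, \ba}(\mc X)$ in \ref{C:prop} and the degree is preserved under specialization in a flat family. Hence $\mls K_{g, \ba}(\mc X, d)\rightarrow B$ is proper.

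The main obstacle is the quasi-compactness step, and specifically the verification that the stabilization morphism from the Abramovich--Vistoli stack of degree $d$ surjects onto $\mls K_{g, \ba}(\mc X, d)$; once that is in hand everything else follows from results already proved in the paper.
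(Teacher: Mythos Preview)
Your proposal is correct and follows essentially the same approach as the paper: open/closed via local constancy of the degree, quasi-compactness by surjecting from the closed substack $\mls K^{\ba-\mathrm{stab}}_{g,\bone^n}(\mc X,d)$ of the Abramovich--Vistoli stack using \ref{L:surj}, and then properness from the separatedness and valuative criterion already established. The paper's write-up is terser but the logic is the same.
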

\begin{proof}
    That $\mls K_{g,  \ba }(\mc X, d)$ is open and closed in $\mls K_{g,  \ba }(\mc X)$ follows from the fact that the degree of a line bundle on a flat family of nodal curves is a locally constant function. Quasicompactness follows from the cover \ref{L:surj} and \cite[\href{https://stacks.math.columbia.edu/tag/050X}{Tag 050X}]{stacks-project}, noting that $\mls K^{\ba-\mathrm{stab}}_{g, \bone^n}(\mc X, d)$ is a closed substack of $\mls K_{g, \bone^n}(\mc X)$ by \ref{T:stabilize} and $\mls K_{g, \bone^n}(\mc X)$ is the Abramovich-Vistoli stack of twisted stable maps and hence quasicompact by \cite[1.4.1]{AV}. Now $\mls K_{g, \ba}(\mc X, d)$ is proper by \ref{C:sep} and \ref{C:prop}, see e.g. \cite[7.12]{LMB}.
    (Note that $\mls K_{g, \ba}(\mc X)$ and hence $\mls K_{g, \ba}(\mc X, d)$ are quasiseparated by \ref{T:7.4}.)
\end{proof}

\section{Global quotient structure}\label{S:section14}

Following the strategy in \cite{AGOT} we can prove that under appropriate assumptions our stacks $\mls K_{g, \ba}(\mc X, d)$ are global quotient stacks.  

\begin{pg}
Assume that $B$ is quasi-compact and that $\mc X = [M/G]$, where $M$ is a quasi-projective $B$-scheme and $G$ is an algebraic group over $B$ acting on $M$.  Assume further that the coarse space $X$ of $\mc X$ is projective over $B$ and fix a relatively ample invertible sheaf $\mls O_X(1)$ on $X$.

In this case the stack $\mc X$ has a generating sheaf and we fix one such sheaf $\mc V$ on $\mc X$ (see \cite[Theorem 5.5]{OlssonStarr}).
\end{pg}

\begin{thm}\label{T:14.2} Under these assumptions every connected component of $\mls K_{g,  \ba }(\mc X, d)$ admits a closed immersion into a stack of the form $[\mc P/\mathbf{G}]$, where $\mc P$ is a scheme smooth and quasi-projective over $B$ and $\mathbf{G}$ is an algebraic group acting on $\mc P$. 
\end{thm}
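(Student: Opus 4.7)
The plan is to adapt the Abramovich--Graber--Olsson--Tseng strategy of \cite{AGOT} to the weighted twisted setting, producing for each connected component of $\mls K_{g, \ba}(\mc X, d)$ a closed embedding into a quotient stack $[\mc P/\mathbf{G}]$ with $\mc P$ smooth and quasi-projective over $B$.

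First I would build a universal polarization on the coarse source curve. Given a stable map $(\bC, f : \mls C \to \mc X)$ over $S$, I consider for sufficiently large $k$ the line bundle $L = \omega_{C/S}(\sum \lceil k a_i \rceil s_i) \otimes \bar f^*\mls O_X(1)^{\otimes k}$ on the coarse curve $C$, and verify that it is relatively very ample with a Hilbert polynomial depending only on the discrete invariants $(g, \ba, d)$. Positivity is checked component by component in each geometric fiber: on irreducible components $E$ such that $\mls E \to \mc X$ has an abelian contraction, the stability condition (v) of \ref{def:weighted twisted stable map} provides enough weighted special-point weight to make $L|_E$ positive; on the remaining components $E$, the map $\bar f|_E$ is forced to have positive degree, because otherwise $\mls E$ would map into a fiber of $\mc X \to X$ and on stacky rational curves of the form arising from contraction analysis (as in \ref{D:5.7}) one could produce an abelian contraction by \ref{P:newabelian}. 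This yields a morphism into a Hilbert scheme $H$ of $L$-polarized curves in a projective bundle over $B$.

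Next, using the fixed generating sheaf $\mc V$ on $\mc X$ and writing $\pi : \mls C \to C$ for the coarse moduli morphism, for $k$ large the pushforward $F = \pi_*(f^*\mc V \otimes \pi^*L^{\otimes k})$ is locally free of constant rank, cohomologically trivial, and globally generated. This produces a classifying morphism into a Quot scheme $Q$ parametrizing quotients of a suitable trivial bundle on the universal curve over $H$. I then argue that the induced morphism from a fixed connected component $\mls K$ of $\mls K_{g, \ba}(\mc X, d)$ into $H \times_B Q$ is a monomorphism: from the image in $H$ one recovers $(C, \{s_i\}, \bar f)$, from which the full generalized log twisted curve $\bC$ is determined on a connected component where the relevant discrete data is constant; and from the Quot datum combined with the defining property of a generating sheaf (see \cite[5.5]{OlssonStarr}) one recovers $f^*\mc V$ on $\mls C$ and hence $f$ itself. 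Properness of $\mls K$ over $B$, established in \ref{C:qc}, upgrades the monomorphism to a closed immersion, and equivariance under the natural linear action of $\mathbf{G} = \mathrm{GL}_{N+1} \times \mathrm{GL}_M$ (reframing the projective embedding and the trivializing frame in the Quot scheme) realizes $\mls K$ as a closed substack of $[\mc P/\mathbf{G}]$ for $\mc P$ a smooth, $\mathbf{G}$-invariant, quasi-projective open subscheme of $H \times_B Q$.

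The main obstacle is the positivity verification of the first step. In the Abramovich--Vistoli setting treated in \cite{AGOT} every component of the coarse curve is genuinely stable, so $\omega_{C/S}(\sum s_i)$ alone suffices; here condition (v) of \ref{def:weighted twisted stable map} is the sharp replacement, and one has to run a case analysis of extremal and interior branches parallel to \ref{D:5.7} to confirm that the weighted sum makes $L$ positive on every component, in particular on those that are contracted by the unweighted stabilization but preserved with weights. A secondary technical difficulty is arranging that $\mc P$ is smooth: one restricts from the full $H \times_B Q$ to the open locus where the universal Quot sheaf is locally free of the correct rank and the obstruction theory of the universal map to $\mc X$ is unobstructed, then appeals to standard smoothness results for Quot schemes over that locus.
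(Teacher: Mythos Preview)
Your proposal has a genuine gap at the ``monomorphism'' step, and the surrounding strategy differs from the paper's in a way that matters.

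First, a minor issue: your positivity argument is more elaborate than necessary and contains a false claim. You assert that a component $E$ without an abelian contraction must have $\bar f|_E$ of positive degree, invoking \ref{P:newabelian}. But \ref{P:newabelian} only applies to the special configuration where all markings but one coincide; a general rational component of $C$ need not have this form, and such an $E$ can perfectly well be contracted by $\bar f$ (mapping to a $BG$ with $G$ nonabelian). The paper sidesteps this entirely: it uses condition (iv) of \ref{def:weighted twisted stable map}, which already forces every $\bar f$-contracted rational component to have at least three distinct special points, so the unweighted bundle $\omega_{C/S}(\sum_i s_i)\otimes \bar f^*\mls O_X(3)$ is ample with no case analysis.

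The serious gap is your claim that $\bC$ can be recovered from $(C,\{s_i\},\bar f)$ on a connected component, so that the map to $H\times_B Q$ is a monomorphism. This is false: distinct generalized log twisted curves can share the same associated stack $\mls C$ with the same marked points (and hence the same $f$). The admissible sheaf $\mls N$ and the simple extension $\ell$ are genuine extra data not visible in the Hilbert/Quot point. The paper confronts this head-on: after rigidifying by frames for $g_*f^*\mc V^\vee(m)$ and $\bar g_*\mc L^{\otimes m}$, it builds an explicit closed immersion $j_{\mls C}:\mls C\hookrightarrow \mathbf{A}$ into an affine bundle over $\mc X\times\mathbf{P}^{l_2-1}$, and then maps the resulting space $\Gamma_{l_1,l_2}$ to $\underline{\mathrm{Hilb}}(\mathbf{A})\times(\mathbf{P}^{l_2-1})^n$. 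The fibers of this map are precisely the sets of generalized log twisted structures with a prescribed associated stack and marked points, which are only \emph{finite} by \cite[7.28]{OWI}, not singletons. So the map is quasi-finite and separated rather than a monomorphism, and the paper finishes by pulling back an equivariant ample line bundle along this quasi-affine map to conclude that $\Gamma_{l_1,l_2}$ is quasi-projective with a $\mathbf{G}$-linearized ample bundle, hence equivariantly embeds in a smooth $\mc P$. Your proper-monomorphism-hence-closed-immersion shortcut does not go through, and without the frame-bundle rigidification and the quasi-finiteness input from \cite{OWI} there is no evident way to salvage it.
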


The proof of this theorem occupies the remainder of this section.

 Let $S$ be a $B$-scheme and let $(\mathbf{C}, f)$ be an object of $\mls K_{g,  \ba }(\mc X, d)(S)$.  So we have an induced diagram
$$
\xymatrix{
\mls C\ar[r]^-{f}\ar@/_2pc/[dd]_-{g}\ar[d]_-\pi & \mc X\ar[d]\\
C\ar[d]_-{\bar g}\ar[r]^-{\bar f}& X\\
S.&}
$$

\begin{lem} The line bundle 
 $\mc L:= \omega _{C/S}(\Sigma _is_i)\otimes \bar f^*\mls O_X(3)$ is relatively ample on $C$. 
\end{lem}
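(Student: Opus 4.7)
My plan is to verify relative ampleness fiberwise, using the standard fact that for a proper flat family of nodal curves $C \to S$, a line bundle is $S$-relatively ample if and only if it has positive degree on every irreducible component of every geometric fiber. So fix a geometric point $\bar s \to S$, let $E \subseteq C_{\bar s}$ be an irreducible component with normalization $\widetilde E$, and introduce the invariants $n_E := \#\{\text{preimages in } \widetilde E \text{ of nodes of } C_{\bar s}\}$ and $w_E := \#\{i : s_i(\bar s) \in E\}$ (counting sections with multiplicity, which is what enters the divisor $\sum_i s_i$). A direct computation using $\omega_{C/S}|_{\widetilde E} \simeq \omega_{\widetilde E}(\text{node preimages})$ yields
\[
\deg_E \omega_{C/S}(\textstyle\sum_i s_i) \;=\; 2g(\widetilde E) - 2 + n_E + w_E \;\geq\; -2,
\]
with the lower bound saturated only when $E$ is a smooth rational component disjoint from both nodes and markings.

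I would then split into two cases. If $\bar f|_E$ is non-constant, then $\deg_E \bar f^*\mls O_X(1) \geq 1$, and combining with the universal lower bound above gives $\deg_E \mc L \geq -2 + 3 = 1 > 0$. If instead $\bar f|_E$ is constant, the twist by $\bar f^*\mls O_X(3)$ contributes nothing to $\deg_E \mc L$, and I must extract $\deg_E \omega_{C/S}(\sum_i s_i) > 0$ from stability. For this I would invoke condition (iv) of \ref{def:weighted twisted stable map}, namely that the automorphism group scheme of the underlying coarse prestable map $(C_{\bar s}, \{s_i\}, \bar f)$ is finite.

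Letting $r_E$ denote the number of distinct images of the sections $s_i$ on $E$, the fact that $\bar f|_E$ is constant means any automorphism of $\widetilde E$ fixing the $n_E$ node-preimages and the $r_E$ distinct section-images extends (by the identity on all other components) to an automorphism of the whole coarse prestable map. Finiteness then forces $n_E + r_E \geq 3$ when $g(\widetilde E) = 0$, $n_E + r_E \geq 1$ when $g(\widetilde E) = 1$, and gives $2g(\widetilde E) - 2 \geq 2$ in higher genus. Since $w_E \geq r_E$, in every subcase the degree formula above gives $\deg_E \omega_{C/S}(\sum_i s_i) \geq 1$, completing the verification.

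The main technical subtlety, and the reason this is slightly more delicate than in the classical Hassett/Kontsevich setting, is that $w_E$ counts marked sections with multiplicity whereas finiteness of automorphisms only constrains the number $r_E$ of distinct images; the inequality $w_E \geq r_E$ is what bridges these counts and explains why condition (iv) alone (rather than the stronger condition (v) involving abelian contractions) is sufficient for this numerical estimate.
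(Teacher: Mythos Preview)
Your proof is correct and follows essentially the same approach as the paper's: reduce to geometric fibers, observe that positivity of $\deg_E \mc L$ is automatic on components where $\bar f$ is non-constant (this is where the factor of $3$ matters), and on contracted components invoke condition (iv) of \ref{def:weighted twisted stable map} to force enough special points. The paper's argument is much terser—it only singles out the case of a rational contracted component and cites finiteness of automorphisms—whereas you have spelled out the full degree computation and the genus case analysis, including the useful clarification that $w_E \geq r_E$ is what makes the count with multiplicity in $\sum_i s_i$ compatible with the count of distinct special points coming from the automorphism condition.
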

\begin{proof}
    It suffices to consider the case when $S = \Sp (k)$ is the spectrum of an algebraically closed field.  In this case it suffices to show that any rational component $P\subset C$ which is contracted by $\bar f$ contains at least three marked points or nodes. This follows from the definition of stability which requires the automorphism group of $C$ to be finite. 
\end{proof}
 For a sheaf $\mc F$ on $\mls C$ and integer $m$ let $\mc F(m)$ denote the sheaf $\mc F\otimes \pi ^*\mc L^{\otimes m}$.

\begin{pg} By the same argument proving \cite[2.2.1]{AGOT} there exists an integer $m_0$ such that the following hold for all $m\geq m_0$ and for any $B$-scheme $S$ and object $(\bC, f)$ of $\mls K_{g, \ba }(\mc X, d)(S)$.
\begin{enumerate}
    \item [(i)] $R^ig_*(f^*\mc V^\vee (m)) = 0$ for all $i>0$; 
    \item [(ii)] The sheaf $g_*(f^*\mc V^\vee (m))$ is locally free and its formation commutes with arbitrary base change $S'\rightarrow S$. 
    \item [(iii)] The natural map
\begin{equation}\label{E:9.4.1}
\bar g^*(g_*f^*\mc V^\vee (m))\rightarrow \pi _*f^*\mc V^\vee (m)
\end{equation}
is surjective.  
Note that since $f^*\mc V^\vee $ is generating this implies that the map
    $$
g^*g_*f^*\mc V^\vee (m)\otimes f^*\mc V(-m)\rightarrow \mls O_{\mls C}
$$
is also surjective. To see it observe that $\mls V$ being generating implies that $(\pi ^*\pi _*f^*\mc V^\vee (m))\otimes \mc V(-m)=\pi ^*\pi _*(\mls Hom(f^*\mc V(-m), \mls O_{\mls C})\otimes f^*\mc V(-m)\rightarrow \mls O_{\mls C}$ is surjective which then combines with \eqref{E:9.4.1} to give the assertion. 
\item [(iv)] $\mc L^{\otimes m}$ is relatively very ample on $C$, $R^i\bar g_*\mc L^{\otimes m}$ vanishes for all $i>0$ and the same holds after arbitary base change $S'\rightarrow S$, $\bar g_*\mc L^{\otimes m}$ is locally free and its formation commutes with base change, and the adjunction map $\bar g^*\bar g_*\mc L^{\otimes m}\rightarrow \mc L^{\otimes m}$ is surjective.
\end{enumerate}
\end{pg}

\begin{pg}
    Fix such an integer $m$ and for integers $l_1$ and $l_2$ let $\mls K_{l_1, l_2}\subset \mls K_{g,  \ba }(\mc X, d)$ denote the open and closed substack classifying log twisted stable maps $(\mathbf{C}, f)$ of degree $d$ for which $g_*f^*\mc V^\vee (m)$ has rank $l_1$ and $\bar g_*\mc L^{\otimes m}$ has rank $l_2$.    To prove \ref{T:14.2}, it is enough to show that $\mls K_{l_1, l_2}$ has a closed immersion into a global quotient stack. Let $\widetilde \Gamma _{l_1, l_2}$ be the stack which associates to a scheme $S$ the collection of data  $(\mathbf{C}, f, \iota _1, \iota _2)$, where $(\mathbf{C}, f)$ is an object of $\mls K_{l_1, l_2}(S)$ and 
    $$
    \iota _1:\mls O_S^{\oplus l_1}\rightarrow g_*f^*\mc V^\vee (m), \ \ \iota _2:\mls O_S^{\oplus l_2}\rightarrow \bar g_*\mc L^{\otimes m}
    $$
    are isomorphisms of locally free sheaves on $S$.  

    Property (iv) above implies that for such a collection of data over a scheme $S$ the composition, which we denote by $j_C$,
    \[
    \begin{tikzcd}
C \arrow[rr, bend left, "j_c"] \arrow[r] & \mathbf{P}(\bar g_*\mc L^{\otimes m}) \arrow[r, "\iota_2"] &\mathbf{P}_S^{l_2-1}
    \end{tikzcd}
    \]
is a closed immersion.  
Let $h:\mls C\rightarrow \mc X_S\times \mathbf{P}_S^{l_2-1}$ be the map $(f, j_C\circ \pi )$.   The map $h$ is proper, representable (since $f$ is representable), and quasi-finite (since the map on coarse spaces is an immersion), and therefore $h$ is finite and $\mls C$ is realized as the relative spectrum over $\mc X_S\times \mathbf{P}_S^{l_2-1}$ of $h_*\mls O_{\mls C}$.

The sheaf $\mc V^\vee \boxtimes \mls O_{\mathbf{P}^{l_2-1}_S}(1)$ on $\mc X_S\times \mathbf{P}_S^{l_2-1}$ (where as usual we write $\boxtimes $ for the operation of pulling the two sheaves back to the product and tensoring them together) is a generating sheaf since $\mc V^\vee $ is generating on $\mc X$.  It follows that if $a:\mc X_S\times \mathbf{P}_S^{l_2-1}\rightarrow X_S\times \mathbf{P}_S^{l_2-1}$ is the coarse moduli space map then the map
\begin{equation}\label{E:iso1}
a^*a_*((h_*\mls O_{\mls C})\otimes (\mc V^\vee \boxtimes \mls O_{\mathbf{P}^{l_2-1}_S}(1)))\otimes (\mc V\boxtimes \mls O_{\mathbf{P}^{l_2-1}_S}(-1))\rightarrow h_*\mls O_{\mls C}
\end{equation}
is surjective.  Now we have isomorphisms
\begin{equation}\label{E:iso2}
a_*((h_*\mls O_{\mls C})\otimes (\mc V^\vee \boxtimes \mls O_{\mathbf{P}^{l_2-1}_S}(1)))\simeq a_*h_*h^*(\mc V^\vee \boxtimes \mls O_{\mathbf{P}^{l_2-1}_S}(1))\simeq a_*h_*(f^*\mc V^\vee (m))\simeq \bar h_*\pi _*(f^*\mc V^\vee (m)),
\end{equation}
where $\bar h:C\rightarrow X\times \mathbf{P}^{l_2-1}_S$ is the closed immersion (since $j_C$ is a closed immersion) defined by $h$. By (iii)  the map 
\begin{equation}\label{eq:surj2}
\mls O_{C}^{\oplus l_1}\rightarrow \pi _*(f^*\mc V^\vee (m))
\end{equation}
(equal to $\iota_1$ followed by \eqref{E:9.4.1}) is surjective, 
and since $\bar h$ is a closed immersion it follows that the map
$$
\mls O_{X_S\times \mathbf{P}_S^{l_2-1}}^{\oplus l_1}\rightarrow \bar h_*\pi _*(f^*\mc V^\vee (m))
$$
(equal to the surjection $\mls O_{X_S\times \mathbf{P}_S^{l_2-1}}^{\oplus l_1}\rightarrow \bar h_*\mls O_C^{\oplus l_1}$ followed by $\bar h_*$ applied to \eqref{eq:surj2})
is also surjective.  
Combining this with \eqref{E:iso1} and \eqref{E:iso2} we find that the natural map
$$
(\mc V\boxtimes \mls O_{\mathbf{P}_S^{l_2-1}}(-1))^{\oplus l_1}\rightarrow h_*\mls O_{\mls C}
$$
is surjective.
 The induced map of quasi-coherent sheaves of algebras on $\mc X_S\times \mathbf{P}_S^{l_2-1}$
$$
\text{Sym}^\bullet( (\mc V\boxtimes \mls O_{\mathbf{P}_S^{l_2-1}}(-1))^{\oplus ^{l_1}})\rightarrow h_*\mls O_{\mls C}
$$
is therefore also surjective, and
taking relative spectra we get a  closed immersion
 \[j_{\mls C}:\mls C\hookrightarrow \mathbf{A}\] lifting $j_C$.  Here $\mathbf{A}$ is the affine bundle over  $\cX_S \times \mathbf{P}_S^{l_2-1}$ associated to $\mc V\boxtimes \mls O_{\mathbf{P}_S^{l_2-1}}(-1)^{\oplus l_1}$. 
\end{pg}

\begin{pg}\label{P:7.6b}
From this and \cite[7.18]{OWI} it follows that any automorphism of an object $(\mathbf{C}, f, \iota _1, \iota _2)$ of $\widetilde{\Gamma}_{l_1, l_2}$ must be the identity, since the construction of the closed embedding $j_{\mls C}$ implies that such an automorphism induces the identity on $\mls C$. 
Moreover the stack $\widetilde \Gamma _{l_1, l_2}$ is the product of frame bundles over $\mls K_{l_1, l_2}$ (or equivalently a stack of sections) and therefore algebraic.  It follows that $\widetilde \Gamma_{l_1, l_2}$ is an algebraic space.  Furthermore  $\widetilde \Gamma _{l_1, l_2}$ has an action of $GL_{l_1}\times GL_{l_2}$, obtained by changing the isomorphisms $(\iota _1, \iota _2)$, for which
    $$
    \mls K_{l_1, l_2}\simeq [\widetilde \Gamma _{l_1, l_2}/GL_{l_1}\times GL_{l_2}].
    $$
\end{pg}

\begin{pg}\label{P:9.7} Let $\mathbf{G}$ denote the quotient of $GL_{l_1}\times GL_{l_2}$ by $\mathbf{G}_m$ embedded diagonally and let $\Gamma _{l_1, l_2}$ denote the quotient of $\widetilde \Gamma _{l_1, l_2}$ by the $\mathbf{G}_m$-action.  So $\Gamma _{l_1, l_2}$ is a $\mathbf{G}$-torsor over $\mls K_{l_1, l_2}$ and $\mls K_{l_1, l_2}\simeq [\Gamma _{l_1, l_2}/\mathbf{G}]$.  The stack $\Gamma _{l_1, l_2}$ classifies data $(\mathbf{C}, f, (\lambda _1, \lambda _2))$, where $(\mathbf{C}, f)\in \mls K_{l_1, l_2}$ and $(\lambda _1, \lambda _2)$ is a section of the sheaf quotient
$$
(\underline {\text{Isom}}(\mls O_S^{\oplus l_1}, g_*f^*\mc V^\vee (m))\times \underline {\text{Isom}}(\mls O_S^{\oplus l_2},\bar g_*\mc L^{\otimes m}))/\mathbf{G}_m
$$
where $\mathbf{G}_m$ acts diagonally.

The embedding $j_C$ and the map $h$ depend only on the projectivization of $\iota _2$. 
Furthermore the preceding construction of the embedding $j_{\mls C}:\mls C\hookrightarrow \mathbf{A}$ depends only on the pair $(\lambda _1, \lambda _2)$ so we get a morphism
$$
\rho :\Gamma _{l_1, l_2}\rightarrow \underline {\text{Hilb}}(\mathbf{A})
$$
to the Hilbert scheme of closed substacks of $\mathbf{A}$. This Hilbert scheme is a disjoint union of schemes quasi-projective over $B$ by \cite[1.5]{OlssonStarr}.

The fact that $j_{\mls C}$ only depends on the pair $(\lambda _1, \lambda _2)$ implies, as in \ref{P:7.6b}, that any automorphism of an object $(\mathbf{C}, f, (\lambda_1, \lambda_2))$ must be the identity, since such an automorphism will induce the identity on $\mls C$. So $\Gamma_{l_1, l_2}$ is an algebraic space.

Note also that there is a natural action of $GL_{l_1}\times GL_{l_2}$ on $\mathbf{A}$, 
which in fact factors through an action of $\mathbf{G}$, and the map $\rho $ is $\mathbf{G}$-equivariant. 
Furthermore, each of the sections $s_i$ define (upon composition with $j_C$) a point of $\mathbf{P}^{l_2-1}$, so we get a map
$$
\tau := \rho \times (j_C\circ s_1)\times \cdots \times (j_C\circ s_n):\Gamma _{l_1, l_2}\rightarrow \underline {\text{Hilb}}(\mathbf{A})\times \mathbf{P}_B^{l_2-1}\times \cdots \times \mathbf{P}_B^{l_2-1}
$$
which is equivariant with respect to the $\mathbf{G}$-actions. 
\end{pg}

\begin{lem}\label{L:9.8}
(i) The map $\tau $ is separated and quasi-finite.

(ii) $\Gamma _{l_1, l_2}$ is a scheme quasi-projective over $B$. 
\end{lem}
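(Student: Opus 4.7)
The plan is to prove (i) by showing $\tau$ is a monomorphism of finite type — which is automatically separated and quasi-finite — and then deduce (ii) via Zariski's main theorem for algebraic spaces.

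To verify that $\tau$ is a monomorphism, I would combine the vanishing of automorphisms established in \ref{P:9.7} with the claim that the data $(\mathbf{C}, f, (\lambda_1, \lambda_2))$ is uniquely recoverable from the $\tau$-image on $T$-valued points. The Hilbert scheme image $j_{\mls C}:\mls C \hookrightarrow \mathbf{A}$ recovers $\mls C$ as a closed substack of $\mathbf{A}$ together with its projections to $\mc X_T$ (yielding $f$) and to $\mathbf{P}^{l_2-1}_T$ (yielding $j_C \circ \pi$). Since $j_C$ is a closed immersion of coarse spaces, the marked data in $(\mathbf{P}^{l_2-1}_B)^n$ recovers the sections $s_j$ on $C$. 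The associated stack $\mls C$ together with its marked coarse curve then determines the generalized log twisted structure $\mathbf{C}$, since the admissible sheaf $\mls N$ and the simple extension $\ell$ are encoded in the stabilizer structure of $\mls C$ at markings and nodes (cf.\ \cite[3.8]{OWI}). Finally, the projective embedding $j_C$ determines $\lambda_2$ only up to a scalar $c_2 \in \mathbf{G}_m$, and under the substitution $(\lambda_1, \lambda_2) \mapsto (c_1\lambda_1, c_2\lambda_2)$ the surjection $(f^* \mc V \otimes (j_C \circ \pi)^* \mls O_{\mathbf{P}^{l_2-1}}(-1))^{\oplus l_1} \to \mls O_{\mls C}$ defining $j_{\mls C}$ is rescaled by the overall factor $c_1 c_2^{-1}$ (since $\lambda_2$ enters through the identification $j_C^* \mls O(1) \simeq \mc L^{\otimes m}$); invariance of the closed embedding then forces $c_1 = c_2$, recovering exactly the diagonal $\mathbf{G}_m$-orbit. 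Since $\mls K_{l_1, l_2}$ is of finite type over $B$, so are $\Gamma_{l_1, l_2}$ and $\tau$; a monomorphism of finite type is automatically separated and quasi-finite, which completes (i).

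For (ii), the stack $\mls K_{g, \ba}(\mc X, d)$ is quasi-compact by \ref{C:qc}, so the open-and-closed substack $\mls K_{l_1, l_2}$ and hence the algebraic space $\Gamma_{l_1, l_2}$ are both quasi-compact. Consequently $\tau$ factors through a quasi-projective $B$-subscheme $T$ consisting of finitely many connected components of $\underline{\text{Hilb}}(\mathbf{A}) \times (\mathbf{P}^{l_2-1}_B)^n$. By Zariski's main theorem for algebraic spaces (\cite[\href{https://stacks.math.columbia.edu/tag/05W7}{Tag 05W7}]{stacks-project}), the separated, finite-type, quasi-finite morphism $\tau:\Gamma_{l_1,l_2} \to T$ factors as $\Gamma_{l_1, l_2} \hookrightarrow Z \to T$ with an open immersion followed by a finite morphism. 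Then $Z$ is a scheme, finite over the quasi-projective $B$-scheme $T$ and hence quasi-projective over $B$, so its open subscheme $\Gamma_{l_1, l_2}$ is also a quasi-projective $B$-scheme, proving (ii).

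The main obstacle is the careful tracking of the $\mathbf{G}_m \times \mathbf{G}_m$-scaling behavior through the construction of $j_{\mls C}$: one must verify that exactly the diagonal $\mathbf{G}_m$ preserves the embedding by unraveling how $\lambda_2$ enters — via the identification $j_C^* \mls O(1) \simeq \mc L^{\otimes m}$ — to recast the surjection $(f^* \mc V(-m))^{\oplus l_1} \twoheadrightarrow \mls O_{\mls C}$ derived from $\lambda_1$ as one whose graph defines $j_{\mls C} \subset \mathbf{A}$.
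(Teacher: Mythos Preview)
Your approach aims to show $\tau$ is a monomorphism, which is stronger than what the paper proves. The paper argues (i) more directly: separatedness of $\tau$ follows because $\Gamma_{l_1,l_2}$ is already separated over $B$ (being a $\mathbf{G}$-torsor over the separated stack $\mls K_{l_1,l_2}$), and for quasi-finiteness the paper observes that a fiber of $\tau$ classifies generalized log twisted structures $\mathbf{C}$ with a fixed associated stack $\mls C$ and fixed markings, and then invokes \cite[7.28]{OWI} to conclude this set is \emph{finite}.

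The gap in your argument is the assertion that $\mls C$ together with its marked coarse curve \emph{determines} the generalized log twisted structure $\mathbf{C}$. Your citation of \cite[3.8]{OWI} does not support this: as used elsewhere in the paper, that result concerns local properties of the associated stack (residue gerbes at stacky points, flatness of the coarse moduli map), not a reconstruction theorem for $\mathbf{C}$ from $\mls C$. The input available from \cite{OWI} is \cite[7.18]{OWI}, used in \ref{P:7.6b}, which says automorphisms of $\mathbf{C}$ inducing the identity on $\mls C$ are trivial (faithfulness), and \cite[7.28]{OWI}, which gives only finiteness of the set of $\mathbf{C}$'s with a given $\mls C$. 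The paper's deliberate phrasing ``finite fibers'' rather than ``monomorphism'' reflects that fullness of $\mathbf{C}\mapsto\mls C$ is not being claimed; in particular, the simple extension $\ell$ is genuine additional data beyond the root orders read off from stabilizers, and it is not obvious that every isomorphism of associated stacks lifts to one of generalized log twisted curves over an arbitrary base. So your monomorphism claim, and hence your route to (i), is not justified by the cited results.

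Your deduction of (ii) from (i) via Zariski's main theorem for algebraic spaces is essentially the paper's argument (which cites \cite[\href{https://stacks.math.columbia.edu/tag/03XX}{Tag 03XX}]{stacks-project} and \cite[\href{https://stacks.math.columbia.edu/tag/02LR}{Tag 02LR}]{stacks-project}).
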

\begin{proof}
The algebraic space $\Gamma _{l_1, l_2}$ is separated over $B$ being a $\mathbf{G}$-torsor over a $B$-separated algebraic stack.  From this it follows that $\tau $ is separated.  Furthermore, the fibers of $\tau $ classify log twisted stable maps with a fixed associated stack with marked points.  Therefore $\tau$ has finite fibers by \cite[7.28]{OWI} and is quasi-finite by \cite[\href{https://stacks.math.columbia.edu/tag/06RT}{Tag 06RT}]{stacks-project} (since $\Gamma_{l_1, l_2}$ and hence $\tau$ are locally of finite type). This proves (i).

It follows from (i) that $\Gamma _{l_1, l_2}$ is a separated algebraic space  quasi-finite over a scheme (the target of $\tau$).  By \cite[\href{https://stacks.math.columbia.edu/tag/03XX}{Tag 03XX}]{stacks-project} and \cite[\href{https://stacks.math.columbia.edu/tag/02LR}{Tag 02LR}]{stacks-project} it follows that $\Gamma _{l_1, l_2}$ is a scheme, quasi-affine over a scheme quasi-projective over $B$ and hence itself is quasi-projective over $B$. 
\end{proof}

\begin{pg} 
Since $\tau $ is quasi-finite and separated by \ref{L:9.8}, and therefore quasi-affine by \cite[\href{https://stacks.math.columbia.edu/tag/02LR}{Tag 02LR}]{stacks-project},  and the target admits a $\mathbf{G}$-equivariant $B$-relatively ample line bundle  (see for example \cite[\S 2.5]{AGOT}),  the scheme $\Gamma _{l_1, l_2}$ also admits a $\mathbf{G}$-equivariant $B$-relatively ample line bundle $\mls M$ by \cite[\href{https://stacks.math.columbia.edu/tag/0892}{Tag 0892}]{stacks-project}. 
Therefore $\Gamma _{l_1, l_2}$ admits an equivariant closed immersion into a smooth quasi-projective  $B$-scheme $\mc P$ 
with $\mathbf{G}$-action 
and we get a closed immersion
$$
\mls K_{l_1, l_2} \simeq [\Gamma _{l_1, l_2}/\mathbf{G}]\hookrightarrow [\mc P/\mathbf{G}].
$$
This completes the proof of \ref{T:14.2}. \qed
\end{pg}

\bibliographystyle{amsplain}
\bibliography{bibliography}{}

\end{document}